\documentclass[11pt,a4paper]{article}
\usepackage{enumitem}
\usepackage[utf8]{inputenc}
\usepackage{amsmath,amsfonts,amssymb}
\usepackage{amsthm,upref}
\usepackage[a4paper]{geometry}
\usepackage{color} 
\usepackage[dvipsnames]{xcolor}
\usepackage{graphicx}
\usepackage{subcaption}
\usepackage{tabularx,float}
\usepackage[hidelinks]{hyperref}
\usepackage{comment} 
\usepackage{tabularx} 

\geometry{verbose,tmargin=1.1in,bmargin=1.32in,lmargin=1.1in,rmargin=1.1in,footskip=0.8cm}
 
\newcommand\rsp[1]{{\color{black}{#1}}}
\newcommand\rspp[1]{{\color{black}{#1}}}
\newtheorem{thm}{Theorem}[section]

\newtheorem{definition}[thm]{Definition}

\newtheorem{proposition}[thm]{Proposition}

\newtheorem{lemma}[thm]{Lemma}
\newtheorem{remark}[thm]{Remark}

\newtheorem{conj}[thm]{\rsp{Conjecture}}

\newcommand{\e}{{\mathrm e}}

\usepackage{todonotes}

\newcommand{\secref}[1]{Section~\ref{sec:#1}}

\newcommand{\seclab}[1]{\label{sec:#1}}
\newcommand{\eqlab}[1]{\label{eq:#1}}
\renewcommand{\eqref}[1]{(\ref{eq:#1})}

\newcommand{\figref}[1]{Fig.~\ref{fig:#1}}
\newcommand{\figlab}[1]{\label{fig:#1}}
\newcommand{\propref}[1]{Proposition~\ref{proposition:#1}}
\newcommand{\proplab}[1]{\label{proposition:#1}}
\newcommand{\conjref}[1]{\rsp{Conjecture~\ref{conj:#1}}}
\newcommand{\conjlab}[1]{\label{conj:#1}}

\newcommand{\lemmaref}[1]{Lemma~\ref{lemma:#1}}
\newcommand{\lemmalab}[1]{\label{lemma:#1}}
\newcommand{\remref}[1]{Remark~\ref{remark:#1}}
\newcommand{\remlab}[1]{\label{remark:#1}}
\newcommand{\thmref}[1]{Theorem~\ref{theorem:#1}}
\newcommand{\thmlab}[1]{\label{theorem:#1}}

\newcommand{\defnlab}[1]{\label{defn:#1}}
\newcommand{\defnref}[1]{Definition~\ref{defn:#1}}

\newcommand{\argmax}{\operatorname{argmax}}
\newcommand{\md}{{\mathbf{d}}}
\newcommand{\degree}{\operatorname{deg}}
\newcommand{\conv}{\mathbf{conv}}
\newcommand{\Conv}{\mathbf{trop}}
\newcommand{\trop}{\operatorname{nc}}
\newcommand{\sign}{\operatorname{sign}}
\newcommand{\Fil}{\operatorname{Fp}}

\newcommand{\TDS}{\mathcal T\mathcal D\mathcal S(N,\{\md_k\}_{k\in \mathcal I})}
\newcommand{\TDSN}{\mathcal T\mathcal D\mathcal S}
\usepackage{tikz}

\DeclareGraphicsRule{.tif}{png}{.png}{`convert #1 `basename #1 .tif`.png}


%

\title{On a tropicalization of planar polynomial ODEs with a 
finite number of structurally stable phase portraits} 
\author {Kristiansen, K. U. and Sarantaris, A. H.} 
\date{%
Department of Applied Mathematics and Computer Science, \\
Technical University of Denmark, \\
2800 Kgs. Lyngby, \\
Denmark,\\
krkri@dtu.dk\\
\vspace{0.5cm}
\today
}
\begin{document}
 
\maketitle
%
%
%
 \begin{abstract}
Recently, concepts from the emerging field of tropical geometry have been used to identify different scaling regimes in chemical reaction networks where dimension reduction may take place. In this paper, we try to formalize these ideas further in the context of planar polynomial ODEs. In particular, we develop a theory of a tropical dynamical system, based upon a differential inclusion, that has a set of discontinuities on a subset of the associated tropical curve. We define a phase portrait, a notion of equivalence and characterize structural stability for a tropical dynamical system. Our main result is that there are finitely many equivalence classes of structurally stable phase portraits.  We believe that the property of finitely many structurally stable phase portraits underlines the potential of the tropical approach, also in higher dimension, as a method to (systematically) obtain and identify skeleton models in chemical reaction networks in extreme parameter regimes.
 \end{abstract} 
\tableofcontents
\section{Introduction}
Polynomial systems of ordinary differential equations (ODEs) occur in many different areas of science. Perhaps most notably they occur as models of chemical reaction networks \cite{feinberg2019a} and in compartmental models for pharmacology, epidemiology and ecology \cite{brauer2012a}. Moreover,
(the second part of) Hilbert’s 16th problem  \cite{ilyashenko2002a,li2003a} – which remains unresolved to this day – is on the existence of an upper bound $H_N$ of the number of limit cycles for planar polynomial
ODEs of a fixed degree $N\in \mathbb N$, $N\ge 2$:
\begin{equation}\eqlab{xyPQ}
\begin{aligned}
 x' &= P(x,y),\\
 y'&=Q(x,y).
\end{aligned}
\end{equation}
These problems are nonlocal (or simply global). For example, in general chemical reaction network, we are interested in the long term dynamics (stationary points, periodic solutions, or even chaos in dimensions $\ge 3$), and although bifurcation theory and numerical computations work well locally and for given parameters, global problems in generality of e.g. \eqref{xyPQ}, are (currently) out of reach.

Tropical geometry is a variant of algebraic geometry based on the semiring where the
usual definitions of addition and multiplication of real numbers are replaced by the following operations 
\begin{align}
 u\oplus v := \max\,(u,v),\quad  u\star v: = u+v,\quad u,v\in \mathbb R\cup\{-\infty\},\eqlab{tropop}
\end{align}
see \cite{maclagan2015introduction}. Tropical monomials $F_{i,j}(u,v)=\alpha_{i,j} \star u^{\star i} \star v^{\star j}=\alpha_{i,j}+iu+jv$ in two variables $u$ and $v$ are therefore affine functions. On the other hand, tropical polynomials are piecewise linear $F_{\max}(u,v)=\max_{i,j} F_{i,j}(u,v)$, much like what one observes when putting the graph
of a classical polynomial on a logarithmic paper. This connection between the tropical semi-ring and logarithmic paper can be formalized through:
\begin{align}\eqlab{logpaper}
\lim_{\epsilon\to 0} \epsilon \log \left(e^{u \epsilon^{-1}}+e^{v\epsilon^{-1}}\right)=u\oplus v,
\end{align}
see \cite{viro2001a}, and relates to the Maslov's dequantization of the real numbers in physics, see \cite{litvinov2007a,litvinov1996a}. The tropical curve of a tropical polynomial $F_{\max}$ is the set of points $(u,v)$ where the maximum is attained by at least two monomials. 

Tropical geometry has in recent years both established
itself as an area of its own right – with tropical solutions of polynomial equations forming a kind
of skeleton for the classical ones based upon standard arithmetic \cite{MR1925796,viro2001a} – but also unveiled deep
connections to numerous branches of pure and applied mathematics \cite{baker2016a,MR2508011,maclagan2015introduction,DBLP:conf/casc/SamalGFR15}. Perhaps most noticeably, O. Viro's \textit{gluing method} in algebraic geometry, which allows algebraic curves to be constructed by a ``cut and paste'' approach, see \cite{MR1036837}, has deep connections to (and even inspired the development of) tropical geometry. 
In \cite{itenberg2000a}, the authors developed a modification of Viro's gluing method to construct  polynomial systems \eqref{xyPQ} of degree $N$ with an order of
\begin{align}\eqlab{lowerboundHN}
\approx \frac{N^2 \log N}{2 \log 2},
\end{align}
as $N\to \infty$,
many limit cycles. To the best of our knowledge, this is the best known asymptotic lower bound of $H_N$, see also \cite{MR4053594}. At the same time, Newton polygons have also been used in dynamical systems theory for the local study of singularities, see \cite{braaksma1992a,dumortier2006a}.
%
%
%

Recently, see e.g. \cite{desoeuvres2022a,kruff2021a,noel2012a,soliman2014a}, tropical geometry has been used to develop computational tools for the analysis and identification of scaling regimes of chemical-reaction networks where dimension reduction may take place. The basis for this approach is related to \eqref{logpaper} and the fact that reduction (in a first approximation) takes place where dominating terms of opposite sign balance out. In tropical terms, the balancing occurs along (a subset of) the tropical curve. This reduction approach can be applied successively, eliminating one dimension at a time and can (potentially) be interpreted (and justified) as slow manifolds in multiple time scale systems, see \cite{desoeuvres2022a,kruff2021a}. 
%
Consequently, it offers an attractive way to deal with the challenges posed by complex, high-dimensional bio-chemical models that involve numerous parameters varying over different scales, making general dynamical systems analysis (including numerical analysis) difficult. Having manageable skeleton models is highly desirable in such cases, see \cite{noel2012a}.
While further theoretical development is necessary, we are taking an initial step in this paper \rsp{by considering} \textit{tropicalized} planar polynomial systems.  
We believe that this serves as a natural starting point for further development of the theory.

Our approach takes as its starting point the setting introduced \rspp{to the first author} by P. Szmolyan, which is partially fleshed out in the Diploma thesis \cite{portisch2021novel} by S. Portisch, wherein \eqref{xyPQ} is embedded into a certain $\epsilon$-family and where $(u,v)$ is introduced through \begin{align}
u=\epsilon \log x,&\quad v=\epsilon \log y.\eqlab{uv}
\end{align}
Taking the limit $\epsilon\to 0$, using \eqref{logpaper} and a certain reparametrization of time, we obtain a piecewise smooth (PWS) system in the $(u,v)$-plane, with the tropical curve containing the discontinuities. 
\rsp{We believe that the idea of tropicalizing polynomial ODE systems was first introduced in \cite{noel2012a}. However, their definition is slightly different from ours (as we for example only work with $(u,v)$) and was motivated by the Litvinov-Maslov corresponding principle \cite{litvinov1996a} rather than (as in the present case) through a formal limiting process of \eqref{xyPQ}.}

PWS systems are important in many different application areas, including mechanics (impact, friction, etc), electronics (switches and diodes, etc.), and control engineering (sliding mode control, etc.), see \cite{pwsbook,jeffrey2018a}. At the same time, PWS systems also pose mathematical problems. For one thing, they do not (in general) define a (classical) dynamical system, due to the lack of uniqueness of solutions. During the past decades, there has therefore been an effort to extend the theory of dynamical systems to PWS systems. This includes the work by \cite{broucke2001a,Sotomayor96} to  extend Peixito's program of structural stability, see also \cite{guardia2011a}. 
Parallel to this effort, there has been an attempt to bound the number of limit cycles in planar PWS linear systems, see \cite{esteban2021a,li2021a,llibre2013a}. \rspp{A uniform bound on the number} of (crossing) limit cycles for such systems with a line of discontinuities was recently \rspp{obtained}  in \cite{carmona2023a}, using the novel characterization of the transition maps in \cite{carmona2021a}.

Finally, recent research has explored how phenomena such as folds, grazing, and boundary singularities in PWS systems can be understood in their smooth version. To achieve this, researchers have refined methods from Geometric Singular Perturbation Theory (GSPT) and blowup, to resolve the special singular limit of smooth systems approaching PWS ones,  see \cite{HUZAK202334,jelbart2021c,jelbart2021b,kristiansen2020a,kristiansen2015a,kristiansen2018a}. 
 These methods \rsp{are also relevant} in the present context when connecting the tropical dynamical system back to \eqref{xyPQ} for $\epsilon>0$ small enough. \rsp{In fact, this was the main focus of \cite{portisch2021novel} in the context of the planar Michaelis-Menten model of enzyme reactions. It was found that an attracting one-dimensional manifold exists across a wide range of parameters. In contrast, in the present paper \textit{we will only analyze (and develop) the singular limit ($\epsilon=0$) case}, and leave this connection to $\epsilon>0$ to future work.
} 
 
 \subsection{Summary of the main results}
 \rsp{
In order to obtain a well-defined system for $\epsilon=0$, we first introduce a novel differential inclusion, that extends Filippov's sliding method, see \cite{filippov1988differential}, in a meaningful way to our setting. As indicated by \eqref{uv}, we will only consider the first quadrant $(x,y)\in \mathbb R_+^2,\,\mathbb R_+:=(0,\infty)$. The other quadrants can be handled in a similar way. We then prove existence of solutions of the differential inclusion (see \thmref{thm0}) and introduce a new notion of equivalence for our tropical dynamical system that we use to classify the structurally stable systems. 
In our main result (see \thmref{thm2}), we find that for any degree $N$ there are finitely many equivalence classes of structurally stable phase portraits. We do not enumerate or put a bound on these in the present work, but we demonstrate our approach by counting the number ($15$ in total) of equivalence classes of structurally stable phase portraits for a generalized autocatalator model (see \propref{genauto}). More generally, we believe that our results demonstrate that the global problems offered by \eqref{xyPQ} become more accessible in their tropical version. }

\subsection{Overview}
The paper is organized as follows: In \secref{trop}, we first review some basic concepts of tropical geometry. Then in \secref{tropode}, we introduce the concept of a tropical (polynomial) dynamical system, see \defnref{tropsystem}. It is described as a differential inclusion with a discontinuity set of the underlying piecewise smooth system as a subset of the tropical curve. 
In \secref{existence}, we \rspp{prove existence of piecewise affine solutions}, which in \secref{orbits} lead us to a definition of orbits (as polygonal curves) and a definition of a phase portrait. In \secref{auto}, we illustrate the concepts on the autocatalator model from \cite{Gucwa2009783}. 

Subsequently, in \secref{equivalence}, we define a (new) notion of equivalence of tropical dynamical systems. In the following sections, see \secref{troppoint}--\secref{crossing}, we then classify structural stability (locally) of \rsp{tropical vertices}, \rsp{tropical singularities}, separatrices and crossing cycles, respectively.  Subsequently, in \secref{graph} we associate a graph (the crossing graph) with a tropical dynamical system. Next, in \secref{structurallystable}, \rspp{we put everything together (including the crossing graph in \secref{graph} that will play a crucial role) to} state and prove our main results (see \thmref{thm2}) on the structural stability of tropical dynamical systems. 
\rsp{We demonstrate these results in the context of a generalized autocatalator model in \secref{genauto}}. 
We conclude the paper in \secref{discussion} with a discussion section. Here we also state some perturbation results (delaying proofs to future work) that relate the tropical dynamical system with solutions of the classical version \eqref{xyPQ} for all $0<\epsilon\ll 1$ and discuss directions for future work. 

\subsection{Tropical Phase Plane}\seclab{TPP}
To generate the tropical phase portraits, we have used the \verb#Matlab# toolbox \verb#Tropical# \verb#Phase# \verb#Plane#  developed by the second author in his bachelor thesis, see \cite{andreas}. The toolbox can be downloaded from 

\begin{center}
\url{https://www.mathworks.com/matlabcentral/fileexchange/132058-tropical-dynamics-toolbox}.
\end{center}
\section{Tropical geometry}\seclab{trop}
Tropical geometry \cite{maclagan2015introduction,morrison2020a} is a variant of algebraic geometry where addition and multiplication are replaced by $\max$ and addition, respectively, see \eqref{tropop},
using $-\infty+u=-\infty$ for every $u\in \mathbb R \cup\{-\infty\}$. \eqref{tropop} defines a semi-ring, where $u=-\infty$ is the neutral element of addition and $u=0$ is the neutral element with respect to multiplication. Notice that there is no additive inverse: Given $u$ and $w$ such that $u>w\ge -\infty$, then the equation $u\oplus v = w$ has no solution $v$. Moreover, $\oplus$ is idempotent: $u\oplus u=u$ for all $u\in \mathbb R\cup\{-\infty\}$.

\begin{remark}\remlab{open}
As usual, $o\subset \mathbb R\cup \{-\infty\}$ is open if either (a) $o$ is open in $\mathbb R$ or (b) $o=K^c\cup\{-\infty\}$ with $K^c$ the complement of a compact set $K\subset \mathbb R$. 
\end{remark}

In tropical geometry, classical monomials $$\alpha_{i,j} u^i v^j, \quad \alpha_{i,j}\in \mathbb R,\, i,j\in \mathbb N,$$ become affine functions:
\begin{align}
 F_{i,j}(u,v)=\alpha_{i,j}+iu+jv,\quad (u,v)\in \mathbb R^2.\eqlab{tropicalmonomial}
\end{align}
We will call $\alpha_{i,j}\in \mathbb R\cup \{-\infty\}$ the \textit{tropical coefficient} of $F_{i,j}$ and define the degree $\degree F_{i,j}$ of $F_{i,j}$ as the tuple $(i,j)\in \mathbb N_0^2$. On the other hand, classical polynomials $$\sum_{0\le i+j\le N}  \alpha_{i,j} u^i v^j,$$ of degree $N$ become piecewise affine functions:
\begin{align}
 F_{\max}(u,v)=\max_{0\le i+j\le N} F_{i,j}(u,v),\quad (u,v)\in \mathbb R^2.\eqlab{tropicalpol}
\end{align}

In tropical geometry, a \textit{tropical curve} $\mathcal T$ is then the set of points $(u,v)\in \mathbb R^2$ where the maximum in \eqref{tropicalpol} is attained by at least two tropical monomials, i.e. 
\begin{align*}
 \mathcal T: = \bigg\{(u,v)\in \mathbb R^2\,:\,\#\argmax_{(i,j)}F_{i,j}(u,v)\ge 2\bigg\},
\end{align*}
see \cite{maclagan2015introduction}.
Here we use $\#$ for the cardinality of a finite set. 


Points $(u,v)$ where $$\#\argmax_{(i,j)}F_{i,j}(u,v)= 2,$$ lie on line segments (isomorphic to open intervals $]a,b[$) given by
\begin{equation}
\begin{aligned}
\mathcal E_{(i_1,j_1),(i_2,j_2)}=\bigg\{(u,v)\,:\,
 F_{i_1,j_1}(u,v) = F_{i_2,j_2}(u,v)>F_{i,j}(u,v),\\
  \text{for all}\quad (i,j)\ne (i_k,j_k),\,k=1,2\bigg\}
  \end{aligned}\eqlab{tropmanifold}
\end{equation}
for some pair $(i_1,j_1),(i_2,j_2)$.
We will call such lines \textit{\rsp{tropical edges}}.  
Notice that $\mathcal E_{(i_1,j_1),(i_2,j_2)}$ (if nonempty) is either vertical (if $j_1=j_2$) or it has rational slope $\frac{i_2-i_1}{j_1-j_2}$. 

On the other hand, points $(u,v)$ where $$\#\argmax_{(i,j)}F_{i,j}(u,v)> 2,$$ are called \textit{\rsp{tropical vertices}}. They are in general isolated with $$\#\argmax_{(i,j)}F_{i,j}(u,v)= 3,$$ see \cite{maclagan2015introduction}, and are in this case given by
 \begin{equation}
\begin{aligned}
 \mathcal P_{(i_1,j_1),(i_2,j_2),(i_3,j_3)} = \bigg\{(u,v)\,:\,F_{i_1,j_1}(u,v) = F_{i_2,j_2}(u,v)=F_{i_3,j_3}(u,v)>F_{i,j}(u,v),\\
 \text{for all}\quad (i,j)\ne (i_k,j_k),\,k=1,2,3\bigg\},
\end{aligned}\nonumber
\end{equation}
for some triplet $(i_1,j_1),(i_2,j_2),(i_3,j_3)$.

Finally, points $(u,v)$ where $$\#\argmax_{(i,j)}F_{i,j}(u,v)=1,$$ lie on convex polygonal domains given by
\begin{align}
 \mathcal R_{(i_1,j_1)} = \bigg\{(u,v)\,:\,F_{i_1,j_1}(u,v) >F_{i,j}(u,v),\quad \text{for all}\quad (i,j)\ne (i_1,j_1)\bigg\},
\end{align}
for some $(i_1,j_1)$. 
\rsp{We call these sets \textit{tropical regions}}. We will frequently drop the subscripts on $\mathcal E$, $\mathcal P$ and $\mathcal R$ if the indices (specifying the relevant monomials) are not important. 

\begin{definition}\defnlab{homotopiccurves}
 Two tropical curves $\mathcal T$ and $\mathcal T'$ are said to be \textnormal{homotopic} if there is a continuous deformation $\mathcal T_t$, $t\in [0,1]$, with $\mathcal T_0=\mathcal T$ and $\mathcal T_1=\mathcal T'$, such that (a) the cardinality of the set of \rsp{tropical vertices} of $\mathcal T_t$ is constant and (b) only the lengths of the \rsp{tropical edges} of $\mathcal T_t$ vary, not their slopes. 
\end{definition}

The \textit{Newton polygon} of a tropical polynomial \eqref{tropicalpol} is defined as the convex hull of the \textit{point configuration} $$\bigg\{\degree F_{i,j}\in \mathbb N_0^2\,:\,\alpha_{i,j}\ne -\infty,\,0\le i+j\le N\bigg\}.$$  Another important concept is that of a \textit{polyhedral subdivision} $\mathcal S$ of the Newton polygon. To explain this, consider the points $$(\degree F_{i,j},\alpha_{i,j})\in \mathbb N_0^2\times (\mathbb R\cup \{-\infty\}),$$  obtained by assigning the tropical coefficients $\alpha_{i,j}$ of $F_{i,j}$ as a third dimension (a height). Then we take the upper envelope of the resulting set of points. Finally, the subdivision $\mathcal S$ is obtained by projecting the edges of the resulting convex polyhedron back to $\mathbb R^2$, where $\mathbb N_0^2$ is naturally embedded, see \cite{de2010a,maclagan2015introduction} and \figref{trop}. The different polygons of $\mathcal S$ will be called \textit{faces}.

The importance of $\mathcal S$ is due to the fact that $\mathcal S$ is \textit{dual to the tropical curve} $\mathcal T$ in the following sense: 
\begin{proposition} \proplab{dualS}
 \cite[Proposition 3.1.6]{maclagan2015introduction}
Consider a polyhedral subdivision $\mathcal S$ associated with a tropical polynomial. Then each face of the subdivision corresponds to a \rsp{tropical vertex}  $(u,v)\in \mathcal T$ where 
\begin{align}\eqlab{tropPcond}
\# \argmax_{(i,j)} F_{i,j}(u,v)>2,
\end{align} and the edges of $\mathcal S$ correspond to 
the edges (the \rsp{tropical edges}) of $\mathcal T$ where $$\# \argmax_{(i,j)} F_{i,j}(u,v)=2.$$
In particular, the subdivision $\mathcal S$ fixes the tropical curve $\mathcal T$ up to homotopy and the dual edges of $\mathcal S$ and $\mathcal T$ are perpendicular (see \figref{trop}(b)). 
\end{proposition}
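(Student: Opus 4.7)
The plan is to establish the duality via a Legendre-transform argument applied to the lifted point configuration $\Lambda := \{(i,j,\alpha_{i,j})\in \mathbb N_0^2\times \mathbb R\,:\,\alpha_{i,j}\ne -\infty\}$ that defines $\mathcal S$. For a fixed base point $(u,v)\in \mathbb R^2$, I would consider the affine functional $\ell_{(u,v)}(i,j,h) = iu+jv+h$ on $\mathbb R^3$. By definition of $F_{\max}$ in \eqref{tropicalpol}, one has $F_{\max}(u,v) = \max_{(i,j,h)\in \Lambda} \ell_{(u,v)}(i,j,h)$, and the set $\argmax_{(i,j)} F_{i,j}(u,v)$ (after projecting away the height coordinate) equals the set of lifted points realizing this maximum. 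These lifted points comprise a face $\sigma$ of the upper convex hull of $\Lambda$, supported by the hyperplane with outward normal proportional to $(u,v,1)$. Projecting $\sigma$ back to $\mathbb R^2$ then yields precisely the face of $\mathcal S$ associated with $(u,v)$.

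Translating this into the claimed correspondence is then routine. If $\#\argmax_{(i,j)} F_{i,j}(u,v)=1$, then $\sigma$ is a vertex of the upper hull, projecting to a vertex of $\mathcal S$, and $(u,v)$ lies in a tropical region $\mathcal R$. If the cardinality equals $2$, $\sigma$ is an edge, projecting to an edge of $\mathcal S$, and $(u,v)$ lies on a tropical edge $\mathcal E_{(i_1,j_1),(i_2,j_2)}$. If the cardinality is at least $3$, $\sigma$ is a two-dimensional face, projecting to a (two-dimensional) face of $\mathcal S$, and $(u,v)$ is a tropical vertex. For perpendicularity, I would note that along $\mathcal E_{(i_1,j_1),(i_2,j_2)}$ one has $F_{i_1,j_1}=F_{i_2,j_2}$, i.e.\ $(i_1-i_2)u+(j_1-j_2)v = \alpha_{i_2,j_2}-\alpha_{i_1,j_1}$, so the tangent direction $(du,dv)$ of the tropical edge is orthogonal to $(i_2-i_1,j_2-j_1)$, which is precisely the direction of the corresponding edge of $\mathcal S$.

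The homotopy claim then follows at once: the combinatorial incidences between tropical vertices and tropical edges are dictated by the face/edge incidences of $\mathcal S$ under the above correspondence, and the slopes of the tropical edges are determined via perpendicularity by the directions of the corresponding edges of $\mathcal S$; only the lengths of the tropical edges can vary with the specific values of the heights $\alpha_{i,j}$, subject to $\mathcal S$ being kept fixed. Hence any two tropical polynomials sharing the subdivision $\mathcal S$ have homotopic tropical curves in the sense of \defnref{homotopiccurves}. The main technical obstacle I anticipate is careful bookkeeping in the degenerate cases: when some $\alpha_{i,j}=-\infty$ (the point is absent from $\Lambda$), when four or more lifted points are coplanar (so $\mathcal S$ is not a triangulation and tropical vertices may have valence greater than three), and when unbounded faces of the upper hull produce unbounded tropical edges along the boundary of the Newton polygon. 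These do not affect the argument in substance, but require phrasing it in terms of general polyhedral faces rather than simplices.
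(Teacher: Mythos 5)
The paper offers no proof of \propref{dualS} at all: it is quoted from the literature with the citation to \cite[Proposition 3.1.6]{maclagan2015introduction}, so there is no in-house argument to compare against. Your Legendre/upper-hull duality argument is correct and is, in substance, the standard proof of that cited result: the face of the upper hull of the lifted configuration exposed by the functional with normal $(u,v,1)$ is the convex hull of the lifted points in $\argmax_{(i,j)}F_{i,j}(u,v)$, its projection is the cell of $\mathcal S$ dual to the stratum containing $(u,v)$, the dimension of that face matches the cardinality count being $1$, $2$ or $>2$, and perpendicularity is read off from the defining equation $(i_1-i_2)u+(j_1-j_2)v=\alpha_{i_2,j_2}-\alpha_{i_1,j_1}$ of a tropical edge exactly as you state. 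The one step I would ask you to make explicit is the homotopy claim: \defnref{homotopiccurves} requires an actual continuous deformation $\mathcal T_t$, not merely that the two curves share combinatorics and edge slopes. This is supplied by the convexity of the secondary cone --- the set of height vectors $\alpha$ inducing a fixed regular subdivision $\mathcal S$ is cut out by linear inequalities and is therefore convex (a fact the paper itself invokes just before \thmref{thm0}) --- so the linear interpolation $\alpha_t=(1-t)\alpha^0+t\alpha^1$ stays in this set and the associated curves $\mathcal T_t$ give the required deformation, with constant vertex count and slopes fixed by duality. Your anticipated degenerate cases (absent monomials, non-triangular faces, unbounded edges dual to boundary edges of the Newton polygon) are indeed handled by phrasing the argument in terms of general polyhedral faces, as you note.
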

  A subdivision $\mathcal S$ where all of its faces are triangles (as in \figref{trop}) is called a \textit{triangulation}, and in this case exactly three \rsp{tropical edges} come together at each \rsp{tropical vertex}  (so that the left hand side of \eqref{tropPcond} equals $3$).

\begin{figure}[H]
    \centering
    \begin{subfigure}{0.5\textwidth}
    \centering
        \includegraphics[width=0.95\linewidth]{./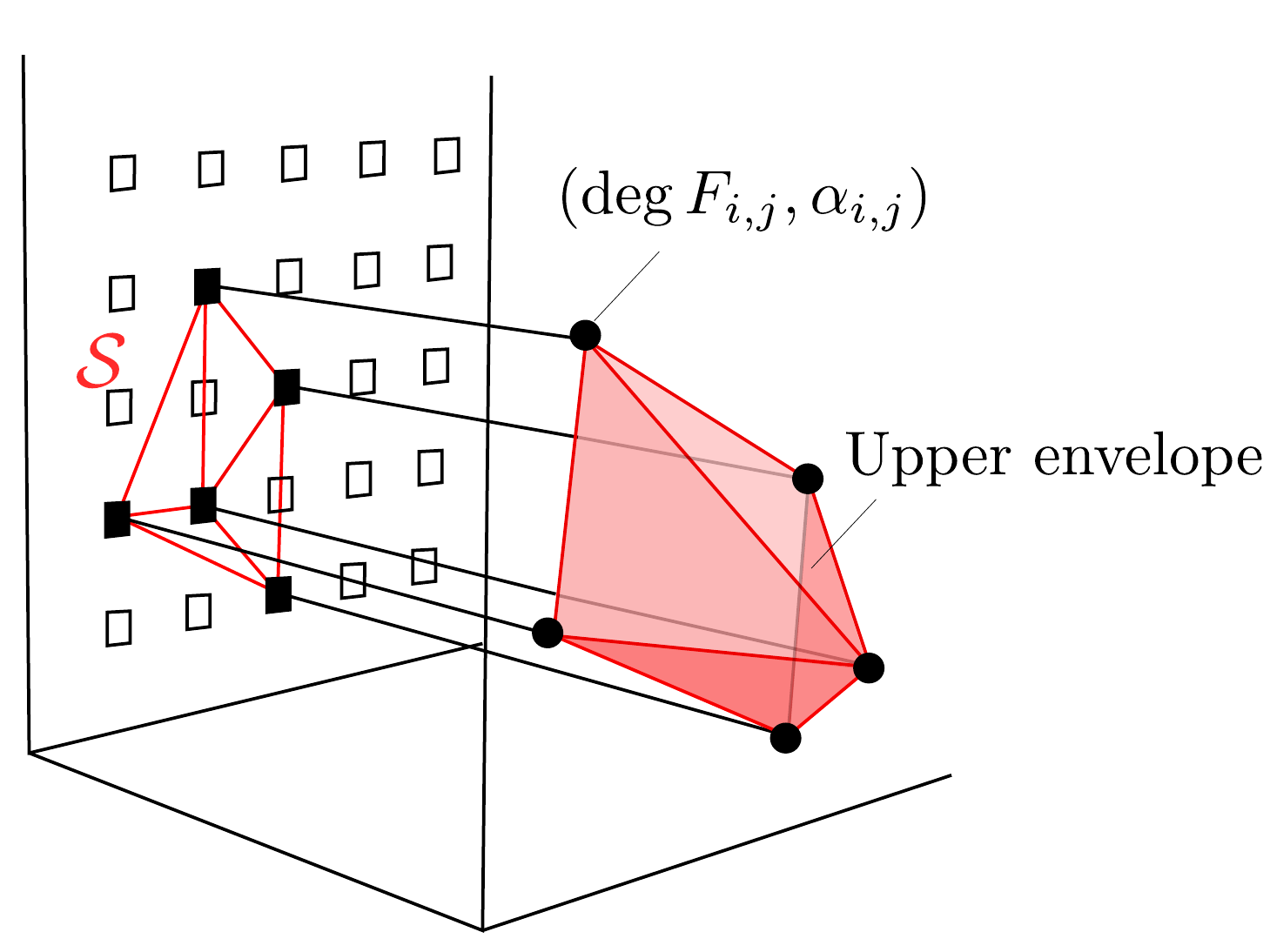}
        \caption{}
    \end{subfigure}%
    \begin{subfigure}{0.5\textwidth}
    \centering
        \includegraphics[width=0.94\linewidth]{./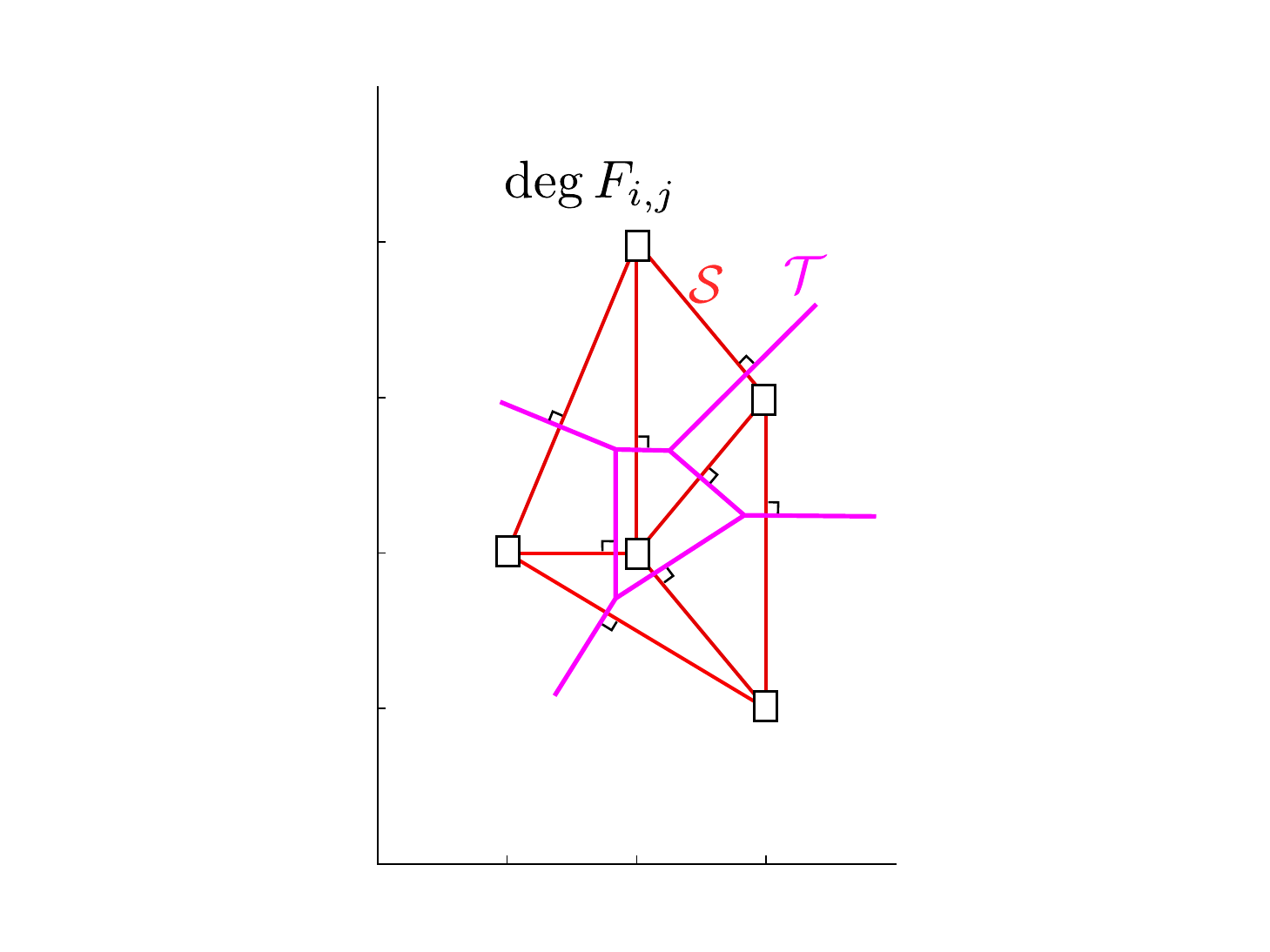}
        \caption{}
    \end{subfigure}
    \caption{In (a): The upper envelope of the set of points $(\degree F_{i,j},\alpha_{i,j})$. The subdivision $\mathcal S$ of the Newton polygon is the projection of the envelope. In (b): The subdivision $\mathcal S$ and the dual tropical curve $\mathcal T$. The subdivision fixes $\mathcal T$ up to homotopy and the \rsp{tropical edges} of $\mathcal T$ are perpendicular to the edges of $\mathcal S$, see \propref{dualS}.}
    \figlab{trop}
    \end{figure}
    
  There are 
\begin{align} M=M(N):=\frac12 (N+2)(N+1)\in \mathbb N,\eqlab{M} \end{align} 
many coefficients $\{\alpha_{i,j}\}_{0\le i+j\le N}$ of a general (tropical) polynomial of degree $N$. 
Let 
\begin{align}\eqlab{alphaenum}
\alpha:=(\alpha_{0,0},\ldots,\alpha_{N,0},\alpha_{0,1},\ldots,\alpha_{N-1,1},\alpha_{0,2},\ldots,\alpha_{N,0}),
\end{align} and put
\begin{align}
 \rspp{\mathbf{A}}(M):=(\mathbb R\cup \{-\infty\})^{M}.\eqlab{AN}
\end{align}
Then $\alpha\in \rspp{\mathbf{A}}(M)$.
%
%
%
%
%
We use the product topology to obtain a topology on $\rspp{\mathbf{A}}(M)$, recall \remref{open}. 
%
%
%
%
%

Now, the subdivisions of the Newton polygon are determined by a set of linear inequalities on the tropical coefficients, see e.g. \cite{de2010a}, and each triangulation defines an open convex polytope in the space of coefficients $\rspp{\mathbf{A}}(M)$. This leads to the following simple fact, see \cite{de2010a}.
\begin{thm}\thmlab{thm0}
Fix a degree $N\in \mathbb N$. Then the subdivision $\mathcal S$ of a tropical polynomial \eqref{tropicalpol} with $\alpha\in \rspp{\mathbf{A}}(M(N))$ is a triangulation on a dense open subset of $\rspp{\mathbf{A}}(M(N))$. This dense subset is the finite union of open convex polytopes.
\end{thm}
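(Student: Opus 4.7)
The plan is to reduce the claim to a standard fact about regular (height-induced) subdivisions of a finite lattice point configuration, combined with a density argument that takes care of the $-\infty$ coordinates.

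First I would restrict to the subspace $\mathbb{R}^{M}\subset \rspp{\mathbf{A}}(M)$ consisting of all-finite tropical coefficient vectors. By \remref{open} and the product topology, any neighborhood of a point with some $\alpha_{i,j}=-\infty$ contains points where that coordinate is an arbitrarily negative real number, so $\mathbb{R}^{M}$ is dense in $\rspp{\mathbf{A}}(M)$. On $\mathbb{R}^{M}$ the point configuration is the fixed lattice triangle $\Delta_N=\{(i,j)\in \mathbb{N}_0^{2}:i+j\le N\}$, the Newton polygon is fixed, and $\mathcal S$ is obtained by projecting the upper envelope of the finite set of lifted points $\{(i,j,\alpha_{i,j})\}_{(i,j)\in \Delta_N}\subset \mathbb{R}^{3}$. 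A face of $\mathcal S$ fails to be a triangle exactly when four or more of these lifted points lie on a common supporting affine plane of the upper envelope. For any fixed quadruple of distinct exponents $\{(i_k,j_k)\}_{k=1}^{4}\subset \Delta_N$, coplanarity of the corresponding lifted points is a single linear equation in the heights (the vanishing of a $4\times 4$ determinant). Since there are only finitely many such quadruples, the locus where $\mathcal S$ fails to be a triangulation is contained in a finite union of hyperplanes, hence closed and nowhere dense in $\mathbb{R}^{M}$; its complement is open and dense.

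For the finite-union-of-polytopes claim, I would invoke the standard fact that on this complement the combinatorial type of the triangulation is locally constant and cut out by a finite system of strict linear inequalities. Concretely, for each candidate triangle $T$ in a triangulation with vertices in $\Delta_N$, the unique affine interpolant of the three heights at the vertices of $T$ must strictly exceed $\alpha_{i,j}$ at every other $(i,j)\in \Delta_N$ lying above $T$ in the Newton polygon. Collecting these finitely many strict inequalities defines an open convex polyhedral chamber (a full-dimensional cone of the secondary fan), and there are only finitely many regular triangulations of the finite set $\Delta_N$, see e.g.\ Chapter~2 of \cite{de2010a} or \cite[Ch.~4]{maclagan2015introduction}. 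The dense open triangulation locus in $\mathbb{R}^{M}$ is thus the disjoint union of these finitely many chambers.

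Finally, I would extend these statements back to $\rspp{\mathbf{A}}(M)$. Openness uses the observation that whenever $\alpha_{i,j}=-\infty$ at a triangulation, the lifted point $(i,j,\alpha_{i,j})$ plays no role in $\mathcal S$, so replacing this value by any sufficiently negative finite number (and vice versa, via the neighborhood base of $-\infty$ in \remref{open}) leaves $\mathcal S$ unchanged; each chamber in $\mathbb{R}^{M}$ therefore promotes to an open set in $\rspp{\mathbf{A}}(M)$, and density is already established. The step I expect to be most delicate is the careful bookkeeping at the $-\infty$-coordinates — in particular, checking that no chamber degenerates when lifted polytopes acquire $-\infty$ heights — but the heavy lifting is done once the secondary-fan description is in place.
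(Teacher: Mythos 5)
Your argument is correct and is essentially the proof the paper has in mind: the paper's justification is a citation of the secondary-fan description in \cite{de2010a} --- each regular triangulation is cut out by finitely many strict linear inequalities on the heights, and the non-triangulation locus lies in finitely many hyperplanes (the coplanarity conditions) --- which is exactly what you reconstruct. The one remark is that your final ``promotion'' step at the $-\infty$ coordinates is unnecessary, and is indeed the only place the argument could wobble (dropping a coordinate to $-\infty$ can shrink the Newton polygon when the corresponding exponent is a vertex of the point configuration): since $\mathbb{R}^{M}$ is already open in $\mathbf{A}(M)$ by \remref{open}, the finite union of open secondary chambers is open and dense in all of $\mathbf{A}(M)$ as it stands.
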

In other words, all \rsp{tropical vertices} have $\#\argmax_{(i,j)}F_{i,j}(u,v)= 3$ on a  dense open subset consisting of a finite union of open convex polytopes. Notice that the tropical curves (as well as the subdivisions) are invariant with respect to translations  
\begin{align}\eqlab{translation}a \mapsto \alpha+a:=(\alpha_{0,0}+a,\ldots,\alpha_{N,0}+a), \quad a\in \mathbb R,\end{align} of the tropical coefficients. Each of the convex polytopes in \thmref{thm0} are therefore also invariant with respect to such translations. (One could obviously choose to partition $\rspp{\mathbf{A}}(M)$ into equivalence classes, obtained from the action \eqref{translation}, but this will not be important for our purposes, and we will therefore not do so in the present manuscript.)

The actual number of triangulations $T(n)$ for a point configuration of $n$ points is not known, to the best of our knowledge. There are bounds, however, of the form $2^{c_1 n}\le T(n)\le 2^{c_2 n}$ for some constants $c_2\ge  c_1>0$, see \cite[Chapter 3]{de2010a}. Moreover, not every triangulation is \textit{regular}, in the sense that it is realizable as a projection of an upper envelope, see \cite{de2010a}.

There are many classical results of algebraic geometry that have counterparts in tropical geometry. One even talks of a (\rsp{Litvinov-Maslov}) \textit{correspondance principle} \cite{litvinov1996a,noel2012a}), in the spirit of the {correspondance principle} in Quantum Mechanics, between results in classical algebraic geometry and tropical geometry, see \cite{litvinov2007a}. For example, B\'ezout's theorem also holds in tropical geometry (for details on counting multiplicities, we refer to \cite{maclagan2015introduction}):
\begin{thm}
 Consider two tropical curves $\mathcal T$ and $\mathcal T'$ of degree $K$ and $L$ in $\mathbb R^2$. Then if they intersect transversally, meaning that each intersection is along the \rsp{tropical edges} of $\mathcal T$ and $\mathcal T'$, then they do so $KL$ times (counting multiplicities). 
\end{thm}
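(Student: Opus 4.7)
The plan is to follow the standard stable-intersection approach in tropical geometry, leveraging the duality between a tropical curve and its Newton subdivision provided by \propref{dualS}. First, I would reduce to a generic situation: by \thmref{thm0}, we can perturb the tropical coefficients of both polynomials so that both subdivisions $\mathcal S$ and $\mathcal S'$ are triangulations. Since the transversality hypothesis (intersections occurring only in relative interiors of tropical edges and not at any tropical vertex of either curve) defines an open condition, a further small perturbation ensures we may work entirely in this generic regime, and homotopy invariance guarantees the intersection count is preserved.

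Next, I would assign integer weights to tropical edges using the duality. By \propref{dualS}, each tropical edge $e\subset \mathcal T$ is perpendicular and dual to an edge of $\mathcal S$ connecting two lattice points of the Newton polygon; let $w(e)\in \mathbb Z^2$ be the primitive integer vector along the dual edge, scaled by its lattice length (its weight). Do the same for tropical edges of $\mathcal T'$. At a transverse intersection point where $e\subset \mathcal T$ meets $e'\subset \mathcal T'$, define the local intersection multiplicity as
\[
 m(e,e') := |\det(w(e),w(e'))|.
\]
This definition is well-defined and invariant under the allowed deformations, since the combinatorial type (and thus dual edges) is preserved by small perturbations.

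The main step is then to sum these local multiplicities over all intersection points and identify the total with $KL$. For this I would use the mixed-subdivision / Minkowski-sum description: the Minkowski sum $P+P'$ of the two Newton polygons carries a mixed subdivision induced by $\mathcal S$ and $\mathcal S'$, and its cells partition into (i) translates of cells of $\mathcal S$ (dual to vertices of $\mathcal T$), (ii) translates of cells of $\mathcal S'$ (dual to vertices of $\mathcal T'$), and (iii) mixed parallelogram cells, one for each transverse intersection point of $\mathcal T$ and $\mathcal T'$, whose area equals $m(e,e')$. Consequently
\[
 \sum_{p\in \mathcal T\cap \mathcal T'} m(e_p,e_p') \;=\; \mathrm{Vol}(P+P') - \mathrm{Vol}(P) - \mathrm{Vol}(P'),
\]
which is the (normalized) mixed volume $\mathrm{MV}(P,P')$. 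For degrees $K$ and $L$, the maximal Newton polygons are the simplices with vertices $(0,0),(K,0),(0,K)$ and $(0,0),(L,0),(0,L)$ respectively, and a direct computation of the mixed volume of two such simplices gives $KL$; for non-generic $P\subsetneq \Delta_K$ or $P'\subsetneq \Delta_L$, one reduces to the generic case via a limit argument, with the deficit absorbed into intersections ``at infinity'' that I would account for by adding ghost edges at the boundary of the Newton polygon.

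The hard part will be step three: rigorously justifying the correspondence between transverse intersection points and mixed cells of $P+P'$, together with the area identity $\mathrm{Area}(\text{mixed cell}) = |\det(w(e),w(e'))|$. This is a purely combinatorial geometry fact about Minkowski sums of polyhedral subdivisions, but it is the only non-formal ingredient. An alternative, more hands-on route I would keep in reserve is a deformation argument: translate $\mathcal T'$ far in a chosen direction so that all intersections occur between unbounded ends of $\mathcal T$ and $\mathcal T'$; since a tropical curve of degree $N$ has exactly $N$ unbounded ends in each of the three asymptotic directions dictated by the edges of $\Delta_N$, a direct bookkeeping of how rays of one curve cross rays of the other recovers the count $KL$ without invoking mixed volumes.
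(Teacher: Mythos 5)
The paper does not actually prove this theorem: it is imported from the literature, stated with an explicit pointer to Maclagan--Sturmfels for the multiplicity conventions, and no proof environment follows it. Your outline is, in essence, the standard proof from that reference (mixed subdivision of the Minkowski sum of the Newton polygons, mixed parallelogram cells dual to transverse intersection points, area bookkeeping giving $\mathrm{Area}(P+P')-\mathrm{Area}(P)-\mathrm{Area}(P')=KL$ for the full simplices $\Delta_K$ and $\Delta_L$), so there is nothing to compare it against within the paper itself; measured against the source the paper cites, your sketch is correct in structure. Two small remarks: the multiplicity is usually written as $w(e)w(e')\vert\det(u,u')\vert$ with $u,u'$ primitive directions of the tropical edges themselves, but since dual edges are perpendicular to tropical edges this agrees with your $\vert\det(w(e),w(e'))\vert$; and the worry about $P\subsetneq\Delta_K$ is moot under the theorem's hypotheses, since ``degree $K$'' in the cited convention already means the Newton polygon is the full simplex, so no ``ghost edges at infinity'' are needed.
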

\rspp{Moreover, if} $\mathcal T$ and $\mathcal T'$ do not intersect transversally, then 
\begin{align*}
 \lim_{\epsilon\to 0} \mathcal T_\epsilon \cap \mathcal T'_\epsilon,
\end{align*}
is \textit{independent of the choice of perturbations} $\mathcal T_\epsilon$ and $ \mathcal T'_\epsilon$ of $\mathcal T$ and $\mathcal T'$, respectively. This is called the \textit{Stable Intersection Principle}, see \cite[Theorem 1.3.3]{maclagan2015introduction}.





%

\section{\rsp{A tropicalization of planar polynomial ODEs}}\seclab{tropode}
The vector space of real polynomials of degree $N$ in two variables $x,y$ is spanned by 
the monomials $x^ny^m$, $0\le n+m\le N$. The dimension of this space is $M=M(N)$, recall \eqref{M}.
Now, define 
\begin{align}\eqlab{UV}
\mathcal U:=\{1,\ldots,M\},\quad \mathcal V=\mathcal U+M:=\{M+1,\ldots,2M\}.
\end{align}
For our purposes, it will then be convenient to write the polynomials $P$ and $Q$ in \eqref{xyPQ} in the following form
\begin{align}
 P(x,y) = \sum_{k\in \mathcal U} a_k x^{n_k} y^{m_k},\quad Q(x,y)=\sum_{k\in \mathcal V} a_{k} x^{n_{k-M}} y^{m_{k-M}},\eqlab{PQpol}
\end{align}
using an enumeration of $n_k$ and $m_k$: The details are not important, but for concreteness, consider
\begin{align*}
 q_l :=lN -\frac{l(l-3)}{2},\quad \text{for}\quad l=0,\ldots,N,
\end{align*}
and $q_{N+1}:=M+1$. Then we define
\begin{align}\eqlab{nkmk}
 n_k := k-q_l,\,
  m_k := l,\quad \text{for}\quad k=q_l,\cdots,q_{l+1}-1,
 \end{align}
 and each $l=0,\ldots,N$. Notice that $q_{l+1}-q_l=N-l+1$ so that $n_{k} +m_{k} = N$ for $k=q_{l+1}-1$ and all $l=0,\ldots,N$. This enumeration is consistent with \eqref{alphaenum}.

We put 
\begin{align*}
\mathcal I:=\mathcal U\cup \mathcal V=\{1,\ldots,2M\}.
\end{align*}
Then $a_k\in \mathbb R$ for all $k\in \mathcal I$ and $(n_k,m_k)\in \mathbb N_0^2$ and $0\le n_k+m_k\le N$ for all 
$k\in \mathcal I$. 
%


We will now introduce a tropical version of \eqref{xyPQ}. We will only consider the first quadrant $(x,y)\in \mathbb R_+^2,\,\mathbb R_+:=(0,\infty)$; the other quadrants can be handled in a similar way, but we leave the problem of connecting the quadrants to future work (see \secref{discussion} for a further discussion of this aspect). 


For any $\epsilon>0$, consider the transformation: 
\begin{align}
(x,y,\{a_k\}_{k\in \mathcal I})\mapsto (u,v,\{\delta_k\}_{k\in \mathcal I},\{\alpha_k\}_{k\in \mathcal I}),\eqlab{trop}
\end{align}
of $x,y$ and the coefficients $\{a_k\}_{k\in \mathcal I}$, defined by \eqref{uv}
%
and 
\begin{align}
\delta_k=\textnormal{sign}(a_k) :=\begin{cases}
                                1 & \textnormal{for}\quad a_k\ge 0\\
                                -1 & \textnormal{for}\quad a_k<0
                                \end{cases},&\quad \alpha_k = \begin{cases} 
                                \epsilon \log \vert a_k\vert & \textnormal{for}\quad a_k\ne 0\\
                                -\infty & \textnormal{for}\quad a_k= 0
                                \end{cases},
                                                                \eqlab{deltakalphak}
\end{align}
for all $k\in \mathcal I$.
By introducing the tropical monomials
\begin{equation}\eqlab{Fk}
\begin{aligned}
 F_k(u,v):&=\alpha_k+(n_k-1)u+m_k v,\quad k\in \mathcal U,\\
 F_k(u,v):&=\alpha_k+n_{k-M} u+(m_{k-M}-1) v,\quad k\in \mathcal V,
\end{aligned}
\end{equation}
for $(u,v)\in \mathbb R^2$,
an elementary calculation then shows that the application of \eqref{trop} to \eqref{xyPQ}, with $P$ and $Q$ in the form \eqref{PQpol}, leads to the following equations:
\begin{equation}\eqlab{uvEqn0}
\begin{aligned}
 u' & =\epsilon \sum_{k\in \mathcal U} \delta_k \e^{F_k(u,v)\epsilon^{-1}},\\
 v' &=\epsilon \sum_{k\in \mathcal V} \delta_k \e^{F_k(u,v)\epsilon^{-1}}.
\end{aligned}
\end{equation}
Monomial terms of $P$ and $Q$ in the equations for $x$ and $y$ therefore become exponential terms with the tropical monomials $F_k$ as their arguments. 
 Recall that the tuple $(n,m)$ is the degree $\textnormal{deg}\,F$ of $F(u,v)=\alpha+n u + m v$. In contrast to \secref{trop}, we can therefore have $\deg F_k =(-1,*)$ and $(*,-1)$.  This is not a problem; we can define tropical curves, the Newton polygons and the subdivisions completely analogously. Notice, however, from \eqref{Fk}  that 
 \begin{equation}\eqlab{nkmkNeg1}
  \begin{aligned}
 \degree F_k=(-1,*)\Longrightarrow k\in \mathcal U,\\
 \degree F_k=(*,-1)\Longrightarrow k\in \mathcal V.
 \end{aligned}
 \end{equation}

 To be able to study the limit $\epsilon\to 0$ of \eqref{uvEqn0}, we ``tame'' the exponentials (as in \cite{jelbart2021a}) by introducing a new time $t$
defined by 
\begin{align}
 \frac{\mathrm dt}{\mathrm d\tau }=\epsilon \sum_{k\in \mathcal I} \e^{F_k(u,v)\epsilon^{-1}}>0.\eqlab{time}
\end{align}
This produces the equations
\begin{equation}\eqlab{uvEqn}
\begin{aligned}
 \dot u & =\frac{\sum_{k\in \mathcal U} \delta_k \e^{F_k(u,v)\epsilon^{-1}}}{\sum_{k\in \mathcal I} \e^{F_k(u,v)\epsilon^{-1}}},\\
 \dot v &=\frac{\sum_{k\in \mathcal V} \delta_k \e^{F_k(u,v)\epsilon^{-1}}}{\sum_{k\in \mathcal I} \e^{F_k(u,v)\epsilon^{-1}}},
\end{aligned}
\end{equation}
with $\dot{()}=\frac{\mathrm d}{\mathrm dt}$. 
 Let $\mathcal L=\mathcal I,\,\mathcal U$ or $\mathcal V$. We then define 
  \begin{align*}
 F_{\max}^{\mathcal L}(u,v):=\max_{k\in \mathcal L} F_{k}(u,v),\quad (u,v)\in \mathbb R^2,
\end{align*}
 as the tropical polynomial associated with the monomials $F_{k}$, $k\in \mathcal L$, and let $\mathcal T^{\mathcal L}$ denote the associated tropical curve. Similarly, \rsp{tropical edges}, \rsp{vertices} and regions will be denoted by $\mathcal E^{\mathcal L}$, $\mathcal P^{\mathcal L}$ and $\mathcal R^{\mathcal L}$, respectively. Finally, the associated subdivision of the Newton polygon is denoted by $\mathcal S^{\mathcal L}$. 
  However, we frequently drop the superscript if $\mathcal L=\mathcal I$ (since this situation will play the most important role).  Notice that 
\begin{align*}
 F_{\max}^{\mathcal U}(u,v)&:=\max_{k\in \mathcal U} F_k(u,v),\quad (u,v)\in \mathbb R^2,\\
 F_{\max}^{\mathcal V}(u,v)&:=\max_{k\in \mathcal V} F_k(u,v),\quad (u,v)\in \mathbb R^2,
\end{align*}
only involve monomials from the $u$ and $v$-directions, respectively.
\begin{remark}
  The point configurations associated with $F_{\max}^{\mathcal U}$ and $F_{\max}^{\mathcal V}$ each have $M$ points, see \eqref{M}, whereas the point configuration associated with $F_{\max}^{\mathcal I}$ has 
  \begin{align}
   M+N+1 = \frac12 (N+1)(N+4),\eqlab{NIpoints}
  \end{align}
points. This is a simple consequence of \eqref{nkmkNeg1}.
 \end{remark}



  
  Finally, for any $k\in \mathcal I$ we define
 \begin{align}
  \md_k = \begin{cases}
         (\delta_k,0) &\textnormal{if}\quad k\in \mathcal U,\\
         (0,\delta_k) &\textnormal{if} \quad k\in \mathcal V,
        \end{cases}\eqlab{dk}
 \end{align}
  as \textit{the flow vector} associated to the tropical monomial $F_k$ in \eqref{Fk}. 
  \begin{remark}\remlab{degree}
  Notice that in our definition of \eqref{Fk}, we can have $\textnormal{deg}\,F_k = \textnormal{deg}\,F_j$ but only for $k\in \mathcal U$ and $j\in \mathcal V$. In particular, \textnormal{the  tropical pair} $(\md_k,\textnormal{deg}\,F_k)$ is unique. 
  \end{remark}
   



  
\begin{lemma}\lemmalab{limitTrop}
The system
 \eqref{xyPQ} on $(x,y)\in \mathbb R_+^2$ and the system \eqref{uvEqn} on $(u,v)\in \mathbb R^2$ are topologically equivalent for every $\epsilon>0$ whenever \eqref{deltakalphak} holds.

  Moreover, suppose $(u,v)\notin \mathcal T$ such that $(u,v)\in \mathcal R_l$ with $l=\argmax_{k\in \mathcal I} F_k(u,v)$ \rspp{and fix the lists $\{\delta_k\}_{k\in \mathcal I}$ and $\{\alpha_k\}_{k\in \mathcal I}$}. Then 
  the right hand side of \eqref{uvEqn} converges as $\epsilon\to 0$ to the flow vector $\md_{l}$, see \eqref{dk}.

 \end{lemma}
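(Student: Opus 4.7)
My plan is to handle the two parts separately, both of which reduce to elementary bookkeeping once the setup is unfolded.

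For the first claim (topological equivalence for $\epsilon > 0$), I would first observe that the map
\[
\Phi_\epsilon\colon (x,y) \mapsto (u,v) = (\epsilon \log x,\,\epsilon\log y)
\]
is a $C^\infty$-diffeomorphism of $\mathbb R_+^2$ onto $\mathbb R^2$. I would then push \eqref{xyPQ} forward through $\Phi_\epsilon$: using $u' = \epsilon x'/x$ and $v' = \epsilon y'/y$, together with $x^{n_k-1}y^{m_k} = \exp(((n_k-1)u + m_k v)/\epsilon)$ and $a_k = \delta_k \exp(\alpha_k/\epsilon)$ (with the convention that $\alpha_k = -\infty$ yields the zero contribution), a direct substitution into \eqref{PQpol} produces exactly the $\tau$-system \eqref{uvEqn0} with $F_k$ given by \eqref{Fk}. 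Finally, since the time rescaling \eqref{time} has a strictly positive, smooth derivative, the resulting change from $\tau$ to $t$ is an orientation-preserving reparametrization of orbits, so \eqref{uvEqn} is topologically equivalent to \eqref{uvEqn0}, hence to \eqref{xyPQ}.

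For the second claim (the singular limit on a tropical region), fix $(u,v) \in \mathcal R_l$, so that $F_l(u,v) > F_k(u,v)$ for every $k \in \mathcal I \setminus \{l\}$. I would factor $\exp(F_l(u,v)/\epsilon)$ out of both numerator and denominator of \eqref{uvEqn}, obtaining, for the $u$-component,
\[
\dot u \;=\; \frac{\sum_{k\in\mathcal U} \delta_k\, \exp\!\bigl((F_k(u,v)-F_l(u,v))/\epsilon\bigr)}{\sum_{k\in\mathcal I} \exp\!\bigl((F_k(u,v)-F_l(u,v))/\epsilon\bigr)},
\]
and an analogous expression for $\dot v$. Since $F_k(u,v) - F_l(u,v) < 0$ for all $k \ne l$, every such exponential tends to $0$ as $\epsilon \to 0^+$, while the $k = l$ exponential equals $1$. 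Splitting into the cases $l \in \mathcal U$ and $l \in \mathcal V$ and reading off the sole surviving term, the limit of the right-hand side is $(\delta_l, 0)$ in the first case and $(0,\delta_l)$ in the second. By the definition \eqref{dk}, this is precisely $\mathbf d_l$.

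The only mild subtlety, and what I would expect to be the one piece worth writing carefully, is the uniformity of the limit: I would note that the argument above actually gives convergence uniform on compact subsets of $\mathcal R_l$, because on such a compact set the gap $\min_{k\ne l}\bigl(F_l - F_k\bigr)$ is bounded below by a positive constant, which makes all the decaying exponentials uniformly $o(1)$ in $\epsilon$. This observation is not required by the statement, but it is what makes the limit meaningful for the subsequent construction of the tropical dynamical system. Otherwise the proof is a direct computation and contains no genuine obstacle.
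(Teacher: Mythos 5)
Your proposal is correct and follows essentially the same route as the paper: the first part is the diffeomorphism \eqref{uv} combined with the positive time reparametrization \eqref{time}, and the second part is the same dominant-exponential computation (the paper phrases it as $\dot u = (\delta_l + \mathcal O(\e^{-c/\epsilon}))/(1+\mathcal O(\e^{-c/\epsilon}))$, which is exactly your factoring out of $\e^{F_l(u,v)/\epsilon}$). Your added remark on uniform convergence over compact subsets of $\mathcal R_l$ is correct but not claimed or needed by the lemma.
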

 \begin{proof}
 The first part is by construction, with the diffeomorphism $\mathbb R_+^2 \ni (x,y)\mapsto (u,v)\in \mathbb R^2$ given by \eqref{uv} and the regular transformation of time given by \eqref{time}. The second part follows from a straightforward calculation: Given $(u,v)\notin \mathcal T$, we have $(u,v)\in \mathcal R_l$ with $l=\argmax_{k\in \mathcal I}F_k(u,v)$. Hence
 \begin{align*}
  F_l(u,v)-F_k(u,v)>c>0\quad \textnormal{for all}\quad k\ne l,
 \end{align*}
 for some $c>0$ small enough. Suppose that $l\in \mathcal U$. Then by \eqref{uvEqn}, we obtain
\begin{align*}
 \dot u &= \frac{\delta_l + \mathcal O(\e^{-c/\epsilon})}{1+\mathcal O(\e^{-c/\epsilon})},\\
 \dot v &=\frac{\mathcal O(\e^{-c/\epsilon})}{1+\mathcal O(\e^{-c/\epsilon})},
\end{align*}
and the result follows. The case $l\in \mathcal V$ is identical. 
 \end{proof}
 \begin{remark}
 \rspp{The approach, detailed above, for tropicalizing \eqref{xyPQ} through \eqref{uv}, \eqref{trop} and the singular limit $\epsilon\to 0$, was communicated to the first author by P. Szmolyan in 2019 in a private conversation. It is also partially fleshed out in \cite{portisch2021novel} in the context of the Michaelis-Menten kinetics.}
   \end{remark}

Let 
\begin{align}\eqlab{norm1}
\vert (u,v)\vert_1 := \vert u\vert+\vert v\vert,\end{align} denote the $1$-norm on $\mathbb R^2$, and put $\mathbb N_{0}:=\{0\}\cup \mathbb N$ and $\mathbb N_{-1}:=\{-1\}\cup \mathbb N_0$. 


\begin{definition}\defnlab{Ml}
We define $\mathcal M^u$ ($\mathcal M^v$) as the set of all tropical monomials $F$ (i.e. of the form \eqref{tropicalmonomial}) with degree $\degree F\in \mathbb N_{-1}\times \mathbb N_0$ ($\degree F\in \mathbb N_{0}\times \mathbb N_{-1}$, respectively) and tropical coefficient $\alpha\in \mathbb R\cup \{-\infty\}$.

For each fixed $N\in \mathbb N$, we define $\mathcal M_N^u$ ($\mathcal M_N^v$) as the subset of $\mathcal M^u$ ($\mathcal M^v$, respectively) satisfying
\begin{align*}
  \mathcal M_N^l = \bigg\{F\in \mathcal M^l\,:\,\vert \degree F\vert_1\le N+1\bigg\},\quad l=u,v.
 \end{align*}
 
  \end{definition}

  \subsection{A set-valued vector-field $\Conv$}
   As a consequence of \lemmaref{limitTrop}, \eqref{uvEqn} \rspp{(with $\{\delta_k\}_{k\in \mathcal I}$ and $\{\alpha_k\}_{k\in \mathcal I}$ fixed)} is piecewise smooth (in fact, even piecewise constant) in the limit $\epsilon\to 0$ and the discontinuity set is a subset of $\mathcal T$. In particular, the limit $\epsilon\to 0$ is well-defined for $(u,v)\notin \mathcal T$ and given by 
   \begin{align}
   \begin{pmatrix}
   \dot u\\
   \dot v
 \end{pmatrix} &=
\md_{\argmax_{k\in \mathcal I} F_k(u,v)},\quad (u,v)\notin \mathcal T.\eqlab{uvnotinT}
\end{align}
On the other hand, for $(u,v)\in \mathcal T$, $$\argmax_{k\in \mathcal I} F_k(u,v),$$ contains more than one element by definition.

Let $$\conv (\md_k,\md_l)=\bigg\{q\md_k+(1-q)\md_l\,:\,q\in [0,1]\bigg\},$$ denote the convex hull of $\md_k$ and $\md_l$. If $\md_k\ne \md_l$ then $\conv (\md_k,\md_l)$ is simply the line segment connecting $\md_k$ and $\md_l$. Otherwise, it is a point.  

\begin{remark}
For simplicity, we write $\{(-1,0),(1,0)\}$ as $\{(\pm 1,0)\}$. $\{(0,\pm 1)\}$ is understood in the same way. \rspp{Moreover, for simplicity of notation, we will throughout assume that it is clear from the context whether tuples should be understood as row or column vectors. As usual, $b\cdot c$ will denote the dot product in $\mathbb R^n$ between vectors $b$ and $c$.}
\end{remark}

We now define a set-valued vector-field $\Conv$, \rspp{which will form the basis for our definition of a tropical dynamical system}. For this purpose, we use $2^{\mathbb R^2}$ to denote the power set of $\mathbb R^2$, i.e. the set of all subsets of $\mathbb R^2$. 
\begin{definition}\defnlab{convtrop}
Consider any $(u,v)\in \mathbb R^2$ and write
\begin{align*}
\mathcal L^*= \mathcal L^*(u,v) :=\argmax_{k\in \mathcal L} F_k(u,v)\quad \mbox{and}\quad 
\md_{\mathcal L^*} := \{\md_{k}\,:\, k\in \mathcal L^*\},
\end{align*}
for $\mathcal L= \mathcal U,\mathcal V$ and $\mathcal I$. 
We then define the set-valued vector-field $$(u,v)\mapsto \Conv(u,v) \in 2^{\mathbb R^2},$$ 
as follows:
\begin{enumerate}
\item \label{1} \rspp{If $\md_{\mathcal I^*}$ is a singleton}, so that $\md_{\mathcal I^*}=\{\md\}$ for some flow vector $\md$, then $$\Conv(u,v):=\{\md\}.$$
\item \label{2} Otherwise, $\md\in \Conv(u,v)$ 
if and only if either of the following conditions \rspp{holds true}
\begin{enumerate}
 \item \label{a} \rspp{There exist $\mathbf t\in \md_{\mathcal U^*}$ and $\mathbf s\in \md_{\mathcal V^*}$} such that $\md \in \conv(\mathbf t,\mathbf s)$.
 \item \label{b} $\md = \textnormal{\textbf{0}}$, $\md_{\mathcal U^*}=\{(\pm 1,0)\}$ \textnormal{and} $\md_{\mathcal V^*}=\{(0,\pm 1)\}$.
\end{enumerate}

\end{enumerate}
\end{definition}

\rsp{Notice that $\Conv(u,v)=\{\md_k\}$ for any $(u,v)\in \mathcal R_k$, $k\in \mathcal I$, (as desired, cf. \eqref{uvnotinT}).} 

\rspp{We emphasize that the zero vector $\textnormal{\textbf{0}}$ belongs to $\Conv(u,v)$ if and only if $\md_{\mathcal U^*}=\{(\pm 1,0)\}$ {and} $\md_{\mathcal V^*}=\{(0,\pm 1)\}$, cf.  \ref{b}. The motivation for this choice should be clear enough, see also \secref{auto} below. If $\textnormal{\textbf{0}}\in \Conv(u,v)$, then $(u,v)$ is called a  \textit{tropical singularity}, see \secref{tropeq}. } 

The following property of $\Conv$ will be important later on. 
\begin{lemma}\lemmalab{upper}
$\Conv:\mathbb R^2\to 2^{\mathbb R^2}$ is \textnormal{upper semi-continuous}: For every $(u',v')\in \mathbb R^2$ there exists a neighborhood $X$ of $(u',v')$ such that
\begin{align*}
\Conv(u,v)\subset \Conv(u',v'),
\end{align*}
for all $(u,v)\in X$.
\end{lemma}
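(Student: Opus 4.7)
The plan is to exploit the continuity of the affine functions $F_k$ to pin down how the argmax sets behave locally around $(u', v')$, and then to verify the inclusion case-by-case against the clauses of \defnref{convtrop}.

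First, I would observe that for any $(u', v')$ and any $\mathcal{L} \in \{\mathcal{U}, \mathcal{V}, \mathcal{I}\}$, each index $k \notin \mathcal{L}^*(u', v')$ satisfies the strict inequality $F_k(u', v') < F_{\max}^{\mathcal{L}}(u', v')$. Since the $F_k$ are continuous (affine), this inequality persists on an open neighborhood of $(u', v')$, and intersecting over the finitely many pairs $(\mathcal{L}, k)$ produces a single neighborhood $X$ on which
\begin{align*}
\mathcal{L}^*(u, v) \subseteq \mathcal{L}^*(u', v'), \qquad \mathcal{L} \in \{\mathcal{U}, \mathcal{V}, \mathcal{I}\},
\end{align*}
and hence $\md_{\mathcal{L}^*(u, v)} \subseteq \md_{\mathcal{L}^*(u', v')}$ for all three choices. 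This monotonicity of the argmax is the workhorse of the proof.

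Given $(u, v) \in X$ and $\md \in \Conv(u, v)$, I would then split on which clause of \defnref{convtrop} certifies $\md \in \Conv(u, v)$. In clause (1), $\md_{\mathcal{I}^*(u, v)} = \{\md\}$ forces $\mathcal{I}^*(u, v)$ to consist only of indices from $\mathcal{U}$ (or only from $\mathcal{V}$), and using the elementary fact that any $k \in \mathcal{I}^*\cap \mathcal{U}$ automatically lies in $\mathcal{U}^*$, one concludes $\md \in \md_{\mathcal{U}^*(u', v')}$ (or $\md \in \md_{\mathcal{V}^*(u', v')}$); if $\md_{\mathcal{I}^*(u', v')}$ is still a singleton the conclusion is immediate, otherwise picking any element of the complementary argmax set and taking $q=1$ in clause (2a) certifies $\md \in \Conv(u', v')$. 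In clause (2a) at $(u, v)$, the certifying pair $\mathbf{t}, \mathbf{s}$ remains in $\md_{\mathcal{U}^*(u', v')}, \md_{\mathcal{V}^*(u', v')}$ by monotonicity, and the same convex combination witnesses $\md \in \Conv(u', v')$.

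The one place that needs an additional observation is clause (2b), where $\md = \mathbf{0}$. The hypotheses at $(u, v)$ give $\{(\pm 1, 0)\} = \md_{\mathcal{U}^*(u, v)} \subseteq \md_{\mathcal{U}^*(u', v')}$, but \eqref{dk} constrains every flow vector of a $\mathcal{U}$-index to be one of $(\pm 1, 0)$, so the reverse containment holds trivially and we obtain the equality $\md_{\mathcal{U}^*(u', v')} = \{(\pm 1, 0)\}$; symmetrically for $\mathcal{V}$. Clause (2b) therefore applies at $(u', v')$ as well, giving $\mathbf{0} \in \Conv(u', v')$. I expect this closure-of-the-alphabet observation to be the only mildly tricky point; the rest is routine bookkeeping once the argmax monotonicity is in hand.
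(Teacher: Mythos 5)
Your argument is correct and follows essentially the same route as the paper: continuity of the affine monomials gives a neighborhood on which $\mathcal L^*(u,v)\subseteq\mathcal L^*(u',v')$ for $\mathcal L=\mathcal U,\mathcal V,\mathcal I$, hence $\md_{\mathcal L^*(u,v)}\subseteq\md_{\mathcal L^*(u',v')}$, and the inclusion of the set-valued fields then follows from \defnref{convtrop}. The paper leaves that last step as ``follows from the definition,'' whereas you carry out the clause-by-clause check (including the observation that clause (2b) upgrades to an equality of the argmax flow-vector sets); this is a faithful, slightly more detailed rendering of the same proof.
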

\begin{proof}
The tropical monomials are continuous functions and consequently there is a neighborhood $X$ of $(u',v')$ such that 
\begin{align*}
F_l(u,v)<F_i(u,v),
\end{align*}
for all $i\in \mathcal L^*(u',v')$, all $l\notin \mathcal L^*(u',v')$, all $(u,v)\in X$ and $\mathcal L=\mathcal U,\mathcal V,\mathcal I$. Consequently, $\md_{\mathcal L^*(u,v)}\subset \md_{\mathcal L^*(u',v')}$ for all $(u,v)\in X$ and every $\mathcal L=\mathcal U,\mathcal V,\mathcal I$, and the result then follows from the definition of $\Conv$, see \defnref{convtrop}.
\end{proof}

 \rsp{In the following \secref{sliding_crossing}, we define the important concepts of tropical sliding and crossing. We then analyze these cases further in \secref{convsli} and \secref{crossingflow}, where we also compute $\Conv(u,v)$.}

\subsection{Tropical sliding and crossing}\seclab{sliding_crossing}
\rsp{Suppose} that $(u,v)$ belongs to a \rsp{tropical edge} $\mathcal E_{i,j}$, $i,j\in \mathcal I$, $i\ne j$, i.e. a line segment defined by the set of points $(u,v)\,:\,F_i(u,v)=F_j(u,v)>F_k(u,v)$ for all $k\in \mathcal I,k\ne i,j$. This gives
\begin{align}\eqlab{Eijeqn}
 (n_j-n_i) u+(m_j-m_i)v+\alpha_j-\alpha_i=0.
\end{align}
\rspp{Therefore $\mathcal E_{i,j}$ is horizontal if $n_i=n_j$ and vertical if $m_i=m_j$. Otherwise, it has rational slope $\frac{n_j-n_i}{m_i-m_j}$. Here we suppose that $\degree F_i\ne \degree F_j$, recall \remref{degree}.}
\begin{remark}\remlab{degree2}
Notice that if $\degree F_i= \degree F_j$ then the set defined by $F_i=F_j$ is either the empty set or $\mathbb R^2$ ($\Leftrightarrow \alpha_i=\alpha_j$).
\end{remark}
Given \eqref{Eijeqn},
\begin{align}\eqlab{nij}
\mathbf n_{i,j}=\degree F_j -\degree F_i =(n_j-n_i,m_j-m_i),
\end{align} is a normal  vector to $\mathcal E_{i,j}$.

In the language of PW\rspp{S} dynamical systems theory \cite{Bernardo08,filippov1988differential}, $\mathcal E_{i,j}$ is a switching manifold if $\md_i\ne \md_j$.
\begin{definition}\defnlab{crossingsliding}
Suppose that $\mathcal E_{i,j}$ is a switching manifold ($\md_i\ne \md_j$). Then $\mathcal E_{i,j}$ is of the following type:
\begin{enumerate}
\item \textnormal{Crossing} if $\md_i$ and $\md_j$ point in the same direction relative to $\mathcal E_{i,j}$:
 \begin{align}
  (\md_i \cdot \mathbf n_{i,j}) (\md_j \cdot \mathbf n_{i,j})>0.\eqlab{crossingdef}
 \end{align}
 \item \textnormal{Sliding} otherwise: 
 \begin{align}
  (\md_i \cdot \mathbf n_{i,j}) (\md_j \cdot \mathbf n_{i,j})\le 0.\eqlab{sliding}
 \end{align}
 More specifically, when \eqref{sliding} holds, then $\mathcal E_{i,j}$ is of the following \textnormal{sliding-type}:
 \begin{enumerate}
  \item \textnormal{Transversal (tangential) Filippov} if $\md_i\cdot \md_j=0$ \textnormal{and} the left hand side of \eqref{sliding} is nonzero (zero, respectively). 
  \item \textnormal{Transversal (tangential) Nullcline} if $\md_i\cdot \md_j=-1$ \textnormal{and} the left hand side of \eqref{sliding} is nonzero (zero, respectively).
 \end{enumerate}
 \end{enumerate}
\end{definition}
 We will occasionally use a similar terminology of nullcline sliding for the \textit{individual} \rsp{tropical edges} $\mathcal E^\mathcal L$ of $\mathcal T^{\mathcal L}$, $\mathcal L=\mathcal U,\mathcal V$, see \secref{tropeq}. Notice that for crossing, then $\md_i\cdot \md_j=0$ with $i\in \mathcal U$ and $j\in \mathcal V$ (or vice versa).

 \rspp{In contrast} to general piecewise smooth systems with nonconstant vector-fields \cite{filippov1988differential,jeffrey_geometry_2011}, we define sliding to hold even when \eqref{sliding} holds with equality. The reason for doing so, is that such tangencies are \textit{robust} with respect to perturbations of $\alpha_k$ in the present case. Indeed, the slope of $\mathcal E_{i,j}$ only depends upon the degrees $\degree F_i$ and $\degree F_j$ (which are fixed), not on the tropical coefficients $\alpha_i$ and $\alpha_j$. 
 
 We illustrate the different cases in \figref{sliding} and \figref{crossing}. \figref{sliding}(a) and (b) illustrate Filippov sliding, whereas (c) and (d) illustrate nullcline sliding. 
 
 In these figures, we also introduce our convention: Red (brown, see \figref{tropauto}) indicates a vertical flow vector, pointing upwards (downwards, respectively). Blue (light blue), on the other hand, corresponds to a horizontal flow vector, pointing to the right (left, respectively). The little arrow along the switching manifolds in (c) indicate the subdominant direction ($\md_l$ in \figref{sliding}). It is colored red in the present case because $\md_l$ is pointing upwards. Moreover, full lines indicate switching manifolds of sliding type, whereas dashed lines indicate switching manifolds of crossing type (or later \rsp{tropical edges} $\mathcal E_{i,j}$ which are not switching manifolds, i.e. $\md_j=\md_i$).

\begin{figure}[H]
    \centering
    \begin{subfigure}{0.49\textwidth}
    \centering
        \includegraphics[width=0.96\linewidth]{./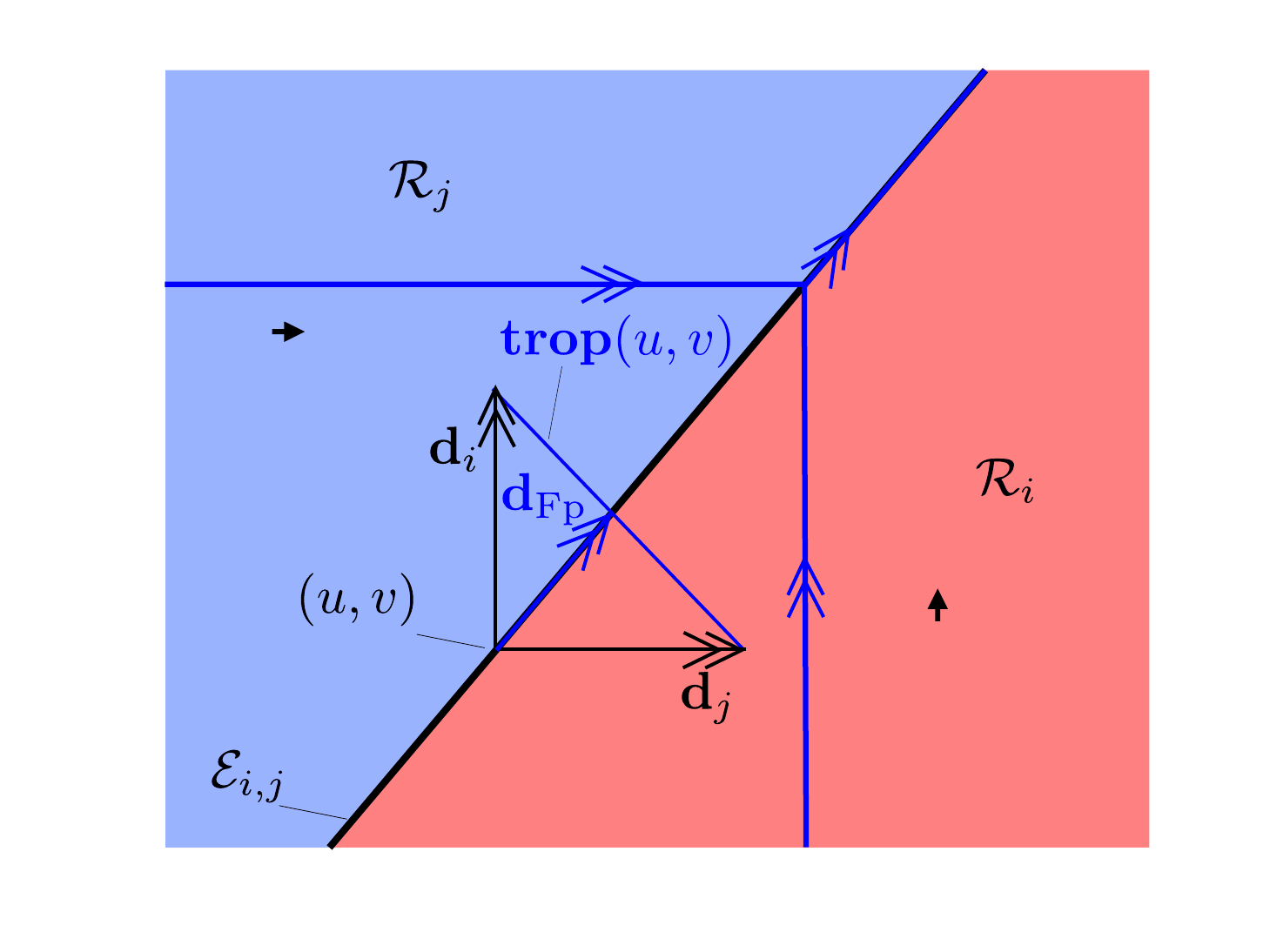}
        \caption{Transversal Filippov sliding}
    \end{subfigure}
    \begin{subfigure}{0.49\textwidth}
    \centering
        \includegraphics[width=0.96\linewidth]{./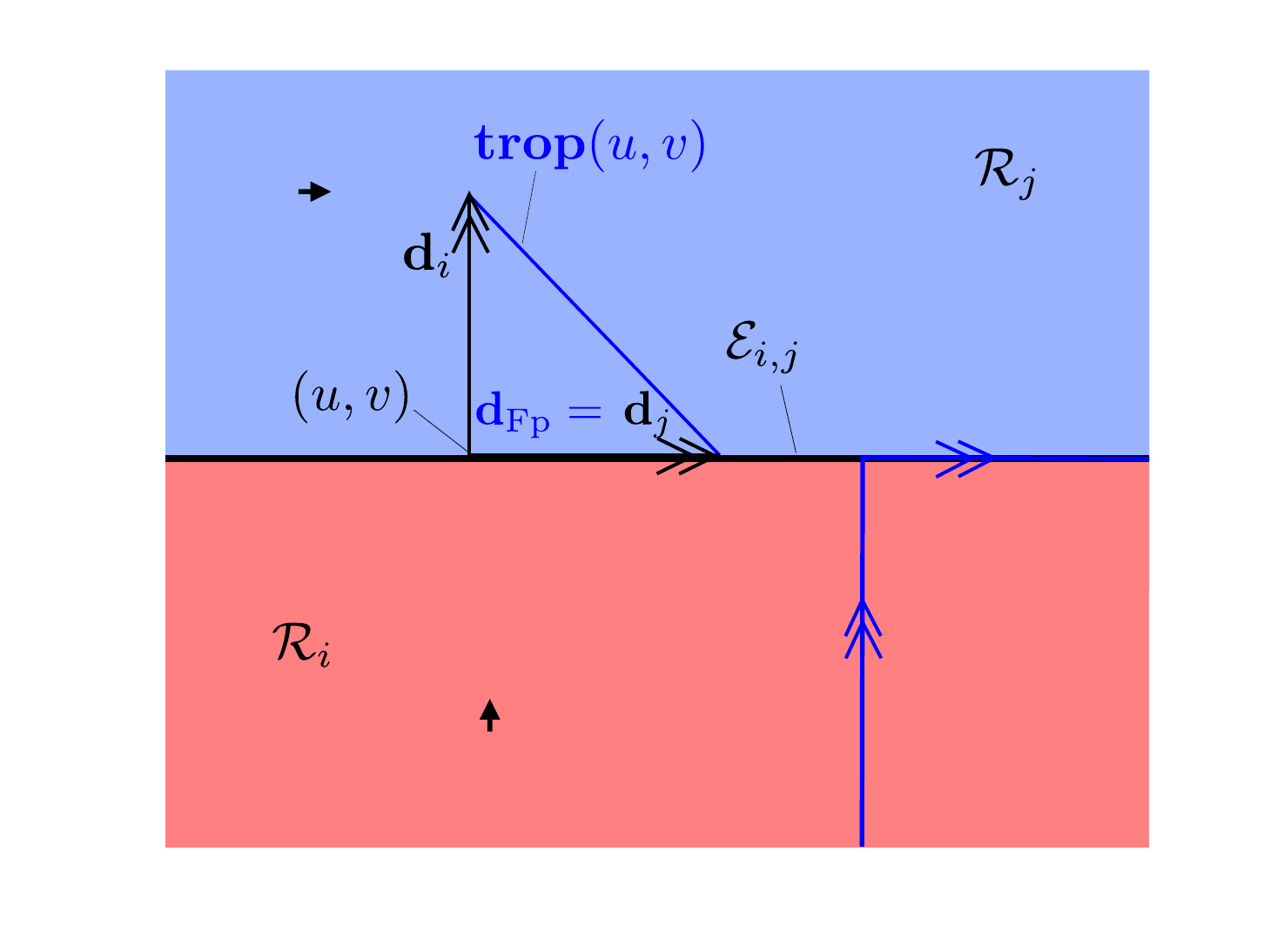}
        \caption{Tangential Filippov sliding}
    \end{subfigure}
     \begin{subfigure}{0.49\textwidth}
    \centering
        \includegraphics[width=0.96\linewidth]{./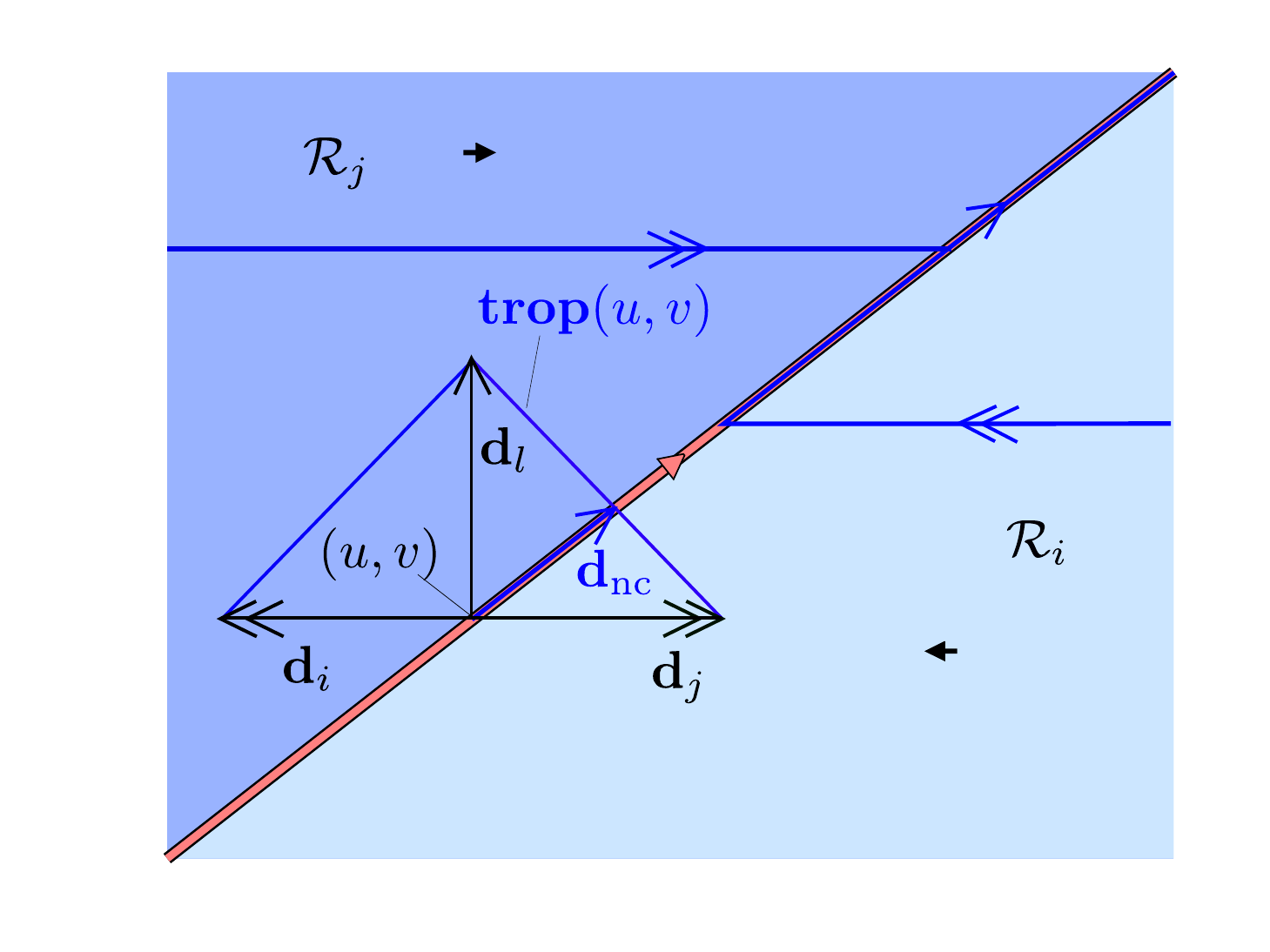}
        \caption{Transversal nullcline sliding}
    \end{subfigure}
    \begin{subfigure}{0.49\textwidth}
    \centering
        \includegraphics[width=0.96\linewidth]{./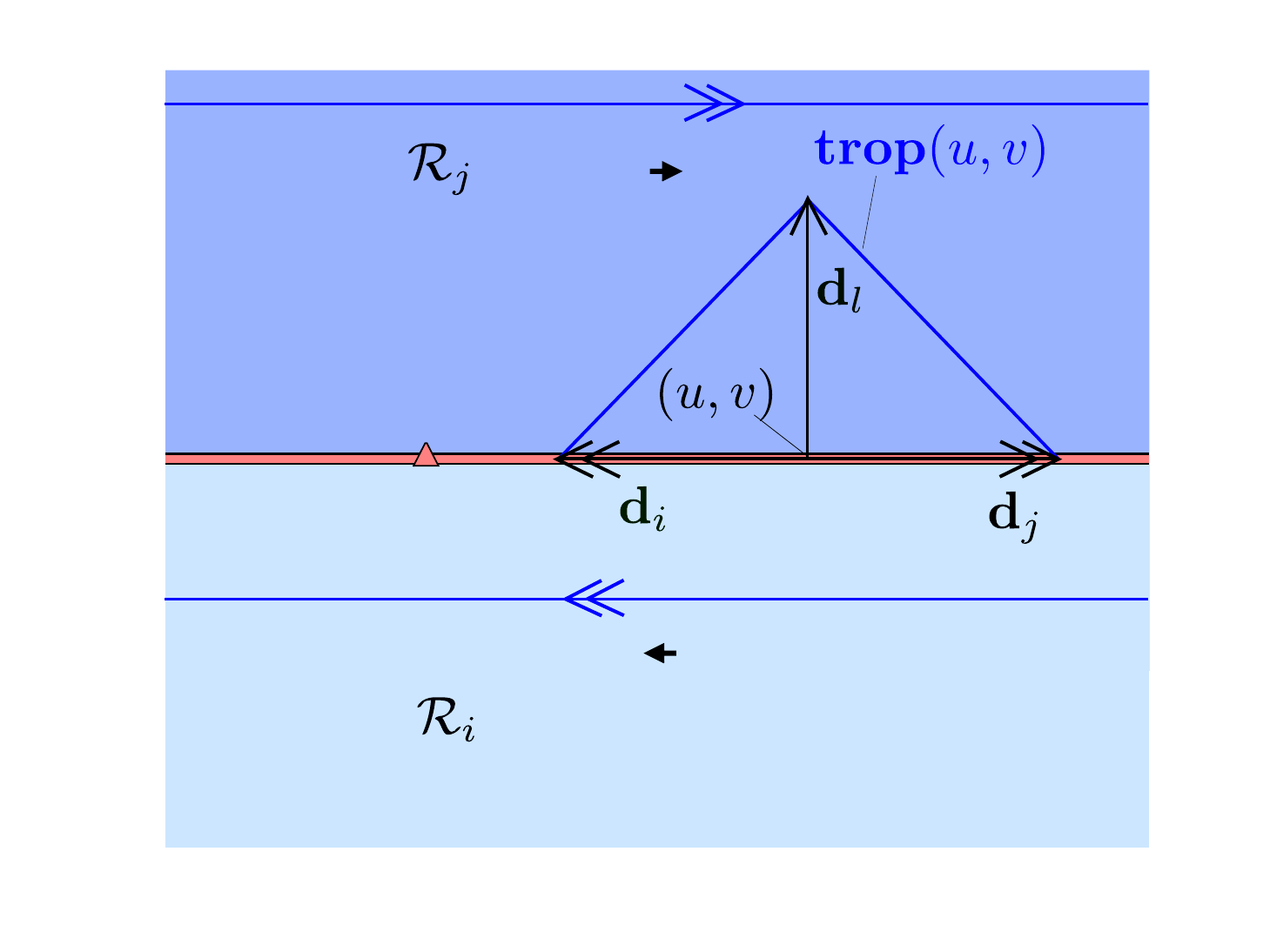}
        \caption{Tangential nullcline sliding}
    \end{subfigure}
    \caption{Illustrations of sliding and the set-valued vector-field $\Conv(u,v)$, $(u,v)\in \mathcal E_{i,j}$. \rsp{The blue curves show orbits of the differential inclusion $(\dot u,\dot v)\in \Conv(u,v)$. In particular,} for Filippov sliding ((a) and (b)), there is a unique vector $\md_{\mathrm{Fp}}$, the Filippov sliding-vector, tangent to the switching manifold. For transversal (not tangential) nullcline sliding (c), in \rspp{a point which is not a} \rsp{tropical singularity}, there is also a unique vector $\md_{\mathrm{nc}}$ that is tangent to the switching manifold. The direction is also indicated by a little (red) triangle, a convention we will also use in the subsequent figures. In (d) (tangential nullcline sliding), only $\md_i$ and $\md_j$ in $\Conv(u,v)$ are tangent to $\mathcal E_{i,j}$ at $(u,v)\in \mathcal E_{i,j}$.} 
    \figlab{sliding}
        \end{figure}

Filippov sliding (transversal or tangential) as well as transversal nullcline sliding can be stable and unstable, depending on whether vectors point towards or away from $\mathcal E_{i,j}$. In \figref{sliding} we only illustrate the stable versions. 

\subsection{\rsp{The set-valued vector-field $\Conv$ for tropical sliding}}\seclab{convsli}

\rsp{In this section, we compute $\Conv(u,v)$ in the different cases of sliding under the assumption that $\textnormal{\textbf{0}}\notin \Conv(u,v)$. The results  will important in our construction of global solutions to $(\dot u,\dot v)\in \Conv(u,v)$ in  \thmref{existence}.} 
\begin{lemma}\lemmalab{ConvFilippov}
 \rsp{Suppose that $\mathcal E_{i,j}$, $i\in \mathcal U$, $j\in \mathcal V$, is a switching manifold of Filippov sliding type (transversal or tangential, see \figref{sliding}(a) and (b)). Then 
 \begin{align}
  \Conv(u,v)=\conv(\md_i,\md_j),\quad \text{for all}\quad (u,v)\in \mathcal E_{i,j},\eqlab{convFilippov}
 \end{align}
and there is a unique vector
\begin{align}
 \md_{\Fil} = q \md_i+(1-q)\md_j,\quad q=\frac{\mathbf n_{i,j} \cdot \md_j}{\mathbf n_{i,j} \cdot \md_j-\mathbf n_{i,j} \cdot \md_i},\eqlab{mdFil}
\end{align}
contained in $\Conv(u,v)$, which is tangent to $\mathcal E_{i,j}$ at $(u,v)\in \mathcal E_{i,j}$.}
\end{lemma}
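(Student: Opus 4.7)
The plan is to proceed by directly unpacking \defnref{convtrop} on $\mathcal E_{i,j}$, and then to impose tangency to recover the formula \eqref{mdFil} via a one-parameter linear equation.

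First I would identify the argmax sets. By definition of the \rsp{tropical edge}, $(u,v)\in \mathcal E_{i,j}$ means $F_i(u,v)=F_j(u,v)>F_k(u,v)$ for every $k\in \mathcal I\setminus\{i,j\}$, so $\mathcal I^\ast=\{i,j\}$. Since $i\in\mathcal U$ and $j\in \mathcal V$, restricting to $\mathcal U$ (respectively $\mathcal V$) leaves only $i$ (resp.\ $j$) as the argmax, hence $\mathcal U^\ast=\{i\}$, $\mathcal V^\ast=\{j\}$, and consequently $\md_{\mathcal U^\ast}=\{\md_i\}$, $\md_{\mathcal V^\ast}=\{\md_j\}$. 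Because the Filippov assumption forces $\md_i\cdot \md_j=0$ with $i\in\mathcal U$, $j\in\mathcal V$, the singletons $\{\md_i\}$ and $\{\md_j\}$ cannot equal $\{(\pm 1,0)\}$ and $\{(0,\pm 1)\}$ simultaneously, so case \ref{b} of \defnref{convtrop} never applies. Only case \ref{a} contributes, giving $\Conv(u,v)=\conv(\md_i,\md_j)$, which is \eqref{convFilippov}.

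Next I would produce the tangent vector. Any $\md\in\conv(\md_i,\md_j)$ has the form $\md=q\md_i+(1-q)\md_j$ for some $q\in[0,1]$, and tangency to $\mathcal E_{i,j}$ amounts to the single scalar equation $\md\cdot \mathbf n_{i,j}=0$. This is linear in $q$ and yields
\begin{equation*}
q(\mathbf n_{i,j}\cdot \md_i-\mathbf n_{i,j}\cdot \md_j)=-\mathbf n_{i,j}\cdot \md_j,
\end{equation*}
so, provided $\mathbf n_{i,j}\cdot \md_i\ne \mathbf n_{i,j}\cdot \md_j$, the value of $q$ is uniquely determined and agrees with \eqref{mdFil}. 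I would dispose of the degenerate possibility $\mathbf n_{i,j}\cdot \md_i=\mathbf n_{i,j}\cdot \md_j$: the Filippov sliding condition \eqref{sliding} then forces both sides to vanish, but $\md_i$ and $\md_j$ are orthogonal unit vectors along the coordinate axes, so they cannot both be tangent to the one-dimensional line $\mathcal E_{i,j}$.

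Finally I would verify $q\in[0,1]$. The Filippov sliding inequality $(\md_i\cdot \mathbf n_{i,j})(\md_j\cdot \mathbf n_{i,j})\le 0$ implies the two quantities $\mathbf n_{i,j}\cdot\md_i$ and $\mathbf n_{i,j}\cdot\md_j$ have opposite (or zero) signs, from which a direct sign check of the numerator and denominator in \eqref{mdFil} yields $q\in[0,1]$; the tangential subcases (one of the dot products vanishing) give $q=0$ or $q=1$, consistent with \figref{sliding}(b). I do not expect any genuine obstacle: the hardest part is merely keeping the case analysis of the signs in \eqref{sliding} organized, but this is routine once the sets $\mathcal U^\ast$, $\mathcal V^\ast$ are pinned down in the first step.
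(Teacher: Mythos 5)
Your proposal is correct and follows the same route as the paper: identify $\md_{\mathcal U^*}=\{\md_i\}$, $\md_{\mathcal V^*}=\{\md_j\}$ so that only case \ref{a} of \defnref{convtrop} applies, then solve the linear tangency equation $\md\cdot\mathbf n_{i,j}=0$ for $q$. The paper dismisses the second part as "an elementary computation"; you have simply written that computation out, including the sign check that $q\in[0,1]$ and the exclusion of the degenerate case, which is a welcome level of detail.
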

\begin{proof}
 \rsp{By \defnref{crossingsliding}, $\md_{\mathcal I^*} = \{\md_i,\md_j\}$ and $\md_i\cdot \md_j=0$ with $i\in \mathcal U$ and $j\in \mathcal V$. We are therefore in situation \ref{a} of \defnref{convtrop} and \eqref{convFilippov} follows. Finally, the statement regarding \eqref{mdFil} follows from a elementary computation.}
\end{proof}


The vector \eqref{mdFil} is the \textit{Filippov sliding vector}, known from the Filippov convention of piecewise smooth systems, see \cite{filippov1988differential}. \rspp{In our setting,} it is constant along a sliding switching manifold $\mathcal E_{i,j}$ of Filippov type. \rsp{Notice that in the tangential case then $q=0$ ($\md_{\Fil} =\md_j$) or $q=1$ ($\md_{\Fil} =\md_i$), whereas in the transversal case $q\in (0,1)$.} 

\rsp{Next, we consider nullcline sliding:}
\begin{lemma}\lemmalab{ConvNullcline}
 \rsp{Suppose that $\mathcal E_{i,j}$ is of nullcline sliding type and that $i,j\in \mathcal U$, $i\ne j$, (so that $u$ is the dominant direction, see \figref{sliding}(c) and (d)). Moreover, suppose that $(u,v)\in \mathcal E_{i,j}\cap \mathcal R^{\mathcal V}_l$ for some $l\in \mathcal V$. 
 Then 
 \begin{align}
 \Conv(u,v)=\bigg\{(u',v')\in \mathbb R^2\,:\,\vert (u',v')\vert_1=1\,\, \mbox{and}\,\, (u',v')\cdot \md_l\ge 0\bigg\},\eqlab{ConvNullcline}
\end{align}
recall \eqref{norm1}. Moreover, the following holds:
\begin{enumerate}
 \item \label{transversal} If $\mathcal E_{i,j}$ is of transversal nullcline sliding type, then there
 exists a unique vector 
 \begin{align}
 \md_{\trop} = \frac{\md_l\cdot \mathbf e_{i,j}}{\vert \md_l \cdot \mathbf e_{i,j}\vert} \mathbf e_{i,j},\eqlab{mdtrop}
\end{align}
contained in $\Conv(u,v)$,
  which is tangent to $\mathcal E_{i,j}$ at $(u,v)\in \mathcal E_{i,j}\cap \mathcal R_l^{\mathcal V}$.  Here $\mathbf e_{i,j}$ is a $\vert \cdot \vert_1$-unit vector tangent to $\mathcal E_{i,j}$.
  \item \label{tangential} If $\mathcal E_{i,j}$ is of tangential nullcline sliding type, then $\md_i$ and $\md_j$ are the only vectors contained in $\Conv(u,v)$ that are tangent to $\mathcal E_{i,j}$.
  \end{enumerate}
 }
\end{lemma}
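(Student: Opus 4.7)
The plan is to first compute $\Conv(u,v)$ explicitly from \defnref{convtrop}, and then read off the two tangency statements as consequences. First I would identify the argmax sets at $(u,v)$: since $(u,v)\in \mathcal E_{i,j}$ with $i,j\in \mathcal U$, the defining condition $F_i=F_j>F_k$ for all $k\in \mathcal I\setminus\{i,j\}$ gives $\mathcal U^*(u,v)=\{i,j\}$, while $(u,v)\in \mathcal R^{\mathcal V}_l$ gives $\mathcal V^*(u,v)=\{l\}$. The nullcline condition $\md_i\cdot \md_j=-1$ from \defnref{crossingsliding}, combined with $i,j\in \mathcal U$, forces $\{\md_i,\md_j\}=\{(1,0),(-1,0)\}$, so $\md_{\mathcal U^*}=\{(\pm 1,0)\}$. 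Crucially $\md_{\mathcal V^*}=\{\md_l\}$ is a singleton (not $\{(0,\pm 1)\}$), so condition \ref{b} of \defnref{convtrop} does not fire; case \ref{a} alone applies and gives $\Conv(u,v)=\conv((1,0),\md_l)\cup \conv((-1,0),\md_l)$.

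The next step is to recast this union of two line segments as the closed-form set \eqref{ConvNullcline}. The three endpoints $\pm(1,0)$ and $\md_l\in \{(0,\pm 1)\}$ all lie on the $|\cdot|_1$-unit sphere, and since the unit ball of $|\cdot|_1$ is convex the segments stay on that sphere; they clearly lie in the closed half-space $(u',v')\cdot \md_l\ge 0$. For the reverse inclusion, any $(u',v')$ with $|u'|+|v'|=1$ and $(u',v')\cdot \md_l\ge 0$ decomposes as the convex combination $|u'|(\sign u',0)+((u',v')\cdot \md_l)\md_l$, placing it in one of the two hulls. This confirms \eqref{ConvNullcline}.

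For the transversal case, the condition $(\md_i\cdot \mathbf n_{i,j})(\md_j\cdot \mathbf n_{i,j})\ne 0$ with $\md_i=-\md_j$ forces the first component $n_j-n_i$ of $\mathbf n_{i,j}$ to be nonzero. Then $\mathbf e_{i,j}$, being orthogonal to $\mathbf n_{i,j}$ and proportional to $(m_i-m_j,\,n_j-n_i)$, has nonzero second component, so $\md_l\cdot \mathbf e_{i,j}\ne 0$ and $\md_{\trop}$ in \eqref{mdtrop} is well-defined. A direct check gives $|\md_{\trop}|_1=1$ and $\md_{\trop}\cdot \md_l=|\md_l\cdot \mathbf e_{i,j}|>0$, so $\md_{\trop}\in \Conv(u,v)$ by \eqref{ConvNullcline}. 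Uniqueness then follows immediately from \eqref{ConvNullcline}: any vector tangent to $\mathcal E_{i,j}$ and lying on the $|\cdot|_1$-sphere is $\pm \mathbf e_{i,j}$, and the half-space constraint selects exactly one sign (since $\md_l\cdot \mathbf e_{i,j}\ne 0$), namely $\md_{\trop}$.

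For the tangential case, the condition $(\md_i\cdot \mathbf n_{i,j})(\md_j\cdot \mathbf n_{i,j})=0$ combined with $\md_i=-\md_j\in \{(\pm 1,0)\}$ forces $n_i=n_j$ (whence $m_i\ne m_j$ by \remref{degree2}), so $\mathcal E_{i,j}$ is horizontal. A vector in $\Conv(u,v)=\conv((1,0),\md_l)\cup \conv((-1,0),\md_l)$ is tangent to $\mathcal E_{i,j}$ iff its second component vanishes, and since $\md_l$ has nonzero second component the only such points in either segment are the endpoints $\pm(1,0)=\md_i,\md_j$. I expect the only slightly delicate step to be the second one, identifying the union of two convex hulls with the half-sphere \eqref{ConvNullcline}; once that identity is in hand, both tangency claims reduce to elementary inspection.
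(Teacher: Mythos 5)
Your proposal is correct and follows the same route as the paper's proof: identify $\md_{\mathcal U^*}=\{(\pm1,0)\}$ and $\md_{\mathcal V^*}=\{\md_l\}$, note that case \ref{b} of \defnref{convtrop} cannot fire, apply case \ref{a} to get $\Conv(u,v)=\conv(\md_i,\md_l)\cup\conv(\md_j,\md_l)$, and identify this union with the half-sphere \eqref{ConvNullcline}. The only difference is that you carefully write out the steps (the explicit convex decomposition on the $\vert\cdot\vert_1$-sphere and the two tangency checks) that the paper dismisses as following directly or being trivial.
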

\begin{proof}
 \rsp{By \defnref{crossingsliding}, $\md_{\mathcal I^*} = \{\md_i,\md_j\}$ with $i,j\in \mathcal U$ and $\md_i\ne \md_j$. We are therefore in case \ref{a} of \defnref{convtrop} (case \ref{b} is not relevant because $\md_{\mathcal V^*}=\{\md_l\}$ is a singleton). Hence $\md\in \Conv(u,v)$ if only if $\md \in \conv(\md_i,\md_l)$ or $\md \in \conv(\md_j,\md_l)$ and \eqref{ConvNullcline} therefore follows. The remaining statements in \ref{transversal} and \ref{tangential} are trivial, and further details are therefore left out.}
\end{proof}
\rsp{The case of \lemmaref{ConvNullcline} where $i,j\in \mathcal V$ and $l\in \mathcal U$ (so that $v$ is the dominating direction) is similar and the corresponding details are therefore left out for simplicity. 

The vector \eqref{mdtrop} will be called the \textit{nullcline sliding vector}. It is constant on the open subset $\mathcal E_{i,j}\cap \mathcal R_l^{\mathcal V}$ of $\mathcal E_{i,j}$ (of transversal nullcline sliding type).} 


\begin{remark}
Notice that transversal nullcline sliding is associated with a time scale separation of \eqref{uvEqn} for $\epsilon\to 0$. This is due to the fact that the vector $\md_{l}$ in \figref{sliding}(c) and (d) corresponds to the flow vector of \textit{sub-dominant monomial $F_l$}: $F_l(u,v)<F_i(u,v)=F_j(u,v)$ for $(u,v)\in \mathcal E_{i,j}\cap \mathcal R_l^{\mathcal V}$, $\{i,j\}=\argmax_{k\in \mathcal I} F_k(u,v)$, and $l=\argmax_{k\in \mathcal V} F_k(u,v)$. Consequently, $v$ is slow in comparison with $u$ in (c) and (d) for \eqref{uvEqn} for $0<\epsilon\ll 1$. Our figures reflect this fact insofar that we use two types of arrows: Double-headed arrows for fast directions (including $\md_{\mathrm{Fp}}$) and single headed arrows for slow directions (along $\md_{\mathrm{nc}}$, see (c)) (a convention frequently used in slow-fast systems).
\end{remark}
 \begin{figure}[H]
    \centering
        \includegraphics[width=0.5\linewidth]{./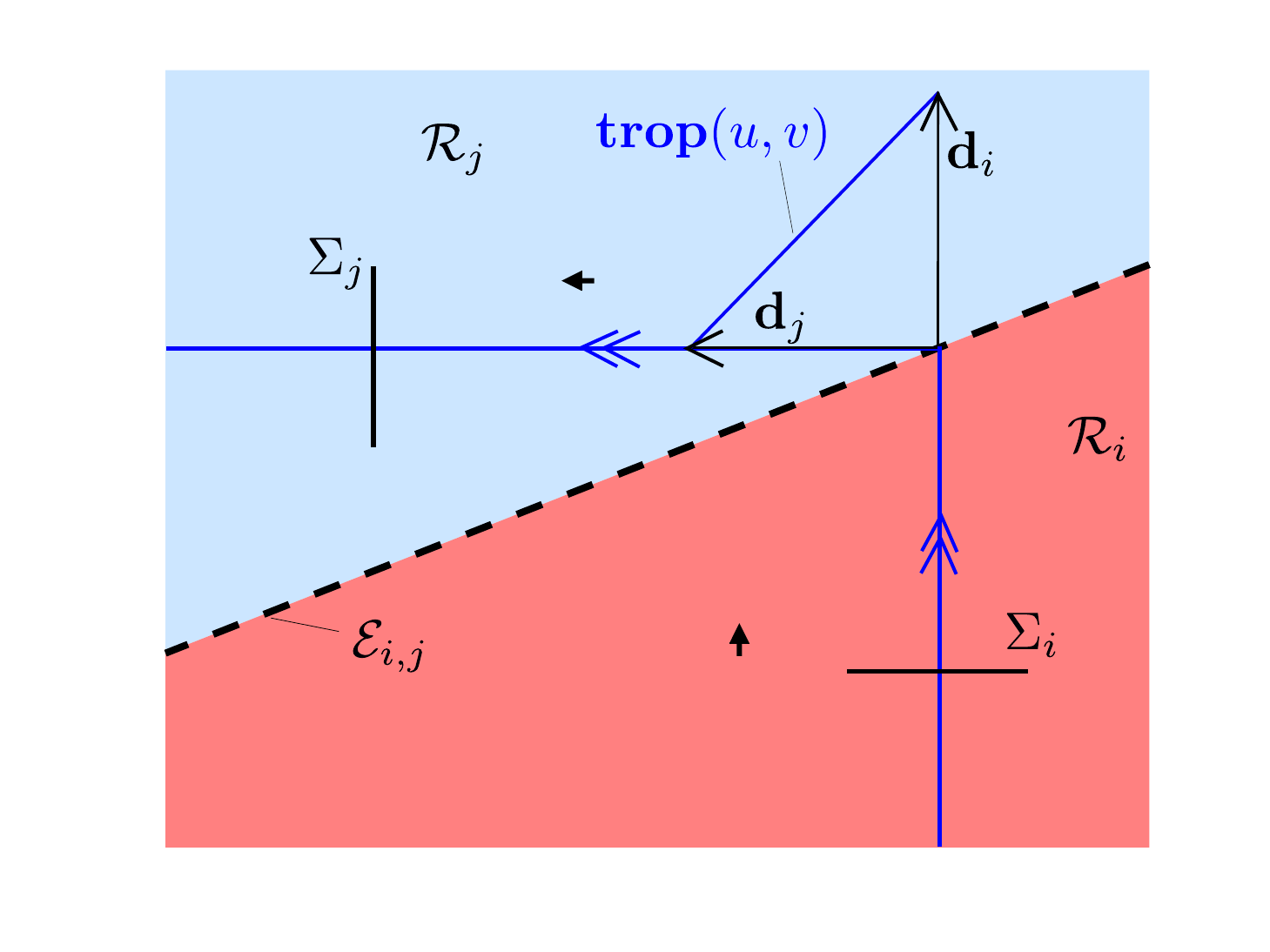}
        \caption{Illustration of crossing. }
\figlab{crossing}
\end{figure}
 
 \subsection{\rsp{The crossing flow}}\seclab{crossingflow}
\rsp{Finally, regarding crossing (see \figref{crossing}) we notice
that a crossing \rsp{edge} $\mathcal E_{i,j}\subset \mathcal T$ with normal vector $ \mathbf n_{i,j}$, see \eqref{nij}, is neither horizontal or vertical (i.e. $n_i\ne n_j$ and $m_i\ne m_j$).  
It follows that the flow of \eqref{uvnotinT} near crossing switching manifolds is well-defined. In fact, as indicated in \figref{crossing}, it coincides with the usual notion of crossing in PWS systems where orbits are uniquely concatenated across the switching manifold. This process leads to local transition maps that we now describe:}

Given that $\md_i\cdot \md_j=0$, we may suppose that $\md_i=(0,*)$  (otherwise, \rspp{by the definition of flow vectors in \eqref{dk}}, we can shift the role of $i$ and $j$). We will also suppose that $\md_i$ points towards $\mathcal E_{i,j}$; if not, we can reverse the direction of time. We are in the situation illustrated in \figref{crossing} (where $\md_i=(0,1)$), and consider segments $\Sigma_l\subset \mathcal R_l$, $l=i,j$, transverse to the flow, defined by fixed $v=\textnormal{const.}$, and $u\in I_i$ for $l=i$, and fixed $u=\textnormal{const.}$, and $v\in J_j$ for $l=j$. Here $I_i$ and $J_j$ are open and nonempty intervals, such that the transition map $P_{i,j}:\Sigma_i\to \Sigma_j$, induced by the forward flow of \eqref{uvnotinT}, is well-defined. Clearly, $P_{i,j}(u)$ satisfies
\begin{align*}
 (n_j-n_i)u + (m_j-m_i) P_{i,j}(u) +\alpha_j-\alpha_i=0,\quad \text{for all}\quad u\in I_i,
\end{align*}
\rsp{recall \eqref{nij}}.
Consequently, we have:
\begin{lemma}\lemmalab{mapPij}
 \begin{align}
  P_{i,j} (u) = \frac{n_j-n_i}{m_i-m_j} u +\frac{\alpha_j-\alpha_i}{m_i-m_j},\quad \textnormal{for all}\quad u\in I_i.\eqlab{mapPij}
 \end{align}
\end{lemma}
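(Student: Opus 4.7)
The plan is to verify the formula by tracking a crossing orbit through the switching manifold step by step, using only the defining equation of $\mathcal{E}_{i,j}$ and the fact that the flow vectors are orthogonal axis-aligned unit vectors.

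First I would note that since $\mathcal{E}_{i,j}$ is a crossing edge, it is neither horizontal nor vertical, so $m_i \ne m_j$ (and $n_i \ne n_j$); this guarantees that the denominator $m_i - m_j$ in \eqref{mapPij} is nonzero, so the formula at least makes sense. Next, by the assumption $\md_i = (0, *)$ pointing toward $\mathcal{E}_{i,j}$, an orbit starting at $(u, v_0) \in \Sigma_i$ evolves with $\dot u = 0$ and $\dot v$ of constant sign until it first meets $\mathcal{E}_{i,j}$. Since $u$ is preserved along this first leg, the hit point has the form $(u, v^*)$ with the same $u$-coordinate.

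The second step is to use \eqref{Eijeqn} to identify $v^*$. Since $(u, v^*) \in \mathcal{E}_{i,j}$, we have
\begin{align*}
 (n_j - n_i) u + (m_j - m_i) v^* + \alpha_j - \alpha_i = 0,
\end{align*}
and solving for $v^*$ (using $m_i - m_j \ne 0$) gives exactly the right-hand side of \eqref{mapPij}. The third step is to propagate the orbit into $\mathcal{R}_j$: because $\md_i \cdot \md_j = 0$ and $\md_i$ is vertical, $\md_j$ must be horizontal, so after crossing the switching manifold $v$ is preserved and the orbit reaches $\Sigma_j$ at the point $(u_0, v^*)$. Identifying $\Sigma_j$ with $J_j$ via the $v$-coordinate, we conclude $P_{i,j}(u) = v^*$, which is the claimed formula.

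There isn't really a main obstacle here: the lemma is essentially a direct computation once one observes that the orthogonality $\md_i \cdot \md_j = 0$ forces the two legs of the crossing orbit to be axis-aligned, so the map is just the restriction of the line $\mathcal{E}_{i,j}$ to a graph over $u$. The only thing worth being slightly careful about is choosing $I_i$ and $J_j$ small enough that the straight-line segments used above remain in $\mathcal{R}_i$ and $\mathcal{R}_j$ respectively until hitting $\mathcal{E}_{i,j}$ or $\Sigma_j$; this follows from openness of $\mathcal{R}_i$ and $\mathcal{R}_j$ together with the transversality of $\Sigma_i$ and $\Sigma_j$ to the respective flow directions, and can be stated in one line without further computation.
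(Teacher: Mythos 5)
Your proposal is correct and follows essentially the same route as the paper: the paper simply observes that the concatenated orbit meets $\mathcal E_{i,j}$ at the point $(u,P_{i,j}(u))$, so that \eqref{Eijeqn} yields the linear relation $(n_j-n_i)u+(m_j-m_i)P_{i,j}(u)+\alpha_j-\alpha_i=0$, and then solves for $P_{i,j}(u)$. Your additional remarks (that crossing forces $m_i\ne m_j$ so the denominator is nonzero, and that the two axis-aligned legs preserve $u$ and then $v$) are exactly the details the paper leaves implicit in its ``Clearly''.
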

\begin{remark}\remlab{crossing}
More generally, we can concatenate orbits across consecutive crossing \rsp{edges} of $\mathcal T$ and we can describe the dynamics by composing maps of the form \eqref{mapPij}; this leads to transition maps of the form $u\mapsto c u+b(\alpha)$, with $\alpha\mapsto b(\alpha)$ affine, and where $c\in \mathbb Q_+$ only depends on the degrees $\degree F_k$, $k\in \mathcal I$. In this sense, \eqref{uvnotinT} (as usual) induces an (incomplete) flow:
\begin{lemma}\lemmalab{crossingflow}
Let $\mathcal T_s\subset \mathcal T$ denote the union of all switching manifolds of sliding type. Then the system \eqref{uvnotinT} defines a usual flow (incomplete, in general)  on $\mathbb R^2\backslash \mathcal T_s$.
\end{lemma}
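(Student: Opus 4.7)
The plan is as follows. First I would show that on the open complement $\mathbb{R}^2\setminus \mathcal{T}$ there is nothing to do: by \eqref{uvnotinT}, within each open region $\mathcal{R}_k$ the right-hand side is the constant vector $\md_k$ (with $\lvert \md_k\rvert_1=1$), so the local flow is straight-line motion and exists for as long as the orbit remains in $\mathcal{R}_k$. Hence the only content of the lemma is to extend these local straight-line segments through the portion $\mathcal{T}\setminus\mathcal{T}_s$ that lies inside $\mathbb{R}^2\setminus\mathcal{T}_s$.

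Next I would stratify $\mathcal{T}\setminus\mathcal{T}_s$ into three cases and handle each:

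\textbf{(i) Non-switching edges.} If $(u,v)\in\mathcal{E}_{i,j}$ with $\md_i=\md_j$, then the vector field \eqref{uvnotinT} admits the same limit $\md_i$ from both incident regions $\mathcal{R}_i$ and $\mathcal{R}_j$, so it extends continuously across $\mathcal{E}_{i,j}$. A straight-line trajectory in $\mathcal{R}_i$ that hits such an edge is simply prolonged into $\mathcal{R}_j$ with the same direction.

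\textbf{(ii) Crossing switching edges.} If $\mathcal{E}_{i,j}$ is a switching manifold of crossing type (with $\md_i\cdot\md_j=0$, $i\in\mathcal{U}$, $j\in\mathcal{V}$ say, so that \eqref{crossingdef} holds), I would invoke \lemmaref{mapPij}: the transition map $P_{i,j}$ is a well-defined affine bijection between transverse sections $\Sigma_i$ and $\Sigma_j$. Concatenating the incoming straight segment in $\mathcal{R}_i$, the crossing point on $\mathcal{E}_{i,j}$, and the outgoing straight segment in $\mathcal{R}_j$ gives a continuous orbit through the edge, and reversing time shows the same for backward flow.

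\textbf{(iii) Tropical vertices in $\mathbb{R}^2\setminus\mathcal{T}_s$.} By \remref{crossing}, the only edges incident to such a vertex are of crossing type or of non-switching type. I would argue by a small-disk analysis: choose a disk $D$ around the vertex so small that its boundary $\partial D$ meets each incident edge in exactly two points and each incident region $\mathcal{R}_k$ in an arc $\Gamma_k$. Within each $\mathcal{R}_k$, the straight-line flow along $\md_k$ partitions $\Gamma_k$ into an incoming part $\Gamma_k^{\mathrm{in}}$ and an outgoing part $\Gamma_k^{\mathrm{out}}$. Using \lemmaref{mapPij} piecewise, I would identify the limit at the vertex as the composition of finitely many affine maps along the chain of crossing edges traversed. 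Since this composition is again affine and hence continuous, each incoming point on $\partial D$ is uniquely sent to an outgoing point on $\partial D$ in finite time, and the orbit is defined through the vertex.

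Putting these three pieces together, every starting point in $\mathbb{R}^2\setminus \mathcal{T}_s$ has a well-defined maximal forward (and backward) trajectory, obtained by concatenating straight-line segments in regions with the affine transition maps from \lemmaref{mapPij}. Uniqueness in each stage gives the semigroup property, so this is a classical (possibly incomplete) flow. The incompleteness enters because a forward orbit may reach $\mathcal{T}_s$ in finite time (where set-valuedness of $\Conv$ prevents continuation as a classical flow); the $\lvert\cdot\rvert_1$-unit speed rules out finite-time blow-up, so no other source of incompleteness arises.

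The main obstacle I anticipate is step (iii): one must check that when several crossing edges and/or non-switching edges meet at a single tropical vertex, the concatenated transition maps yield a well-defined and continuous passage through the vertex, and that no incoming arc on $\partial D$ is mapped into a sliding edge (which by construction is excluded from the vertex since we are in $\mathbb{R}^2\setminus \mathcal{T}_s$). Everything else reduces to \lemmaref{mapPij} and straight-line motion.
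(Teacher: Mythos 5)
Your overall strategy is the same as the paper's: the paper gives no formal proof of this lemma, only the sentence of \remref{crossing} immediately preceding it, which justifies the crossing flow by concatenating orbits across consecutive crossing edges via compositions of the affine maps of \lemmaref{mapPij}. Your steps (i) and (ii) are exactly that argument, spelled out, and your closing observation that unit $\vert\cdot\vert_1$-speed rules out finite-time blow-up correctly isolates the only remaining source of incompleteness besides reaching the sliding set.

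The one place where your argument as written would fail is step (iii). You assert that a tropical vertex lying in $\mathbb R^2\setminus\mathcal T_s$ can only have incident edges of crossing or non-switching type. That is not so: by \eqref{tropmanifold} the switching manifolds $\mathcal E_{i,j}$ are \emph{open} line segments, so $\mathcal T_s$ does not contain the tropical vertices at the endpoints of sliding edges, and such vertices therefore belong to $\mathbb R^2\setminus\mathcal T_s$ while having incident sliding edges. At such a vertex your small-disk analysis breaks down: some incoming arcs $\Gamma_k^{\mathrm{in}}$ feed into the adjacent sliding edge rather than across a chain of crossing edges, and the orbit through the vertex itself need not be continuable by crossing at all (nor, a priori, uniquely). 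The statement survives because the lemma only claims an incomplete flow --- one can declare the maximal interval of existence to end when an orbit reaches $\overline{\mathcal T_s}$ or a vertex admitting no (or no unique) crossing continuation --- but you should either restrict the discussion to vertices all of whose incident edges are of crossing or non-switching type, or build this termination explicitly into your definition of the maximal trajectory. With that adjustment the rest of your argument is sound and is, if anything, more careful than what the paper itself provides.
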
 
We will often need to refer to this flow and we will do so by using the following reference:
\begin{center}
\textnormal{The crossing flow (induced by \eqref{uvnotinT})}.
\end{center}
%
\end{remark}

\rsp{For completeness, we assert that $\Conv(u,v) = \conv(\md_i,\md_j)$ on a crossing switching manifold $\mathcal E_{i,j}$. Indeed, we are 
in case \ref{a} of \defnref{convtrop} with $\md_{\mathcal I^*} = \{\md_i,\md_j\}$ and $i\in \mathcal U$, $j\in \mathcal V$. Most importantly, $\textbf{0}\notin \Conv(u,v)$ in this case. Therefore the flow of the differential inclusion $(\dot u,\dot v)\in \Conv(u,v)$ is well-defined near crossing type switching manifolds and coincides with the crossing flow.}



\subsection{\rsp{Definition of a} tropical dynamical system}

We are now in a position to define our tropical dynamical system. (Recall \defnref{Ml}.)
\begin{definition}\defnlab{tropsystem}
Given (a): two nonempty, \rspp{finite} and disjoint index sets ${\mathcal U}\subset \mathbb N$ and ${\mathcal V}\subset \mathbb N$, (b): ${\mathcal I}:={\mathcal U} \cup {\mathcal V}$ and (c): two sets of tropical pairs:
\begin{enumerate}
\item $(\md_k,F_k)\in \{(\pm 1,0)\} \times \mathcal M^u$ for all $k\in {\mathcal U}$, with all $\deg F_k$, $k\in {\mathcal U}$ distinct,
\item $(\md_k,F_k)\in \{(0,\pm 1)\} \times \mathcal M^v$ for all $k\in {\mathcal V}$, with all $\deg F_k$, $k\in {\mathcal V}$ distinct,
\end{enumerate} 
we then define a \textnormal{tropical dynamical system} ($TDS$ for short) as the differential inclusion:
\begin{align}\eqlab{uvtrop}
 \rspp{(\dot u,
   \dot v)(t)\in \Conv(u,v)(t),}
\end{align}
\rspp{with $(u,v)(t):=(u(t),v(t))\in \mathbb R^2,\, t\in \mathbb R$.}
\rspp{Finally, we define the} \textnormal{degree of the tropical dynamical system} \rspp{as} 
\begin{align*}
 \max_{k\in{ \mathcal I}} \vert \degree F_k\vert_1+1.
\end{align*}
\end{definition}
In this definition, $\mathcal U$ and $\mathcal V$ are general index sets, not necessarily given by \eqref{UV}. This is convenient in examples (as in the autocatalator below, see \secref{auto}) where only a few monomials are present.

However, in our analysis of general degree $N$ tropical dynamical systems, we will stick to $\mathcal U$ and $\mathcal V$ and the enumeration of $(n_k,m_k)$ given by \eqref{UV} and \eqref{nkmk}, respectively. Specifically, we then define:
\begin{definition}
$\TDS$ ($\TDSN$ for short) is the set of tropical dynamical systems $TDS$ of a fixed degree $N\in \mathbb N$ and a fixed list of flow vectors $\{\md_k\}_{k\in \mathcal I}$.
\end{definition}
Consequently, we have that $TDS \in \TDSN$ if and only if $F_k\in \mathcal M_N^u$ ($F_k\in \mathcal M_N^v$) for all $k\in \mathcal U$ ($k\in \mathcal V$, respectively), recall (again) \defnref{Ml}. 

Let $$\alpha:=(\alpha_1,\ldots,\alpha_{2M})\in \rspp{\mathbf{A}}(2M),$$
recall \eqref{AN}.
We then topologize $\TDSN$ in the obvious way:
\begin{definition}\defnlab{openTDS}
An open neighborhood $\mathcal O$ of a tropical dynamical system $$TDS'\in \TDS,$$ with tropical coefficients $\alpha'\in \rspp{\mathbf{A}}(2M)$, is the set of tropical dynamical systems $TDS$ with $\alpha\in O$, $O\subset \rspp{\mathbf{A}}(2M)$ being an open neighborhood of $\alpha'$ in $\rspp{\mathbf{A}}(2M)$.
\end{definition}
In examples, with only a few monomials and a few parameters $(\alpha_1,\ldots,\alpha_L)\in (\mathbb R\cup \{-\infty\})^L$, as in the autocatalator model below (with $L=1$, $\alpha_1=\alpha$), see \secref{auto}, we define a neighborhood completely analogously using open sets $O\subset (\mathbb R\cup \{-\infty\})^L$. 

\begin{remark}\remlab{mathcalOO}
\rspp{In the following, we will use $\mathcal Y\subset \TDSN$ ($\mathcal Y=\mathcal O,\mathcal D$) to denote open sets in $\TDSN$ and denote by $Y\subset \rspp{\mathbf{A}}(2M)$ the associated open subsets of $\rspp{\mathbf{A}}(2M)$. $X$ (as in \lemmaref{upper}) will denote open sets in $\mathbb R^2$. }
\end{remark}



\section{Existence of solutions}\seclab{existence}
Our main aim will be to show that there are finitely many distinct structurally stable phase portraits in $\TDSN$. For this purpose, we need to define what we mean by a phase portrait of a $TDS\in \TDSN$ and how structural stability can be understood. We start by studying solutions of \eqref{uvtrop}. 

As is usual \cite{aubin1984a,filippov1988differential}, we define solutions of the differential inclusion \eqref{uvtrop} through $(u_0,v_0)$ to be absolutely continuous functions:
$$(u,v)
: \mathbb R \to \mathbb R^2,$$ with $(u,v)(0)=(u_0,v_0)$ and for which \eqref{uvtrop} holds almost everywhere. Given the nature of $\Conv(u,v)$, it is natural to restrict attention to piecewise affine solutions, i.e. for every $t_0\in \mathbb R$ there is an $\rspp{\theta}(t_0)>0$ such that $(u,v)\vert_{I}:I\to \mathbb R^2$,  
$I:=(t_0-\rspp{\theta},t_0+\rspp{\theta})$, 
satisfies: $(u,v)\vert_{I}'(t)=\mathrm{const.}$ for all $t\in I$, $t\ne t_0$. If the $\mathrm{const.}$ depends upon the sign of $t-t_0$, then $t_0$ is said to be a \textit{singular point} of the solution $u$. 

\begin{thm}\thmlab{existence}
Consider a tropical dynamical system $TDS$. Then there exists a piecewise affine solution $(u,v):\mathbb R\to \mathbb R^2$ of \eqref{uvtrop} through every point, whose singular points $\{t_n\}_n$ are enumerable and do not accumulate (if $\{t_n\}_n$ is infinite).
\end{thm}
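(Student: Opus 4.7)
The plan is to construct a solution piecewise affine segment by segment, using a local existence claim at every point combined with a compactness argument bounding the number of transitions. The crucial structural fact is that the tropical curve $\mathcal T$ has finitely many tropical vertices and finitely many tropical edges, so any bounded subset of $\mathbb R^2$ meets finitely many of them.

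I would first establish the local existence claim: for every $(u_0,v_0)\in \mathbb R^2$ there exist $\theta>0$ and a vector $\md\in \Conv(u_0,v_0)$ such that the affine curve $t\mapsto (u_0,v_0)+t\md$ satisfies \eqref{uvtrop} on $(0,\theta)$, and similarly for $t<0$. This is a case analysis on the location of $(u_0,v_0)$ relative to $\mathcal T$. In the interior of a tropical region $\mathcal R_k$ we take $\md=\md_k$; on a crossing-type edge $\mathcal E_{i,j}$ we take whichever of $\md_i,\md_j$ agrees with the crossing direction \eqref{crossingdef}, so that the trajectory enters the adjacent open region; on a Filippov sliding edge we take $\md=\md_{\Fil}$ from \eqref{mdFil}, and on a transversal (respectively tangential) nullcline sliding edge we take $\md=\md_{\trop}$ from \eqref{mdtrop} (respectively $\md=\md_i$ or $\md_j$), which by \lemmaref{ConvFilippov} and \lemmaref{ConvNullcline} are admissible and tangent to the edge. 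At a tropical vertex one verifies that either (i) at least one flow vector $\md_k$ of an adjacent region, or a tangent vector along an emanating sliding edge, provides a valid outgoing direction lying in $\Conv$ at the vertex, or (ii) $\textnormal{\textbf{0}}\in \Conv(u_0,v_0)$, i.e. the vertex is a tropical singularity in the sense of case \ref{b} of \defnref{convtrop}, in which case the constant solution $\md=\textnormal{\textbf{0}}$ trivially works. In every case, upper semi-continuity of $\Conv$ (\lemmaref{upper}) together with the explicit form of $\Conv$ on the destination stratum ensures that the inclusion persists on $(0,\theta)$ for some $\theta>0$. Concatenating these pieces forward and backward in time produces a piecewise affine curve $(u,v):\mathbb R\to\mathbb R^2$ through $(u_0,v_0)$.

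For the non-accumulation of singular points, note that every admissible $\md$ satisfies $\vert\md\vert_1\le 1$, so on any compact interval $[-T,T]$ the orbit is confined to a bounded set $K\subset \mathbb R^2$. The set $K$ meets only finitely many edges and vertices of $\mathcal T$, and each non-constant affine segment of the orbit either traverses a sub-segment of a tropical edge of positive length, or crosses a tropical region with nonzero unit-norm velocity. Thus each such segment has strictly positive duration bounded below by a geometric constant depending only on $K$, so only finitely many singular points can occur in $[-T,T]$. Letting $T\to\infty$ yields that the singular points are enumerable and do not accumulate. The main obstacle is the case analysis at tropical vertices, where several sliding and crossing edges meet and one must produce a consistent outgoing affine direction; this is handled by exploiting the strong combinatorial restriction that $\md_{\mathcal U^*}$ and $\md_{\mathcal V^*}$ each contain at most two vectors drawn from $\{(\pm 1,0)\}$ and $\{(0,\pm 1)\}$ respectively, which (together with \defnref{convtrop}) limits the admissible configurations at a vertex to a small finite list one can exhaustively dispatch.
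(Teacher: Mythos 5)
Your overall strategy coincides with the paper's: construct the solution stratum by stratum, reducing everything to a local existence claim, and then globalize using bounded speed and the finiteness of the combinatorial data of $\mathcal T$. However, there is a genuine gap at precisely the step you flag as ``the main obstacle''. At a tropical vertex $(u_0,v_0)$ with $\#\argmax_{k\in\mathcal I}F_k(u_0,v_0)\ge 3$ and $\textnormal{\textbf{0}}\notin\Conv(u_0,v_0)$, you assert that ``one verifies'' that some adjacent region's flow vector or some emanating sliding edge's tangent vector gives a valid outgoing direction, and that this follows from an exhaustive check of a ``small finite list'' of configurations of $\md_{\mathcal U^*}$ and $\md_{\mathcal V^*}$. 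But the admissible configurations are not determined by the flow vectors alone: an arbitrary number $L\ge 3$ of tropical edges of varying slopes can meet at the vertex, and whether each is of crossing, Filippov or nullcline sliding type depends on their geometry. The actual content of the paper's proof is an arc argument that you do not supply: one first observes (via \lemmaref{upper}) that if the orbit cannot simply continue along the dominant coordinate direction, the region flow vectors on the two sides of the vertex along that axis must be in opposition; one then sweeps a small $180^\circ$ (resp. $90^\circ$) arc between the blocked directions and argues \emph{by contradiction} that at least one tropical edge crossed by the arc must be of nullcline (resp. Filippov) sliding type --- for if all were of crossing type, concatenating the crossings around the arc would force the two opposing vectors to agree. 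The sliding vector on that edge lies in $\Conv(u_0,v_0)$ by upper semi-continuity and points away from the vertex, which is the outgoing direction you need. Without this (or an equivalent) argument, the local existence claim at vertices is unproved, and it is the crux of the theorem.

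A secondary, smaller gap is in your non-accumulation argument: the claim that every affine segment of the orbit inside a compact set $K$ has duration bounded below by ``a geometric constant depending only on $K$'' is false for general piecewise affine solutions --- the paper explicitly notes that zeno trajectories with accumulating singular points exist (e.g. near elliptic sectors and tangential nullcline sliding), and a chord of a tropical region cut off near one of its corners can be arbitrarily short. The non-accumulation must instead be tied to the \emph{specific} solution you construct (as the paper does, by ruling out all failure modes other than finite-time blow-up, which bounded speed excludes), not to a uniform lower bound valid for all segments in $K$.
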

\begin{proof}
We first consider the problem of a local piecewise affine solution $(u,v):I\to \mathbb R^2$, $I=[-\rspp{\theta},\rspp{\theta}]$, $\rspp{\theta}>0$, through a point $(u_0,v_0)$. 

If $\textnormal{\textbf{0}}\in \Conv(u_0,v_0)$ then we are done. 
\rspp{From the results in \secref{convsli} and \secref{crossingflow}, see also \figref{sliding} and \figref{crossing}, we can also construct local piecewise affine solutions near $(u_0,v_0)$} if
\begin{align}\eqlab{case1}
1\le \#\argmax_{k\in \mathcal I}F_k(u_0,v_0)\le 2.
\end{align}
\rspp{Such solutions} can either (a) be extended globally or (b) be extended to points $(u_0,v_0)$ where $$L:=\#\argmax_{k\in \mathcal I}F_k(u_0,v_0)\ge 3.$$ We therefore restrict attention to such points $(u_0,v_0)$.
We may here suppose that $\degree F_i$, $i\in \argmax_{k\in \mathcal I}F_k(u_0,v_0)$, are mutually distinct, because otherwise we can reduce it to \eqref{case1}. The same is true if $\degree F_i$ were co-linear, since then $\mathcal T$ consists of a single tropical \rsp{edge} near $(u_0,v_0)$.

Consequently, we are left with the case where the point $(u_0,v_0)$ is associated with a $L$-sided polygon face of the polyhedral subdivision $\mathcal S$, such that $L$ \rsp{tropical edges} of $\mathcal T$ come together at $(u_0,v_0)$. Moreover, $\textnormal{\textbf{0}}\notin \Conv(u_0,v_0)$. 

There are two cases to consider:
\begin{enumerate}
 \item \label{case1} $\md_{\mathcal I^*}=\md_{\mathcal L^*}$, with either $\mathcal L=\mathcal U$ or $\mathcal V$,
 \item \label{case2} or, $\md_{\mathcal I^*}=\md_{\mathcal U^*}\cup \md_{\mathcal V^*}$.
\end{enumerate}
In case \ref{case1}, we take $\mathcal L=\mathcal V$ for simplicity ($\mathcal L=\mathcal U$ is identical). If $\md_{\mathcal I^*}$ only contains one element, then we are done as we can just follow this direction. We therefore suppose that $\md_{\mathcal I^*}=\{(0,\pm 1)\}$. We can always follow the positive or negative $v$-direction in one direction of time. This is consequence of \lemmaref{upper}; there will be vectors in $\Conv(u_0,v)$, $v\in I\backslash\{v_0\}$, with $I$ a neighborhood of $v_0$, that are either pointing in the same direction or are in opposition; notice that this also holds if $u=u_0$ contains a \rsp{tropical edge} because vectors within the tropical regions are extended to $\mathcal T$ by $\Conv$. Now, if these vectors point in the same direction along $u=u_0$, then we are done. We therefore suppose that these vectors are in opposition and suppose without loss of generality that solutions can be extended backward in time. We will now construct a solution forward in time. For this purpose, we first use that $\textnormal{\textbf{0}}\notin \Conv(u_0,v_0)$ to deduce that $\md_{\mathcal U^*}$ only contains one element. We suppose it is $(1,0)$ ($(-1,0)$ is identical). Then we take an arc going clockwise from the positive $v$-direction to the negative $v$-direction of sufficiently small radius. Along this $180^\circ$
arc, we have to intersect at least one \rsp{tropical edge} (simply due to the fact that the face  of $\mathcal S$ is an $L$-sided polygon).  By \lemmaref{upper}, at least one of these \rsp{tropical edges} has to be of nullcline sliding-type. Indeed, if not, then going along the arc each \rsp{tropical edge} would be of crossing type and we arrive at the contradiction that the vectors along $u=u_0$ point in the same direction. 

Along the \rsp{tropical edge} of nullcline sliding type, we have a tangent vector, cf. \lemmaref{ConvNullcline}, which by \lemmaref{upper} belongs to $ \Conv(u_0,v_0)$. This gives the desired forward solution.

Next, in case \ref{case2},
only one of $\md_{\mathcal U^*}$ and $ \md_{\mathcal V^*}$ can contain two elements. Otherwise $\textnormal{\textbf{0}}\in \Conv(u_0,v_0)$. We suppose $\md_{\mathcal U^*} = \{(1,0)\}$ and that $(0,1) \in\md_{\mathcal V^*}$ without loss of generality. We now construct a forward solution. The case of a backward solution is similar and therefore left out. 

Either we have:
\begin{enumerate} 
 \item[(\textnormal{a})] $(0,1)\notin \Conv(u_0,v)$ and $(1,0)\notin \Conv(u,v_0)$, for all $v>v_0$, $v\sim v_0$ and $u>u_0$, $u\sim u_0$, respectively,
  \item[(\textnormal{b})] or we can continue a solution along the positive $v$-direction or along the positive $u$-direction.
\end{enumerate}
We are therefore left with the first case. Then by \lemmaref{upper}, we have 
 $(1,0)\in \Conv(u_0,v)$ and $(0,1)\in \Conv(u,v_0)$, for all $v>v_0$, $v\sim v_0$ and $u>u_0$, $u\sim u_0$. We can then proceed as in case \ref{case1} above and take a $90^\circ$-arc going clockwise from the positive $v$-direction to the positive $u$-direction, which has to intersect at least one \rsp{tropical edge} of Filippov sliding-type. Along this set we have a unique tangent vector $\md_{\Fil}\in \Conv(u_0,v_0)$, the Filippov sliding vector, recall \lemmaref{ConvFilippov}, which points away from $(u_0,v_0)$. This completes the proof of existence of a local piecewise affine solution.

 Now, we turn to the problem of globalizing the solution. We focus on extending the solution forward in time, since backward in time is identical. For this purpose, we simply apply the same procedure, using $(u_0,v_0)=(u(\rspp{\theta}),v(\rspp{\theta}))$ as initial condition in the first step. This procedure either leads to a global forward solution $(u,v):[0,\infty)\to \mathbb R^2$, as desired, or there is some $T>0$ such that the forward solution $(u,v):[0,T)\to \mathbb R^2$ cannot be continued beyond $T>0$. From our construction, and the fact that there are finitely many \rsp{tropical vertices} $\mathcal P\subset \mathcal T$ and finitely many \rsp{tropical edges} $\mathcal E\subset \mathcal T$, we are left with the only possibility that $(u,v)(t)\to \infty$ as $t\to T^-$. But this is impossible in finite time, since all vectors in $\Conv$ are bounded in Euclidean norm by $1$. Consequently, $T=\infty$. It also follows from these arguments that the singular points $\{t_n\}_n$ of our solution $(u,v)$ do not accumulate, $t_n\not \to t_*$. 

 \end{proof}
 \begin{remark}
 Not all solutions are necessarily piecewise affine. Consider for example the case where 
 \begin{align}\eqlab{degenerate}
 F_i(u',v')=F_j(u',v')>F_k(u',v')\quad \text{for all} \quad k\in \mathcal I,k\ne i,j,
 \end{align} 
 where $\degree F_i=\degree F_j$, $i\in \mathcal U$, $j\in \mathcal V$, for some $(u',v')$. In this case, \eqref{degenerate} holds on a neighborhood $X$ of $(u',v')$. Within $X$, we have $\Conv(u,v)=\conv(\md_i,\md_j)$, $\md_i\cdot \md_j=0$. Consequently, any differentiable curve $s\mapsto (u,v)(s)\in X$, with $\operatorname{sign}(u'(s)\delta_i)=\operatorname{sign}(v'(s)\delta_j)$ can be re-parametrized as a solution $t\mapsto (u,v)(s(t))$ of \eqref{uvtrop}.  
  Having said that, \eqref{degenerate} is not stable to perturbations, in the sense that if $\alpha_i\ne \alpha_j$, then \eqref{degenerate} holds nowhere, 
  recall \remref{degree2}.
  
Moreover, even if \eqref{degenerate} does not hold for any pair $\degree F_i=\degree F_j$, then there may still exist solutions where the singular points $\{t_n\}_n$ of a solution of \eqref{uvtrop} accumulate, such that $t_n\to t_*$. These are so called \textnormal{zeno trajectories}, and they are covered for classical Filippov systems in \cite[Theorem 8.1]{broucke2001a}. They occur, for example, at so-called \textit{elliptic sectors} in our case; an example of this case is a neighborhood of the origin in \figref{tropauto2}(b) (compare with \cite[Fig. 8]{broucke2001a}). It is reminiscent of a canard point \cite{krupa2001a} and here solutions can be continued through the \rsp{tropical vertex}  $\mathcal P_{456}$ and along the repelling transversal nullcline sliding manifold $\mathcal E_{46}$ and then jump back down towards the stable transversal nullcline sliding manifold $\mathcal E_{45}$, arbitrarily close to $\mathcal P_{456}$. In our setting, zeno trajectories also occur along tangential nullcline sliding, see \figref{sliding}(d) and \remref{hybrid}.

Having said that, \thmref{thm0} guarantees that there is \textnormal{always} a piecewise affine solution of \eqref{uvtrop} through every point, whose singular points $\{t_n\}_n$ do not accumulate. 
 \end{remark}
 \section{Orbits and phase portraits}\seclab{orbits}


Solutions of a tropical dynamical system are nonunique (as usual for piecewise smooth systems, see \cite{filippov1988differential} and \figref{sliding} for an example (due to sliding)), and the tropical system does therefore not define a usual flow. Nevertheless, 
we can still use the existence of piecewise affine solutions (see \thmref{existence}) to define an orbit (nonunique) through any given point, including a forward orbit and a backward orbit, see also \cite{broucke2001a,filippov1988differential}. This also leads to $\alpha$ and $\omega$-limit sets. Moreover, since the solutions of \thmref{existence} are piecewise affine, we obtain orbits as oriented polygonal curves, i.e. oriented and connected line segments (finitely many or countably infinitely many), and we will restrict attention to such polygonal orbits. 

We can write a polygonal orbit $\gamma$ as the union of a set of edges $E=\{l_i\}_i$ and a set of vertices $V=\{q_i\}_i$ in such a way that (a) $l_i$ is the oriented line segment from $q_i$ to $q_{i+1}$ and (b) $l_i$ and $l_{i+1}$ have different orientations in the sense that the vertices $V$ correspond to the singular points $\{t_i\}_i$ (that do not accumulate) of a solution having $E\cup V$ as its image. Here $i$ runs over:
\begin{enumerate} 
 \item A finite index set, when the orbit is isomorphic to a closed interval or a closed curve.
 \item $(-\mathbb N)=\{\ldots,-2,-1\}$ or $\mathbb N$, when the orbit is isomorphic to $(-\infty,0]$ (infinitely many line segments in backward time) or $[0,\infty)$ (infinitely line many segments in forward time), respectively.
 \item $\mathbb Z$, when the orbit is isomorphic to $\mathbb R$ (infinitely many line segments in forward and backward time).
\end{enumerate}
If $\gamma$ is a point (corresponding to a \rsp{tropical singularity} ) then $E$ is the empty set and $V=\{\gamma\}$, whereas if $\gamma$ is a single straight line then $E=\{\gamma\}$ and $V$ is the empty set. 

We then define the \textit{phase portrait of a tropical dynamical system} as the set of polygonal orbits. 

For an oriented line segment $l_i$, we will use ``flow orientation'' to refer to the orientation of time, whereas we use ``line orientation'' to refer to the geometric orientation (i.e. the slope) of $l_i$ in $\mathbb R^2$. 
A periodic orbit $\gamma$ is then a closed polygonal orbit of a piecewise affine periodic solution of \eqref{uvtrop}  and it is a limit cycle if it is the $\alpha$ or $\omega$-limit set of points $p\notin \gamma$. 
Due to nonuniqueness, we can (without further assumptions) have two limit cycles $\gamma_1$ and $\gamma_2$ with nonempty intersection (see \figref{foldfold} below for an example). Moreover, polygonal orbits in the neighborhood $X$ of a limit cycle $\gamma$ may (due to sliding) intersect $\gamma$ in finite forward or backward time. As is usual in piecewise smooth systems, we define:
\begin{definition}\defnlab{crossing} Suppose that $\gamma$ is a periodic orbit. Then it is said to be a \textnormal{crossing cycle} if it is a periodic orbit of the crossing flow (induced by \eqref{uvnotinT}, recall \remref{crossing}), with all intersections of the discontinuity subset of $\mathcal T$ (where $\md_i\ne \md_j$) belonging to switching manifolds of crossing-type. Otherwise, $\gamma$ is called a \textnormal{sliding cycle}. 
\end{definition}
Notice here that a crossing cycle does not pass through \rsp{tropical vertices} of $\mathcal T$. This is convenient for our purposes, see \secref{crossing2}.
\begin{remark}
The paper \cite{broucke2001a} establishes generic structural stability (in the usual sense) for Filippov systems on surfaces. Here orbits are not \textnormal{amalgamated} (in contrast to the approach by Filippov \cite{filippov1988differential}) along the sliding switching manifolds. Applying this viewpoint to \figref{sliding}(a), we only have one orbit through any point $\mathcal E_{i,j}$  (the set $\mathcal E_{i,j}$ itself). In the present paper, when we refer to the crossing flow, then we are only amalgamating orbits across the switching manifolds of crossing type (crossing orbits), ``stopping'' when these reach sliding segments. However, for our definition of equivalence, see \secref{equivalence}, it will be important to \textnormal{amalgamate orbits along the switching manifolds of sliding type} as in \cite{filippov1988differential}. Consequently, with our convention, we will have infinitely many orbits through any point on $\mathcal E_{i,j}$ in \figref{sliding}(a)). 
\end{remark}

\section{\rsp{Example: A} tropicalized autocatalator}\seclab{auto}
In this section, we illustrate the concepts further by working on a tropicalized version of the \textit{autocatalator model}:
\begin{equation}\eqlab{auto}
\begin{aligned}
 \dot x &=\rspp{\theta} (\mu-x-xy^2),\\
 \dot y&=-y+x+xy^2,
\end{aligned}
\end{equation}
with $0<\rspp{\theta}\ll 1$ and $\mu>0$. \eqref{auto} is referred to as the autocatalator, since it models an autocatalytic chemical process with $(x,y)\in \mathbb R_+^2$ being (scaled) concentrations; the autocatalytic nature of the process is modeled by the nonlinear term $xy^2$, see \cite{petrov1992a}. The system was analyzed in \cite{Gucwa2009783} using GSPT and has since then become the prime example of a slow-fast polynomial system supporting nonstandard relaxation oscillations (for $\mu>1$ fixed and all $0<\rspp{\theta}\ll 1$, see \cite[Theorem 2.3]{Gucwa2009783}).\footnote{\rspp{Note that $\theta$ is called $\epsilon$ in \cite{Gucwa2009783}. We adopt this change in notation to avoid confusion with $\epsilon$ in \eqref{uv} and \eqref{tropautoalpha} below}.} 

In further details, \cite{Gucwa2009783} shows that there is a Hopf bifurcation at $\mu\approx 1$, which is of canard type \cite{krupa2001a}, in the sense that the Hopf cycles grow (through canard cycles) in amplitude by an $\mathcal O(1)$-amount in a parameter regime of a width of order $\mathcal O(\e^{-c/\rspp{\theta}})$. But in contrast to e.g. the van der Pol system \cite{krupa2001a}, the canard cycles grow unboundedly for \eqref{auto} as $\rspp{\theta}\to 0$. 

We illustrate the phase portrait of \eqref{auto} for $\mu=2$ and $\rspp{\theta}=0.001$ in \figref{auto} (computed in \verb#Matlab# using \verb#ODE45# with low tolerances ($10^{-12}$)). Due to the unboundedness of the limit cycle $\gamma_\rspp{\theta}$ (in red) as $\rspp{\theta}\to 0$ in the $(x,y)$-coordinates, see \figref{auto}(a), we use the zoomed-out coordinates $(x,\rspp{\theta} y)$ in \figref{auto}(b). The blue dashed line in \figref{auto}(a) is the $y$-nullcline for \eqref{auto}. It defines a critical manifold for $\rspp{\theta}=0$, being attracting for $y<1$ and repelling for $y>1$. The point $(x,y)=(\frac12,1)$ is a fold point, in particular a canard point for $\mu=1$ when the $x$-nullcline (in purple and dashed in \figref{auto}) intersects the $y$-nullcline at the fold of the critical manifold. 

For $\mu<1$ and $0<\rspp{\theta}\ll 1$, there is a globally attracting stable node; in \figref{auto}(a) for $\mu=2$ (on the other side of the Hopf) it is an unstable node (black dot).

\begin{figure}[H]
    \centering
    \begin{subfigure}{0.5\textwidth}
    \centering
        \includegraphics[width=0.95\linewidth]{./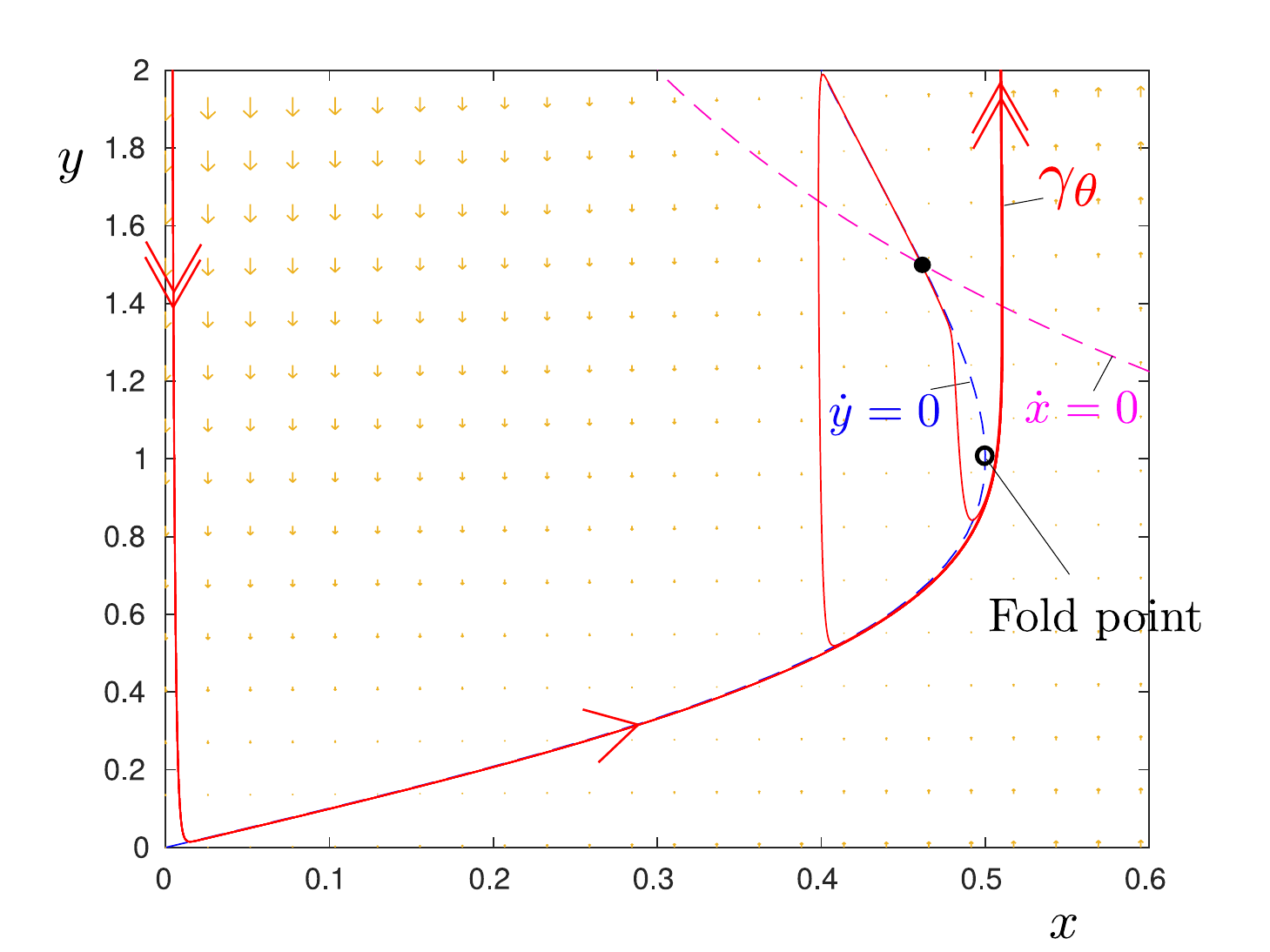}
        \caption{}
    \end{subfigure}%
    \begin{subfigure}{0.5\textwidth}
    \centering
        \includegraphics[width=0.95\linewidth]{./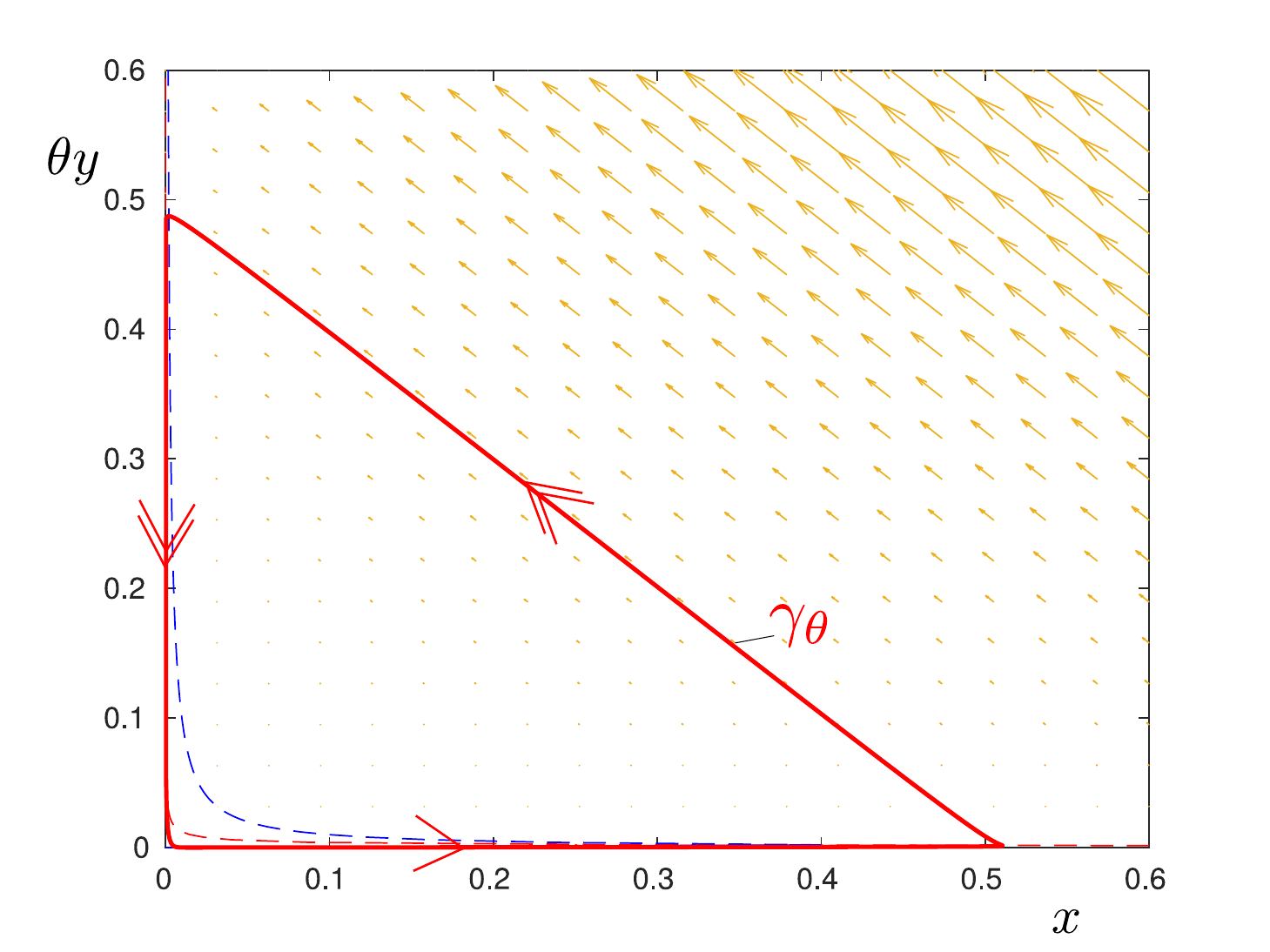}
        \caption{}
    \end{subfigure}
    \caption{Phase portrait of \eqref{auto} for $\mu=2$ and $\rspp{\theta}=0.001$, using two different scalings: $(x,y)$ in (a) and $(x,\rspp{\theta} y)$ in (b). The limit cycle $\gamma_\rspp{\theta}$ \rspp{(in red)} is unbounded in the $(x,y)$-coordinates as $\rspp{\theta}\to 0$ but bounded in $(x,\rspp{\theta} y)$. }
    \figlab{auto}
    \end{figure}

    We now consider a tropical version of \eqref{auto} by defining $u$ and $v$ as in \eqref{uv} and setting
    \begin{align}
     \rspp{\theta} = \e^{-1/\epsilon},\quad \mu = \e^{\alpha/\epsilon}.\eqlab{tropautoalpha}
    \end{align}
    Taking the limit $\epsilon\to 0$ \rspp{(with $\alpha$ fixed)}, we are then led to consider the tropical dynamical system (\rspp{of degree $3$}) defined by the following tropical pairs:
\begin{equation}\eqlab{tropauto}
\begin{alignedat}{3}
 F_1(u,v)&:=\alpha-1-u,\quad &\md_1&:=(1,0),\\
 F_2(u,v)&:=-1,\quad  &\md_2&:=(-1,0),\\
 F_3(u,v)&:=-1+2v, \quad &\md_3&:=(-1,0),\\
 F_4(u,v)&:=0, \quad &\md_4&:=(0,-1),\\
 F_5(u,v)&:=u-v,\quad &\md_5&:=(0,1),\\
 F_6(u,v)&:=u+v,\quad &\md_6&:=(0,1),
\end{alignedat}
\end{equation}
and $\mathcal U=\{1,2,3\}$, $\mathcal V=\{4,5,6\}$. 
\rspp{Notice that $\alpha$ is the single parameter and that it only enters through the tropical coefficient of $F_1$}.
In \figref{tropauto}(a), we show the graph of the associated tropical polynomial $$F_{\max}(u,v)=\max_{k\in \{1,\ldots,6\}}F_k(u,v),$$ for $\alpha=\frac14$. In \figref{tropauto}(b), we illustrate the corresponding subdivision of the Newton polygon. Notice that we label the subdivision according to the associated flow vector in the tropical pair, see \eqref{tropauto}. The purple edges then indicate where we have crossing (see \eqref{crossingdef}); \rspp{this colouring relates to the concept of a crossing graph introduced in \secref{graph} below}.  In \figref{tropauto}(c), we show the resulting tropical phase portrait for the same value of $\alpha$. The numbers $1,3,4,5,6$ in \figref{tropauto} correspond to the number of the dominating monomial, see \eqref{tropauto}. Notice that $2$ is missing. This is due to the fact that the set $\mathcal R_2$ is the empty set in the present case: $F_2(u,v)=-1<F_4(u,v)=0\le F_{\max}(u,v)$ for all $(u,v)\in \mathbb R^2$ and all $\alpha\in \mathbb R\cup \{-\infty\}$. The corresponding flow vectors are indicated as black arrows in \figref{tropauto}(c).

We notice the following in \figref{tropauto}(c). Firstly, there is a point $\mathcal Q_{1346}=(\frac14,\frac14)$ where $\textnormal{\textbf{0}}\in \Conv(u,v)$ (white disc with black boundary). This is  due to the intersection of the transversal nullcline sliding switching manifolds $\mathcal E_{13}^{\mathcal U}$ and $\mathcal E_{46}^{\mathcal V}$. They are both of unstable type and $\mathcal Q_{1346}$, which we call a \rsp{tropical singularity}, acts like an unstable node (or source); see further details in \secref{tropeq}. Secondly, there is also an attracting sliding limit cycle (in red). Notice that it slides along $\mathcal E_{14}$ (tangential Filippov) and along $\mathcal E_{13}^{\mathcal U}$ and $\mathcal E_{45}^{\mathcal V}$ (both stable transversal nullcline). The \rsp{tropical vertex}  $\mathcal P_{456}$ is reminiscent of a classical fold point (see \figref{auto}(a)) of a slow-fast system, insofar that $\mathcal P_{456}$ also separates an attracting nullcline from a repelling one. 

\begin{figure}[H]
    \centering
    \begin{subfigure}{0.5\textwidth}
    \centering
        \includegraphics[width=0.96\linewidth]{./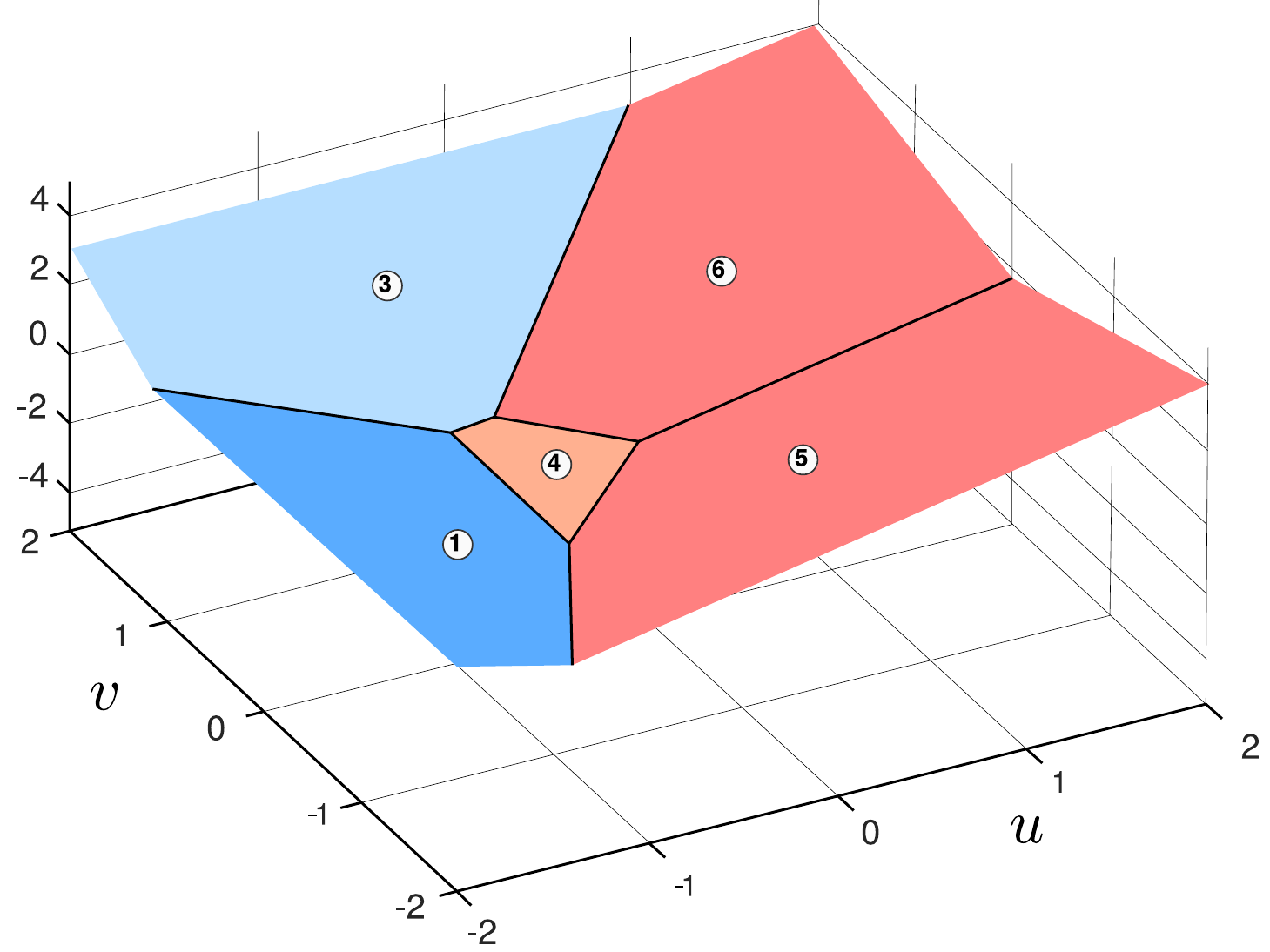}
        \caption{}
    \end{subfigure}%
    \begin{subfigure}{0.5\textwidth}
    \centering
        \includegraphics[width=0.96\linewidth]{./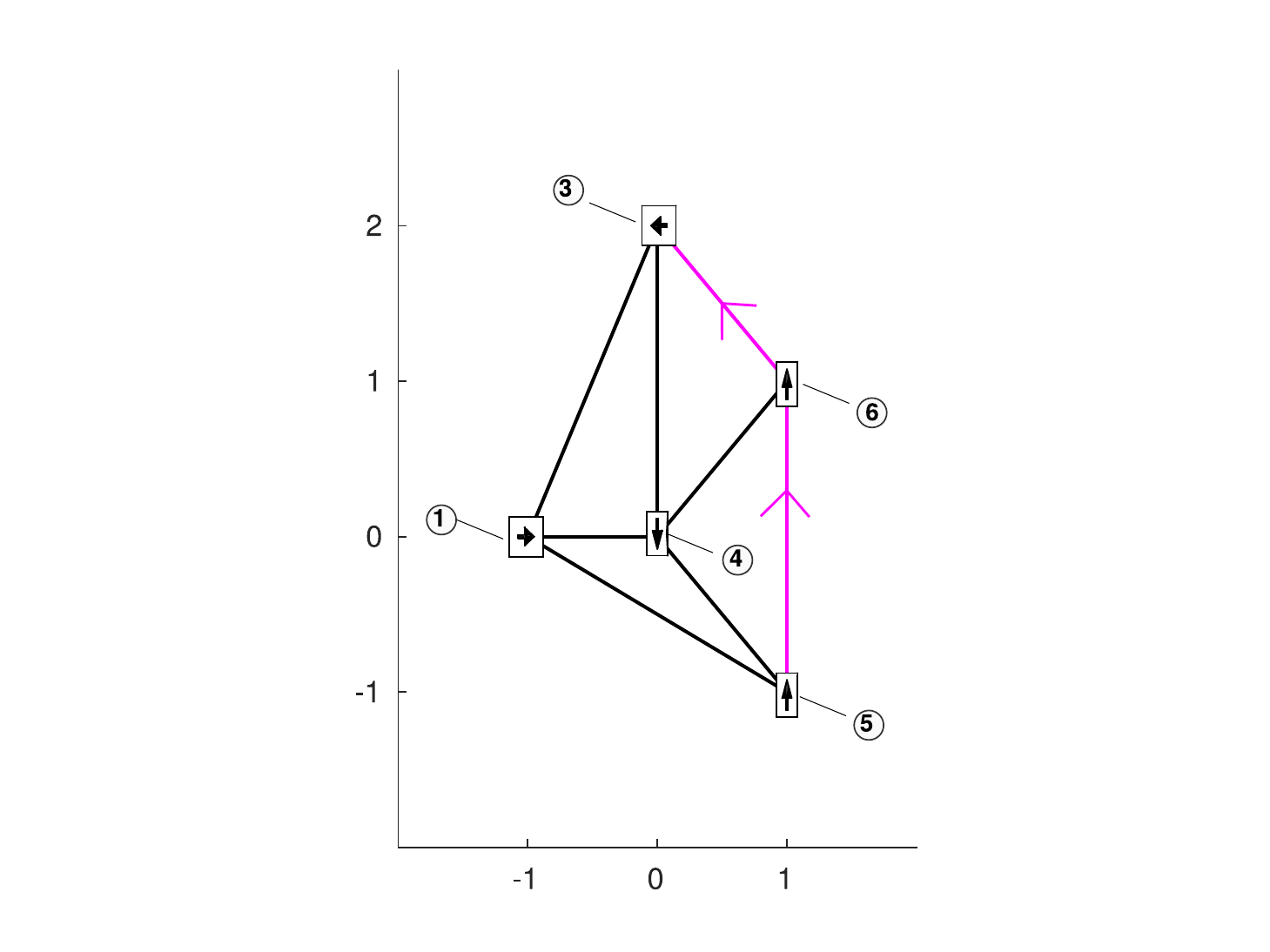}
        \caption{}
    \end{subfigure}
    \begin{subfigure}{0.5\textwidth}
    \centering
        \includegraphics[width=0.96\linewidth]{./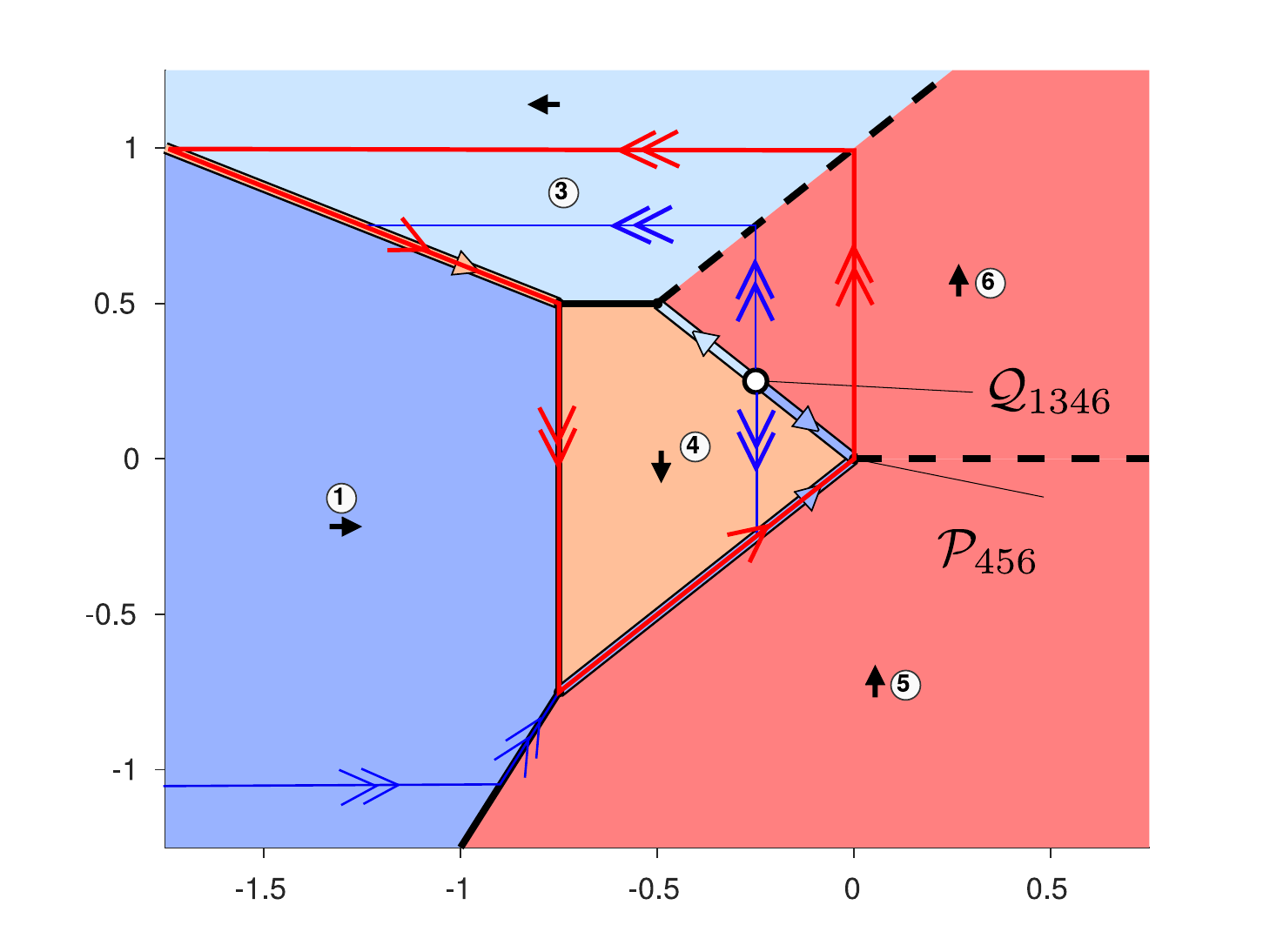}
        \caption{}
    \end{subfigure}     
    \caption{In (a): The graph of the tropical polynomial $F_{\max}$ associated with the tropical monomials in \eqref{tropauto}. In (b): The (labelled) subdivision associated with monomial in \eqref{tropauto}. In all figures, $\alpha=\frac14$. Finally, in (c): The tropical phase portrait with a source $\mathcal Q_{1346}$ and a sliding limit cycle (in red).  }
    \figlab{tropauto}
    \end{figure}

In \figref{auto}, we used $\mu=2$. This corresponds to $\alpha=\epsilon \log 2\to 0$ as $\epsilon\to 0$ (or equivalently $\rspp{\theta} \to 0$) upon using \eqref{tropautoalpha}. We therefore illustrate the case $\alpha=0$ in \figref{tropauto2}, see (b), together with three other values of $\alpha$: $\alpha=-\frac14$ in (a), $\alpha=\frac12$ in (c) and $\alpha=\frac34$ in (d). We notice the following: For $\alpha=0$, the \rsp{tropical singularity}  $\mathcal Q_{1346}$ has moved onto the \rsp{tropical vertex}  $\mathcal P_{456}$. Just like in the canard situation of \eqref{auto}, this value of $\alpha$ also marks the onset of limit cycles; compare \figref{tropauto}(c) for $\alpha=\frac14$ with \figref{tropauto2}(a) for $\alpha=-\frac14$. In the former case, we have a stable limit cycle, whereas in the latter case, we have a sink $\mathcal Q_{1345}$ on $\mathcal E_{45}^{\mathcal V}$, which attracts all points. Essentially, the phase portraits for any $\alpha<0$ and any $\alpha\in (0,\frac12)$ are qualitatively similar to those in \figref{tropauto2}(a) and \figref{tropauto}(c), respectively. (We will make this more precise below in \secref{equivalence}). In the case $\alpha=0$, we see (as in the classical canard explosion) cycles (in red) of different amplitudes co-existing. 
The compelling thing about the tropical phase portrait is that all scales (on the logarithmic scale induced by \eqref{uv}) are visible all at once. The blowup method offers a similar ``compactification'', see \cite[Fig. 7]{Gucwa2009783}. In comparison, we had to use two separate scales in \figref{auto} to visualize the dynamics.   

The limit cycle we see in \figref{tropauto}(c) for $\alpha=\frac14$ is not covered by the analysis of \cite{Gucwa2009783} (in a singular limit). Indeed, in \cite{Gucwa2009783} the authors keep $\mu=\mathcal O(1)$ as $\rspp{\theta}\to 0$. Based on the findings of the tropical dynamical system, see also \conjref{per1} below, we find that the limit cycles of \cite{Gucwa2009783} can be continued for $\mu$-values growing unboundedly (at a rate $\mu=\e^{\alpha/\epsilon}\to \infty$ as $\epsilon\to 0$ with $\alpha\in (0,\frac12)$). (Formally this requires a separate GSPT analysis, which we will leave to future work.) Within this context, it is interesting to \rspp{note} that there is another bifurcation at $\alpha=\frac12$ (see \figref{tropauto2}(c)), where the source $\mathcal Q_{1346}$ has moved on top of a different \rsp{tropical vertex}, $\mathcal P_{1346}$, so that $\textnormal{\textbf{0}}\in \Conv(\mathcal P_{1346})$.  This \rsp{tropical vertex}  (as indicated by the subscripts $1346$) is degenerate in the sense that the four monomials $F_1,F_3,F_4$ and $F_6$ attain the same value at this point. In \figref{tropauto3}, we show the subdivision associated with this case. In agreement with \figref{tropauto2}(c), the subdivision is not a triangulation (in contrast to \figref{tropauto}(b).

The value $\alpha=\frac12$ marks the termination of oscillations. In particular, as indicated in \figref{tropauto2}(d), we have a sink $\mathcal Q_{1346}$ on $\mathcal E_{13}^{\mathcal U}$ for any $\alpha \in(\frac12,1)$. In future work, it would be interesting to study the bifurcation at $\alpha=\frac12$ further as a singular bifurcation for $\rspp{\theta}\to 0$.  We expect that the techniques, developed in \cite{jelbart2021b,jelbart2021c,uldall2021a} for the analysis of bifurcations of smooth systems approaching nonsmooth ones, as well as those in \cite{kristiansen2017a} for dealing with exponential nonlinearities, will all be useful in this regard.  

Notice that although the situation near $\alpha=\frac12$ again bears some similarities with the canard explosion, it is clearly different, as it involves a mixture of fast directions. Indeed, along $\mathcal E_{46}$ the $v$-direction is fast/dominating, whereas along $\mathcal E_{13}$ the $u$-direction is fast/dominating. We notice that $\alpha=\frac12$ corresponds to 
\begin{align*}
 \mu =\rspp{\theta}^{-\frac12}, 
\end{align*}
by \eqref{tropautoalpha}. The following result demonstrates the effectiveness of the tropical approach in identifying significant scalings. 
\begin{lemma}
 The system \eqref{auto} has two Hopf bifurcations at
 \begin{align*}
  \mu &=\rspp{\theta}^{-\frac12}\sqrt{\frac{1-2\rspp{\theta}+\sqrt{1-8\rspp{\theta}}}{2}} = \rspp{\theta}^{-\frac12}\left(1 -\frac{3}{2}\rspp{\theta}+\mathcal O(\rspp{\theta}^2)\right),\\
  \mu &=\rspp{\theta}^{-\frac12}\sqrt{\frac{1-2\rspp{\theta}-\sqrt{1-8\rspp{\theta}}}{2}}=1+2\rspp{\theta}+\mathcal O(\rspp{\theta}^2),
 \end{align*}
for all $0<\rspp{\theta}\ll 1$. 
 \end{lemma}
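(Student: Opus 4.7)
The plan is to verify the Hopf condition analytically by linearizing \eqref{auto} at its unique equilibrium and solving $\operatorname{tr} J = 0$ for $\mu$ as a function of $\theta$.

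First, I would compute the equilibria. Setting $\mu - x - xy^2 = 0$ and $-y + x + xy^2 = 0$, adding these gives $y = \mu$, and substituting back yields the unique equilibrium $(x^*, y^*) = \left(\tfrac{\mu}{1+\mu^2},\, \mu\right)$. This uses the algebraic identity $1 + y^{*2} = 1 + \mu^2 = \mu/x^*$, which will simplify the Jacobian considerably.

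Second, I would compute the Jacobian at $(x^*, y^*)$:
\begin{equation*}
J = \begin{pmatrix} -\theta(1+\mu^2) & -\tfrac{2\theta\mu^2}{1+\mu^2} \\[1mm] 1+\mu^2 & \tfrac{\mu^2 - 1}{1+\mu^2} \end{pmatrix}.
\end{equation*}
A direct calculation gives $\det J = \theta(1+\mu^2) > 0$, so the equilibrium is never a saddle, and the Hopf condition reduces purely to the trace. Setting
\begin{equation*}
\operatorname{tr} J = -\theta(1+\mu^2) + \frac{\mu^2 - 1}{1+\mu^2} = 0
\end{equation*}
and multiplying through by $1+\mu^2$ leads to the quadratic
\begin{equation*}
\theta z^2 + (2\theta - 1) z + (\theta + 1) = 0, \qquad z := \mu^2,
\end{equation*}
with discriminant $(1-2\theta)^2 - 4\theta(\theta+1) = 1 - 8\theta$. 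For $0 < \theta < 1/8$ the discriminant is positive, yielding the two real positive solutions
\begin{equation*}
\mu^2 = \frac{1 - 2\theta \pm \sqrt{1 - 8\theta}}{2\theta},
\end{equation*}
which, after taking the positive square root, produce exactly the two expressions stated in the lemma.

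Third, I would derive the asymptotic expansions. Expanding $\sqrt{1-8\theta} = 1 - 4\theta - 8\theta^2 + \mathcal O(\theta^3)$, the ``+'' branch gives $\mu^2 = \theta^{-1}(1 - 3\theta + \mathcal O(\theta^2))$ and hence $\mu = \theta^{-1/2}(1 - \tfrac{3}{2}\theta + \mathcal O(\theta^2))$; the ``$-$'' branch gives $\mu^2 = 1 + 4\theta + \mathcal O(\theta^2)$ and hence $\mu = 1 + 2\theta + \mathcal O(\theta^2)$. To confirm these are genuine Hopf bifurcations (not just trace-zero points), I would verify the transversality condition $\frac{d}{d\mu}\operatorname{tr} J \neq 0$ at each branch; differentiating $\operatorname{tr} J$ with respect to $\mu$ yields $\tfrac{d}{d\mu}\operatorname{tr} J = -2\theta\mu + \tfrac{4\mu}{(1+\mu^2)^2}$, which is readily checked to be nonzero at both roots for all $0 < \theta \ll 1$ by substituting the asymptotic values of $\mu$. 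The main (mild) obstacle is purely algebraic bookkeeping in the discriminant and trace identities; no conceptual difficulty is expected, and the classification as sub- or supercritical (which is not claimed in the statement) would require a separate Lyapunov coefficient computation that I would omit.
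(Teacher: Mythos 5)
Your proof is correct and follows essentially the same route as the paper's: locate the unique equilibrium at $\bigl(\tfrac{\mu}{1+\mu^2},\mu\bigr)$, compute the trace and determinant of the linearization, note the determinant is positive, and solve $\operatorname{tr}J=0$ for $\mu$. You supply more detail than the paper (the explicit quadratic in $\mu^2$, the discriminant, the asymptotic expansions, and a transversality check the paper omits), but the underlying calculation is identical.
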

 \begin{proof}
  Follows from a direct calculation: The unique \rspp{singularity} is at 
  \begin{align*}
   (x,y)=\left(\frac{\mu}{\mu^2+1},\mu\right),
  \end{align*}
and the determinant and trace of the linearization are given by
\begin{align*}
 \rspp{\theta}(1+\mu^2),\quad \frac{\mu^2-1}{\mu^2+1}-\rspp{\theta} (\mu^2+1),
\end{align*}
respectively. With the determinant being positive, we obtain the result, specifically the expressions for $\mu$ by setting the trace equal zero. 
 \end{proof}

Finally, regarding \figref{tropauto2}(d) for the value $\alpha=\frac34$, we notice that the purple orbit (reminiscent of a strong stable manifold of $\mathcal Q_{1346}$) goes through the \rsp{tropical vertex}  $\mathcal P_{146}$. At first sight, this might seem special. One would perhaps expect that if we vary $\alpha$ then this connection would break up. To see that this is not the case, we can either note that the line $\mathcal E_{36}$ is the bisector of the line $(\mathcal P_{146},\mathcal Q_{1346})$ or proceed by the direct calculation:

$\mathcal Q_{1346}$ has coordinates $(\alpha,\alpha)$ and it lies on $\mathcal E_{13}^{\mathcal U}$ for $\alpha>\frac12$. The purple orbit, obtained by following the flow vector $\md_3$ backwards from $\mathcal Q_{1346}$ then intersects the crossing switching manifold $\mathcal E_{36}$, given by $v=u+1$, at a point 
\begin{align}\eqlab{point1}
(-1+\alpha,\alpha).
\end{align}
At the same time, the \rsp{tropical vertex}  $\mathcal P_{146}$, obtained by setting $F_1(u,v)=F_4(u,v)=F_6(u,v)$, has coordinates 
\begin{align}\eqlab{point2}
(-1+\alpha,1-\alpha). 
\end{align}
The $u$-components of \eqref{point1} and \eqref{point2} coincide and the situation illustrated in \figref{tropauto2}(d) is therefore not special, but \textit{persistent}.

    \begin{figure}[H]
    \centering
    \begin{subfigure}{0.49\textwidth}
    \centering
        \includegraphics[width=0.96\linewidth]{./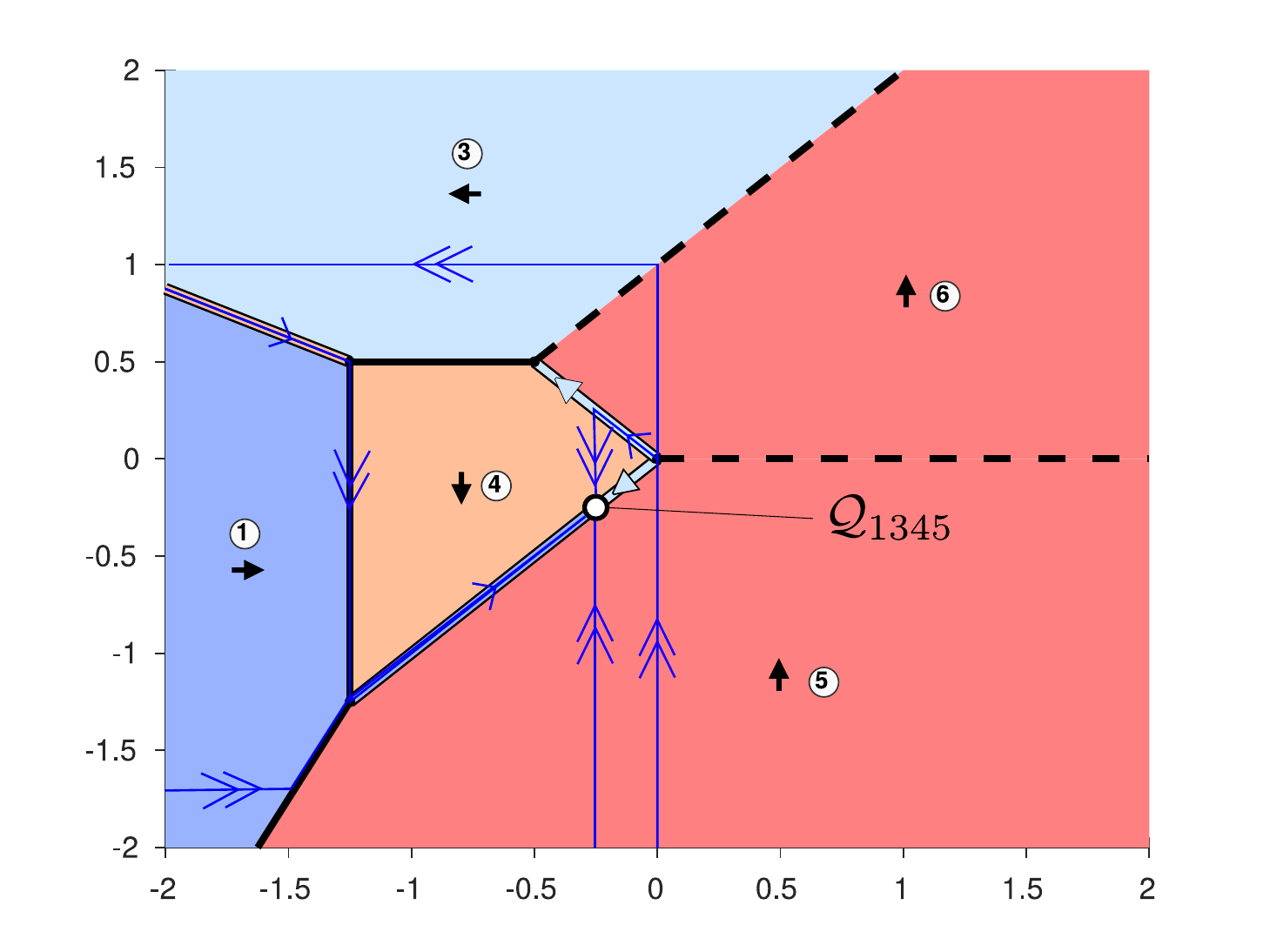}
        \caption{$\alpha=-\frac14$}
    \end{subfigure}%
    \begin{subfigure}{0.49\textwidth}
    \centering
        \includegraphics[width=0.96\linewidth]{./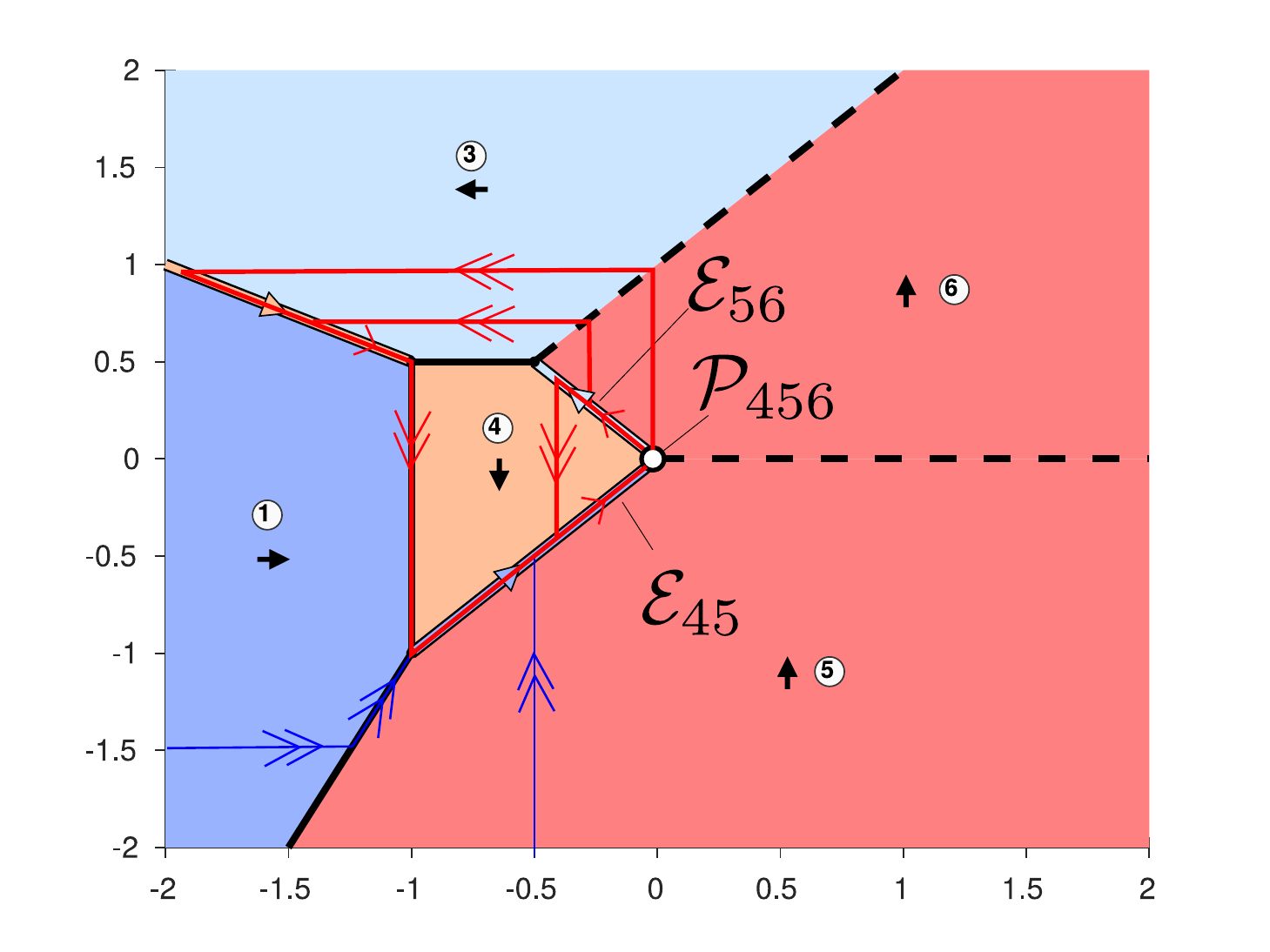}
        \caption{$\alpha=0$}
    \end{subfigure}
    \begin{subfigure}{0.49\textwidth}
    \centering
        \includegraphics[width=0.96\linewidth]{./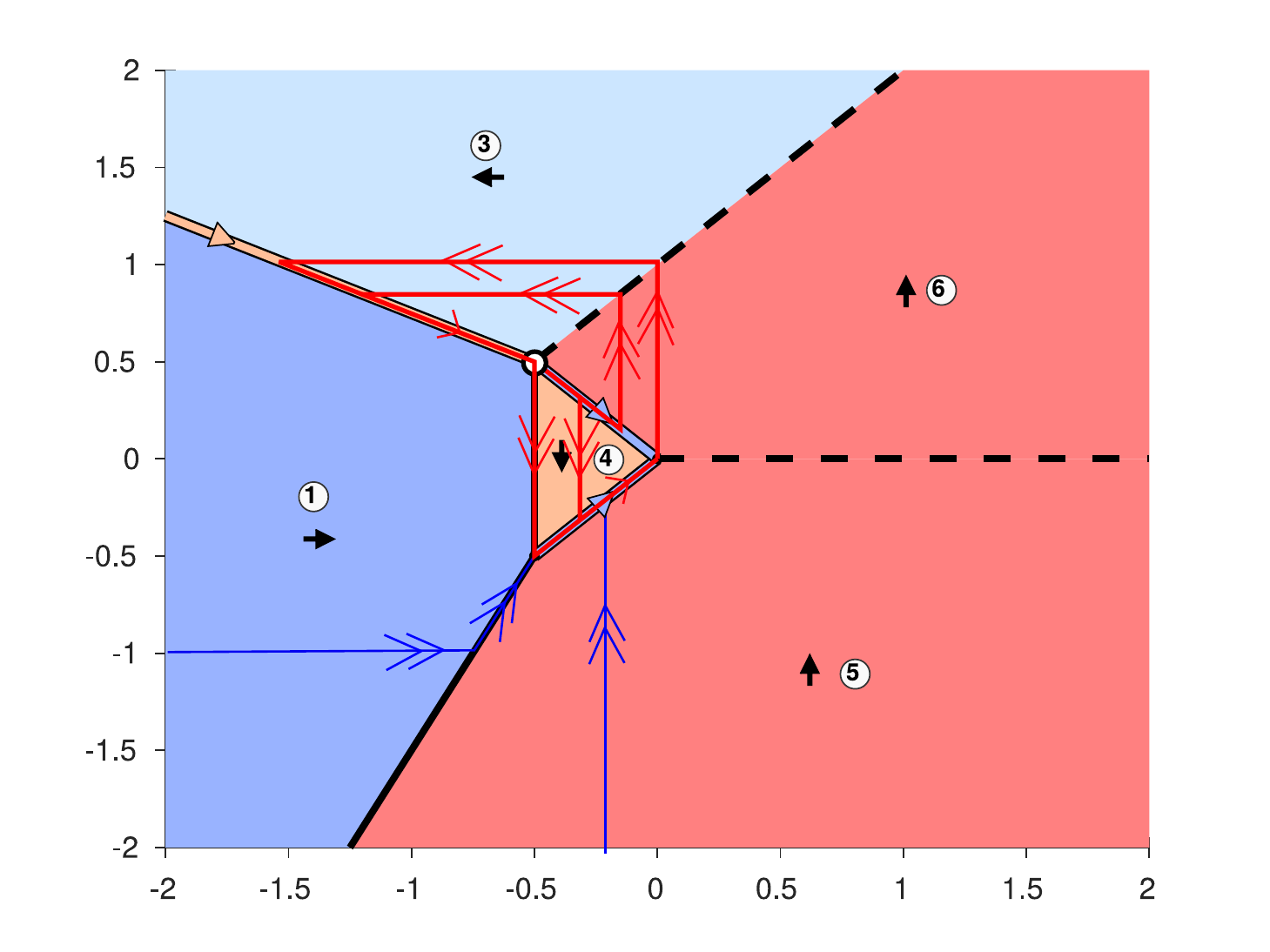}
        \caption{$\alpha=\frac12$}
    \end{subfigure}    
     \begin{subfigure}{0.49\textwidth}
    \centering
        \includegraphics[width=0.96\linewidth]{./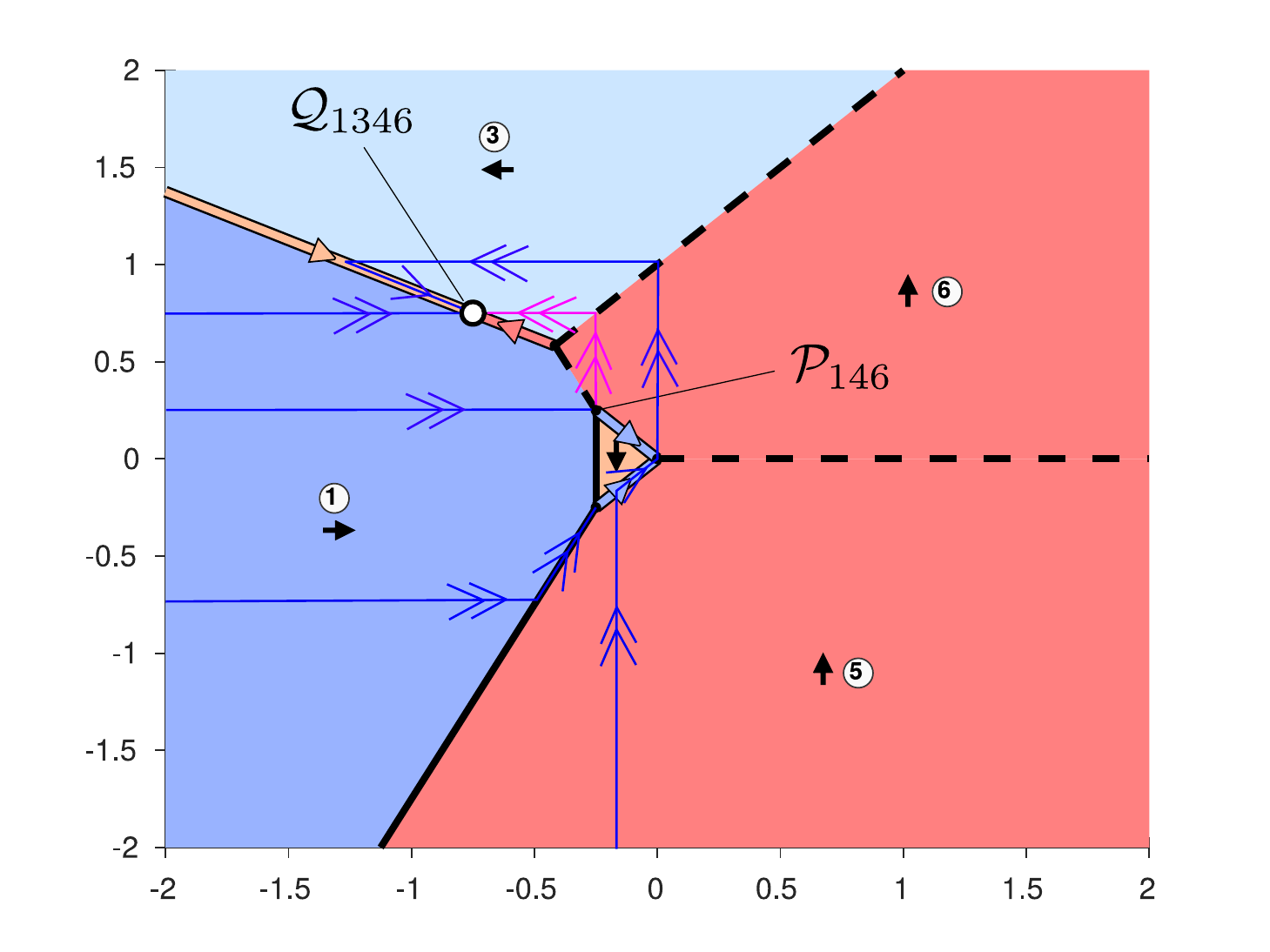}
        \caption{$\alpha=\frac34$}
    \end{subfigure}  
       \caption{The phase portraits of the tropicalized autocatalator given by the tropical pairs \eqref{tropauto} for four different values of $\alpha$ (see sub-captions). $\alpha=0$ (b) and $\alpha=\frac12$ (c) are bifurcation points, reminiscent of canard points in slow-fast systems, where the stability of the \rsp{tropical singularity}  (white circle) changes from a source to a sink (and vice versa) and where limit cycles are created/destroyed, respectively, in  a dramatic (or explosive) fashion. See also \figref{tropauto}(c).} 
    \figlab{tropauto2}
    \end{figure}
    \begin{figure}
    \centering
        \includegraphics[width=0.66\linewidth]{./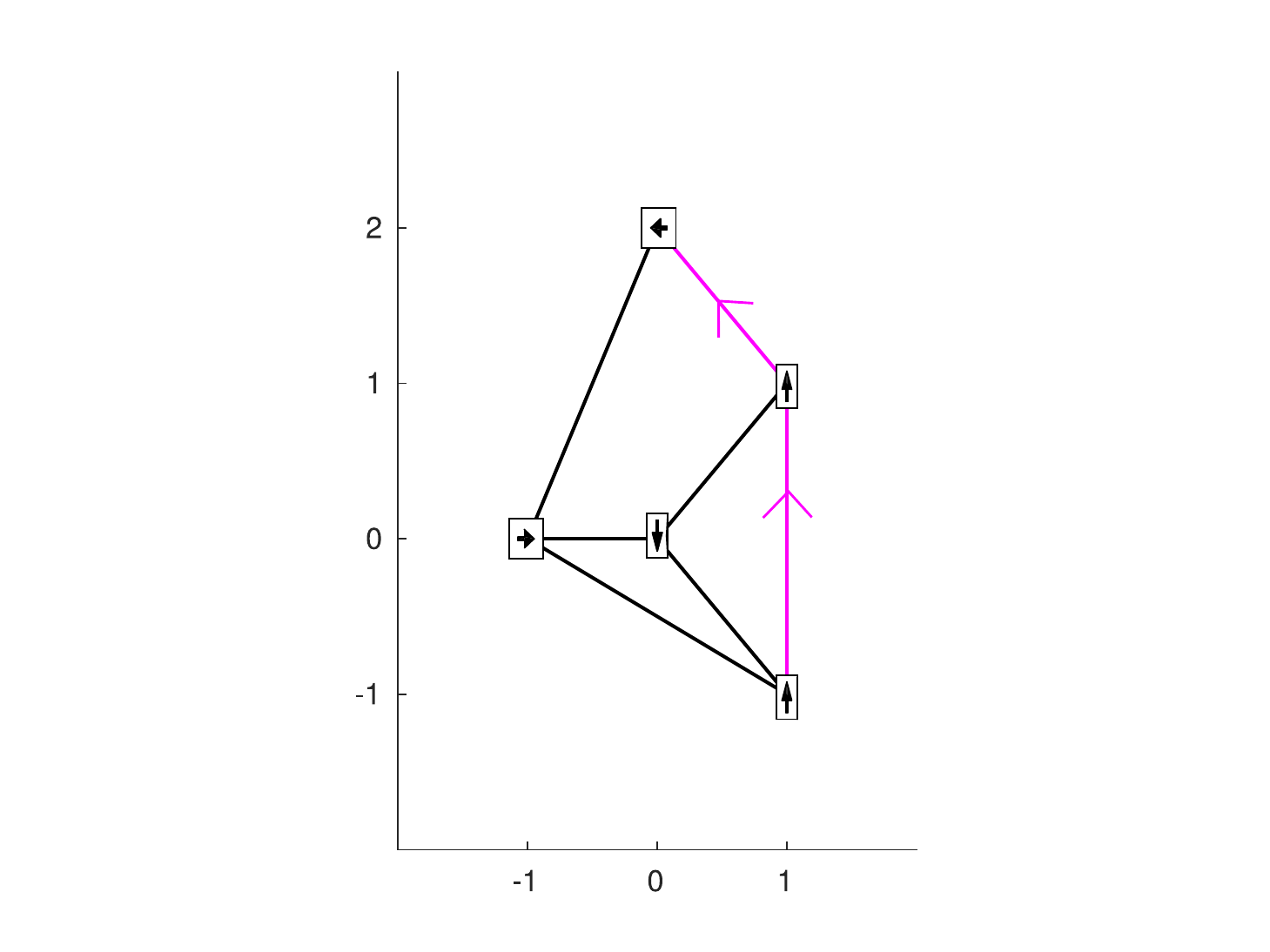}
        \caption{The (labelled) subdivision associated with the tropical monomials \eqref{tropauto} for $\alpha=\frac12$, see \figref{tropauto2}(c). In contrast to the subdivision in \figref{tropauto}(b), the subdivision is not a triangulation.}
\figlab{tropauto3}
\end{figure}
\section{Equivalence of tropical dynamical systems}\seclab{equivalence}
In smooth dynamical systems theory, we say that two planar systems are topologically equivalent if there is a(n) (orientation preserving) homeomorphism $h: \mathbb R^2\to \mathbb R^2$  that maps orbits to orbits (often on invariant compact submanifolds such as the (Poincar\'e) sphere, i.e. $h:S^2\to S^2$, leaving the equator invariant in the case of the Poincar\'e sphere) and preserve the direction of time, see \cite{perko1991a}. For equivalence in PWS systems, (a) the discontinuity set is either assumed to be fixed (and smooth) or (b) $h$ is also assumed to be a homeomorphism on the discontinuity sets, see \cite{broucke2001a,filippov1988differential}. This is called $\Sigma$-equivalence in \cite{guardia2011a}. \cite{guardia2011a} also defines topological equivalence as equivalence in the usual sense, without the requirement of neither (a) not (b). However, \cite[Proposition 2.17]{guardia2011a} and \cite[Remark 2.19]{guardia2011a} show that only diffeomorphisms preserve Filippov in general and therefore demonstrate the subtleties with these notions of equivalence.

In any case, we believe that these definitions of equivalence based on homeomorphisms are too restrictive, if not just simply too difficult to work with, for tropical dynamical systems. This belief is based on the fact that $h$ has to preserve lines, including horizontal and vertical ones; after all, every orbit of \eqref{uvnotinT} is a polygonal curve with horizontal and vertical segments in the tropical regions and (possibly) inclined segments along \rsp{tropical edges} of sliding type.  Instead, we suggest the following definition of equivalence (which is also more in tune with \defnref{homotopiccurves} and tropical geometry in general):

Consider a polygonal orbit $\gamma$ and let $E=\{l_i\}_i$ denote the edges of $\gamma$ and $V=\{q_i\}_i$ its vertices. As before, (a) $i$ runs over a finite index set, $\pm \mathbb N$ or $\mathbb Z$, (b) $l_i$ is the line segment connecting $q_i$ with $q_{i+1}$, and (c) $l_i$ and $l_{i+1}$ have different orientations (either flow or line orientations). Finally, if $\gamma$ is a point, corresponding to a \rsp{tropical singularity}, then $E$ is the empty set and $V=\{\gamma\}$, whereas if $\gamma$ is a single straight line then $E=\{\gamma\}$ and $V$ is the empty set.

\begin{definition}\defnlab{orbitshomotopy}
An oriented polygonal curve $\gamma'$, having $E'=\{l_i' \}_i$ as its edges and $V'=\{q_i'\}_i$ as its vertices, is said to be \textnormal{homotopic} to an oriented polygonal curve $\gamma$, having $E=\{l_i\}_i$ as its edges and $V=\{q_i\}_i$ as its vertices, if there are continuous functions $f_i:[0,1]\to \mathbb R^2$, such that $V=\{f_i(0)\}_i$, $V'=\{f_i(1)\}_i$ and $V(t)=\{f_i(t)\}_i$ for each $t\in [0,1]$ are vertices of an oriented polygonal curve, where only the lengths and positions of the associated edges $E(t)$ change, not their flow and line orientations, and such that $f_i(t)\ne f_j(t)$ for all $i\ne j$ and all $t\in [0,1]$. 

If $\gamma'$ is a single straight line, so that $V'=\emptyset$, then $\gamma'$ is homotopic to $\gamma$ if $\gamma$ is also a straight line having the same flow and line orientations (i.e. $\gamma$ and $\gamma'$ are translations of eachother with the same flow orientation). 
\end{definition}

Since continuous mappings can be composed, we obtain \textit{equivalence classes of orbits that are oriented polygonal curves}. Consider a tropical dynamical system $TDS$ and all of its orbits that are oriented polygonal curves. We then group homotopic (polygonal) orbits into equivalence classes and define equivalence of tropical dynamical systems as follows. 
\begin{definition}\defnlab{equivalence}
Two tropical dynamical systems $TDS$ and $TDS'$ are equivalent, if they have the same polygonal orbit equivalence classes. 
\end{definition}


We will define local equivalence analogously, saying that $TDS$ and $TDS'$ are locally equivalent if they have the same \textit{local} polygonal orbit equivalence classes on an open set $X$ of $\mathbb R^2$. (Here a local polygonal orbit is just the intersection of a polygonal orbit with an open set $X$.)
\begin{remark}\remlab{slope}
Notice that the line segments of two homotopic polygonal curves have the same slopes. Consequently, in our definition of equivalence, two equivalent tropical dynamical systems have sliding switching manifolds with the same slope (line orientation). One could argue that this leads to \textnormal{too many} structurally stable tropical dynamical systems. But we believe the definition is a natural one, since the line segments of the polygonal orbits only have rational slope. In particular, horizontal and vertical lines are intrinsic for the tropical dynamical systems. If we were to allow the slopes to vary, then we obtain lines with irrational slopes under homotopy. Finally, the definition is in line with \defnref{homotopiccurves}.
\end{remark}
\begin{definition}\defnlab{structuralstability}
 Fix $N\in \mathbb N$ and a list of flow vectors $\{\md_k\}_{k\in \mathcal I}$. A tropical dynamical system $$TDS'\in \TDS,$$ is \textnormal{structurally stable} if there is a neighborhood $\mathcal O$ of $TDS'$ in $\TDSN$ such that $TDS$ and $TDS'$ are equivalent for all $TDS\in \mathcal O$. Otherwise, $TDS'$ is said to be \textnormal{structurally unstable}.
\end{definition}
We will define local structurally stability completely analogously when it holds on an open neighborhood $X$ of $(u,v)$ in $\mathbb R^2$.

\begin{lemma}\lemmalab{triangulation}
 Suppose that a tropical dynamical system $TDS'\in \TDSN$ is structurally stable and let $\mathcal O$ be any neighborhood of $TDS'$ in $\TDSN$. Then there is a $TDS\in \mathcal O$ for which the following holds:
 \begin{enumerate}
 \item \label{gp1} The polyhedral subdivisions $\mathcal S^{\mathcal L}$ of the Newton polygons associated to the tropical polynomials $F_{\max}^{\mathcal L}$ are triangulations for $\mathcal L=\mathcal U,\mathcal V$ and $\mathcal I$.
 \item \label{gp2} $\degree F_i = \degree F_j,\,i\ne j \Longrightarrow \alpha_i\ne \alpha_j$. 
 \item \label{gp3} $\mathcal T^{\mathcal U}$ and $\mathcal T^{\mathcal V}$ intersect transversally (i.e. all intersections of $\mathcal T^{\mathcal U}$ and $\mathcal T^{\mathcal V}$  occur along their \rsp{tropical edges}).
 \end{enumerate}
\end{lemma}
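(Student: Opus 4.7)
The plan is to show that each of the three conditions \ref{gp1}, \ref{gp2}, \ref{gp3} defines an open dense subset of the coefficient space $\rspp{\mathbf{A}}(2M)$ (recall \defnref{openTDS}), and then to invoke the fact that a finite intersection of open dense sets is open and dense. Since $\mathcal O$ is open and nonempty, it must then meet this intersection, giving the desired $TDS$. Note in passing that the structural stability of $TDS'$ is not actually used in the argument itself --- only the openness of $\mathcal O$ matters.

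For \ref{gp1}, I would apply \thmref{thm0} separately with $\mathcal L = \mathcal U, \mathcal V, \mathcal I$. Each application yields that the set of coefficients for which $\mathcal S^{\mathcal L}$ is a triangulation is a dense open subset (in fact a finite union of open convex polytopes) of the relevant coefficient subspace; lifted to $\rspp{\mathbf{A}}(2M)$ and intersected, the three resulting sets retain both openness and density. For \ref{gp2}, I would observe that by \defnref{tropsystem} the degrees $\degree F_k$ are already distinct within each of $\mathcal U$ and $\mathcal V$, so the only possible coincidences $\degree F_i = \degree F_j$ with $i \ne j$ must involve one index from $\mathcal U$ and one from $\mathcal V$. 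The set of $\alpha$ where $\alpha_i = \alpha_j$ for some such pair is then a finite union of closed hyperplanes in $\rspp{\mathbf{A}}(2M)$, and its complement is open and dense.

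For \ref{gp3}, a non-transversal intersection of $\mathcal T^{\mathcal U}$ and $\mathcal T^{\mathcal V}$ can occur only in two ways: (a) a \rsp{tropical vertex} of one curve lies on the other, or (b) a \rsp{tropical edge} of $\mathcal T^{\mathcal U}$ overlaps a \rsp{tropical edge} of $\mathcal T^{\mathcal V}$ on a nontrivial segment. From \eqref{Eijeqn}, the slopes of all \rsp{tropical edges} are determined entirely by the (fixed) degrees, whereas their positions depend affinely on the $\alpha_k$. Consequently both (a) and (b) impose one extra linear equality on the $\alpha_k$ on top of the equalities defining the vertex or edge in question, so the ``bad'' locus is a finite union of proper affine subspaces of $\rspp{\mathbf{A}}(2M)$, whose complement is open and dense. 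I expect the main subtlety to lie in case (b), where one must check that two edges can only share a nontrivial segment if their slopes \emph{and} their single offsets (read off from \eqref{Eijeqn}) coincide, yielding one genuine codimension-$1$ constraint rather than merely a transverse crossing at a single point. Intersecting the three open dense sets constructed above completes the argument.
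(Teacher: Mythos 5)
Your argument is correct and is precisely the genericity argument the paper intends: the paper's own proof of this lemma is the single line ``This should be clear enough, see also \cite{de2010a}'', so your decomposition into three open–dense conditions (\thmref{thm0} for the triangulations of $\mathcal S^{\mathcal U}$, $\mathcal S^{\mathcal V}$, $\mathcal S^{\mathcal I}$, and hyperplane-avoidance both for the coefficient coincidences in \ref{gp2} and for the non-transversal intersections of $\mathcal T^{\mathcal U}$ and $\mathcal T^{\mathcal V}$ in \ref{gp3}) supplies exactly the details being waved at. Your side remark that the structural stability of $TDS'$ is never used --- only that $\mathcal O$ is a nonempty open set --- is also accurate and consistent with how the lemma is applied later, namely to pass to a representative in general position.
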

\begin{proof}
 This should be clear enough, see also \cite{de2010a}.
\end{proof}
This means that for structurally stable systems, we may assume that (or more accurately, consider a representative of the structural stable system for which)  the tropical curves $\mathcal T^{\mathcal U}$, $\mathcal T^{\mathcal V}$ and $\mathcal T^{\mathcal I}$ consist of \rsp{tropical edges} and \rsp{tropical vertices} where precisely three \rsp{tropical edges} come together. Also, for structurally stable systems, we may assume (in the same sense as before) that \rsp{tropical singularities} $(u',v')$, where $\textnormal{\textbf{0}}\in \Conv(u',v')$, lie on the \rsp{tropical edges} of $\mathcal T$. This is a consequence of item \ref{gp3} of \lemmaref{triangulation}, see also \lemmaref{troppoint} below.

\begin{definition}\defnlab{generalposition}
We say that \rspp{$(\mathcal T,\mathcal T^{\mathcal U},\mathcal T^{\mathcal V})$} (of a tropical dynamical system $TDS$) is \textnormal{in general position}, if items \ref{gp1}--\ref{gp3} of \lemmaref{triangulation} hold.
\end{definition}

\begin{lemma}\lemmalab{usual} 
  Consider a tropical dynamical system $TDS'\in \TDS$ and suppose that $TDS'$ is structurally stable in the following (strong) sense of homeomorphisms (by $\Sigma$-equivalence, see e.g. \cite[definition 2.15]{guardia2011a}): \rspp{For every $\rspp{\theta}>0$}, there is a sufficiently small neighborhood $\mathcal O$ of $TDS'$ such that for any $TDS\in \mathcal O$ there is a homeomorphism $h:\mathbb R^2\to \mathbb R^2$, satisfying:
  \begin{enumerate}
  \item \label{hO} $h$ maps orbits of $TDS'$ to orbits of $TDS$, preserving the direction of time.
   \item \label{hT} $h$ maps  $\mathcal T'$ homeomorphically onto $\mathcal T$, including \rsp{tropical vertices} of $\mathcal T'$ to \rsp{tropical vertices} of $\mathcal T$,
  \item \label{hI} \rspp{$h$ is near-identity: $\vert h(u,v)-(u,v)\vert<\rspp{\theta}$ for all $(u,v)\in \mathbb R^2$.}   
  \end{enumerate}
Then $TDS'$ is also structural stable in the sense of  \defnref{structuralstability}.
 \end{lemma}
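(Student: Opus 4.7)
The plan is to fix a neighborhood $\mathcal O$ of $TDS'$ as in the hypothesis with $\rspp{\theta}>0$ to be chosen sufficiently small, pick any $TDS\in \mathcal O$ together with its homeomorphism $h$, and construct for every polygonal orbit $\gamma'$ of $TDS'$ an explicit homotopy (in the sense of \defnref{orbitshomotopy}) between $\gamma'$ and $\gamma:=h(\gamma')$; equivalence in the sense of \defnref{equivalence} then follows. Writing $V'=\{q_i'\}_i$ and $E'=\{l_i'\}_i$ for the vertices and edges of $\gamma'$, I would first argue that $\gamma$ inherits a polygonal structure with vertices $V=\{q_i\}_i$, $q_i:=h(q_i')$, and edges $l_i$ the line segments connecting consecutive vertices. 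Item \ref{hO} ensures that $\gamma$ is an orbit of $TDS$ with the same flow orientation as $\gamma'$, while item \ref{hT} forces $\gamma$ to meet $\mathcal T$ at the images of the points where $\gamma'$ meets $\mathcal T'$; between any two such consecutive meetings, $\gamma$ lies in a single tropical region or on a single sliding tropical edge of $TDS$, where the flow is constant, so $\gamma$ is piecewise linear with vertices exactly at the $h(q_i')$.

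The crucial step is to establish that corresponding edges $l_i'$ and $l_i$ carry the same \emph{line orientation}, i.e., the same slope. This is where I would use that $TDS$ and $TDS'$ share the same lists $\{\md_k\}_{k\in \mathcal I}$ and $\{\degree F_k\}_{k\in \mathcal I}$ by virtue of both lying in $\TDSN$. If $l_i'$ lies in the interior of a tropical region $\mathcal R'_k$, then by \lemmaref{limitTrop} its direction is $\md_k$; since $h$ is near-identity (item \ref{hI}) and the subdivisions of $TDS$ and $TDS'$ are close, for $\rspp{\theta}$ small $l_i\subset \mathcal R_k$ has direction $\md_k$ as well. If $l_i'$ lies on a sliding tropical edge $\mathcal E'_{i,j}$, then by \lemmaref{ConvFilippov} (Filippov case) or \lemmaref{ConvNullcline} (nullcline case) combined with \eqref{Eijeqn}, the slope depends only on $\degree F_j-\degree F_i$, which is fixed in $\TDSN$, so $l_i$ on the corresponding $\mathcal E_{i,j}$ has the same slope. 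Consequently, $q_{i+1}-q_i$ is a positive scalar multiple of $q_{i+1}'-q_i'$ for every $i$.

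I would then construct the homotopy by linear interpolation, $f_i(t):=(1-t)q_i'+t\, q_i$ for $t\in [0,1]$. By the parallelism just established, the segment from $f_i(t)$ to $f_{i+1}(t)$ retains the flow and line orientation of $l_i'$ throughout $t\in [0,1]$, so the orientation requirements of \defnref{orbitshomotopy} are automatic. The distinctness condition $f_i(t)\neq f_j(t)$ is where the main obstacle lies: item \ref{hI} gives $|q_i-q_i'|<\rspp{\theta}$ for every $i$, and \thmref{existence} guarantees non-accumulation of the $q_i'$'s, so locally one can take $\rspp{\theta}$ smaller than half the minimum vertex spacing; but for orbits with infinitely many vertices (indexed by $\mathbb Z$ or $\pm\mathbb N$) the spacing may shrink at infinity, forcing an edge-by-edge, compact-subarc argument rather than a single uniform choice of $\rspp{\theta}$. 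This is the most delicate bookkeeping step, though it is tractable because each interpolation $t\mapsto (f_i(t),f_{i+1}(t))$ lives on the compact parameter interval $[0,1]$. The degenerate cases where $\gamma'$ is a tropical singularity ($E'=\emptyset$) or a single straight line ($V'=\emptyset$) are handled immediately by the slope-matching argument: in the first, the constant-length interpolation $f(t)=(1-t)\gamma'+t\,h(\gamma')$ suffices; in the second, $h(\gamma')$ is itself a straight line with the same orientation, exactly as \defnref{orbitshomotopy} requires.
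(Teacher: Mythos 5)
Your proposal is correct and follows essentially the same route as the paper's proof: map each polygonal orbit $\gamma'$ to $h(\gamma')$, use properties \ref{hO}--\ref{hI} to show that the corresponding edges have the same flow and line orientations, and conclude that the two orbits are homotopic. The only differences are matters of detail: you justify the slope-matching via the fixed degrees $\degree F_k$ and flow vectors $\md_k$ shared within $\TDSN$ (the paper instead appeals to the rationality and finiteness of the possible slopes combined with the near-identity property \ref{hI}), and you spell out the linear-interpolation homotopy and the vertex-distinctness check that the paper leaves implicit.
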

 \begin{proof}
   Let $\gamma'$ be any polygonal orbit of $TDS'$. Then  $\gamma=h(\gamma')$ is a polygonal orbit of $TDS\in \mathcal O$ by property \ref{hO}, \rspp{and as a consequence of property \ref{hT}, $h$ maps the vertices and edges of $\gamma'$ homeomorphically onto the edges and vertices of $\gamma$. Morever, by taking $\rspp{\theta}>0$ small enough in property \ref{hI}, we conclude that the corresponding edges of $\gamma'$ and $\gamma$ have to have the same slope; recall here that the slopes are rational and that there are only (with $N\in \mathbb N$ fixed) finitely many possibilities in $\mathcal T\mathcal D\mathcal S$. Hence $\gamma'$ and $\gamma$ are homotopic. In turn, we conclude that $TDS'$ is  structural stable in the sense of  \defnref{structuralstability}. }
 \end{proof}

\rspp{We leave the details of going the other way in \lemmaref{usual} to future work}. In the following, we study local phenomena (\rsp{tropical vertices} and \rsp{tropical singularities}) and characterize the structurally stable situations (locally). For these local phenomena, we will construct the local equivalence in terms of a translation, mapping orbits to orbits locally.

\begin{definition}
We will say that a transformation $T_\alpha:(u,v)\mapsto (u,v)+b( \alpha)$, with $\alpha\mapsto b(\alpha)\in \mathbb R^2$ affine, is \textnormal{an affine translation with respect to $\alpha$}.
\end{definition}

\section{Tropical \rsp{vertices}}\seclab{troppoint}
We define \rsp{tropical vertices} as points  $\mathcal P:(u,v)$ where $\#\argmax_{k\in \mathcal I}F_k(u,v)\ge 3$. 

\begin{lemma}\lemmalab{troppoint}
Consider a tropical dynamical system $TDS'$ with \rspp{$(\mathcal T',\mathcal T'^{\mathcal U},\mathcal T'^{\mathcal V})$} in general position and suppose that $\mathcal P':(u',v')$ is a \rsp{tropical vertex}. Then
\begin{enumerate}
 \item \label{item1} $\argmax_{k\in \mathcal I}F_k(u',v')=\{i,j,l\}$ for some distinct $i,j,l\in \mathcal I$.
 \item \label{item2} $\mathcal P'$ is isolated, i.e. there is a neighborhood $X$ of $(u',v')$ where $\mathcal P'$ is the only \rsp{tropical vertex}.
 \item \label{item3} The degrees $\degree F_k$, $k=i,j,l$ are distinct.
 \item \label{item4} $\textnormal{\textbf{0}}\notin \Conv(u',v')$. 
\end{enumerate}
\end{lemma}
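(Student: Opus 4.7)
The plan is to derive items \ref{item1}--\ref{item4} as essentially direct consequences of the three general-position conditions of \defnref{generalposition} (triangulation of $\mathcal S^{\mathcal L}$ for $\mathcal L = \mathcal U, \mathcal V, \mathcal I$; distinct $\alpha_k$ when $\degree F_k$'s coincide; and transversal intersection of $\mathcal T'^{\mathcal U}$ and $\mathcal T'^{\mathcal V}$). The real content is bookkeeping, and the only step that requires any genuine thought is item \ref{item4}.

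First I would handle items \ref{item1} and \ref{item2} together. By general position, $\mathcal S^{\mathcal I}$ is a triangulation, so by \propref{dualS} each tropical vertex of $\mathcal T'$ is dual to a triangular face of $\mathcal S^{\mathcal I}$, whose three vertices index exactly the monomials attaining the maximum at $\mathcal P'$. This gives $\#\argmax_{k\in \mathcal I} F_k(u',v') = 3$, i.e.\ item \ref{item1}. Since every face of the triangulation is a bounded triangle, $\mathcal P'$ admits a neighborhood $X$ containing no other dual face, proving item \ref{item2}.

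Next, for item \ref{item3}, suppose for contradiction that two of the three indices $i,j,l$ (say $i$ and $j$) satisfy $\degree F_i = \degree F_j$. By \remref{degree} this forces $i$ and $j$ to lie in different sets among $\mathcal U, \mathcal V$. General position (item \ref{gp2} of \lemmaref{triangulation}) gives $\alpha_i \ne \alpha_j$, and then $F_i - F_j \equiv \alpha_i - \alpha_j \ne 0$ everywhere. Hence $\min(F_i,F_j)$ is strictly dominated by $\max(F_i,F_j)$ at every point, contradicting the assumption that both attain the common maximum at $(u',v')$. Therefore the three degrees are distinct.

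For item \ref{item4}, I invoke \defnref{convtrop}. Since $\#\mathcal I^* = 3 > 1$ we are not in case \ref{1}, so $\textnormal{\textbf{0}} \in \Conv(u',v')$ would have to come from \ref{a} or \ref{b}. Case \ref{b} requires $\md_{\mathcal U^*} = \{(\pm 1, 0)\}$ and $\md_{\mathcal V^*} = \{(0,\pm 1)\}$, i.e.\ at least two elements of $\mathcal I^*$ in $\mathcal U$ and at least two in $\mathcal V$, yielding $\#\mathcal I^* \ge 4$, contradicting item \ref{item1}. Case \ref{a} would require $\textnormal{\textbf{0}} = q \mathbf t + (1-q)\mathbf s$ with $\mathbf t \in \{(\pm 1,0)\}$, $\mathbf s \in \{(0,\pm 1)\}$, $q \in [0,1]$; the two components of this equation force $q = 1-q = 0$, impossible. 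Thus $\textnormal{\textbf{0}}\notin \Conv(u',v')$.

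The only place a subtlety could hide is item \ref{item3}: one must be careful that the three monomials indexed by the triangle face of $\mathcal S^{\mathcal I}$ really correspond to three distinct lattice points in the Newton polygon, since the point configuration in our setting (see \eqref{NIpoints}) can in principle have two monomials (one in $\mathcal U$, one in $\mathcal V$) with the same degree; the argument above shows that general position \ref{gp2} disposes of this.
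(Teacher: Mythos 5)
Your treatment of items \ref{item1}--\ref{item3} is fine and in fact more explicit than the paper's one-line ``directly follows from general position''. The problem is item \ref{item4}, where your argument for ruling out case \ref{b} of \defnref{convtrop} rests on a misreading of the definition. You write that $\md_{\mathcal U^*}=\{(\pm 1,0)\}$ and $\md_{\mathcal V^*}=\{(0,\pm 1)\}$ would force ``at least two elements of $\mathcal I^*$ in $\mathcal U$ and at least two in $\mathcal V$'', hence $\#\mathcal I^*\ge 4$. But $\mathcal U^*=\argmax_{k\in\mathcal U}F_k(u',v')$ and $\mathcal V^*=\argmax_{k\in\mathcal V}F_k(u',v')$ are argmaxes taken \emph{separately} over $\mathcal U$ and $\mathcal V$; they are not the intersections $\mathcal I^*\cap\mathcal U$ and $\mathcal I^*\cap\mathcal V$. (This distinction is the whole point of the construction: it is what makes the subdominant direction visible in nullcline sliding, cf. \lemmaref{ConvNullcline}.) So condition \ref{b} is perfectly compatible with $\#\mathcal I^*=3$: all three indices $i,j,l$ could lie in $\mathcal U$ with $\{(\pm1,0)\}\subset\md_{\{i,j,l\}}$, while two monomials of $\mathcal V$ --- both strictly below the overall maximum --- tie for the maximum over $\mathcal V$ with flow vectors $(0,1)$ and $(0,-1)$. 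Your contradiction ``$\#\mathcal I^*\ge 4$'' therefore does not arise, and case \ref{b} is not excluded by item \ref{item1} alone.

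The missing ingredient is item \ref{gp3} of general position, which you never invoke for item \ref{item4} (and indeed item \ref{item4} would be \emph{false} without it). The paper's argument is: if $\textnormal{\textbf{0}}\in\Conv(u',v')$ then condition \ref{b} forces $\#\mathcal U^*\ge 2$ and $\#\mathcal V^*\ge 2$, so $(u',v')\in\mathcal T'^{\mathcal U}\cap\mathcal T'^{\mathcal V}$. By pigeonhole at least two of $i,j,l$ lie in the same index set, say $\mathcal U$; if the third lies in $\mathcal V$ then $\mathcal V^*$ is a singleton (contradicting \ref{b}), so all three lie in $\mathcal U$ and $\mathcal P'=\mathcal P'^{\mathcal U}$ is a tropical vertex of $\mathcal T'^{\mathcal U}$ sitting on $\mathcal T'^{\mathcal V}$. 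That is an intersection of $\mathcal T'^{\mathcal U}$ and $\mathcal T'^{\mathcal V}$ not occurring along their tropical edges, contradicting transversality. Your dismissal of case \ref{a} (the components of $q\mathbf t+(1-q)\mathbf s$ cannot both vanish) is correct, so only the case-\ref{b} step needs repair.
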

\begin{proof}
  The statements \ref{item1}--\ref{item3} directly follow from the definition of $\rspp{(\mathcal T',\mathcal T'^{\mathcal U},\mathcal T'^{\mathcal V})}$ being in general position, see \defnref{generalposition}. Now regarding item \ref{item4}, we use item \ref{item1} and the definition of $\Conv$ to conclude that $\textnormal{\textbf{0}}\in \Conv(u',v')\Rightarrow \mathcal P'=\mathcal P^{'\mathcal L}$ for either $\mathcal L=\mathcal U$ or $\mathcal L=\mathcal V$. But then $\mathcal P'$ is the intersection point of $\mathcal T^{'\mathcal U}$ and $\mathcal T^{'\mathcal V}$, and this contradicts $\rspp{(\mathcal T',\mathcal T'^{\mathcal U},\mathcal T'^{\mathcal V})}$ being in general position, see item \ref{gp3} of \defnref{generalposition}. 
\end{proof}

\begin{lemma}\lemmalab{troppoint1}
Suppose that the assumptions of \lemmaref{troppoint} hold and that $$TDS'\in \TDS.$$ Then there is a neighborhood $\mathcal O$ of $TDS'$ and a neighborhood $X$ of $(u',v')$, such that there is a unique \rsp{tropical vertex}  $\mathcal P(\alpha)\in X$ of $TDS\in \mathcal O$, having coordinates $(u(\alpha),v(\alpha))$ with $(u(\alpha'),v(\alpha'))=(u',v')$, of the following form:
\begin{align}\eqlab{walpha}
 w(\alpha) = w'+b_w\cdot (\alpha_i-\alpha_i',\alpha_j-\alpha_j',\alpha_l-\alpha'_l),\quad w=u,v,
\end{align}
with $b_w:=(b_{w,i},b_{w,j},b_{w,l})\ne  (0,0,0)$, $b_{w,i}+b_{w,j}+b_{w,l}=0$ and $\argmax_{k\in \mathcal I}F_k(u',v')=\{i,j,l\}$.
\end{lemma}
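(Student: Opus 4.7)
The plan is to set up the defining linear equations for the tropical vertex, solve them via Cramer's rule, and then verify the claimed structural properties.

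First I would use Lemma~\ref{lemma:troppoint} to write $\argmax_{k\in\mathcal I}F_k(u',v')=\{i,j,l\}$ with distinct degrees. The vertex $\mathcal P(\alpha)$ is cut out by the two linear equations $F_i=F_j$ and $F_j=F_l$, which in matrix form read
\begin{equation*}
\begin{pmatrix} n_i-n_j & m_i-m_j \\ n_j-n_l & m_j-m_l \end{pmatrix}\begin{pmatrix} u \\ v \end{pmatrix}=\begin{pmatrix} \alpha_j-\alpha_i \\ \alpha_l-\alpha_j \end{pmatrix}.
\end{equation*}
The key observation is that the coefficient matrix is invertible: by general position (item~\ref{gp1} of \lemmaref{triangulation}) exactly three \rsp{tropical edges} come together at $\mathcal P'$, which is dual to a triangular face of the subdivision $\mathcal S$, and therefore $\degree F_i,\degree F_j,\degree F_l$ are affinely independent. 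Cramer's rule then yields $u$ and $v$ in the asserted affine form \eqref{eq:walpha} in $(\alpha_i-\alpha_i',\alpha_j-\alpha_j',\alpha_l-\alpha_l')$, with coefficient vectors $b_u,b_v$ determined by the (nonzero) entries of the inverse matrix, so $b_u\ne 0$ and $b_v\ne 0$.

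Next I would verify the constraint $b_{w,i}+b_{w,j}+b_{w,l}=0$ using translation invariance: if we add a common $a\in\mathbb R$ to each of $\alpha_i,\alpha_j,\alpha_l$, the right-hand side of the linear system is unchanged, hence $w(\alpha)$ is invariant under this simultaneous shift. Differentiating with respect to $a$ (equivalently, setting $\alpha_i-\alpha_i'=\alpha_j-\alpha_j'=\alpha_l-\alpha_l'=a$ in \eqref{eq:walpha}) gives the desired relation.

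It remains to confirm that the point $\mathcal P(\alpha)$ produced by the linear system is genuinely a \rsp{tropical vertex} (and the unique one in a neighborhood). Since the strict inequalities $F_k(u',v')<F_i(u',v')$ for $k\notin\{i,j,l\}$ are open conditions, continuity (in $\alpha$ and $(u,v)$) guarantees they persist at $(u(\alpha),v(\alpha))$ for $TDS$ in a sufficiently small $\mathcal O$ and $\mathcal P(\alpha)$ in a sufficiently small $X$; this both confirms $\mathcal P(\alpha)\in\mathcal T$ and, by item~\ref{item2} of \lemmaref{troppoint}, rules out any other \rsp{tropical vertex} in $X$. The only delicate point is coordinating the sizes of $\mathcal O$ and $X$ so that the affine formula remains valid and the strict dominance persists; this is routine since finitely many inequalities are involved.
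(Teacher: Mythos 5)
Your proof is correct and follows essentially the same route as the paper's: the same $2\times 2$ linear system for the vertex coordinates, regularity of the coefficient matrix from general position (the paper invokes the isolation of $\mathcal P'$, you invoke the triangulation duality --- both are equivalent here), and translation invariance to get $b_{w,i}+b_{w,j}+b_{w,l}=0$. One small imprecision: individual entries of the inverse matrix may well vanish (e.g.\ when an edge is horizontal or vertical), so the parenthetical ``nonzero entries'' is not quite right; what actually guarantees $b_w\ne(0,0,0)$ is that no row of an invertible matrix can be identically zero.
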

\begin{proof}
Given item \ref{item1} of \lemmaref{troppoint}, we obtain two equations for the coordinates of $(u,v)$ (see 
 e.g. \eqref{Eijeqn}):
 \begin{align*}
  \begin{pmatrix}
   \degree F_i - \degree F_j  \\
   \degree F_i - \degree F_l
  \end{pmatrix}\begin{pmatrix}
  u\\
  v
\end{pmatrix} +\begin{pmatrix}
\alpha_i-\alpha_j\\
\alpha_i-\alpha_l\end{pmatrix}=0.
 \end{align*}
 But then by item \ref{item2} of \lemmaref{troppoint}  and the fact that $\mathcal P'$ is isolated, we conclude that the coefficient matrix is regular. The fact, that the sum of the components of $b_w$ is zero, is the consequence of the invariance of the tropical system to translations of the tropical coefficients, see \eqref{translation}.
\end{proof}

The \rsp{tropical edges} emanating (locally) from $\mathcal P(\alpha)$ are clearly affine translations of those of $\mathcal P'$. In particular, the slopes of the \rsp{tropical edges} do not depend on $\alpha$ (but only on the degrees $\degree F_k$, $k=i,j,l$.)


\begin{lemma}\lemmalab{troppoint2}
 Consider a tropical dynamical system $$TDS'\in \TDS,$$ with \rspp{$(\mathcal T',\mathcal T'^{\mathcal U},\mathcal T'^{\mathcal V})$ }in general position and suppose that $\mathcal P'$ with coordinates $(u',v')$ is a \rsp{tropical vertex}  of $\mathcal T'$. Then the $TDS'$ is locally structurally stable in a neighborhood $X$ of $(u',v')$.
\end{lemma}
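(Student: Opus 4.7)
The plan is to construct the local equivalence explicitly via the affine translation provided by \lemmaref{troppoint1}. Write $\argmax_{k\in\mathcal I}F_k(u',v')=\{i,j,l\}$ and let $T_\alpha:(u,v)\mapsto (u,v)+(u(\alpha)-u',v(\alpha)-v')$, where $(u(\alpha),v(\alpha))$ is the unique nearby tropical vertex $\mathcal P(\alpha)$ of $TDS$ given by \eqref{eq:walpha}. Shrink the open neighborhood $\mathcal O$ of $TDS'$ and choose a bounded open $X\ni (u',v')$ so that (a) the only tropical vertex in $\overline{T_\alpha(X)}$ is $\mathcal P(\alpha)$; (b) $\rspp{(\mathcal T,\mathcal T^{\mathcal U},\mathcal T^{\mathcal V})}$ remains in general position for every $TDS\in\mathcal O$ (these are open conditions on $\alpha$); and (c) the monomials not in $\{i,j,l\}$ remain strictly dominated on $\overline{T_\alpha(X)}$ (using continuity and compactness).

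First I would show that $T_\alpha$ maps the local piece of $\mathcal T'$ in $X$ onto the local piece of $\mathcal T$ in $T_\alpha(X)$, edge by edge. By items \ref{item1}--\ref{item3} of \lemmaref{troppoint}, exactly three tropical edges $\mathcal E_{ij}',\mathcal E_{il}',\mathcal E_{jl}'$ emanate from $\mathcal P'$ in $X$, and by \eqref{eq:Eijeqn} their slopes depend only on the (fixed) degrees $\degree F_i,\degree F_j,\degree F_l$, not on $\alpha$. Since $T_\alpha$ is a translation and $\mathcal P(\alpha)=T_\alpha(\mathcal P')$, it therefore carries each $\mathcal E_{\cdot\cdot}'\cap X$ onto the corresponding $\mathcal E_{\cdot\cdot}\cap T_\alpha(X)$, and maps the three tropical regions $\mathcal R_i'\cap X,\mathcal R_j'\cap X,\mathcal R_l'\cap X$ onto the analogous regions for $TDS$.

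Next I would verify that $T_\alpha$ preserves the sliding/crossing type of each edge, so that the set-valued field $\Conv$ is transported compatibly. By \defnref{crossingsliding}, the type of $\mathcal E_{\cdot\cdot}$ is determined solely by the flow vectors $\md_i,\md_j,\md_l$ and by the normal vectors $\mathbf n_{\cdot\cdot}$ in \eqref{eq:nij}, all of which are $\alpha$-independent; for nullcline sliding, \lemmaref{ConvNullcline} and condition (c) above show that the dominating $\mathcal V$- or $\mathcal U$-monomial on the edge is the same for $TDS'$ and $TDS$. Moreover, by item \ref{item4} of \lemmaref{troppoint}, $\textnormal{\textbf{0}}\notin\Conv$ at the vertex, and shrinking $X$ we may assume $\textnormal{\textbf{0}}\notin\Conv$ throughout $X$ and $T_\alpha(X)$. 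Combined with \lemmaref{ConvFilippov}, \lemmaref{ConvNullcline} and \lemmaref{mapPij}, this shows that $T_\alpha$ maps each local piecewise affine solution segment (crossing through an edge, Filippov sliding, or nullcline sliding along an edge) of $TDS'$ to a piecewise affine solution segment of $TDS$ of exactly the same geometric type, with the same flow and line orientations.

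Finally, I would conclude by assembling these local segments into the full local polygonal orbits and invoking \defnref{orbitshomotopy}. Any local polygonal orbit $\gamma'$ of $TDS'$ in $X$ is a concatenation of segments of the above types meeting at $\mathcal P'$ or along the edges; the image $T_\alpha(\gamma')$ is the corresponding local polygonal orbit of $TDS$. The straight-line homotopy $f(t)=\mathrm{Id}+t(T_\alpha-\mathrm{Id})$, for $t\in[0,1]$, continuously deforms vertices to vertices while preserving slopes and orientations of edges (lengths may vary, as permitted by \defnref{orbitshomotopy}), and is injective on the finite vertex set for $\mathcal O$ small enough. Hence $\gamma'$ and $T_\alpha(\gamma')$ lie in the same equivalence class, which gives the local equivalence of $TDS'$ and $TDS$ on $X$ in the sense of \defnref{equivalence}, and therefore local structural stability per \defnref{structuralstability}. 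The main technical point, to be handled with some care, is ensuring that the dominance structure of the \emph{subdominant} monomials on nullcline sliding edges is preserved under perturbation; this is exactly why we impose condition (c) above using uniform strict dominance on a compact neighborhood.
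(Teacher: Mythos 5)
Your proposal is correct and follows essentially the same route as the paper's own proof: both construct the affine translation $T_\alpha$ from \lemmaref{troppoint1}, observe that it carries the local tropical curve (and hence local orbits) of $TDS'$ onto that of $TDS$, and use $\textnormal{\textbf{0}}\notin\Conv(u',v')$ to conclude local structural stability. The paper states this in three lines, whereas you spell out the supporting details (slope-invariance of the edges, preservation of sliding/crossing type, persistence of the subdominant dominance structure, and the explicit straight-line homotopy), all of which are consistent with the paper's argument.
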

\begin{proof}
 We first use \lemmaref{troppoint1} and define the affine translation $T_\alpha$ with respect to $\alpha$ by 
 \begin{align*}
  T_\alpha(u,v)=(u,v)+(u(\alpha)-u',v(\alpha)-v'),
 \end{align*}
 with $w(\alpha)$, $w=u,v$, given by \eqref{walpha}. Clearly, $T_\alpha$ maps $\mathcal T'\cap X$ to $\mathcal T\cap X$ and since $\textnormal{\textbf{0}}\notin \Conv(u',v')$ it follows that $\Conv(\mathcal P')=\Conv(\mathcal P(\alpha))$ and that $T_\alpha$ locally map orbits of $TDS'$ to orbits of $TDS\in \mathcal O$, with $\mathcal O$ being a neighborhood of $TDS'$. Consequently, $TDS'$ is locally structurally stable.
\end{proof}
\begin{definition}\defnlab{pointgen}
We say that a \rsp{tropical vertex}  $\mathcal P':(u',v')$ of a tropical dynamical system $TDS'$ is \textnormal{in general position} if items \ref{item1}--\ref{item4} of \lemmaref{troppoint} all hold true.
\end{definition}


For a given degree $N$, there are finitely many equivalence classes of locally structurally stable \rsp{tropical vertices} $\mathcal P$.  We will not worry about the exact number, but these different systems are given by different slopes (line orientations) of the \rsp{tropical edges} meeting at $\mathcal P$ and the different relevant flow vectors $\md_k$, $k=i,j,l$. This follows from \lemmaref{troppoint} and \lemmaref{troppoint2}.

In \figref{tropicalpoints}, we sketch different local phase portraits in a table. 
The rows of \figref{tropicalpoints} correspond to different conditions on how orbits approach the \rsp{tropical vertices}, whereas the columns correspond to different conditions on how orbits leave. The examples are not unique, the list is not exhaustive (in fact, we have left some out on purpose) and some cases appear more than once (e.g. upon applying certain symmetries). Moreover, each example do come in different equivalence classes since different slopes of the \rsp{tropical edges} lead to distinct phase portraits, according to our definition \defnref{equivalence}, see also \remref{slope}. The main point of \figref{tropicalpoints} is to demonstrate the variety of possibilities. 
Examples for each of the cases can easily be constructed using the subdivision of the Newton polygon. 
Empty entries reflect cases that are (probably!) not possible. We \rspp{leave a more} thorough classification of the \rsp{tropical vertices} \rspp{to} future work. 

\begin{figure}
        \includegraphics[width=1.0\linewidth]{./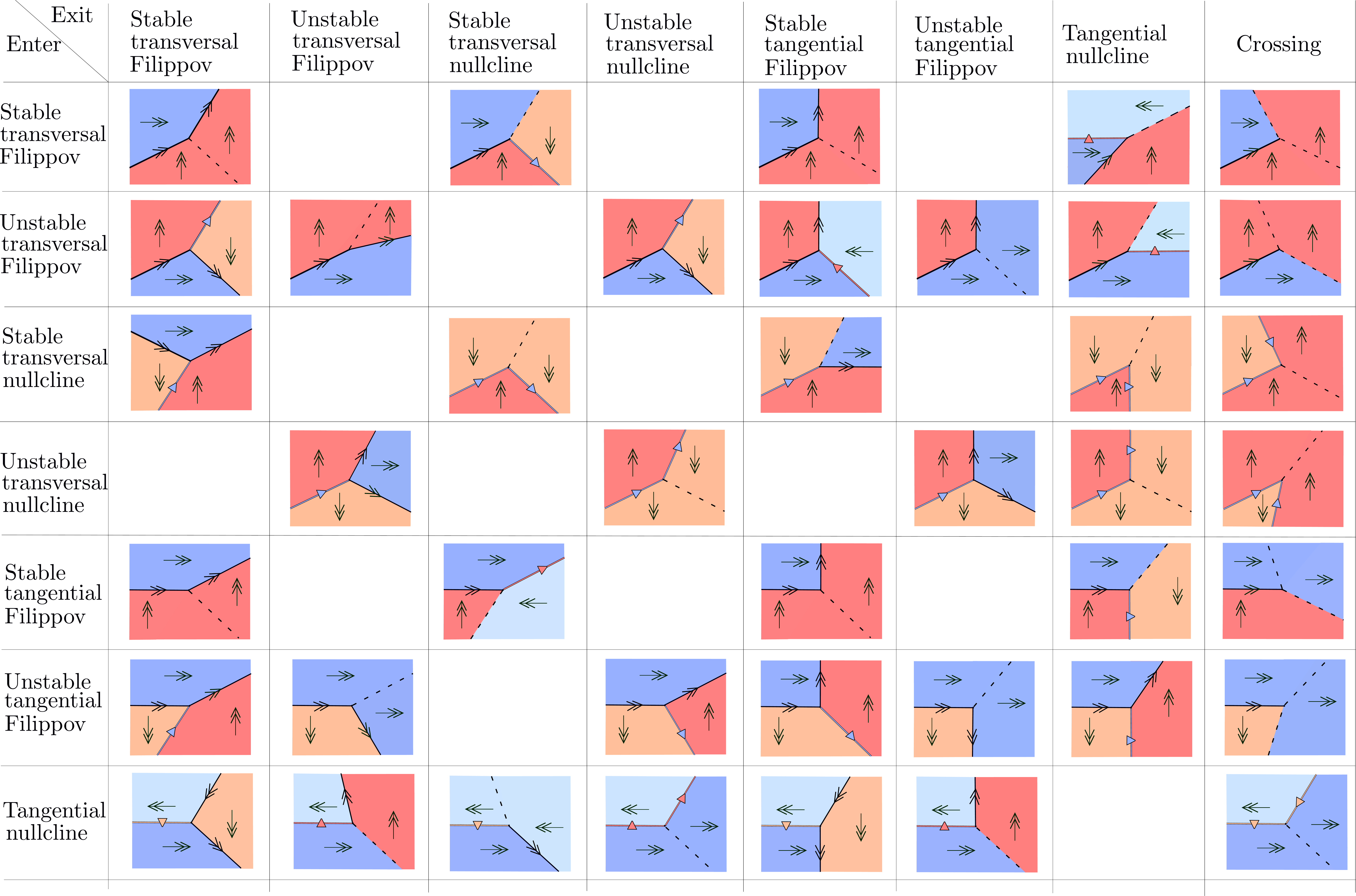}
        \caption{A table with different local dynamics near a \rsp{tropical vertex}  $\mathcal P$ with $\rspp{(\mathcal T,\mathcal T^{\mathcal U},\mathcal T^{\mathcal V})}$ in general position (e.g. $\textnormal{\textbf{0}}\notin \Conv(\mathcal P)$). The table is organized around different types of entrance (the rows) and different types of exits (the columns). Empty entries reflect cases that are (probably!) not possible. The list is not exhaustive and the cases shown are examples; they are not unique. In fact, according to our notion of topological equivalence, different slopes lead to different equivalence classes, see \remref{slope}. }
\figlab{tropicalpoints}
\end{figure}

\section{Tropical \rsp{singularities}}\seclab{tropeq}
A point $(u,v)$ is a \rsp{tropical singularity}  (or just \rsp{singularity} for short) of a tropical dynamical system $TDS$ if $\textnormal{\textbf{0}}\in \Conv(u,v)$. 
We now classify the structurally stable ones, and therefore suppose that $\rspp{(\mathcal T,\mathcal T^{\mathcal U},\mathcal T^{\mathcal V})}$ is in general position, recall \defnref{generalposition}.  By \lemmaref{troppoint}, $(u,v)$ is not a \rsp{tropical vertex}. Consequently, by the definition of $\Conv$, see \defnref{convtrop}, a \rsp{tropical singularity}  $(u,v)$ is the intersection of $\mathcal E_{i,j}^{\mathcal U}\subset \mathcal T^{\mathcal U}$ and $\mathcal E_{l,p}^{\mathcal V}\subset \mathcal T^{\mathcal V}$ with both sets \rspp{being} switching manifolds of nullcline sliding-type (either transversal or tangential). With $\rspp{(\mathcal T,\mathcal T^{\mathcal U},\mathcal T^{\mathcal V})}$ being in general position, the intersection is transverse at $(u,v)$, see item \ref{gp3} of \defnref{generalposition}, such that
\begin{align}\eqlab{detcond}
 \operatorname{det} \begin{pmatrix}
   \degree F_i - \degree F_j  \\
   \degree F_l - \degree F_p
  \end{pmatrix}\ne 0.
\end{align}
Moreover, $\mathcal E_{i,j}^{\mathcal U}$ or $\mathcal E_{l,p}^{\mathcal V}$ is a \rsp{tropical edge} of $\mathcal T$ but not both (since then $(u,v)$ would be a \rsp{tropical vertex}  of $\mathcal T^{\mathcal I}$ with $\#\argmax_{k\in \mathcal I} F_k(u,v)=4$, in contradiction with $\rspp{(\mathcal T,\mathcal T^{\mathcal U},\mathcal T^{\mathcal V})}$ being in general position). Suppose the former. (The latter is identical). 
Then in the case of transversal nullcline sliding, \rspp{the nullcline sliding vector} $\md_{\trop}$, recall \lemmaref{ConvNullcline}, is well-defined on either side of $(u,v)$ on $\mathcal E_{i,j}^{\mathcal U}$ and it is discontinuous at $(u,v)$. 

\begin{definition}\defnlab{sinks}
Under the assumptions stated above, we say that $(u,v)$ is a 
 \begin{enumerate}
  \item \textnormal{A sink} if $\mathcal E_{i,j}^{\mathcal U}$ is a stable transversal nullcline sliding switching manifold, and $\md_{\trop}$ on either side of $(u,v)$ points towards $(u,v)$. 
  \item \textnormal{A source} if $\mathcal E_{i,j}^{\mathcal U}$ is an unstable transversal nullcline sliding switching manifold, and $\md_{\trop}$ on either side of $(u,v)$ points away from $(u,v)$. 
  \item \textnormal{A strong-stable saddle} if $\mathcal E_{i,j}^{\mathcal U}$ is a stable transversal nullcline sliding switching manifold, and $\md_{\trop}$ on either side of $(u,v)$ points away from $(u,v)$. 
  \item \textnormal{A strong-unstable saddle} if $\mathcal E_{i,j}^{\mathcal U}$ is an unstable transversal nullcline sliding switching manifold,  and $\md_{\trop}$ on either side of $(u,v)$ points towards $(u,v)$.
  \item \textnormal{A hybrid point} if $\mathcal E_{i,j}^{\mathcal U}$ is a tangential nullcline sliding switching manifold.
 \end{enumerate}
In the situations where reference to whether the dominating directions are stable or unstable is not important, we just write either a strong-stable saddle or a strong-unstable saddle as a saddle. 
\end{definition}

%

%

We illustrate the different \rsp{tropical singularities} in \figref{tropeq}. In particular, \figref{tropeq}(a) is a sink whereas \figref{tropeq}(b) is a strong-stable saddle. The source and the strong-unstable saddle are identical upon time reversal. Finally, \figref{tropeq}(c)--(d) are hybrid points;  notice that the hybrid points come in two types: One where the dominant and sub-dominate vectors have a well-defined direction of rotation, see (c), (like a center) and one where there is no direction of rotation (like a saddle), see (d). 
Due to our definition of equivalence, see \remref{slope}, for two \rsp{tropical singularities} of sink, source, or saddle-type to be equivalent, the transversal sliding switching manifolds have to have the same slopes. However, for a fixed degree $N$, there are finitely many possible slopes (the actual number is not important for us, but it can be expressed in terms of Euler's totient number) and therefore finitely many equivalence classes of locally structurally stable \rsp{tropical singularities}.

\begin{figure}[H]
    \centering
    \begin{subfigure}{0.49\textwidth}
    \centering
        \includegraphics[width=0.96\linewidth]{./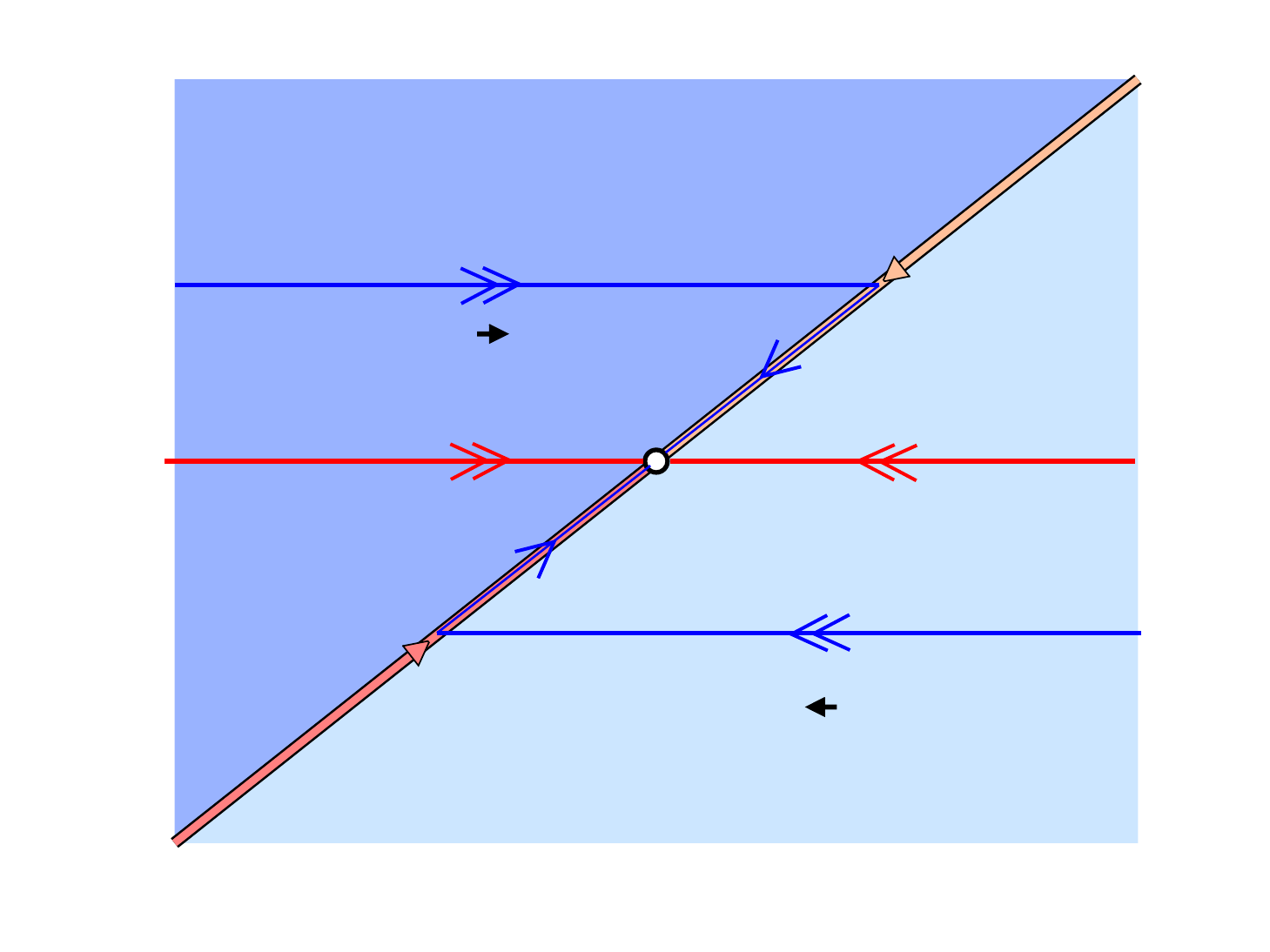}
        \caption{Sink}
    \end{subfigure}
    \begin{subfigure}{0.49\textwidth}
    \centering
        \includegraphics[width=0.96\linewidth]{./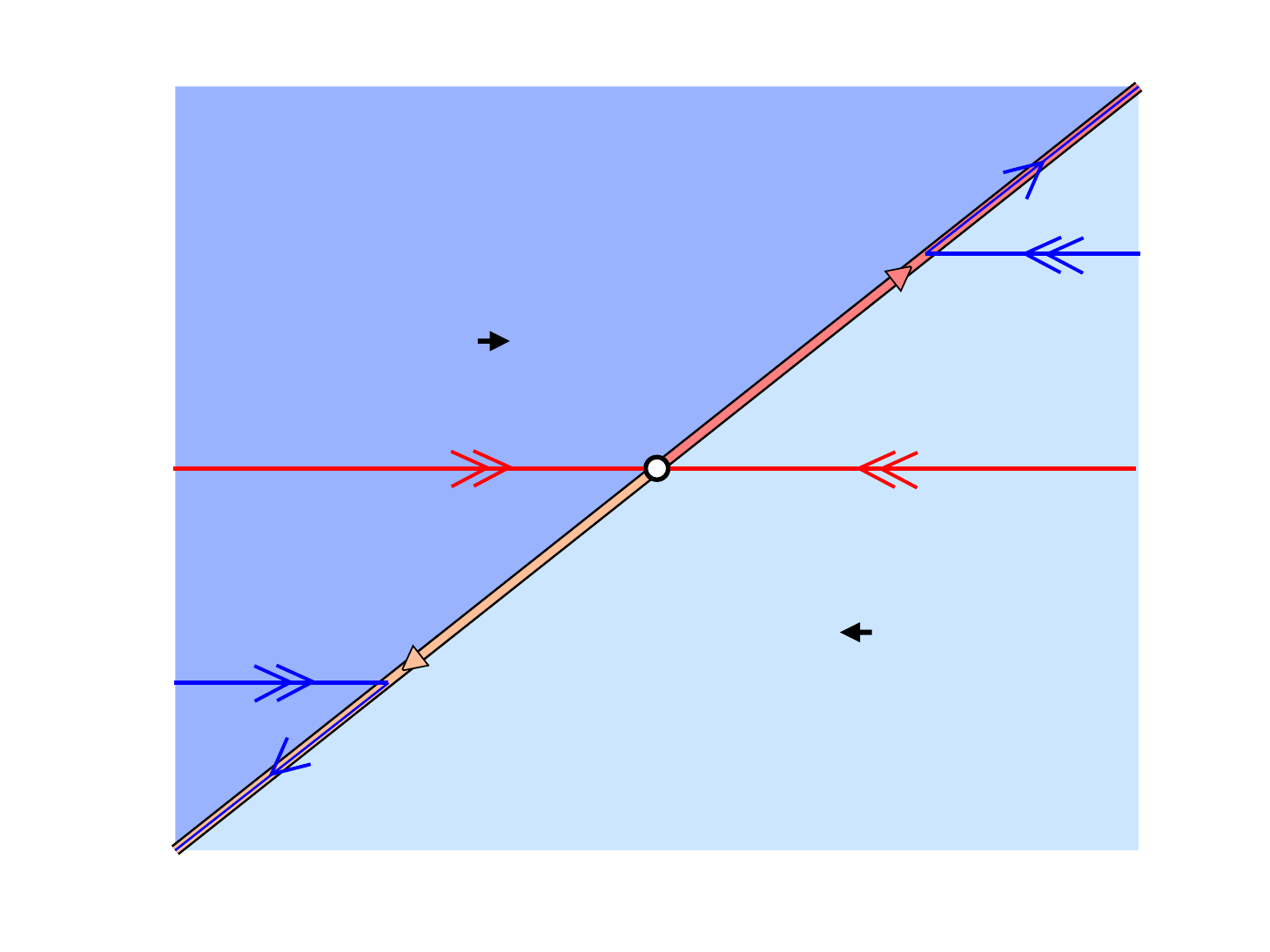}
        \caption{Saddle}
    \end{subfigure}
     \begin{subfigure}{0.49\textwidth}
    \centering
        \includegraphics[width=0.96\linewidth]{./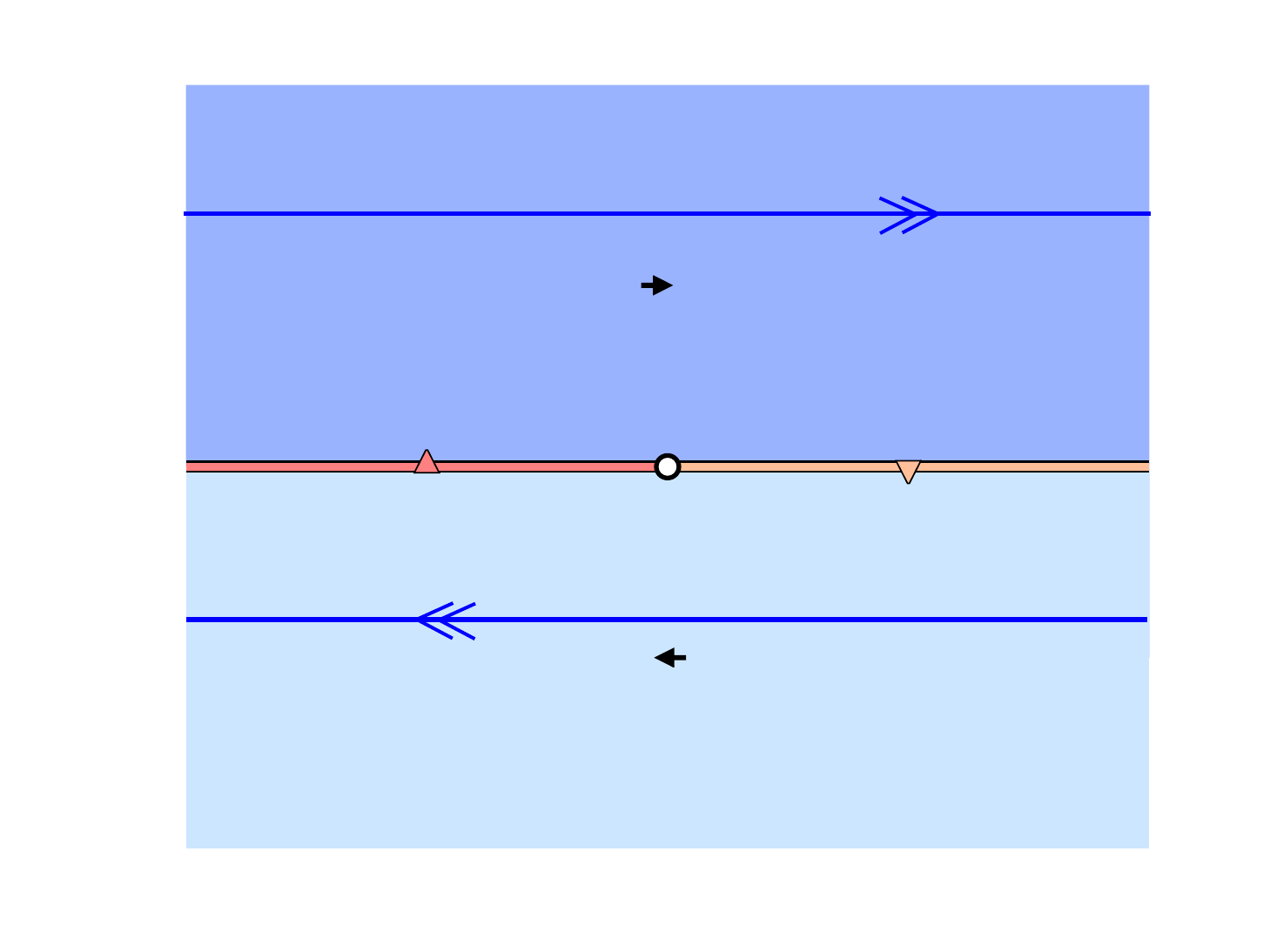}
        \caption{Hybrid point (center-type)}
    \end{subfigure}
    \begin{subfigure}{0.49\textwidth}
    \centering
        \includegraphics[width=0.96\linewidth]{./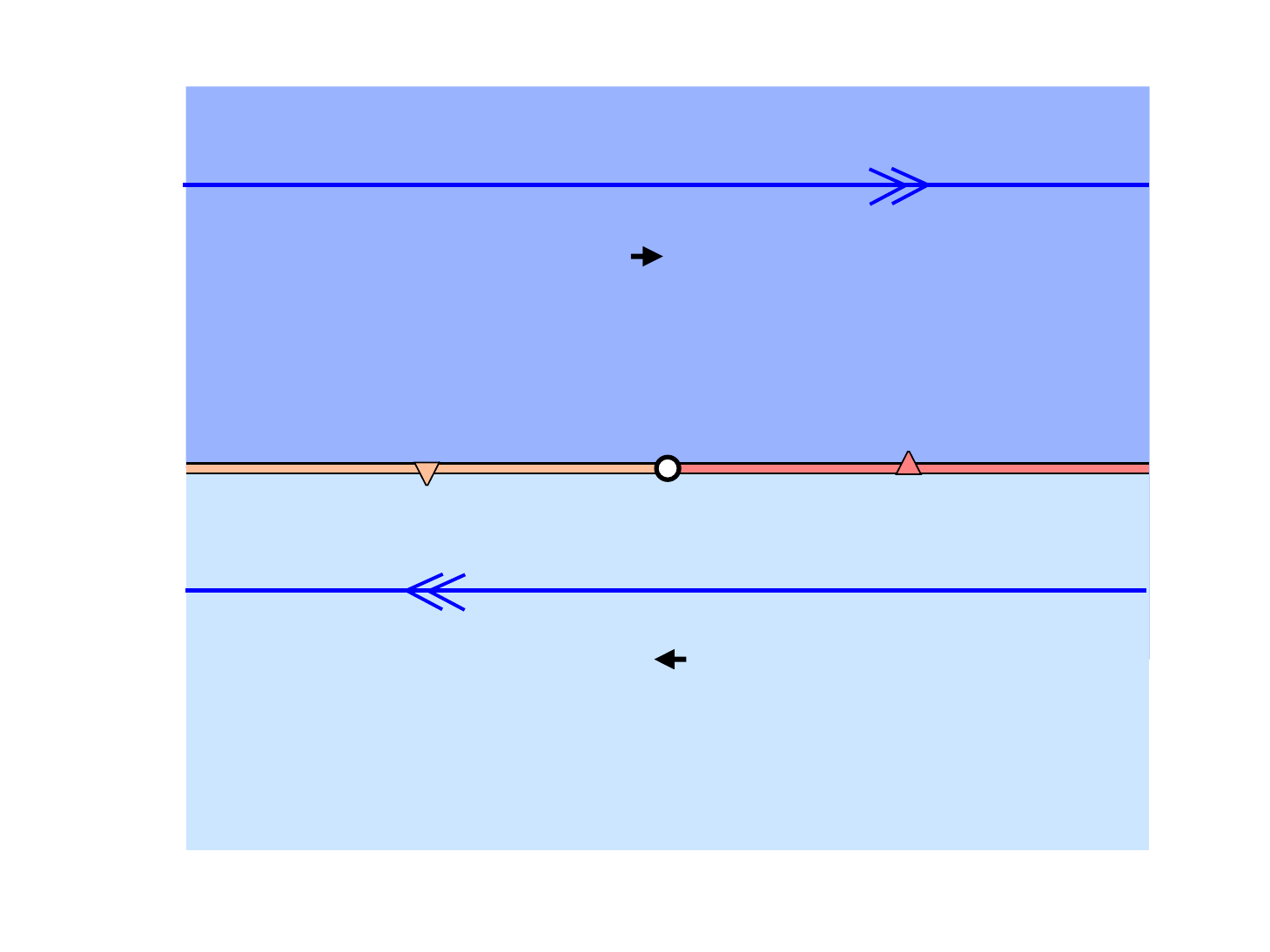}
        \caption{Hybrid point (saddle-type)}
    \end{subfigure}
    \caption{Illustration of the different \rsp{tropical singularities}, see sub-caption. In the case of the sink and saddle we indicate the arrival separatrices in red, see \secref{separatrixconnection}. The triangles along the sliding manifolds indicate the direction of $\md_{\trop}$ in (a) and (b) and the direction of the flow vector of the subdominant direction ($v$ in this case) in (c) and (d).}
    \figlab{tropeq}
        \end{figure}

\begin{lemma}\lemmalab{tropeq}
Consider a tropical dynamical system $$TDS'\in \TDS,$$ with \rspp{$(\mathcal T',\mathcal T'^{\mathcal U},\mathcal T'^{\mathcal V})$} in general position and suppose that $\mathcal P'$ is a \rsp{tropical singularity}. Then $\mathcal P'$ is a sink, source, saddle or a hybrid point and it is locally structurally stable on a neighborhood $X$ of $\mathcal P'$.

Moreover,  for any $TDS\in \mathcal O$, with $\mathcal O$ a sufficiently small neighborhood of $TDS'$, there is a unique \rsp{tropical singularity}  $\mathcal P(\alpha)$, of the same type (sink/source/saddle/hybrid point) as $\mathcal P'$, and whose coordinates $(u(\alpha),v(\alpha))$ are affine functions of $\alpha\in O$ (and invariant with respect to \eqref{translation}).
\end{lemma}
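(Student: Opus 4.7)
The plan is to handle the three assertions—exhaustive classification, persistence of $\mathcal P(\alpha)$, and local structural stability—in sequence. For the \textbf{classification}, since $\textbf{0}\in\Conv(u',v')$, case \ref{b} of \defnref{convtrop} forces $\md_{\mathcal U^*}=\{(\pm 1,0)\}$ and $\md_{\mathcal V^*}=\{(0,\pm 1)\}$. Write $\mathcal U^*=\{i,j\}$ and $\mathcal V^*=\{l,p\}$, so $\mathcal P'\in\mathcal E_{i,j}^{\mathcal U}\cap\mathcal E_{l,p}^{\mathcal V}$. General position (specifically item \ref{gp3} of \defnref{generalposition}) rules out $\{i,j,l,p\}\subset\mathcal I^*$, so exactly one of these two sliding edges lies in $\mathcal T$; suppose $\mathcal E_{i,j}^{\mathcal U}\subset\mathcal T$, the other case being symmetric. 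On $\mathcal E_{i,j}^{\mathcal U}$ near $\mathcal P'$, the line $\mathcal E_{l,p}^{\mathcal V}$ separates two sides on which the subdominant $\mathcal V$-maximum is realized by $F_l$ or $F_p$, respectively. The transversal/tangential dichotomy for $\mathcal E_{i,j}^{\mathcal U}$ is determined solely by $\mathbf n_{i,j}=\degree F_j-\degree F_i$ (cf.\ \defnref{crossingsliding}), and within the transversal case the stability of the sliding combined with the signs of $\md_l\cdot\mathbf e_{i,j}$ and $\md_p\cdot\mathbf e_{i,j}$ in \eqref{mdtrop} yields exactly the list in \defnref{sinks}.

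For \textbf{persistence}, the coordinates $(u(\alpha),v(\alpha))$ of $\mathcal P(\alpha)$ are determined by
\begin{align*}
\begin{pmatrix} \degree F_i-\degree F_j \\ \degree F_l-\degree F_p \end{pmatrix}\begin{pmatrix} u \\ v \end{pmatrix} + \begin{pmatrix} \alpha_i-\alpha_j \\ \alpha_l-\alpha_p \end{pmatrix}=0.
\end{align*}
By \eqref{detcond} the matrix is invertible, yielding a unique affine solution that depends only on the differences $\alpha_i-\alpha_j$ and $\alpha_l-\alpha_p$, hence is invariant under \eqref{translation}. The strict inequalities $F_k<F_i$ for $k\in\mathcal U\setminus\{i,j\}$ and $F_k<F_l$ for $k\in\mathcal V\setminus\{l,p\}$, valid at $(u',v')$, persist on a neighborhood $X$ of $\mathcal P'$ and for $\alpha$ near $\alpha'$ by joint continuity, so $\mathcal P(\alpha)\in X$ remains a tropical singularity. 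Its type is determined either by the fixed degrees $\degree F_k$ (transversal vs.\ tangential; stability of sliding) or by the strict sign of $\md_{l}\cdot\mathbf e_{i,j}$ relative to which side of $\mathcal E_{l,p}^{\mathcal V}(\alpha)$ one sits on; all these data are preserved, so $\mathcal P(\alpha)$ has the same type as $\mathcal P'$.

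For \textbf{local structural stability}, define the affine translation $T_\alpha(u,v):=(u,v)+(u(\alpha)-u',v(\alpha)-v')$. Both perturbed sliding lines $\mathcal E_{i,j}^{\mathcal U}(\alpha)$ and $\mathcal E_{l,p}^{\mathcal V}(\alpha)$ pass through $\mathcal P(\alpha)$ with slopes depending only on the (fixed) degrees, so $T_\alpha$ maps each onto its $TDS'$-counterpart. After shrinking $\mathcal O$, and hence $X$, so that no further tropical edges of $\mathcal T^{\mathcal U}$, $\mathcal T^{\mathcal V}$, or $\mathcal T$ enter $X$ for any $TDS\in\mathcal O$, the map $T_\alpha$ matches the local tropical data of $TDS$ with that of $TDS'$ while preserving slopes, flow vectors $\md_k$, and flow orientations. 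Consequently every local polygonal orbit of $TDS$ is the image under $T_\alpha$ of a local polygonal orbit of $TDS'$ with the same edge slopes and vertex structure; the continuous family $s\mapsto T_{s\alpha+(1-s)\alpha'}$, $s\in[0,1]$, then provides the homotopy required by \defnref{orbitshomotopy}, yielding local structural stability in the sense of \defnref{structuralstability}.

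The \textbf{anticipated obstacle} is the hybrid case, where $\mathcal E_{i,j}^{\mathcal U}$ is tangential and solutions through $\mathcal P'$ are non-unique in a richer way (including the Zeno-type behavior flagged after \thmref{existence}). One must verify that two hybrid points of the same sub-type—center- vs.\ saddle-type, cf.\ \figref{tropeq}(c),(d)—actually produce homotopic polygonal orbit families under $T_\alpha$: orbits arriving along $\md_i$ or $\md_j$ can be continued in multiple ways, and one needs to check that the equivalence classes of continuations (not merely individual orbits) are preserved by the translation. Since the relevant combinatorial choices are dictated by the fixed degrees and the strict ordering of the subdominant monomials, this bookkeeping ultimately goes through, but it is the step requiring the most care.
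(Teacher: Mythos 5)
Your proposal is correct and follows essentially the same route as the paper: the classification via case \ref{b} of \defnref{convtrop} and general position is exactly the discussion preceding \defnref{sinks}, the persistence argument is the paper's ``proceed as in \lemmaref{troppoint1}'' (invertible linear system by \eqref{detcond}, affine translation-invariant solution), and the local structural stability is established by the same affine translation $T_\alpha$. Your additional care with the hybrid case is more explicit than the paper's one-line justification via $\Conv(\mathcal P')=\Conv(\mathcal P(\alpha))$, but it resolves in the same way.
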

\begin{proof}
We first obtain $\mathcal P(\alpha)$ with the coordinates $(u(\alpha),v(\alpha))$ by proceeding as in \lemmaref{troppoint1}. Then we define an affine translation $T_\alpha$ with respect to $\alpha\in O$ by
 \begin{align*}
  T_\alpha(u,v)=(u,v)+(u(\alpha)-u',v(\alpha)-v').
 \end{align*}
Clearly, $T_\alpha$ sends $\mathcal T'$ to $\mathcal T(\alpha)$ and locally orbits of $TDS'$ to orbits of $TDS\in \mathcal O$, since $\Conv(\mathcal P')=\Conv(\mathcal P(\alpha))$ for all $\alpha\in O$. 
\end{proof}

\begin{definition}\defnlab{eqgen}
 We say that a \rsp{tropical singularity}  is \textnormal{in general position} if it is a sink, source, saddle or a hybrid point.
\end{definition}
\begin{remark}\remlab{hybrid}
 Notice that a sink, a source and saddle each have nine local orbits up to equivalence (one point, two separatrices, two along the switching manifolds and then four with fast and slow segments; only five (including the \rsp{singularity} itself) are shown in \figref{tropeq}(a)  and (b) in the cases of a sink and saddle). Notice also that solutions are nonunique at a \rsp{tropical singularity}.  
 
 On the other hand, with our definition of an orbit, a hybrid point has infinitely many polygonal orbits, as we can slide back and forth any finite number of times before either leaving the neighborhood or ending at the \rsp{singularity}. In the present manuscript, it will not be important, but in future work one might consider restricting the set of orbits further and rule out pathological examples.   
\end{remark}

\section{Separatrices and separatrix connections}\seclab{separatrixconnection}
In \secref{troppoint} and \secref{tropeq}, we focused on local phenomena. We now turn our attention to global ones. For this purpose, \rsp{tropical vertices} and \rsp{tropical singularities} play an important role. Indeed if there is an orbit connecting two such points by crossing only, then this situation \textit{may be} structurally unstable, see \rsp{the example in} \figref{separatrixconnection} and the figure caption for further details.

\begin{figure}[H]
    \centering
    \begin{subfigure}{0.5\textwidth}
    \centering
        \includegraphics[width=0.96\linewidth]{./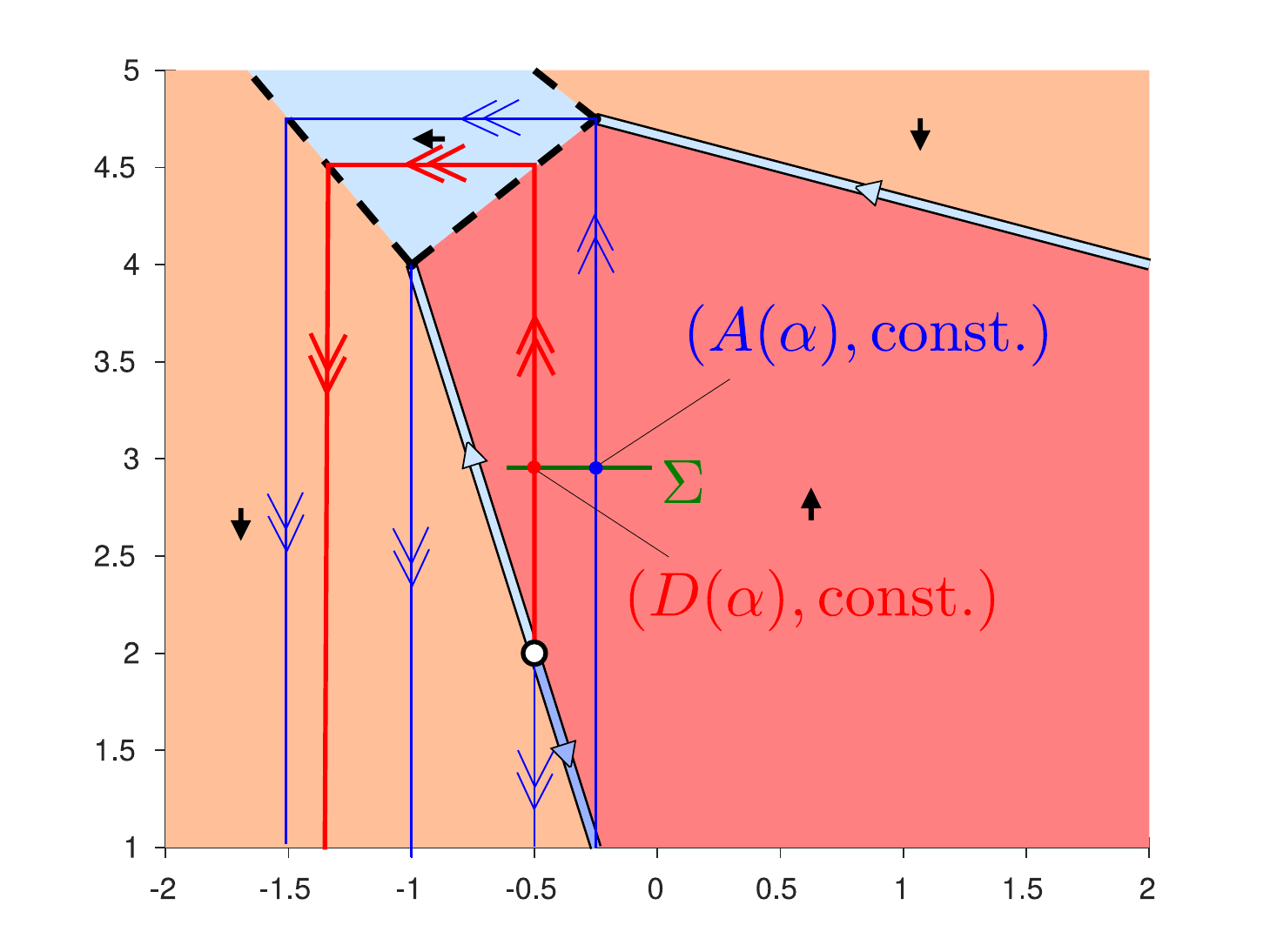}
        \caption{$\alpha=-14$}
    \end{subfigure}%
    \begin{subfigure}{0.5\textwidth}
    \centering
        \includegraphics[width=0.96\linewidth]{./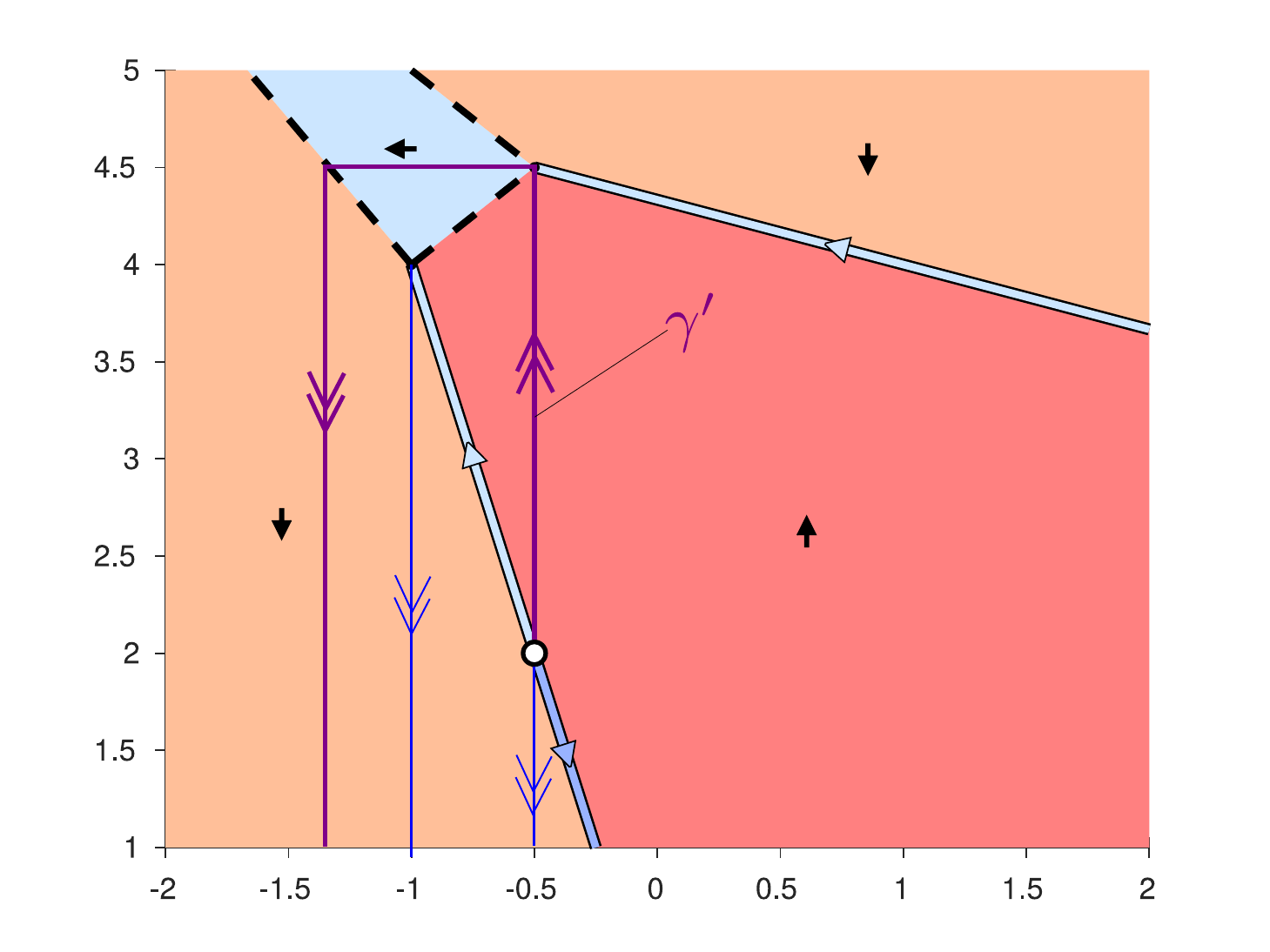}
        \caption{$\alpha=-13$}
    \end{subfigure}
    \begin{subfigure}{0.5\textwidth}
    \centering
        \includegraphics[width=0.96\linewidth]{./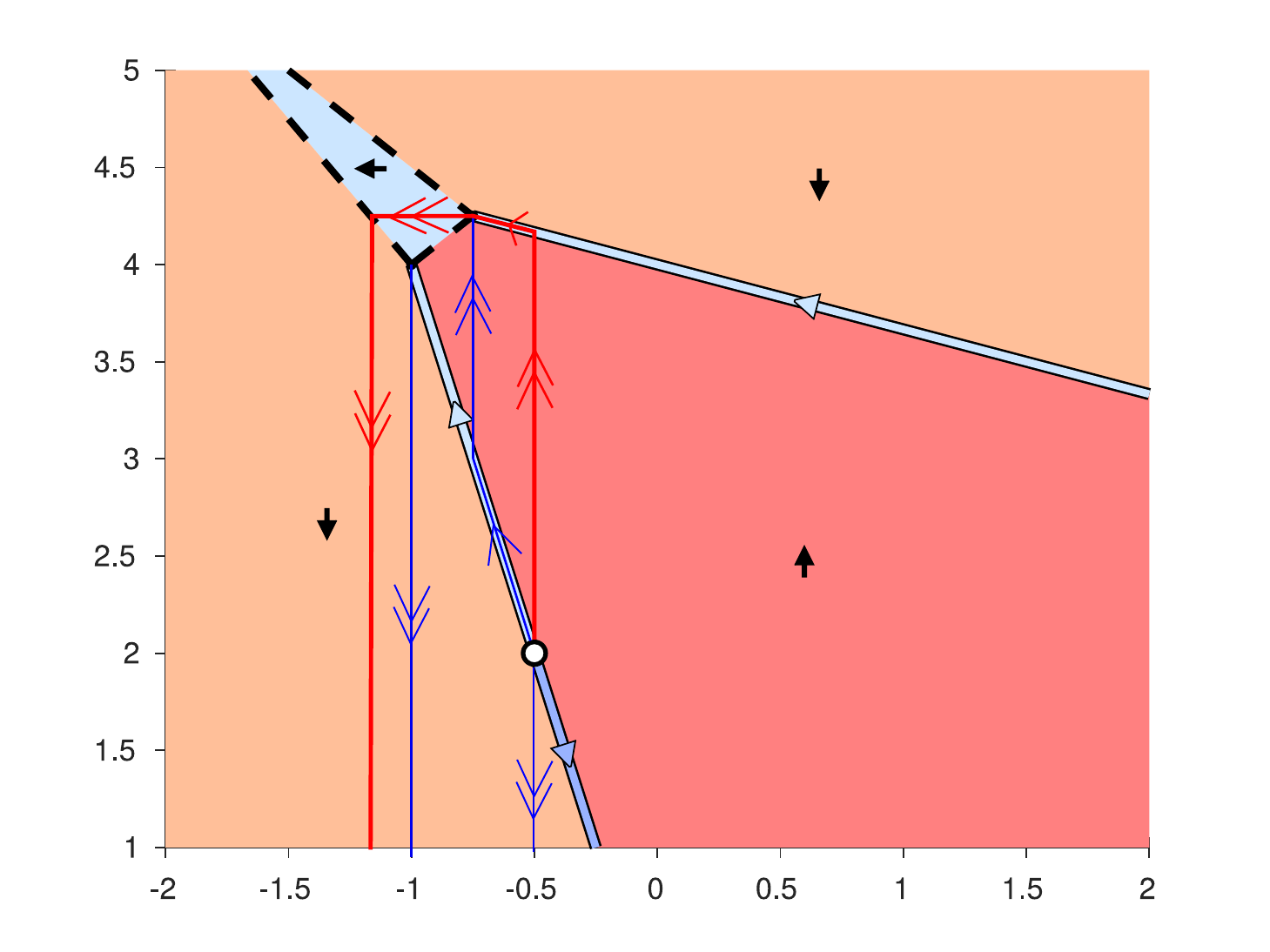}
        \caption{$\alpha=-12$}
    \end{subfigure}
    \caption{\rsp{Example of an} unfolding of a separatrix connection. The system is given by \eqref{crossing1} below. For $\alpha=-13$, see (b), there is a separatrix connection (in purple) between the tropical source at $(-0.5,2)$ and the \rsp{tropical vertex}  at $(-0.5,4.5)$. This gives rise to two non-equivalent phase portraits upon perturbation, see  (a) and (c) for $\alpha=-14$ and $\alpha=-12$, respectively. In particular, the red orbits in (a) and (c) are not homotopic to any orbits in (c) and (a), respectively, cf. \defnref{orbitshomotopy}.}
    \figlab{separatrixconnection}
    \end{figure}

    We now introduce the notion of arrival and departure separatrices (\cite{broucke2001a} uses similar concepts).
\begin{definition}
 Consider a point $\mathcal P$, which is either a \rsp{tropical vertex}  or a \rsp{tropical singularity}  of a tropical dynamical system. Then \textnormal{a departure (an arrival) separatrix} is a forward (backward, respectively) orbit of the crossing flow with initial condition at $\mathcal P$.
\end{definition} 

A departure separatrix of $\mathcal P$ either corresponds to a global forward solution $(u,v):[0,\infty)\to \mathbb R^2$, $(u,v)(0)=\mathcal P$, of the crossing flow (being either unbounded as $t\to \infty$ or having a crossing limit cycle (see \secref{crossing}) as the $\omega$-limit set) or it intersects a sliding switching manifold $\mathcal E\subset \mathcal T$ (transversal nullcline type) or a \rsp{tropical vertex}  $\mathcal Q$ in a (smallest) finite time $T>0$: $(u,v)(T)\in \mathcal E$, $(u,v)(T)=\mathcal Q$, respectively. The same holds for an arrival separatrix in backward time. 

Sinks and strong-stable saddles have two arrival separatrices, see \figref{tropeq}(a) and (b) (red curves), whereas sources and strong-unstable saddles each have two departure separatrices.

Notice that a hybrid point does \textit{not} have a separatrix (since this would necessarily be along the \rsp{tropical edge} and it is therefore not given by crossing, see \figref{tropeq}(c) and (d)). \rspp{This is our definition and it is motivated} by the fact that connections along sliding segments are not (in themselves) associated with bifurcations. 

By \lemmaref{troppoint} and \lemmaref{tropeq}, we \rspp{directly obtain the} following. 
\begin{lemma}\lemmalab{this}
Consider a tropical dynamical system $$TDS'\in \TDS,$$ and a point $\mathcal P':(u',v')$, which is either (a) a \rsp{tropical vertex}  in general position or (b) a tropical sink, source or saddle, recall \defnref{pointgen} and \defnref{sinks}, respectively. (We will refer to this as the type of $\mathcal P'$.) Then there is a neighborhood $X$ of $\mathcal P'$ such that the separatrices of the point $\mathcal P(\alpha)$ of $TDS\in \mathcal O$, which is of the same type as $\mathcal P'$, are affine translations with respect to $\alpha\in O$ in  $X$.
\end{lemma}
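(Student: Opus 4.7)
The plan is to exploit the local structure already obtained in \lemmaref{troppoint1} and \lemmaref{tropeq}: both of these assert that the point $\mathcal P(\alpha)$ exists (and is unique within a suitable neighborhood $X$), that it is of the same type as $\mathcal P'$, and that its coordinates depend affinely on $\alpha\in O$. Composing this with the affine translation $T_\alpha$ introduced in the proofs of \lemmaref{troppoint2} and \lemmaref{tropeq} already gives a local equivalence on $X$, so what remains is only to verify that the image of a separatrix of $\mathcal P'$ under $T_\alpha$ is exactly a separatrix of $\mathcal P(\alpha)$.

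First, I would choose $X$ small enough so that: (i) the only vertex/singularity of $TDS'$ in $X$ is $\mathcal P'$; (ii) by \lemmaref{troppoint1} / \lemmaref{tropeq}, for all $TDS\in \mathcal O$ (with $\mathcal O$ a suitably small neighborhood of $TDS'$) the point $\mathcal P(\alpha)\in X$ is the unique vertex/singularity of the corresponding type; and (iii) the combinatorial structure of the adjacent tropical regions, edges, and local vertices is preserved under $T_\alpha$. Concretely, since the slopes of all \rsp{tropical edges} depend only on the degrees $\degree F_k$ (not on $\alpha$), and the boundaries of all tropical regions meeting $\mathcal P'$ are translated by the same affine function of $\alpha$ as $\mathcal P(\alpha)$ itself (this uses the invariance under \eqref{translation} together with \eqref{walpha}), the translation $T_\alpha$ maps the local decomposition of $X$ associated with $TDS'$ onto that of $TDS$.

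Next I would track a separatrix. By definition, an arrival (or departure) separatrix at $\mathcal P'$ is an orbit of the crossing flow with terminal (or initial) condition $\mathcal P'$. Near $\mathcal P'$ such an orbit is the trace of the unique flow vector $\md_k$ associated with the adjacent tropical region $\mathcal R_k$ (or, in the crossing-edge case, the concatenation of two such traces across a crossing \rsp{edge}, described locally by the transition map of \lemmaref{mapPij}). Crucially, the flow vector $\md_k\in\{(\pm 1,0),(0,\pm 1)\}$ is intrinsic to the monomial and independent of $\alpha$, and the transition map \eqref{mapPij} has slope $(n_j-n_i)/(m_i-m_j)$ that depends only on degrees. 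Hence on $X$, the separatrix of $\mathcal P(\alpha)$ is obtained by issuing the same fixed flow vectors from $\mathcal P(\alpha)$ that issue from $\mathcal P'$, and consequently it is precisely the image under $T_\alpha$ of the corresponding separatrix of $\mathcal P'$. Since $T_\alpha$ is affine in $\alpha$, this is the desired affine-in-$\alpha$ translation.

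The only mild subtlety is making sure the separatrix does not exit $X$ or meet another sliding manifold within $X$ at a position whose $\alpha$-dependence is nontrivial. This is handled by shrinking $X$ so that the separatrix of $\mathcal P'$ stays inside $X$ up to its first encounter with $\mathcal T$ (say at some point $q'\in X$), and then noting that the corresponding encounter point $q(\alpha)$ for $TDS$ is obtained as the intersection of the translated emanating ray with the translated edge/vertex, which by construction equals $T_\alpha(q')$. Iterating this argument across finitely many crossing segments (up to a fixed finite combinatorial length inside $X$) completes the proof.
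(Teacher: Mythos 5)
Your proposal is correct and follows essentially the same route as the paper, which states the lemma as a direct consequence of the preceding local results on tropical vertices and singularities (the paper gives no written proof beyond citing those lemmas). Your write-up simply fills in the details the paper leaves implicit — the affine translation $T_\alpha$, the $\alpha$-independence of slopes and flow vectors, and the localization to $X$ — all in the spirit of the proofs of \lemmaref{troppoint2} and \lemmaref{tropeq}.
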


\begin{definition}
 We say that an orbit $\gamma$ of the crossing flow is \textnormal{a separatrix connection} if it is both a departure separatrix and an arrival separatrix of points $\mathcal P$ and $\mathcal Q$, respectively. In the affirmative case, we will say that $\gamma$ \textnormal{departs from $\mathcal P$} and \textnormal{arrives at $\mathcal Q$}.  
 If $\mathcal P=\mathcal Q$ then it is a \textnormal{homoclinic separatrix connection}; otherwise, it is said to be a \textnormal{heteroclinic separatrix connection}. 
 \end{definition}
 
\begin{lemma}\lemmalab{Deltasep}
 Consider a tropical dynamical system   $$TDS'\in \TDS,$$ with \rspp{$(\mathcal T',\mathcal T'^{\mathcal U},\mathcal T'^{\mathcal V})$} in general position and tropical coefficients $\alpha'$. Suppose that $\gamma'$ is a separatrix connection, departing from $\mathcal P'$ and arriving at $\mathcal Q'$. Here each $\mathcal P'$ and $\mathcal Q'$ is either a \rsp{tropical vertex}  in general position or a tropical sink, source or saddle.

  Then there is a neighborhood $\mathcal O$ of $TDS'$ such that for each $TDS\in \mathcal O$ there are points $\mathcal P(\alpha)$ and $\mathcal Q(\alpha)$ of the same type as $\mathcal P'$ and $\mathcal Q'$, respectively.
Moreover, there is a section $\Sigma$ transverse to $\gamma'$, see  \figref{separatrixconnection}(a) and (b), defined by either (i) $v=\textnormal{const.}$, $u\in I$,  or (ii) $u=\textnormal{const.}$, $v\in I$, where $I$ is an open interval, such that the departure separatrix of $\mathcal P(\alpha)$ and the arrival separatrix of $\mathcal Q(\alpha)$ intersect $\Sigma$ for all $\alpha\in O$ in the following points:
 \begin{enumerate}
  \item[(i)] $(D(\alpha),\textnormal{const.})$ and $(A(\alpha),\textnormal{const.})$.
  \item[(ii)] $(\textnormal{const.},D(\alpha))$ and $(\textnormal{const.},A(\alpha))$,
 \end{enumerate}
in cases (i) and (ii), respectively.  In particular,
%
 \begin{align*}
  \Delta(\alpha): = D(\alpha)-A(\alpha)=b\cdot (\alpha-\alpha'),\quad \alpha\in O,
 \end{align*}
 for some
 \begin{align}
 \rspp{b=(b_1,\ldots,b_{2M})\in \mathbb Q^{2M}},\eqlab{splittingb}
 \end{align}
 is an affine function of $\alpha$. Moreover, it is invariant with respect to the action \eqref{translation} (i.e. $\sum_{k=1}^{2M} b_k=0$) and roots of $\Delta$ correspond to separatrix connections of $\mathcal P$ and $\mathcal Q$. 
\end{lemma}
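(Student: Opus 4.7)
The plan is to combine the affine dependence of \rsp{tropical vertices} and \rsp{tropical singularities} on $\alpha$ (established in \lemmaref{troppoint1}, \lemmaref{tropeq} and \lemmaref{this}) with the affine nature, in both the spatial variable and the tropical coefficients, of the crossing transition maps (\lemmaref{mapPij}). The result will follow by interpreting $D(\alpha)$ and $A(\alpha)$ as compositions of affine maps.

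First, I would invoke \lemmaref{troppoint1} and \lemmaref{tropeq} to obtain, for $TDS\in \mathcal O$ with $\mathcal O$ a sufficiently small neighborhood of $TDS'$ in $\TDSN$, points $\mathcal P(\alpha)$ and $\mathcal Q(\alpha)$ of the same type as $\mathcal P'$ and $\mathcal Q'$, whose coordinates depend affinely on $\alpha$. I would also recall from \lemmaref{this} that the \rsp{tropical edges} meeting $\mathcal P(\alpha)$ and $\mathcal Q(\alpha)$ — along which the relevant separatrices depart, respectively arrive — are affine translations (with respect to $\alpha$) of the corresponding edges at $\mathcal P'$ and $\mathcal Q'$, so their slopes are fixed.

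Second, I would exploit that $\gamma'$ is, by the definition of a separatrix connection, a compact polygonal orbit of the crossing flow. Hence it crosses only finitely many \rsp{tropical edges} $\mathcal E_{k_0,k_1}^{(1)},\ldots,\mathcal E_{k_{N-1},k_N}^{(N)}$, each of crossing-type, and each intersection lies transversely in the relative interior of the corresponding \rsp{tropical edge} — away from any \rsp{tropical vertex} or sliding segment. Using \lemmaref{upper} together with the continuous (in fact affine) dependence of $\mathcal T$ on $\alpha$, I claim that after shrinking $\mathcal O$ the departure separatrix of $\mathcal P(\alpha)$ crosses the perturbed versions of these edges in the same order, and similarly the arrival separatrix of $\mathcal Q(\alpha)$, traced backward.

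Third, I would pick a section $\Sigma$ transverse to $\gamma'$ at an intermediate crossing, taking $\Sigma$ horizontal (case (i)) or vertical (case (ii)) according to the local direction of $\gamma'$. By \lemmaref{mapPij} and \remref{crossing}, the composition of the transition maps from $\mathcal P(\alpha)$ forward, respectively from $\mathcal Q(\alpha)$ backward, to $\Sigma$ is of the form $u\mapsto cu+b(\alpha)$ with $c\in \mathbb Q_+$ depending only on the degrees $\degree F_k$ and $b(\alpha)$ affine in $\alpha$. Precomposing with the affine dependence of $\mathcal P(\alpha)$ and $\mathcal Q(\alpha)$ on $\alpha$, I obtain $D(\alpha)$ and $A(\alpha)$, and hence $\Delta(\alpha)$, as affine functions of $\alpha$ with rational linear part, giving \eqref{splittingb}. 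Invariance of $\mathcal T$ and of $\Conv$ under the translation \eqref{translation} forces $\Delta$ to inherit that invariance, which yields $\sum_{k=1}^{2M} b_k = 0$.

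Finally, I would observe that $\Delta(\alpha_*)=0$ precisely says that the departure separatrix of $\mathcal P(\alpha_*)$ and the arrival separatrix of $\mathcal Q(\alpha_*)$ meet $\Sigma$ at the same point; by \lemmaref{crossingflow} (uniqueness of the crossing flow on $\mathbb R^2\setminus \mathcal T_s$), the two separatrices then coincide in a neighborhood of that point on $\Sigma$ and, by forward/backward continuation, produce a separatrix connection between $\mathcal P(\alpha_*)$ and $\mathcal Q(\alpha_*)$. The main technical obstacle is the second step: making rigorous that the ordered crossing sequence is preserved for all $\alpha$ close to $\alpha'$. This reduces to verifying that transversality of each interior crossing, combined with the continuous dependence of $\mathcal T$ and $\Conv$ on $\alpha$, prevents the perturbed separatrix from striking a \rsp{tropical vertex} or slipping onto a sliding segment before reaching $\Sigma$ — a uniform statement over the finitely many crossings in $\gamma'$.
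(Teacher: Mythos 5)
Your proposal is correct and follows essentially the same route as the paper, which simply cites \lemmaref{this}, \lemmaref{mapPij} and \lemmaref{troppoint} and declares the rest clear: affine dependence of $\mathcal P(\alpha)$, $\mathcal Q(\alpha)$ and their local separatrices on $\alpha$, composed with the affine crossing maps of \lemmaref{mapPij}, yields the affine splitting function $\Delta$ with rational coefficients and translation invariance. Your additional care about preserving the ordered, transversal crossing sequence for $\alpha$ near $\alpha'$ is a genuine (and correctly resolved) point of detail that the paper leaves implicit.
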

\begin{proof}
 This should be clear enough, recall \lemmaref{this}. In particular, the expression for $\Delta$ is a consequence of  \lemmaref{mapPij} and \lemmaref{troppoint}. 
\end{proof}
\begin{remark}$\Delta$ only depends on the $\alpha_i$'s with $i\in \alpha_{\argmax_{k\in \mathcal I} F_k(u,v)}$ for some $(u,v)\in \gamma$ (like the quantities in \lemmaref{mapPij} and \lemmaref{troppoint}), but this will not be important. 
\end{remark}

The \rsp{vector $b$ \eqref{splittingb}} will be called \textit{a splitting constant} associated with $\gamma'$. Clearly, the splitting constant depends upon the section $\Sigma$, but at the same time:
\begin{lemma}\lemmalab{bbp}
If $b$ and $b'$ are two splitting constants then $b=cb'$ for some $c\in \mathbb Q$, $c\ne 0$.
\end{lemma}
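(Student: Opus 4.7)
The plan is to relate the two splitting constants via the transition map of the crossing flow between the two sections $\Sigma$ and $\Sigma'$ on which they are defined. Since $\gamma'$ is a separatrix connection, it consists of crossing segments only (with possibly intermediate \rsp{tropical edges} of crossing type). Hence, in a tubular neighborhood of $\gamma'$ and for $\alpha$ in a sufficiently small neighborhood $O$ of $\alpha'$, the crossing flow is well-defined, structurally stable, and depends continuously on $\alpha$.

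First, I would assume without loss of generality that both $\Sigma$ and $\Sigma'$ are traversed by $\gamma'$, with $\Sigma$ encountered before $\Sigma'$ along the flow direction (the opposite ordering is handled by reversing time, and if both sections happen to coincide or lie on the same crossing segment the assertion is trivial with $c=1$). By \lemmaref{mapPij} and \remref{crossing}, the transition map $T:\Sigma\to \Sigma'$ induced by the crossing flow is a composition of finitely many maps of the form \eqref{mapPij}. Each such map is affine of the form $z\mapsto \frac{n_j-n_i}{m_i-m_j}z + \frac{\alpha_j-\alpha_i}{m_i-m_j}$, with rational, nonzero slope depending only on the degrees $\degree F_k$. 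Composing, one gets
\begin{align*}
T(z)=c\,z+d(\alpha),
\end{align*}
with $c\in \mathbb Q\setminus\{0\}$ depending only on the degrees $\{\degree F_k\}_{k\in \mathcal I}$ and $\alpha\mapsto d(\alpha)$ affine.

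Next, I would apply $T$ to the points $D(\alpha)$ and $A(\alpha)$ on $\Sigma$. By \lemmaref{this}, both the departure separatrix of $\mathcal P(\alpha)$ and the arrival separatrix of $\mathcal Q(\alpha)$ are affine translations (with respect to $\alpha$) of $\gamma'$ in any tubular neighborhood where the flow is purely crossing. Consequently, the departure separatrix from $\mathcal P(\alpha)$ meets $\Sigma'$ at $D'(\alpha)=T(D(\alpha))=c\,D(\alpha)+d(\alpha)$, and the arrival separatrix at $\mathcal Q(\alpha)$ meets $\Sigma'$ at $A'(\alpha)=T(A(\alpha))=c\,A(\alpha)+d(\alpha)$. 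Subtracting,
\begin{align*}
\Delta'(\alpha)=D'(\alpha)-A'(\alpha)=c\bigl(D(\alpha)-A(\alpha)\bigr)=c\,\Delta(\alpha)=c\,b\cdot(\alpha-\alpha').
\end{align*}
Comparing with $\Delta'(\alpha)=b'\cdot(\alpha-\alpha')$ and using that this identity holds for all $\alpha$ in an open neighborhood $O$ of $\alpha'$, I conclude $b'=c\,b$ with $c\in \mathbb Q\setminus\{0\}$, as required.

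The only nontrivial point is verifying that the transition map $T$ is well-defined and affine with a degree-dependent rational slope independent of $\alpha$; this is a direct consequence of \lemmaref{mapPij} together with the observation in \remref{crossing} that compositions of such maps preserve this structure. The rest is a one-line cancellation of $d(\alpha)$ in the difference $D'(\alpha)-A'(\alpha)$.
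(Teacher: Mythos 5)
Your proposal is correct and follows essentially the same route as the paper: both conjugate the two distance functions through the affine, invertible transition map between $\Sigma$ and $\Sigma'$ (a composition of maps of the form \eqref{mapPij}), so that the $\alpha$-dependent offset cancels in the difference and $\Delta'=c\,\Delta$ with $c\in\mathbb Q\setminus\{0\}$. Your write-up merely makes explicit the cancellation that the paper's proof leaves implicit.
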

\begin{proof}
Let $\Delta(\alpha)=b\cdot (\alpha-\alpha')$ and $\Delta'(\alpha)=b'\cdot (\alpha-\alpha')$ be two different distance functions. They are defined along different sections $\Sigma$ and $\Sigma'$, respectively. Consequently, we can conjugate $\Delta$ and $\Delta'$ through the transition map from $\Sigma$ to $\Sigma'$. This map is affine and invertible, since it is a composition of maps of the form \lemmaref{mapPij}. The result therefore follows.
\end{proof} 
Most importantly, $b=0\Leftrightarrow b'=0$, since $b$ can vanish; for a simple example, recall \figref{tropauto2}(d) (purple orbit). 
\begin{lemma}
 Suppose that the assumptions of \lemmaref{Deltasep} hold true and let $b'$ denote a splitting constant associated to the separatrix connection $\gamma'$. Then the tropical dynamical system $TDS'$ is locally structurally stable on a neighborhood $X$ of $\gamma'$ if $b'=0$. 

  \end{lemma}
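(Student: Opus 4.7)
The plan is to extract from $b'=0$ a persistent separatrix connection $\gamma(\alpha)$ between $\mathcal P(\alpha)$ and $\mathcal Q(\alpha)$, and then show that every polygonal orbit of $TDS'$ near $\gamma'$ has a canonically associated homotopic polygonal orbit of $TDS$ near $\gamma(\alpha)$ (in the sense of \defnref{orbitshomotopy}).

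First, I would invoke \lemmaref{Deltasep} with $b'=0$ to deduce that $\Delta(\alpha)=b'\cdot(\alpha-\alpha')\equiv 0$ on a neighborhood $O$ of $\alpha'$. By \lemmaref{bbp}, the vanishing of the splitting constant is independent of the choice of transverse section along $\gamma'$. Hence, for every $TDS\in\mathcal O$ (with $\mathcal O$ the neighborhood of $TDS'$ supplied by \lemmaref{Deltasep}), the departure separatrix of $\mathcal P(\alpha)$ and the arrival separatrix of $\mathcal Q(\alpha)$ intersect every intermediate section in a single common point, so they coincide as a separatrix connection $\gamma(\alpha)$. In particular $\gamma(\alpha)$ crosses the same sequence of \rsp{tropical edges} of $\mathcal T(\alpha)$ as $\gamma'$ crosses in $\mathcal T'$, and the vertices of the polygonal curve $\gamma(\alpha)$ depend affinely on $\alpha$ through \lemmaref{mapPij}, \lemmaref{troppoint1} and \lemmaref{this}.

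Next, I would choose a tubular neighborhood $X$ of $\gamma'$ such that $X$ contains no \rsp{tropical vertex} or \rsp{tropical singularity} of $TDS'$ other than (possibly) the endpoints $\mathcal P'$ and $\mathcal Q'$, and such that every tropical edge of $\mathcal T'\cap X$ that is not crossed by $\gamma'$ is bounded away from $\gamma'$. By shrinking $\mathcal O$ further if necessary, the same property holds uniformly for $\mathcal T(\alpha)\cap X$ for every $TDS\in \mathcal O$, because the slopes of all tropical edges are determined by the degrees $\{\degree F_k\}_{k\in\mathcal I}$ (which are fixed inside $\TDSN$) and only their positions vary affinely with $\alpha$. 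Consequently, $\mathcal T(\alpha)\cap X$ and $\mathcal T'\cap X$ determine the same combinatorial decomposition of $X$ into pieces of tropical regions, edges, and (at most two) vertices/\rsp{singularities}, with all corresponding features being affine translations of one another.

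Then, for each local polygonal orbit $c'$ of $TDS'$ in $X$, I would construct a homotopic polygonal orbit $c$ of $TDS$ in $X$ as follows: its vertices are the (uniquely determined) intersections of the corresponding sequence of extensions of $\gamma(\alpha)$-type crossings with the (affinely translated) edges of $\mathcal T(\alpha)$; its edges lie in the same tropical regions as those of $c'$, hence carry the same flow vectors $\md_k$ and therefore the same slopes and flow orientations. The straight-line homotopy in $\alpha$ between the vertices of $c'$ and those of $c$ gives the required deformation in the sense of \defnref{orbitshomotopy}. The same construction in reverse produces a homotopic orbit of $TDS'$ for every polygonal orbit of $TDS$ in $X$. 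Applying this to every equivalence class of local polygonal orbits — including $\gamma'$ itself, which is sent to $\gamma(\alpha)$ by the $\Delta\equiv 0$ condition — establishes the local equivalence on $X$ and hence local structural stability.

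The main obstacle is the endpoint bookkeeping: at $\mathcal P'$ and $\mathcal Q'$ (which may be \rsp{tropical vertices} or tropical sinks/sources/saddles), one must verify that $\gamma'$ is matched with the unique separatrix of the appropriate $\mathcal P(\alpha),\mathcal Q(\alpha)$ of the same type (supplied by \lemmaref{troppoint1}, \lemmaref{tropeq} and \lemmaref{this}), and that neighboring polygonal orbits that slide off the separatrix connection are handled consistently on both sides. This relies on the condition $\Delta(\alpha)\equiv 0$ being applicable at every intermediate section, which is precisely the content of \lemmaref{bbp} together with $b'=0$; the intermediate segments between crossings are then trivially handled by the affine translation of the relevant tropical region.
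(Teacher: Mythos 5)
Your proposal is correct and follows essentially the same route as the paper's own (much terser) proof: deduce $\Delta(\alpha)\equiv 0$ from $b'=0$, conclude that the separatrix connection persists as $\gamma(\alpha)$ for all nearby $\alpha$, and observe that no orbit equivalence classes are created or destroyed since all nearby features are affine translations with respect to $\alpha$. The paper states this in two sentences; your version simply supplies the homotopy bookkeeping (via \defnref{orbitshomotopy}, \lemmaref{bbp} and \lemmaref{this}) that the paper leaves implicit.
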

\begin{proof}
 If $b'=0$ then $\Delta(\alpha)\equiv 0$ and there is a separatrix connection $\gamma(\alpha)$, with $\gamma(\alpha')=\gamma'$, for all $\alpha\in O$. The system is locally structurally stable, as no equivalence classes are produced or destroyed by perturbations. 
\end{proof}
\begin{remark}\remlab{bnonzero}
On the other hand, if the splitting constant $b'$ is nonzero for a separatrix connection $\gamma'$, then for any neighborhood $O$ of $\alpha'$, there is an $\alpha\in O$ such that $TDS$ does not have separatrix connection near $\gamma$. This may not lead to structural instability; $\mathcal P'$ not being a discontinuity point is an obvious example, but see also \figref{tropauto1H} below for a (more) nontrivial example. In any case, for a structurally stable system $TDS'$, we may assume that (or more accurately, consider a representative of the structural stable system for which) all separatrix connections $\gamma'$ of $TDS'$ have a vanishing splitting constant, i.e. $b'=0$. 
\end{remark}

\begin{definition}\defnlab{sepgen}
We say that a separatrix connection $\gamma$ is \textnormal{in general position} if its splitting constant $b\in \mathbb Q^{2M}$ vanish: $b=0$.
\end{definition}
This definition is independent of the section $\Sigma$, cf. \lemmaref{bbp}.


\section{Crossing cycles}\seclab{crossing}
The dynamical system defined by the crossing flow (induced by \eqref{uvnotinT}, recall \remref{crossing}) can have closed orbits. We call these crossing cycles, when they only intersect the discontinuity set of $\mathcal T$ along switching manifolds of crossing type, recall also \defnref{crossing}. We describe these, as in classical dynamical systems theory, using a return map. 

\begin{lemma}\lemmalab{returnmap}
Consider a tropical dynamical system $TDS'\in \TDSN$ with tropical coefficients $\alpha'$. Suppose that $\rspp{(\mathcal T',\mathcal T'^{\mathcal U},\mathcal T'^{\mathcal V})}$ is in general position and that there is a crossing cycle $\gamma'$ of the crossing flow, i.e. a closed polygonal orbit that does not go through the \rsp{tropical vertices} of $\mathcal T'$, see \defnref{cyclegen}. Then there is a neighborhood $O$ of $\alpha'$ in $\rspp{\mathbf{A}}(2M)$ and a section $\Sigma$ defined by $v=\textnormal{const.}, u\in I$, transverse to $\gamma'$ with $\gamma'\cap \Sigma=(u',\textnormal{const.})$ and $I$ an open interval, such that for all $TDS\in \mathcal O$, we have a well-defined return map $P(\cdot,\alpha):\Sigma\to \Sigma$ of the following form:
\begin{align}\eqlab{Pmap}
 P(u,\alpha) = u'+ c (u-u')+b\cdot (\alpha-\alpha'),\quad u\in \Sigma,\alpha\in O,
\end{align}
satisfying $P(u',\alpha')=u'$,
with $c\in \mathbb Q_+$, and $b=(b_1,\ldots,b_{2M})\in \mathbb Q^{2M}$ with $\sum_{k=1}^{2M} b_k=0$.
\end{lemma}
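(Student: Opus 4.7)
The plan is to describe the crossing cycle $\gamma'$ as a finite itinerary of horizontal and vertical segments, build the return map by composing the local transition maps from \lemmaref{mapPij}, and then read off the claimed properties.

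First, I would observe that since $\gamma'$ is a crossing cycle (hence a closed polygonal orbit of the crossing flow that avoids the \rsp{tropical vertices}), it intersects $\mathcal T'$ only along switching manifolds of crossing type. By the remark just after \defnref{crossingsliding}, on a crossing switching manifold $\mathcal E_{i,j}$ one has $\md_i \cdot \md_j = 0$ with $i\in \mathcal U$, $j\in \mathcal V$ (or vice versa), so the flow alternates between horizontal segments (in $\mathcal R_k$ with $k\in \mathcal U$) and vertical segments (in $\mathcal R_k$ with $k\in \mathcal V$) along $\gamma'$. Hence $\gamma'$ crosses, in some cyclic order, a finite sequence of switching manifolds $\mathcal E_{i_1,j_1},\ldots,\mathcal E_{i_n,j_n}$ with $n$ even. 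I would pick the section $\Sigma$ of the form $v=\mathrm{const.}$, $u\in I$, transverse to a horizontal segment of $\gamma'$ (so an adjacent segment in $\mathcal R_{l}$ with $l\in \mathcal V$ has vertical flow), and let $\gamma'\cap \Sigma=(u',\mathrm{const.})$.

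Next, for $\alpha$ close to $\alpha'$, I would exploit that $\rspp{(\mathcal T',\mathcal T'^{\mathcal U},\mathcal T'^{\mathcal V})}$ is in general position (\defnref{generalposition}) so the \rsp{tropical edges} $\mathcal E_{i_k,j_k}$ depend affinely on $\alpha$ (with slopes independent of $\alpha$, cf.\ \eqref{Eijeqn}), the triangulation is preserved, and the cycle remains bounded away from the \rsp{tropical vertices}. Consequently there is a neighborhood $O$ of $\alpha'$ in $\rspp{\mathbf{A}}(2M)$ on which the same itinerary $\mathcal E_{i_1,j_1},\ldots,\mathcal E_{i_n,j_n}$ describes the flow started at any $u\in I$ shrunk appropriately. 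I would then build $P(u,\alpha)$ step by step: starting from $(u,\mathrm{const.})\in \Sigma$, the flow is vertical and preserves $u$, so it hits $\mathcal E_{i_1,j_1}$ at a point whose $v$-coordinate is an affine function of $u$ and $\alpha$ (solving \eqref{Eijeqn}); then the next segment is horizontal, preserving $v$, hitting $\mathcal E_{i_2,j_2}$ at a point with new $u$-coordinate affine in $(v,\alpha)$; and so on. This is precisely a concatenation of maps of the form \eqref{mapPij}, whose composition yields an affine function $P(u,\alpha)=u'+c(u-u')+b\cdot (\alpha-\alpha')$ with $c\in \mathbb Q$ (a product of ratios of integer differences of degrees) and $b\in \mathbb Q^{2M}$; the constant term is $u'$ because $\gamma'\cap\Sigma=(u',\mathrm{const.})$ closes up for $\alpha=\alpha'$.

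The remaining points are the sign of $c$ and the zero-sum condition on $b$. For $c>0$: by \lemmaref{crossingflow}, the crossing flow is a genuine (incomplete) flow on $\mathbb R^2\setminus \mathcal T_s$, and $\Sigma$ lies in this open set; since planar flows cannot interchange the order of two distinct nearby initial conditions on a transverse section (the orbits would otherwise have to cross), the return map is orientation preserving, so the slope $c$ must be positive. For $\sum_{k=1}^{2M} b_k=0$: this follows from the invariance of the tropical dynamical system under the common translation $\alpha\mapsto \alpha+a$ of the tropical coefficients (see \eqref{translation}), because $\gamma$, $\Sigma$ and hence $P$ are unchanged by this action, forcing $b\cdot(a,\ldots,a)=a\sum_k b_k=0$ for every $a\in \mathbb R$.

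The main obstacle is bookkeeping rather than any conceptual difficulty: one has to argue carefully that the itinerary $\{(i_k,j_k)\}_{k=1}^n$ is stable under small perturbations of $\alpha$, which amounts to using the fact that the crossings of $\gamma'$ are transverse and occur at points which remain in the interiors of the \rsp{tropical edges} $\mathcal E_{i_k,j_k}$ (i.e.\ away from the \rsp{tropical vertices}) under the affine perturbation of these edges. Beyond that, the affine composition and the orientation argument are standard.
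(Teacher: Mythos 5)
Your proposal is correct and follows essentially the same route as the paper, whose entire proof is the one-line instruction to compose the transition maps of \lemmaref{mapPij}; you have simply supplied the bookkeeping (stability of the itinerary, positivity of $c$ via orientation, and $\sum_k b_k=0$ via the translation invariance \eqref{translation}) that the paper leaves implicit. One small wording slip: a section $\{v=\mathrm{const.}\}$ is transverse to a \emph{vertical} segment of $\gamma'$ (i.e.\ it sits in a region $\mathcal R_l$ with $l\in\mathcal V$), not to a horizontal one, as your own parenthetical already indicates.
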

\begin{proof}
 We simply compose the transition maps in \lemmaref{mapPij}.
\end{proof}

The value $c>0$ is independent of the section $\Sigma$ and it is called \textit{the multiplier} of $\gamma'$. This is standard, see \cite{perko1991a}. The \rspp{vector $b\in \mathbb Q^{2M}$} will be called a \textit{splitting constant} associated to $\gamma'$. 

As usual, if $c=1$ then the fixed point $u=u'$ of $P(\cdot,\alpha')$ is non-hyperbolic, whereas if $c\ne 1$ then it is hyperbolic, being attracting for $c\in (0,1)$ and repelling for $c>1$ . This is standard, see \cite{perko1991a}. Since $P(\cdot,\alpha')$ is affine, $\gamma'$ is only a limit cycle in the hyperbolic case. In particular, $\gamma'$ is not isolated when $c=1$, but surrounded by closed orbits (since $I$ is open). 
\begin{lemma}\lemmalab{crossinggen}
 Suppose that the assumptions of \lemmaref{returnmap} hold true and let $c$ denote the multiplier of $\gamma'$ and let $b$ be an associated splitting constant, see \eqref{Pmap}. Then $TDS'$ is locally structurally stable in a neighborhood $X$ of $\gamma'$ if and only if either of the following statements hold true:
 \begin{enumerate}
  \item[(\textnormal{a})] $\gamma'$ is hyperbolic ($c\ne 1$). 
  \item[(\textnormal{b})] $\gamma'$ is nonhyperbolic ($c=1$) \textnormal{and} $b=0$.
 \end{enumerate}

\end{lemma}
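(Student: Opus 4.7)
The plan is to use the explicit affine return map $P(u,\alpha)=u'+c(u-u')+b\cdot(\alpha-\alpha')$ provided by \lemmaref{returnmap}, noting that crossing cycles of $TDS$ near $\gamma'$ correspond bijectively to fixed points of $P(\cdot,\alpha)$ on $\Sigma$. Since \defnref{equivalence} asks only that the two systems share the same polygonal orbit equivalence classes, it suffices to produce, for every polygonal crossing orbit of $TDS'$ in a neighborhood $X$ of $\gamma'$, a homotopic polygonal orbit of each nearby $TDS$, and vice versa for the converse direction.

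\textbf{Sufficiency in the hyperbolic case} ($c\ne 1$). Solving $P(u,\alpha)=u$ yields the unique solution $u(\alpha)=u'-(c-1)^{-1}b\cdot(\alpha-\alpha')$, affine in $\alpha$, and hence a unique crossing cycle $\gamma(\alpha)$ of $TDS\in \mathcal O$ with unchanged multiplier $c$. By \lemmaref{mapPij} (and the fact that the locations of all relevant \rsp{tropical edges} vary affinely in $\alpha$, cf. the argument in \lemmaref{troppoint1}), the successive vertices of $\gamma(\alpha)$ depend affinely on $\alpha$, while the slopes of the connecting edges depend only on the fixed degrees $\degree F_k$. The linear interpolation $\alpha_t=\alpha'+t(\alpha-\alpha')$, $t\in[0,1]$, then produces a homotopy satisfying \defnref{orbitshomotopy} between $\gamma'$ and $\gamma(\alpha)$. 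Any polygonal crossing orbit $\eta'\subset X$ near $\gamma'$ is parametrised by the iterates of $P(\cdot,\alpha')$ on $\Sigma$; applying the same affine construction to the iterates of $P(\cdot,\alpha)$ yields a homotopic orbit $\eta(\alpha)$ of $TDS$, and one checks that no new orbit equivalence class appears.

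\textbf{Sufficiency in the degenerate case} ($c=1$, $b=0$). Here $P(\cdot,\alpha)=\textnormal{id}$ for every $\alpha\in O$, so a neighborhood of $\gamma'$ is foliated by crossing cycles both under $\alpha'$ and under each perturbed $\alpha$. Matching cycles that meet $\Sigma$ at the same coordinate and again interpolating vertices affinely in $\alpha$ yields the required homotopies. \textbf{Necessity.} If $c=1$ and $b\ne 0$, choose $\alpha\in O$ with $b\cdot(\alpha-\alpha')\ne 0$, which is possible since $b\ne 0$ and $O$ is open. Then $P(u,\alpha)-u=b\cdot(\alpha-\alpha')\ne 0$ for every $u\in \Sigma$, so the iterates of $P(\cdot,\alpha)$ drift monotonically off any seed and $TDS$ has no crossing cycle in $X$. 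In particular, the family of closed polygonal orbits of $TDS'$ surrounding $\gamma'$ at $\alpha'$ has no homotopic counterpart in $TDS$, so equivalence in the sense of \defnref{equivalence} fails.

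The delicate step, and what I expect to be the main obstacle, is justifying the homotopy for the countably many spiraling orbits accumulating on $\gamma'$ in the hyperbolic case: for each such $\eta'$ with vertex-set $\{q_i\}_{i\in\mathbb Z}$ one must produce continuous maps $f_i:[0,1]\to\mathbb R^2$ for every $i$, satisfying the non-collision requirement $f_i(t)\ne f_j(t)$ of \defnref{orbitshomotopy}, uniformly across the accumulation. Because $P(\cdot,\alpha)$ is affine with the same multiplier $c\ne 1$ (independent of $\alpha$), the ordering and geometric ratios of consecutive iterates on $\Sigma$ are preserved uniformly for $\alpha\in O$, which should prevent collisions and edge degenerations during the deformation provided $\mathcal O$ is taken small enough.
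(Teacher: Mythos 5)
Your proposal is correct and follows essentially the same route as the paper's own proof: both arguments reduce everything to the affine return map of \lemmaref{returnmap}, treat the cases $c\ne 1$, $c=1$ with $b=0$, and $c=1$ with $b\ne 0$ separately, and use the affine dependence of the fixed point (and of the crossing switching manifolds) on $\alpha$ to obtain the required equivalence. You in fact supply more detail than the paper does — in particular the explicit homotopy construction and the uniformity issue for spiraling orbits accumulating on $\gamma'$, which the paper passes over in silence — and your resolution of that point via the $\alpha$-independence of the multiplier $c$ is sound.
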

\begin{proof}
 Suppose first that neither (a) nor (b) holds. Then $\gamma'$ is nonhyperbolic with $c=1$ and $b\ne 0$. But then there is an $\alpha\in O$ such that the Poincar\'e map does not have a fixed-point. This shows the only if part. 

 Next, for the if part. Suppose first that (a) holds. Then for all $\alpha\in O$ there is a limit cycle $\gamma(\alpha)$, satisfying $\gamma(\alpha')=\gamma'$, with the same stability as $\gamma'$. In particular, $\gamma(\alpha)$ intersects the section $\Sigma$ in the unique fixed-point of $P(\cdot,\alpha)$. This proves the locally structurally stability in this case. Finally, suppose (b). Then $P(\cdot,\alpha')=P(\cdot,\alpha)=\mathrm{id}$ for all $\alpha\in O$. Since all crossing switching manifolds are translated in an affine way with respect to $\alpha$, the result follows.
\end{proof}

\begin{definition}\defnlab{cyclegen}
 We say that a crossing cycle $\gamma$, with multiplier $c$ and a splitting constant $b$, is \textnormal{in general position} if either (a) or (b) \rspp{of \lemmaref{crossinggen}} hold true. 
\end{definition}

%

 \section{The crossing graph}\seclab{graph}
 Consider a tropical dynamical system $TDS$ with a polyhedral subdivision $\mathcal S$ of the Newton polygon associated with $F_{\max}$. For simplicity, we suppose that $\rspp{(\mathcal T,\mathcal T^{\mathcal U},\mathcal T^{\mathcal V})}$ is in general position. Then for each vertex $\degree F_k$ of the subdivision, we assign the flow vector $\md_k$ as a label to $\degree F_k$, as we did in \secref{auto}. This leads to our \textit{labelled subdivision}. From the subdivision, we obtain $\mathcal T$ (up to homotopy, see \propref{dualS}). Each labelled $\degree F_k$ corresponds to a tropical region $\mathcal R_k$ with flow vector $\md_k$.
 
 \begin{definition}\defnlab{graph}
  The labelled subdivision $\mathcal S$ defines a directed graph $\mathcal G$ (\textnormal{the crossing graph}) in the following way:
   The labelled degrees $\degree F_k$ of $\mathcal S$ are vertices of $\mathcal G$ and there is a directed edge from $\degree F_i$ to $\degree F_j$, $i\ne j$, if the edge $(\degree F_i,\degree F_j)$ is a subset of $\mathcal S$ \textnormal{and}
   \begin{align}\eqlab{graphG}
    (\md_j \cdot (\degree F_j-\degree F_i))(\md_i\cdot (\degree F_j-\degree F_i))>0.
   \end{align}
 \end{definition}
 By construction, the crossing graph $\mathcal G$ is a planar graph. Notice the following:
 \begin{lemma}
  Suppose that there is directed edge of the crossing graph $\mathcal G$ between $\degree F_i$ and $\degree F_j$. Then either (a) $\md_j = \md_i$ or (b) $\md_j\cdot \md_i =0$ \textnormal{and} there is crossing along the associated \rsp{tropical edge} $\mathcal E_{i,j}$. 
 \end{lemma}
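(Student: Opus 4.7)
The plan is to reduce the defining inequality \eqref{graphG} of the crossing graph to the crossing condition of \defnref{crossingsliding}. First I would observe that by \eqref{nij} the vector $\mathbf n_{i,j} = \degree F_j - \degree F_i$ is precisely a normal vector to $\mathcal E_{i,j}$, so \eqref{graphG} rewrites as
\[(\md_i\cdot \mathbf n_{i,j})(\md_j\cdot \mathbf n_{i,j})>0,\]
which is exactly \eqref{crossingdef}. Moreover, the hypothesis that $(\degree F_i,\degree F_j)$ is an edge of the subdivision $\mathcal S$ guarantees, via \propref{dualS}, that $\mathcal E_{i,j}$ is a (nonempty) tropical edge of $\mathcal T$, so that the statement is not vacuous.

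Next I would split into cases according to whether $\md_i=\md_j$ or not. If $\md_i=\md_j$ then (a) holds and there is nothing more to prove. Otherwise $\md_i\ne \md_j$, so $\mathcal E_{i,j}$ is a switching manifold. By \defnref{tropsystem}, every flow vector lies in $\{(\pm 1,0),(0,\pm 1)\}$, and therefore
\[\md_i\cdot \md_j\in \{-1,0,1\},\]
with $\md_i\cdot \md_j=1$ only when $\md_i=\md_j$. So in the present sub-case $\md_i\cdot \md_j\in \{-1,0\}$.

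Finally, I would rule out $\md_i\cdot \md_j=-1$. In that sub-case $\md_i$ and $\md_j$ are antiparallel, so $\md_j\cdot \mathbf n_{i,j}=-\md_i\cdot \mathbf n_{i,j}$ and consequently
\[(\md_i\cdot \mathbf n_{i,j})(\md_j\cdot \mathbf n_{i,j})=-(\md_i\cdot \mathbf n_{i,j})^2\le 0,\]
which contradicts the strict inequality \eqref{graphG}. Therefore $\md_i\cdot \md_j=0$, and the rewritten form of \eqref{graphG} is precisely the crossing condition \eqref{crossingdef} of \defnref{crossingsliding} on the switching manifold $\mathcal E_{i,j}$. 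This yields (b). The argument is essentially bookkeeping; the only mild obstacle is making sure the dichotomy on flow vectors and the identification of $\mathbf n_{i,j}$ with a difference of degrees are pinned down precisely, but both follow immediately from the definitions.
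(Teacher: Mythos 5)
Your proposal is correct and follows essentially the same route as the paper: the paper's proof simply invokes the definition of crossing \eqref{crossingdef} together with the perpendicularity of dual edges from \propref{dualS}, which is exactly your identification of \eqref{graphG} with $(\md_i\cdot \mathbf n_{i,j})(\md_j\cdot \mathbf n_{i,j})>0$. Your explicit elimination of the antiparallel case $\md_i\cdot\md_j=-1$ is left implicit in the paper (it is noted right after \defnref{crossingsliding} that crossing forces $\md_i\cdot\md_j=0$), so you have merely filled in details the authors omitted.
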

\begin{proof}
 The statement follows from the definition of crossing, see \eqref{crossingdef}, and the fact that the \rsp{tropical edges} $\mathcal E_{i,j}$ are perpendicular to edges of the subdivision, see \propref{dualS}.
\end{proof}


 Notice that there is specifically a directed edge of $\mathcal G$ between $\degree F_i$ and $\degree F_j$ if $\mathcal E_{i,j}\subset \mathcal T$ with $\md_i=\md_j$ (so that $\mathcal E_{i,j}$ is not a switching manifold). 
 
 \begin{lemma}\lemmalab{graph}
  Suppose that a tropical dynamical system $TDS$ has a crossing cycle that passes through $n$ tropical regions: $\mathcal R_{k_1},\ldots,\mathcal R_{k_n}$, with $k_1,\ldots,k_n\in \mathcal I$ distinct and $n\in \mathbb N,n\ge 4$. Then the crossing graph $\mathcal G$ has a cycle through the associated vertices $\degree F_{k_1},\ldots, \degree F_{k_n}$. 
 \end{lemma}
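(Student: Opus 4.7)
The plan is to read off a cycle in the crossing graph $\mathcal G$ directly from the sequence of transitions made by the crossing cycle as it passes from one tropical region to the next. First, I would invoke \defnref{crossing} to deduce that the crossing cycle avoids every tropical vertex of $\mathcal T$ and meets $\mathcal T$ only transversally along switching manifolds of crossing type. Enumerating the visited regions cyclically as $\mathcal R_{k_1},\mathcal R_{k_2},\ldots,\mathcal R_{k_n}$ (with indices taken modulo $n$), the hypothesis that the $k_i$ are pairwise distinct, together with $n\ge 4$, guarantees that each transition from $\mathcal R_{k_i}$ to $\mathcal R_{k_{i+1}}$ corresponds to the traversal of a single well-defined tropical edge $\mathcal E_{k_i,k_{i+1}}\subset\mathcal T$ of crossing type.

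Next, I would apply the duality of \propref{dualS} to convert each such tropical edge into an edge of the polyhedral subdivision $\mathcal S$. Explicitly, $\mathcal E_{k_i,k_{i+1}}$ is dual to the edge $(\degree F_{k_i},\degree F_{k_{i+1}})$ of $\mathcal S$, which is perpendicular to $\mathcal E_{k_i,k_{i+1}}$ in $\mathbb R^2$. Hence the first condition imposed by \defnref{graph}, namely that $(\degree F_{k_i},\degree F_{k_{i+1}})$ be an edge of $\mathcal S$, is automatically satisfied for every $i$.

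Then I would verify the crossing condition \eqref{graphG} at each transition. Since the cycle crosses $\mathcal E_{k_i,k_{i+1}}$ at a point of genuine crossing type, \defnref{crossingsliding} yields $(\md_{k_i}\cdot \mathbf{n}_{k_i,k_{i+1}})(\md_{k_{i+1}}\cdot \mathbf{n}_{k_i,k_{i+1}})>0$, with $\mathbf{n}_{k_i,k_{i+1}}=\degree F_{k_{i+1}}-\degree F_{k_i}$ as in \eqref{nij}. This is precisely the inequality demanded by \defnref{graph}, so there is a directed edge of $\mathcal G$ linking $\degree F_{k_i}$ and $\degree F_{k_{i+1}}$, which we orient in accordance with the physical flow of the crossing cycle (i.e.\ the edge points from the region the orbit leaves to the region the orbit enters).

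Concatenating these directed edges produces the closed walk $\degree F_{k_1}\to \degree F_{k_2}\to \cdots \to \degree F_{k_n}\to \degree F_{k_1}$ in $\mathcal G$, which is a bona fide cycle because the vertices $\degree F_{k_1},\ldots,\degree F_{k_n}$ are pairwise distinct (each $k_i$ labels a different tropical region). The proof is essentially bookkeeping; the one place that requires a small amount of care is ensuring that consecutive crossings correspond to \emph{distinct} edges of $\mathcal S$ so that the walk does not immediately backtrack. This is precisely where the hypothesis $n\ge 4$, the distinctness of the $k_i$, and the convexity of the tropical regions combine to rule out the only potential obstruction.
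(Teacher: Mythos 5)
Your proposal is correct, and it is essentially the argument the paper intends: the published proof consists only of the remark ``this should be clear enough,'' so your write-up simply supplies the bookkeeping (duality via \propref{dualS} to get the edges of $\mathcal S$, the crossing inequality to get the edges of $\mathcal G$, distinctness of the $k_i$ to get a genuine cycle). One small refinement: not every transition of a crossing cycle is across a switching manifold, since the cycle may pass through a tropical edge $\mathcal E_{k_i,k_{i+1}}$ with $\md_{k_i}=\md_{k_{i+1}}$ (a non-switching, ``dashed'' edge), and there \defnref{crossingsliding} does not literally apply; the inequality \eqref{graphG} still holds in that case because the common flow vector must satisfy $\md_{k_i}\cdot \mathbf n_{k_i,k_{i+1}}\ne 0$ for the orbit to pass from $\mathcal R_{k_i}$ into $\mathcal R_{k_{i+1}}$, so the product in \eqref{graphG} is a nonzero square. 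With that one-line addition your argument is complete.
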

 \begin{proof}
  This should be clear enough. 
 \end{proof}
 
 The graph also encodes the multiplier $c$ because this quantity is determined as a product of the slopes, see \lemmaref{returnmap} and \eqref{mapPij}, of the crossing switching manifolds. 
 
We cannot go the other way in \lemmaref{graph}, because the edges of the crossing graph $\mathcal G$ only relate to existence of one single orbit segment going from one tropical region to another. So a cycle of the crossing graph $\mathcal G$ does not imply that there is a closed orbit passing through each of the corresponding tropical regions; this depends upon the degrees and the position of the separatrices of the \rsp{tropical vertices}, see the examples below in \secref{crossing1} and \secref{crossing2}. In other words, the number of cycles in the crossing graph $\mathcal G$ is (only) an upper bound for the number of crossing cycles. We refer to \cite{aldred2008a} for optimal bounds on cycles of planar graphs, see also \secref{discussion} below.

For our purposes, the crossing graph $\mathcal G$ will be useful to understand the possible separatrix connections. To explain this, consider a \rsp{tropical singularity}  $\mathcal Q$. If we are in general position, then it belongs to a \rsp{tropical edge} $\mathcal E_{ij}$ at an interface between $\mathcal R_i$ and $\mathcal R_j$, i.e. $\mathcal E_{i,j}\subset \overline{\mathcal R}_i \cap \overline{\mathcal R}_j$. To study the departure separatrices of $\mathcal Q$, we therefore follow the paths in the directed graph from the corresponding vertices $\degree F_i$ and $\degree F_j$. To study the arrival separatrices, we have to reverse the directions of the edges of the graph.

Similarly, for a \rsp{tropical vertex}  $\mathcal P$ in general position, we have three adjacent tropical regions $\mathcal R_i$, $\mathcal R_j$ and $\mathcal R_l$, $i,j,l\in \mathcal I$ distinct, i.e. $\mathcal P=\overline{\mathcal R}_i\cap \overline{\mathcal R}_j\cap \overline{\mathcal R}_l$, and we can follow the departure separatrices of $\mathcal P$ by following the paths in the directed graph from the corresponding vertices $\degree F_i$, $\degree F_j$ and $\degree F_l$. Arrival separatrices are studied in a similar way by reversing the directions of the edges. 

In other words, each \rsp{tropical singularity}  of sink, source or saddle-type is associated with two vertices of the crossing graph $\mathcal G$, whereas each \rsp{tropical vertex}  in general position is associated with three vertices of $\mathcal G$. We conclude the following.
\begin{lemma}\lemmalab{connectiongraph}
Consider a tropical dynamical system with $\rspp{(\mathcal T,\mathcal T^{\mathcal U},\mathcal T^{\mathcal V})}$ in general position and suppose that there is a separatrix connection, departing from $\mathcal P$ and arriving at $\mathcal Q$. Then there is a path in the crossing graph $\mathcal G$ between the vertices associated with $\mathcal P$ and the vertices associated with $\mathcal Q$.
 \end{lemma}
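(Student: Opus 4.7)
The plan is to convert the separatrix connection $\gamma$, which is a polygonal orbit of the crossing flow from $\mathcal P$ to $\mathcal Q$, into a sequence of edges of the crossing graph $\mathcal G$. The central observation is that the crossing condition \eqref{crossingdef}, which must hold whenever $\gamma$ passes from one tropical region to the next, is precisely the edge condition \eqref{graphG} of \defnref{graph}.

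First, I would enumerate the tropical regions visited by $\gamma$. Since $\gamma$ is both a departure separatrix from $\mathcal P$ and an arrival separatrix at $\mathcal Q$, it is realized by a solution of the crossing flow on a compact time interval, and hence it consists of finitely many line segments (each with flow vector of unit $\vert\cdot\vert_1$-norm) traversing a finite sequence of regions $\mathcal R_{k_1},\mathcal R_{k_2},\ldots,\mathcal R_{k_n}$, with $\mathcal P\in\overline{\mathcal R}_{k_1}$ and $\mathcal Q\in\overline{\mathcal R}_{k_n}$. In the terminology of the paragraph preceding the lemma, $\degree F_{k_1}$ is then a vertex of $\mathcal G$ associated with $\mathcal P$, while $\degree F_{k_n}$ is a vertex of $\mathcal G$ associated with $\mathcal Q$.

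Second, I would check that for each $i\in\{1,\ldots,n-1\}$ the pair $(\degree F_{k_i},\degree F_{k_{i+1}})$ is an edge of $\mathcal G$. The orbit $\gamma$ transitions from $\mathcal R_{k_i}$ to $\mathcal R_{k_{i+1}}$ across the tropical edge $\mathcal E_{k_i,k_{i+1}}$; by the duality between $\mathcal S$ and $\mathcal T$ (see \propref{dualS}), together with the general position assumption on $(\mathcal T,\mathcal T^{\mathcal U},\mathcal T^{\mathcal V})$, the pair $(\degree F_{k_i},\degree F_{k_{i+1}})$ is indeed an edge of $\mathcal S$. By the definition of a separatrix, $\gamma$ cannot terminate at any tropical vertex strictly between $\mathcal P$ and $\mathcal Q$, so the transition takes place in the relative interior of $\mathcal E_{k_i,k_{i+1}}$. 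There, either $\md_{k_i}=\md_{k_{i+1}}$ with the common flow vector transverse to $\mathcal E_{k_i,k_{i+1}}$, so that the left-hand side of \eqref{graphG} is a positive square; or $\md_{k_i}\ne\md_{k_{i+1}}$, in which case $\mathcal E_{k_i,k_{i+1}}$ must be a switching manifold of crossing type (not sliding, since the crossing flow actually traverses it), so that \eqref{crossingdef} directly yields \eqref{graphG}. Concatenating the resulting edges of $\mathcal G$ produces the desired path from $\degree F_{k_1}$ to $\degree F_{k_n}$.

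The argument is essentially a translation, and I do not anticipate a real obstacle. The only delicate point is the combined use of the general position assumption and the stopping rule built into the definition of a separatrix, which together guarantee that the finite sequence $\mathcal R_{k_1},\ldots,\mathcal R_{k_n}$ is well defined and that each traversed $\mathcal E_{k_i,k_{i+1}}$ corresponds to a genuine edge of the subdivision $\mathcal S$, rather than to the degenerate situation where several tropical edges meet at a tropical vertex.
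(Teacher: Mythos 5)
Your proof is correct and follows exactly the route the paper intends: the paper states this lemma without a separate proof, presenting it as a direct consequence of the preceding discussion (separatrices are crossing-flow orbits, each region-to-region transition along a crossing or non-switching tropical edge satisfies \eqref{graphG} and hence gives an edge of $\mathcal G$, and the endpoints lie in closures of regions whose degrees are the vertices associated with $\mathcal P$ and $\mathcal Q$). Your write-up simply makes that reasoning explicit, including the two relevant checks (finiteness of the region sequence and that transitions occur in the relative interiors of tropical edges), so there is nothing to add.
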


\begin{remark}
The labelled subdivision $\mathcal S=\mathcal S^{\mathcal I}$ (and the crossing graph $\mathcal G$) describe the flow on all \rsp{tropical edges}, except on \rspp{edges with nullcline sliding}. \rspp{The flow on these} are essentially determined by the intersection of the tropical curves $\mathcal T^{\mathcal U}$ and $\mathcal T^{\mathcal V}$ of the $u$- and $v$-polynomials, $F_{\max}^{\mathcal U}$ and $F_{\max}^{\mathcal V}$, respectively. These intersections can be encoded in the Minkowski sum of $\mathcal S^{\mathcal U}$ and $\mathcal S^{\mathcal V}$, see \cite{de2010a} for details. 
\end{remark}

\subsection{\rsp{Example:} \rspp{A bifurcating crossing cycle}}\seclab{crossing1}
\rspp{In this section,} we consider a simple one-parameter-family of tropical dynamical systems, defined by the following tropical pairs:
\begin{equation}\eqlab{crossing1}
\begin{alignedat}{3}
 F_1(u,v)&:=u,\quad &\md_1&:=(1,0),\\
 F_2(u,v)&:=-5+3u+3v,\quad  &\md_2&:=(-1,0),\\
 F_3(u,v)&:=v, \quad &\md_3&:=(0,-1),\\
 F_4(u,v)&:=4u+2v, \quad &\md_4&:=(0,1),\\
 F_5(u,v)&:=\alpha+5u+5v,\quad &\md_5&:=(0,-1),
\end{alignedat}
\end{equation}
\rspp{with $\mathcal U=\{1,2\},\,\mathcal V=\{3,4,5\}$, that (among other things) supports a bifurcating crossing cycle}.
A simple calculation shows that for all $\alpha<-11$, we obtain the labelled subdivision $\mathcal S$ in \figref{crossing1}(a). We restrict attention to this set of $\alpha$-values. 

We have also illustrated the associated crossing graph $\mathcal G$ in \figref{crossing1}(a) (purple edges). There is a single cycle of the graph given by
\begin{align}
 (1,0)\to (4,2)\to (3,3)\to (0,1)\to (1,0).\eqlab{cycleG}
\end{align}

There are three important \rsp{tropical vertices} associated with the tropical polynomial $F_{\max}(u,v)=\max_{k\in \{1,\ldots,5\}} F_k(u,v)$:
\begin{align*}
 \mathcal P_{134}=(0,0),\quad \mathcal P_{234} = (-1,4),
\end{align*}
and
\begin{align}\eqlab{P245}
 \mathcal P_{245} = \left(-\frac{15}{4}-\frac{\alpha}{4},\frac{5}{4}-\frac{\alpha}{4}\right),
\end{align}
for $\alpha<-11$. There is also a unique \rsp{tropical singularity}  $\mathcal Q_{1234}$ at 
\begin{align*}
 \left(-\frac12,2\right),
\end{align*}
due to the intersection of $\mathcal E^{\mathcal U}_{12}$ and $\mathcal E^{\mathcal V}_{34}$. 

In \figref{crossing1}(b), we illustrate the phase portrait of the tropical system for $\alpha = -25$. Here we see a stable crossing limit cycle (in red) intersecting $\Sigma$ (in green) defined by $v=0$ in the point $u=2$. In agreement with the cycle \eqref{cycleG}, it goes from $\mathcal R_1$ (blue) to $\mathcal R_4$ (red) and then to $\mathcal R_2$ (light blue) and finally to $\mathcal R_\rspp{3}$ (brown), before it repeats itself. It is easy to show that the return map $P(\cdot,\alpha):\Sigma\to \Sigma$ satisfies:
\begin{align*}
 P(u,\alpha)  = \frac{4}{9}u+\frac{10}{9}.
\end{align*}
We see that $u=2$ is a stable fixed point ($P(2,\alpha)=2$, $c:=P'_u(2,\alpha)=\frac49 \in (0,1)$) and that $P$ is independent of $\alpha$, see \lemmaref{returnmap}. However, this expression is only valid if 
\begin{align*}
 2<-\frac{15}{4}-\frac{\alpha}{4},
\end{align*}
where the right hand side is the $u$-component of $\mathcal P_{245}$, see \eqref{P245}. This gives
\begin{align*}
 \alpha <-23.
\end{align*}
At $\alpha=-23$ there is a structurally unstable homoclinic separatrix connection, arriving and departing from $\mathcal P_{245}$, see \figref{crossing1bif}(a). There are also structurally unstable separatrix connections at the following values of $\alpha\in (-11,-23)$.
\begin{enumerate}
\item For $\alpha=-\frac{167}{9}$, there is a separatrix connection, departing from $\mathcal Q_{1234}$ (moving upwards) and arriving at $\mathcal P_{245}$, see \figref{crossing1bif}(b).
 \item For $\alpha=-\frac{53}{3}$, there is a separatrix connection, departing from $\mathcal P_{234}$ and arriving at $\mathcal P_{245}$, see \figref{crossing1bif}(c).
 \item For $\alpha=-\frac{49}{3}$, there is a separatrix connection, departing from $\mathcal Q_{1234}$ (moving downwards) and arriving at $\mathcal P_{245}$, see \figref{crossing1bif}(d).
 \item For $\alpha=-15$, there is a separatrix connection, departing from $\mathcal P_{134}$ and arriving at $\mathcal P_{245}$, see \figref{crossing1bif}(e).
 \item For $\alpha=-13$, there is another separatrix connection, departing from $\mathcal Q_{1234}$ and arriving at $\mathcal P_{245}$. See \figref{separatrixconnection}(b).
\end{enumerate}
For $\alpha=-11$, we have $\mathcal P_{234}=\mathcal P_{245}:=\mathcal P_{2345}$ (not shown). There are no crossing cycles for $\alpha\in [-23,-11)$, cf. \defnref{crossing}, but as is clear from \figref{crossing1bif}, there is a (unique) stable sliding cycle for every $\alpha\in [-23,-11)$. However, the $\alpha$-parameter family defined by \eqref{crossing1} is only structurally stable within $\alpha<-11$ for $\alpha\notin \{-23,-\frac{167}{9},-\frac{53}{3}, -\frac{49}{3},-15,-13\}$.

\begin{figure}[H]
    \centering
    \begin{subfigure}{0.5\textwidth}
    \centering
        \includegraphics[width=0.96\linewidth]{./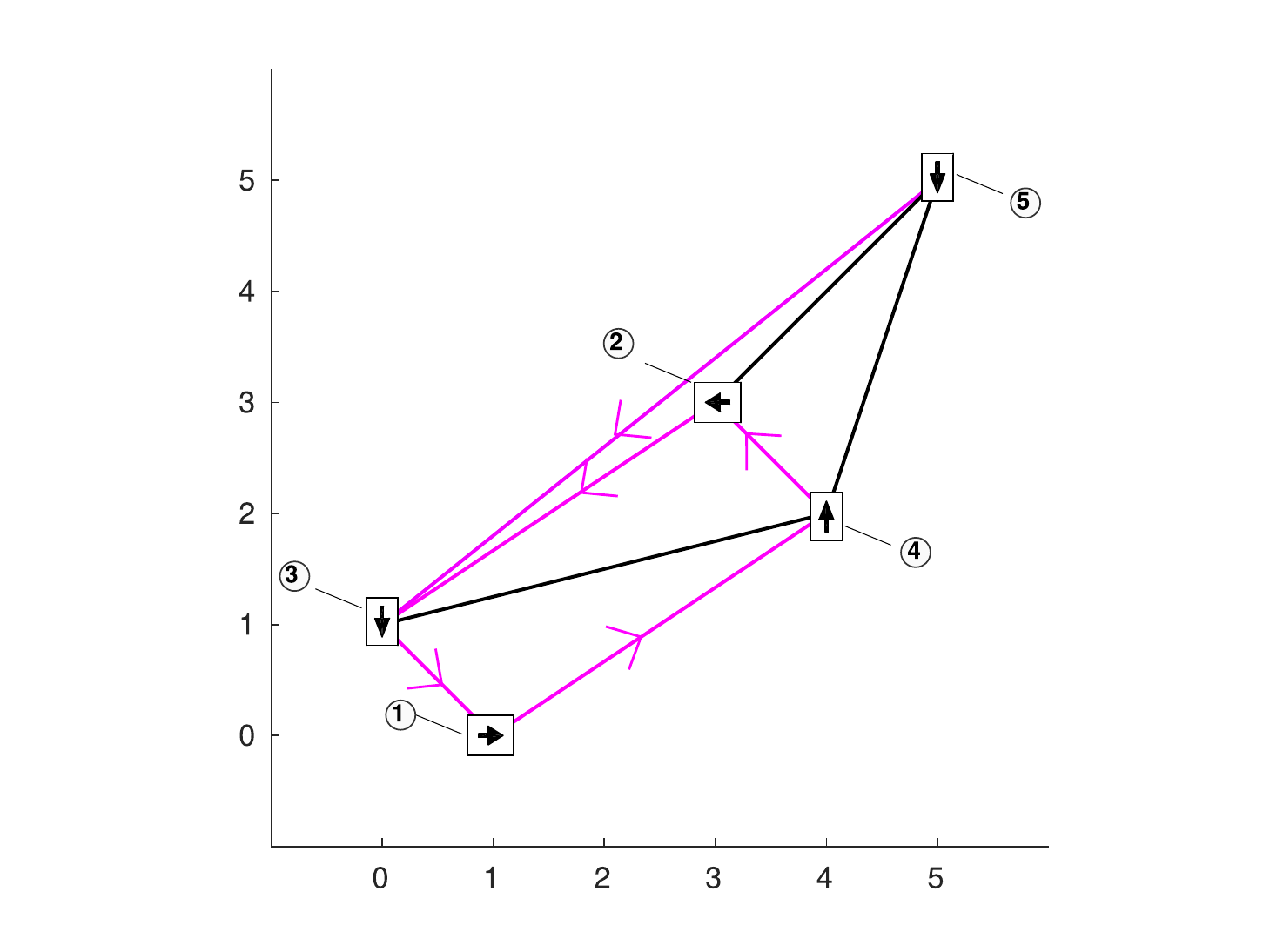}
        \caption{Labelled subdivision and the crossing graph $\mathcal G$ (purple)}
    \end{subfigure}%
    \begin{subfigure}{0.5\textwidth}
    \centering
        \includegraphics[width=0.96\linewidth]{./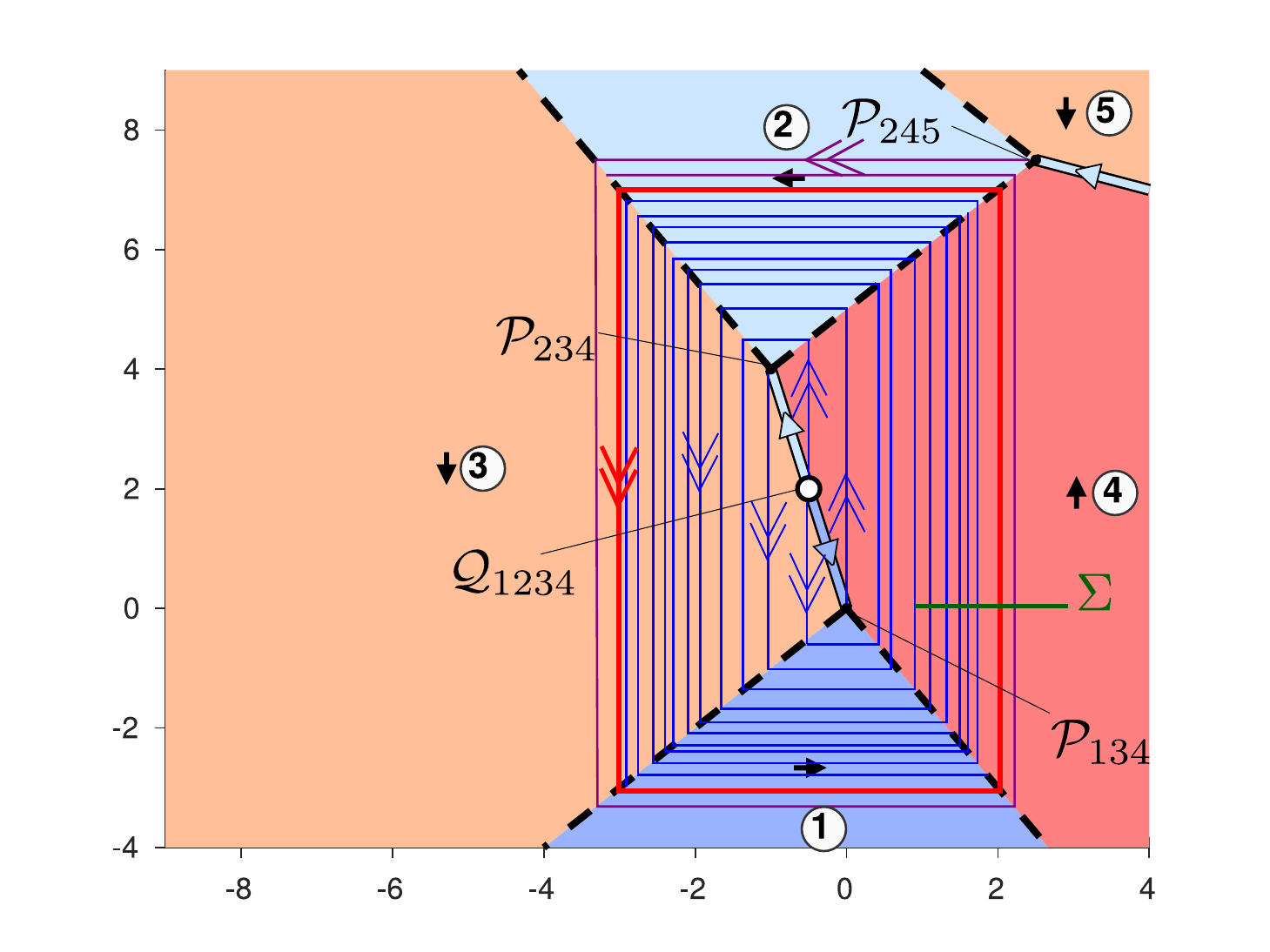}
        \caption{Phase portrait}
    \end{subfigure}
    \caption{In (a): The labelled subdivision and the crossing graph $\mathcal G$ (purple) for the tropical dynamical system defined by \eqref{crossing1} for $\alpha<-11$. There is a single cycle of the graph, see \eqref{cycleG}. In (b): The associated phase portrait for $\alpha=-25$. There is a crossing limit cycle in red. The system is structurally stable (within the one-parameter family defined by \eqref{crossing1}).}
    \figlab{crossing1}
    \end{figure}
    
    \begin{figure}[H]
    \centering
     \begin{subfigure}{0.495\textwidth}
    \centering
        \includegraphics[width=0.96\linewidth]{./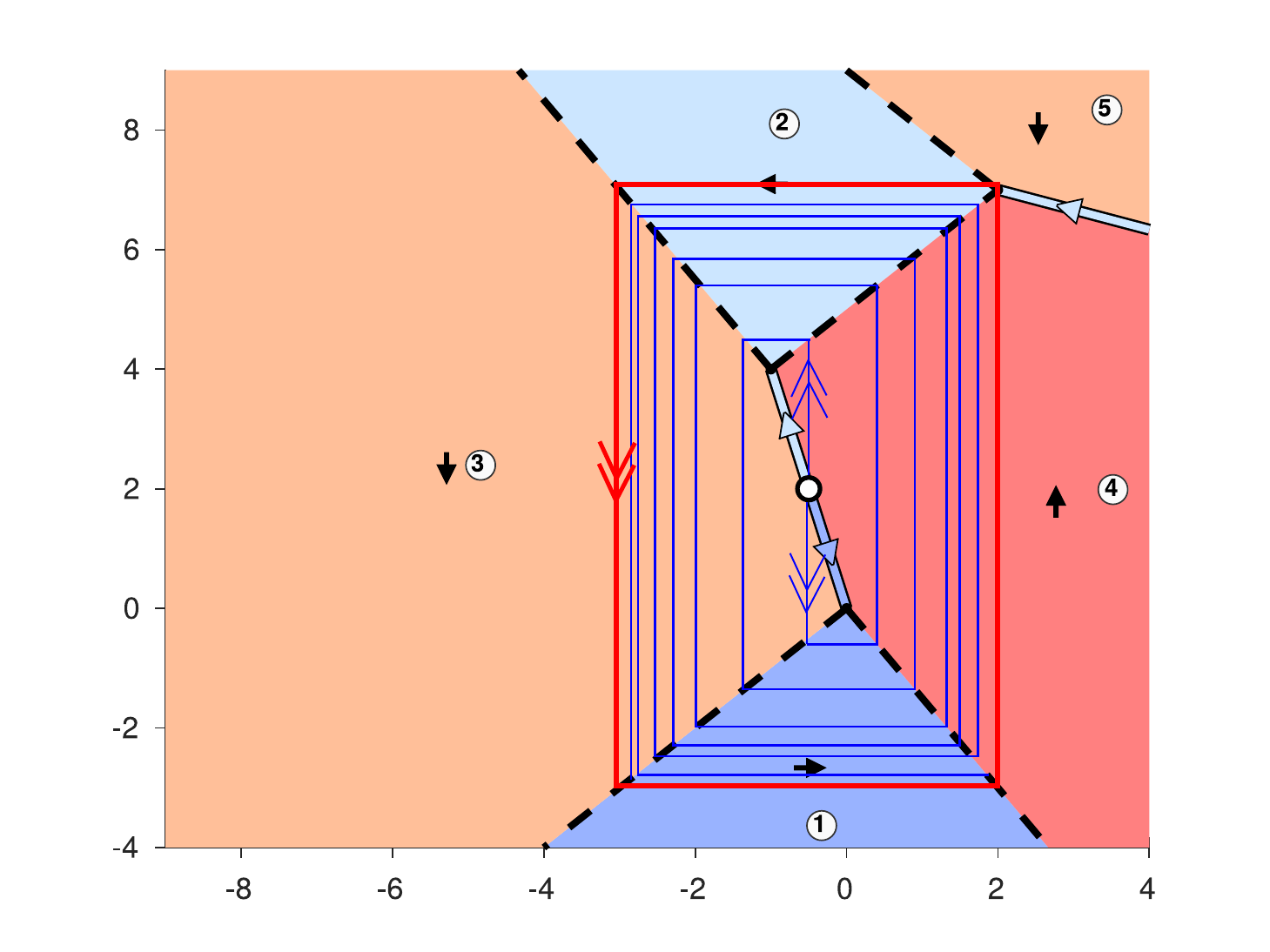}
        \caption{$\alpha=-23$}
    \end{subfigure}
    \begin{subfigure}{0.495\textwidth}
    \centering
        \includegraphics[width=0.96\linewidth]{./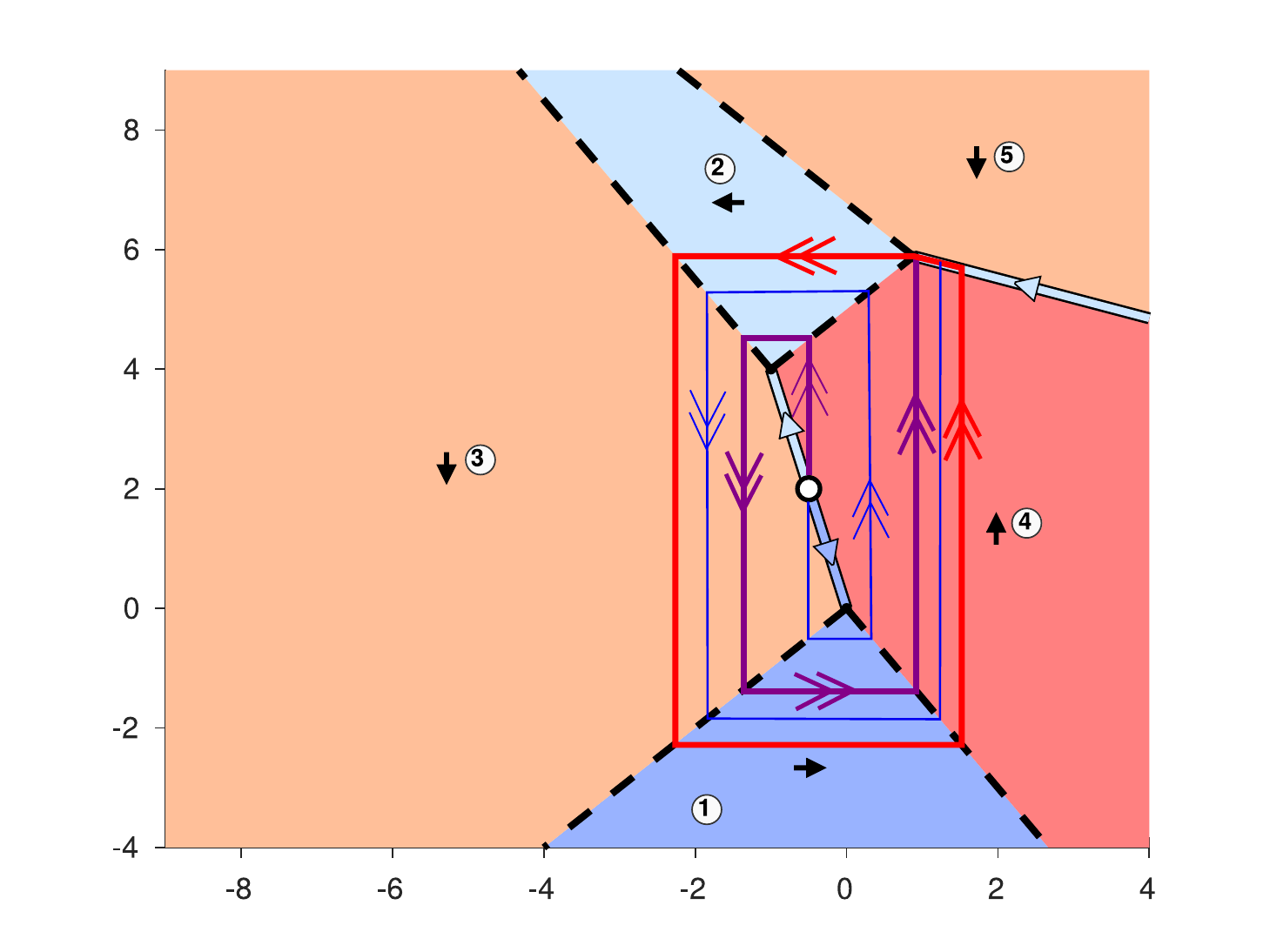}
        \caption{$\alpha=-\frac{167}{9}$}
    \end{subfigure}%
    \begin{subfigure}{0.495\textwidth}
    \centering
        \includegraphics[width=0.96\linewidth]{./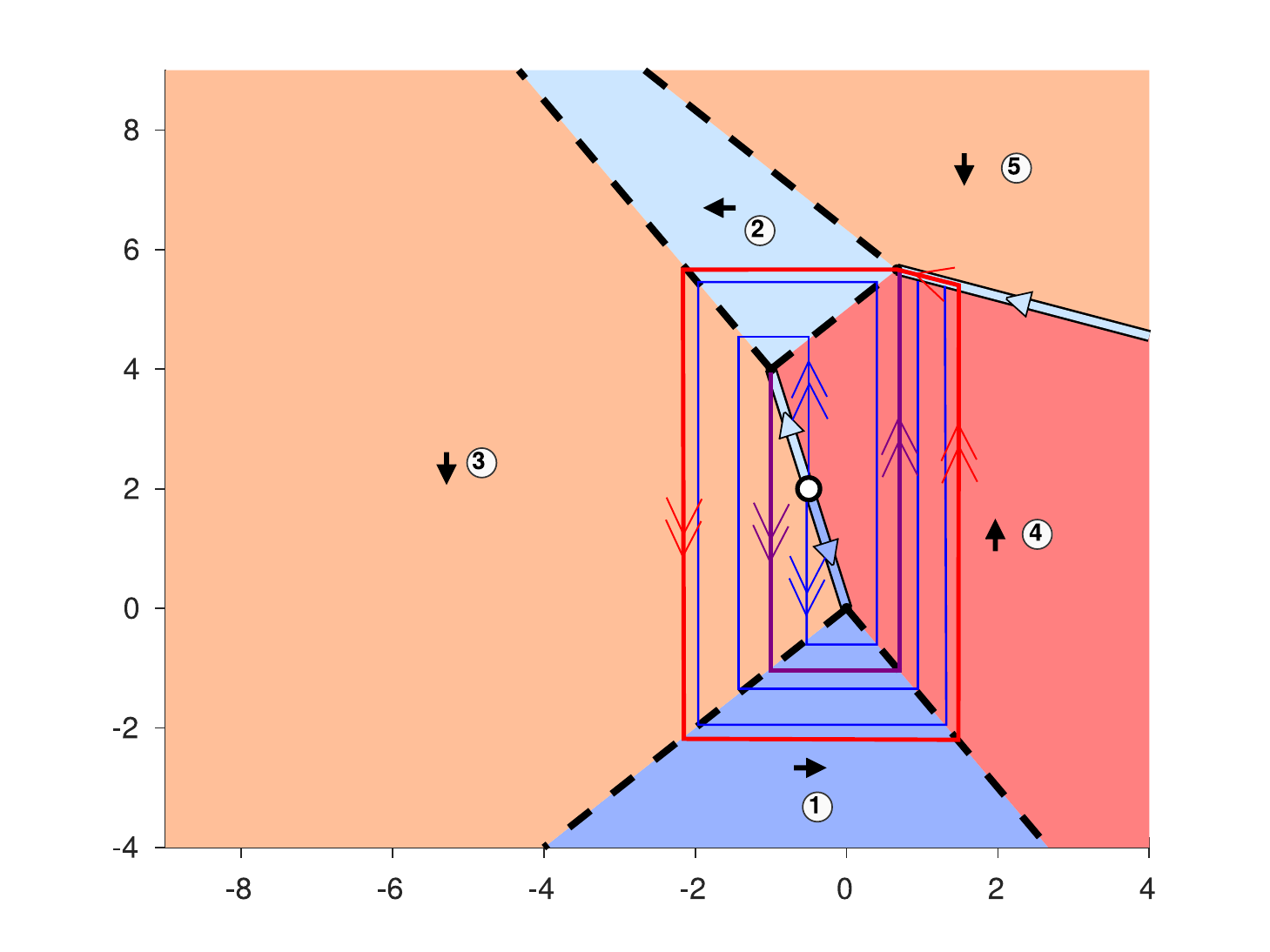}
        \caption{$\alpha=-\frac{53}{3}$}
    \end{subfigure}
     \begin{subfigure}{0.495\textwidth}
    \centering
        \includegraphics[width=0.96\linewidth]{./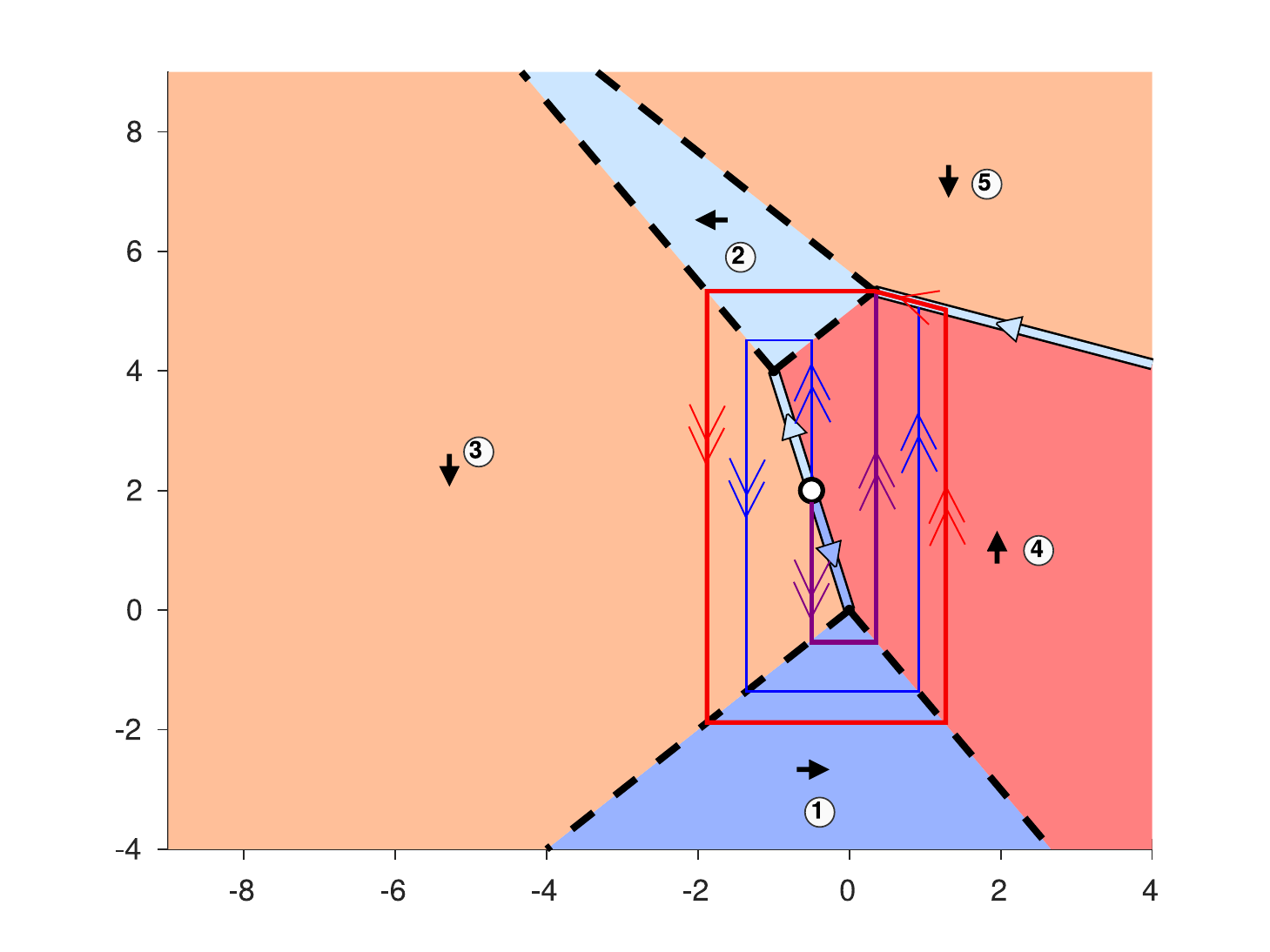}
        \caption{$\alpha=-\frac{49}{3}$}
    \end{subfigure}
     \begin{subfigure}{0.495\textwidth}
    \centering
        \includegraphics[width=0.96\linewidth]{./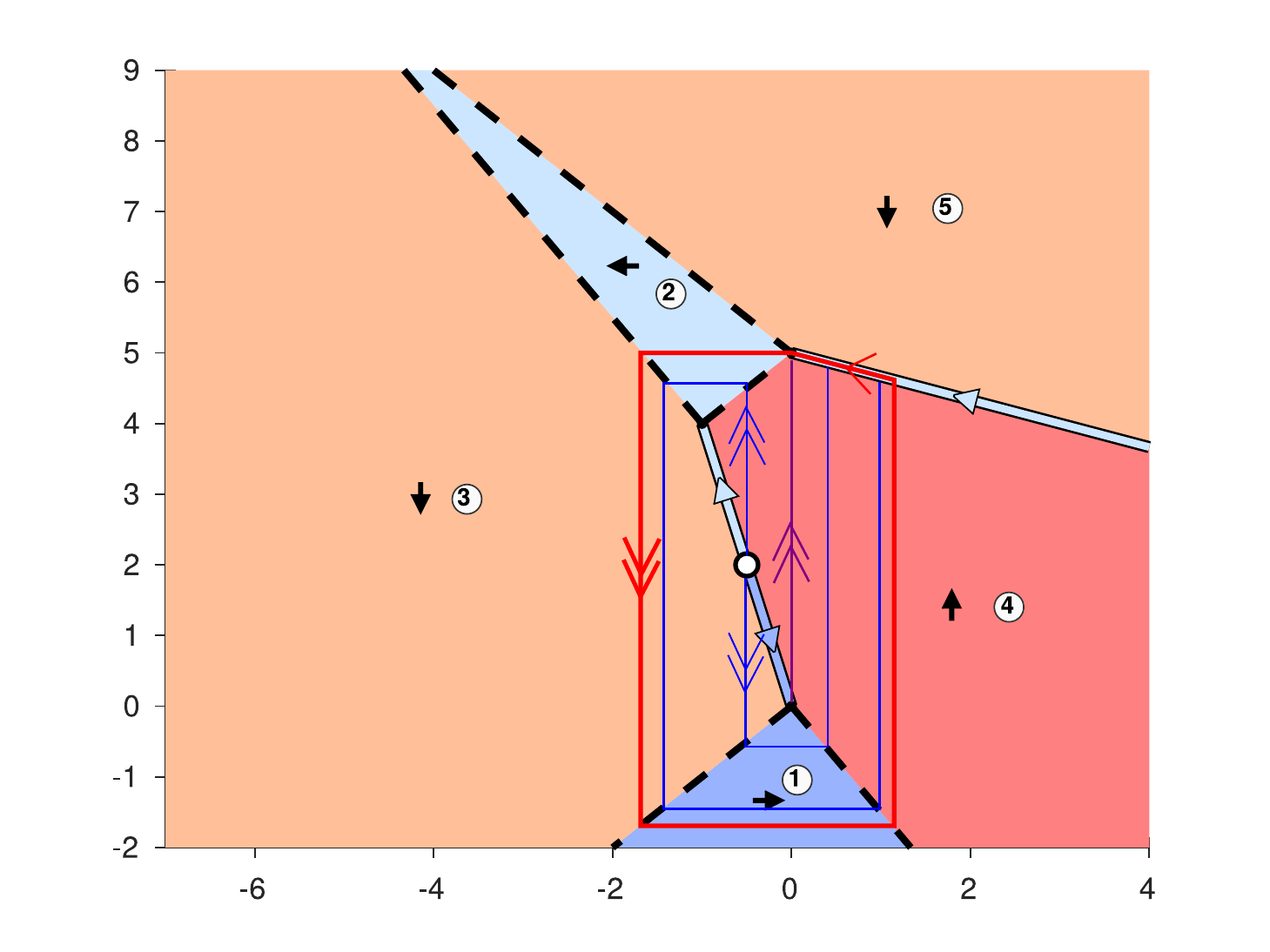}
        \caption{$\alpha=-15$}
    \end{subfigure}
    \caption{Bifurcations of the tropical dynamical system defined by \eqref{crossing1} due to separatrix connections. In (a), where $\alpha=-23$, the crossing limit cycles goes through the point $\mathcal P_{245}$ giving rise to a homoclinic separatrix connection. In (b), where $\alpha=-\frac{167}{9}$, there is a separatrix connection, departing from $\mathcal Q_{1234}$ and arriving at $\mathcal P_{245}$. In (c), where $\alpha=-\frac{53}{3}$, there is a separatrix connection, departing from $\mathcal P_{234}$ and arriving at $\mathcal P_{245}$ (see \figref{crossing1}). In (d), where $\alpha=-\frac{49}{3}$, there is a separatrix connection, departing from the tropical source $\mathcal Q_{1234}$ and arriving at $\mathcal P_{245}$. Finally, in (e), where $\alpha=-15$, there is a separatrix connection departing from $\mathcal P_{134}$ and arriving at $\mathcal P_{245}$.  }
    \figlab{crossing1bif}
    \end{figure}
%
\subsection{\rsp{Example: A} structurally stable separatrix connection}\seclab{crossing2}
Now, consider instead the following one-parameter-family of tropical dynamical systems, defined by the following tropical pairs:
\begin{equation}\eqlab{crossing2}
\begin{alignedat}{3}
 F_1(u,v)&:=2u,\quad &\md_1&:=(1,0),\\
 F_2(u,v)&:=-2+2u+3v,\quad  &\md_2&:=(-1,0),\\
 F_3(u,v)&:=v, \quad &\md_3&:=(0,-1),\\
 F_4(u,v)&:=4u+v, \quad &\md_4&:=(0,1),\\
 F_5(u,v)&:=\alpha+5u+4v,\quad &\md_5&:=(0,-1),
\end{alignedat}
\end{equation}
\rspp{with $\mathcal U=\{1,2\}$, $\mathcal V=\{3,4,5\}$}.
It is easy to show that for all $\alpha<-3$, we obtain the labelled subdivision $\mathcal S$ in \figref{crossing2}(a). We restrict attention to this set of $\alpha$-values. There is a single cycle of the crossing graph $\mathcal G$ given by
\begin{align}\eqlab{cycleG2}
 (2,0)\to (4,1)\to (2,3)\to (0,1)\to (2,0).
\end{align}

There are as before three important \rsp{tropical vertices} of the tropical polynomial $F_{\max}(u,v)=\max_{k\in\{1,\ldots,5\}}F_k(u,v)$:
\begin{align*}
 \mathcal P_{134}=(0,0),\quad \mathcal P_{234} = (0,1),
\end{align*}
and
\begin{align}\nonumber
 \mathcal P_{245} = \left(-\frac{3}{4}-\frac{\alpha}{4},\frac{1}{4}-\frac{\alpha}{4}\right),
\end{align}
for $\alpha<-3$. However, there are now two \rsp{tropical singularities}: $\mathcal Q_{1234}$ at 
\begin{align*}
 \left(0,\frac23\right),
\end{align*}
due to the intersection of $\mathcal E^{\mathcal U}_{12}$ and $\mathcal E^{\mathcal V}_{34}$, and $\mathcal Q_{1245}$ at 
\begin{align*}
 \left(-\alpha-2,\frac23\right),
\end{align*}
due to the intersection of $\mathcal E^{\mathcal U}_{12}$ and $\mathcal E^{\mathcal V}_{45}$. $\mathcal Q_{1234}$ is a hybrid point (center-like), whereas $\mathcal Q_{1245}$ is a tropical saddle.

In \figref{crossing2}(b) we show the phase portrait for $\alpha=-4$. 
The return map to $\Sigma$, defined by $v=0$, $u\in (0,-\frac{3}{4}-\frac{\alpha}{4})$, is the identity in this case (so that $c=1$, $b=0$ in \lemmaref{returnmap}). Consequently, there is a family of (nonhyperbolic) crossing cycles. This also means that there is a structurally stable homoclinic separatrix connection, departing and arriving at the point $\mathcal P_{245}$. A simple calculation, shows that there are no additional bifurcations for $\alpha<-3$; in particular $\mathcal P_{245}$ cannot connect to $\mathcal Q_{1245}$ by crossing only. At $\alpha=-3$, we have $\mathcal P_{245}=\mathcal P_{234}$ and $\rspp{(\mathcal T,\mathcal T^{\mathcal U},\mathcal T^{\mathcal V})}$ is not in general position. Interestingly, the purple cycle in \figref{crossing2}(b) is a limit cycle, as the red orbit coincides with it beyond $\mathcal P_{245}$. (Notice that by \defnref{crossing} the purple curve is a sliding cycle, not a crossing cycle). 

\begin{remark}\remlab{hybridpoint}
%
 The tropical system \eqref{crossing2} is the $\epsilon=0$ singular limit of the following system
 \begin{equation}\eqlab{thisss}
 \begin{aligned}
  x' &= x^3(1-e^{-2\epsilon^{-1}} y^3),\\
  y' &=y^2(-1+x^4  -e^{\alpha \epsilon^{-1}} x^5 y^3),
 \end{aligned}
\end{equation}
written in the $(u,v)$-coordinates, see \eqref{uv}.  A direct calculation shows that \eqref{thisss} has two singularities in the first quadrant for any $\alpha<-2$ and all $0<\epsilon\ll 1$. Indeed, these are given by the equations
\begin{align*}
 x^4-1-e^{(\alpha+2)\epsilon^{-1}} x^5=0,\quad y =e^{\frac23 \epsilon^{-1}},
\end{align*}
and the first equation has two positive solutions $x>0$ in this case. This is in agreement with \figref{crossing2}.

Notice also that for $\alpha<-2$ and $0<\epsilon\ll 1$ then there is a \rspp{singularity} near $x\approx 1$. A direct calculation, shows that the Jacobian at this point has positive determinant and a negative trace given by 
\begin{align}
-3 e^{\frac{3\alpha+8}{3}\epsilon^{-1}}x^5<0. \eqlab{trace}
\end{align}
The \rspp{singularity} is therefore hyperbolic, specifically an attracting focus for \eqref{thisss} for  $\alpha<-2$ and all $0<\epsilon\ll 1$. The corresponding \rsp{tropical singularity}  at $(u,v)=(0,\frac23)$ for $\alpha<-3$ (where the trace \eqref{trace} is exponentially small with respect to $\epsilon\to 0$) is not attracting for the tropical dynamical system, see \figref{crossing2}. Indeed, it is a hybrid point (of center type). 
\end{remark}

\begin{figure}[H]
    \centering
    \begin{subfigure}{0.5\textwidth}
    \centering
        \includegraphics[width=0.96\linewidth]{./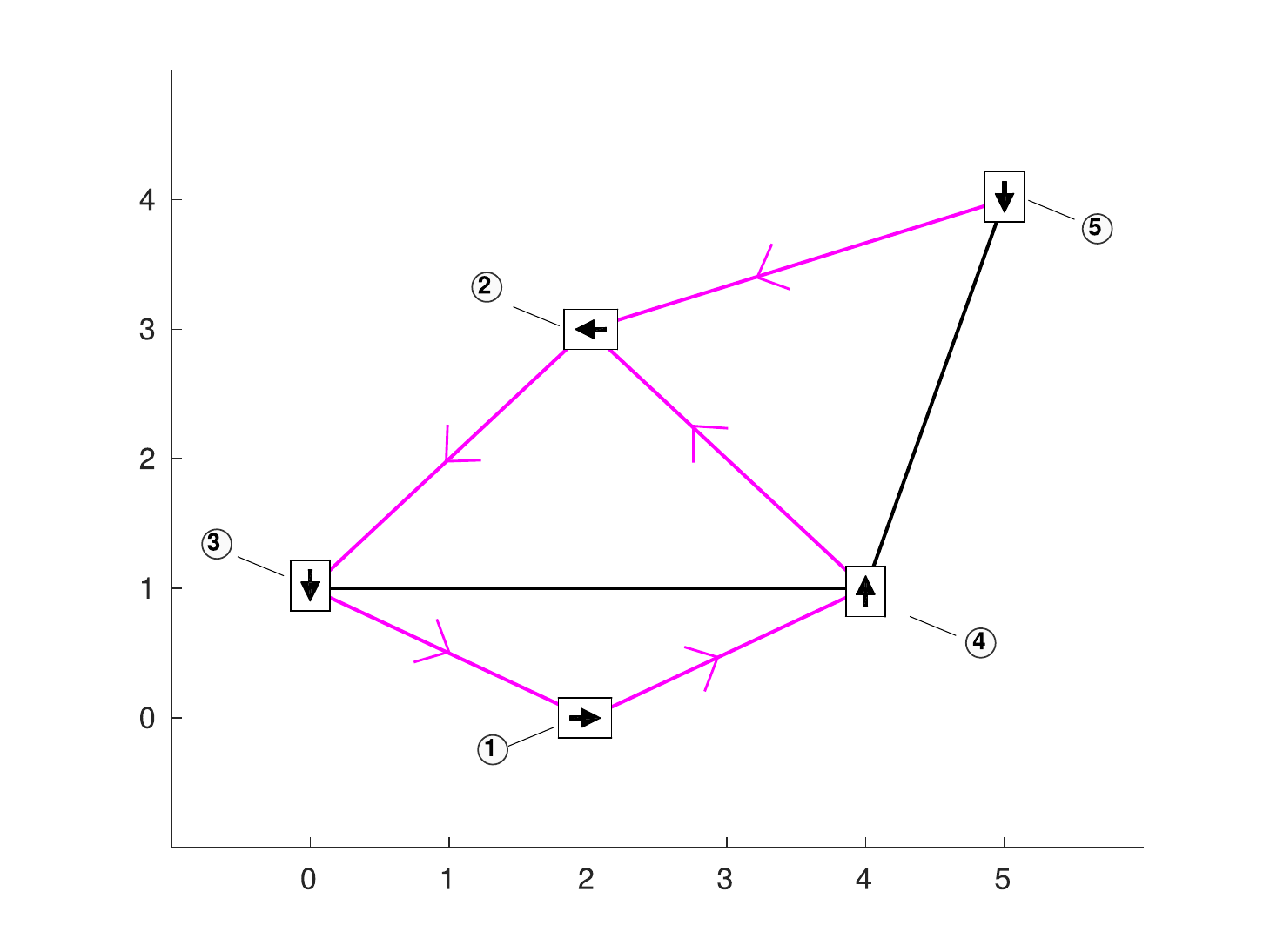}
        \caption{Labelled subdivision and the crossing graph $\mathcal G$ (purple)}
    \end{subfigure}%
    \begin{subfigure}{0.5\textwidth}
    \centering
        \includegraphics[width=0.96\linewidth]{./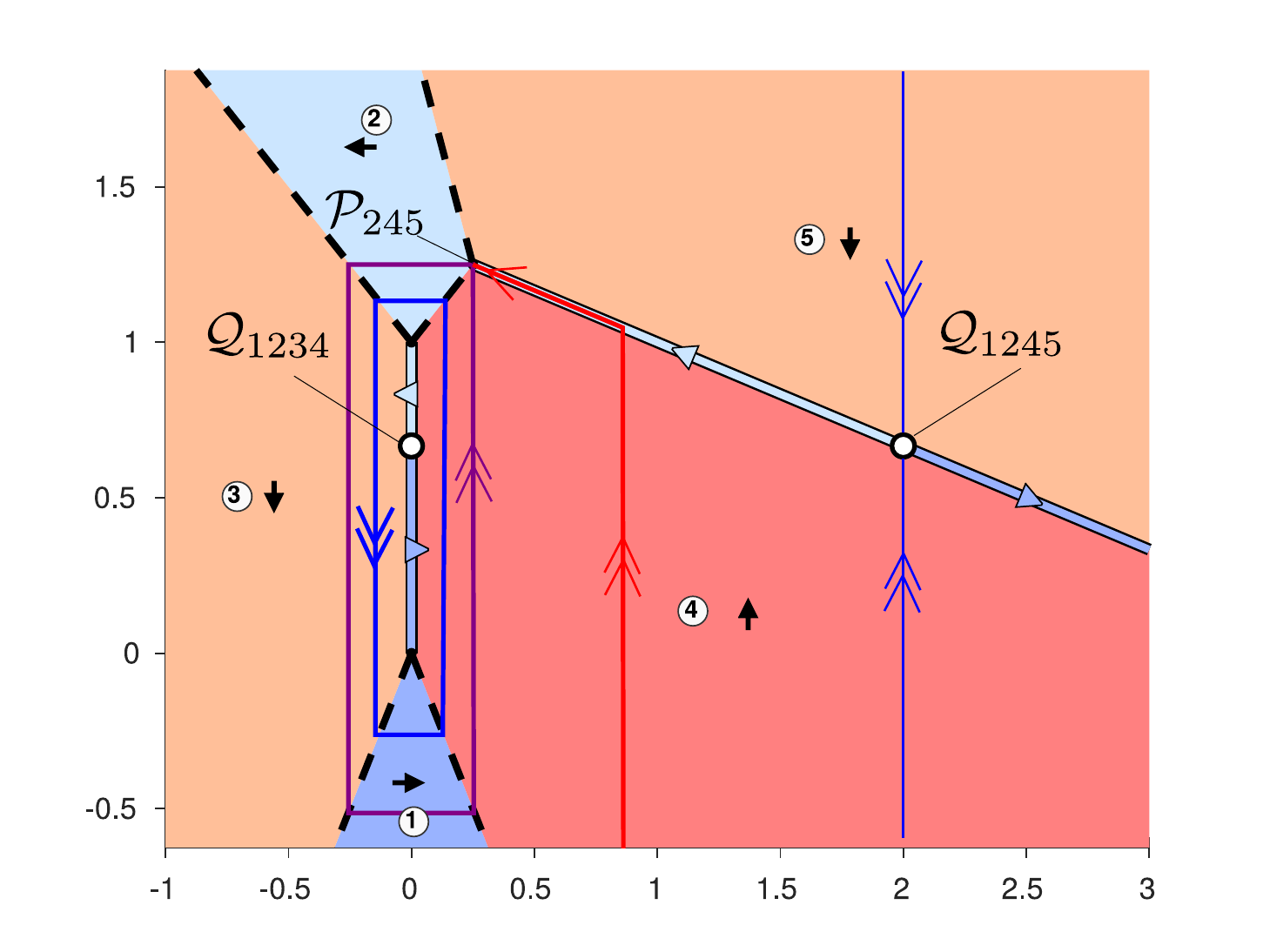}
        \caption{Phase portrait}
    \end{subfigure}
    \caption{In (a): The labelled subdivision and the crossing graph $\mathcal G$ (purple) for the tropical dynamical system defined by \eqref{crossing2} for $\alpha<-3$.  In (b): The associated phase portrait for $\alpha=-4$. There is a single cycle of the graph in (a), see \eqref{cycleG2}, and a family of crossing cycles (all nonhyperbolic with $c=1$, recall \lemmaref{returnmap}) in (b). The purple cycle is a separatrix connection the point $\mathcal P_{245}$. It is structurally stable (due to $c=1$). }
    \figlab{crossing2}
    \end{figure}

\section{Main results: Structurally stable tropical dynamical systems}\seclab{structurallystable}
We are now finally in a position to state our main results. 
  \begin{thm}\thmlab{thm1}
   Consider a tropical system $$TDS'\in \TDS,$$ and assume that 
   \begin{enumerate}
   \item $\rspp{(\mathcal T',\mathcal T'^{\mathcal U},\mathcal T'^{\mathcal V})}$,
   \item all separatrix connections, 
   \item and all crossing cycles,
   \end{enumerate}
   of $TDS'$ are in general position, cf. \defnref{generalposition}, \defnref{sepgen}, and \defnref{cyclegen}, respectively. 
Then $TDS'$ is structurally stable.
%
  \end{thm}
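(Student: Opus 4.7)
The plan is to show that, for a sufficiently small neighborhood $\mathcal O$ of $TDS'$ in $\TDSN$, every $TDS\in \mathcal O$ has exactly the same polygonal-orbit equivalence classes as $TDS'$. I would proceed by first establishing combinatorial persistence, then local persistence of all distinguished features (tropical vertices, singularities, separatrix connections, and crossing cycles), and finally gluing these into a global homotopy of polygonal orbits. The first step uses \defnref{generalposition}: since $(\mathcal T',\mathcal T'^{\mathcal U},\mathcal T'^{\mathcal V})$ is in general position, each subdivision $\mathcal S'^{\mathcal L}$, $\mathcal L=\mathcal U,\mathcal V,\mathcal I$, is a triangulation. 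By \thmref{thm0}, the set of coefficients giving rise to a fixed triangulation is an open convex polytope, so I can shrink $\mathcal O$ until all three subdivisions, and hence the crossing graph $\mathcal G$ from \defnref{graph}, are identical for every $TDS\in \mathcal O$. In particular, the sequence of tropical regions traversed by the crossing flow is the same for $TDS$ and $TDS'$, and the slopes of all tropical edges are preserved.

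Next, I would invoke the local persistence results obtained earlier. By \lemmaref{troppoint1} and \lemmaref{tropeq}, every tropical vertex $\mathcal P'$ and every tropical singularity $\mathcal Q'$ of $TDS'$ has a unique counterpart $\mathcal P(\alpha)$, $\mathcal Q(\alpha)$ of the same type in $TDS$, and the coordinates depend affinely on $\alpha$. By \lemmaref{this}, the separatrices of these points are affine translations (in $\alpha$) of the corresponding separatrices of $TDS'$, since transition maps for the crossing flow are affine by \lemmaref{mapPij}. Because every separatrix connection of $TDS'$ is in general position (vanishing splitting constant, \defnref{sepgen}), it persists for all $\alpha\in O$: the distance function $\Delta(\alpha)\equiv 0$ and hence the connection survives. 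Similarly, because every crossing cycle is in general position (\defnref{cyclegen}), \lemmaref{crossinggen} gives local structural stability on a neighborhood of each cycle, either through hyperbolic persistence ($c\ne 1$) or through a persisting nonhyperbolic family ($c=1$, $b=0$).

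The final and most delicate step is to glue these local statements into a global equivalence of polygonal orbits. Given any polygonal orbit $\gamma'$ of $TDS'$ with edges $\{l_i'\}_i$ and vertices $\{q_i'\}_i$, each $q_i'$ is of exactly one of the following kinds: a tropical vertex, a tropical singularity, or a transverse intersection of the crossing flow with a sliding switching manifold (a transversal nullcline sliding manifold or a transversal Filippov manifold). In each case, the corresponding $q_i(\alpha)$ in $TDS$ is an affine function of $\alpha$: the first two kinds by Step 2, the third kind by combining \lemmaref{mapPij} with the affine motion of the sliding manifold. The edges $l_i$ are either segments of the crossing flow (whose slopes are fixed because $\mathcal G$ is preserved) or sliding segments along tropical edges with the fixed rational slopes from \lemmaref{ConvFilippov} and \lemmaref{ConvNullcline}. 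Choosing the homotopy functions $f_i(t)$ of \defnref{orbitshomotopy} as the linear interpolation between $q_i'$ and $q_i(\alpha)$ then produces a valid homotopy: flow orientation is preserved because no separatrix connection breaks and no cycle disappears (Step 3), and line orientation is preserved because all slopes are invariants of the fixed combinatorial data.

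The main obstacle is the bookkeeping in the global gluing argument, rather than any single analytic estimate. One must show that every polygonal orbit of $TDS$, for $\alpha$ close to $\alpha'$, is obtained in exactly this way from a polygonal orbit of $TDS'$, so that no equivalence class appears or disappears. This follows from the combined combinatorial invariants — the crossing graph $\mathcal G$, the list of tropical vertices and singularities with their types, the separatrix connection pattern, and the crossing cycle multipliers — which together determine the polygonal orbit structure up to homotopy. Since all of these are constant on $\mathcal O$ by the preceding steps, the map sending each orbit class of $TDS'$ to its deformation in $TDS$ is a bijection, which is precisely \defnref{equivalence}, and $TDS'$ is structurally stable in the sense of \defnref{structuralstability}.
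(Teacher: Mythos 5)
Your proposal is correct and follows essentially the same route as the paper's own proof: local structural stability of tropical vertices, singularities, separatrix connections and crossing cycles from the preceding sections, affine dependence of all distinguished points and transition maps on $\alpha$, and a global gluing into a homotopy of polygonal orbits (your write-up is in fact somewhat more explicit about the final bijection of orbit classes than the paper's). One minor slip: your enumeration of the vertex types of a polygonal orbit omits the most common case, namely intersections with switching manifolds of \emph{crossing} type where the orbit turns, but these are handled by \lemmaref{mapPij}, which you already invoke, so nothing essential is missing.
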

  \begin{proof}
     It follows from the previous sections, that all \rsp{tropical vertices}, singularities, separatrix connections, and all crossing cycles are locally structurally stable. 
     
     
 Consider any departure (or arrival) separatrix of a point $\mathcal P'$, which is not a (structurally stable) separatrix connection. Either it is crossing only (of \eqref{uvnotinT}), or it intersects a sliding manifold (Filippov or transversal nullcline) away from arrival separatrices of \rsp{tropical singularities}.  
  Suppose the former. Then since the crossing flow defines a usual dynamical system, either the separatrix goes unbounded or the $\omega$-limit ($\alpha$-limit) set is a crossing limit cycle, which is hyperbolic by assumption. 
    The same holds for the departure (arrival) orbit of the perturbed $\mathcal P(\alpha)$, $\alpha\in O$, since it is not a separatrix connection. Moreover, all intersections with the crossing switching manifolds of $\mathcal T$ of  $TDS\in \mathcal O$, with $\mathcal O$ a neighborhood of $TDS'$, vary in an affine way with respect to $\alpha\in O$. 
    
    Now, we turn to stable (unstable) sliding. In forward (backward) time, orbits can only leave a sliding segment at a \rsp{tropical vertex}, through a departure (arrival, respectively) orbit. Otherwise, it goes unbounded (only possible if the sliding segment is unbounded) or it goes to a \rsp{tropical singularity}. Since $\rspp{(\mathcal T',\mathcal T'^{\mathcal U},\mathcal T'^{\mathcal V})}$ is in general position, see \lemmaref{troppoint2} and \lemmaref{tropeq}, the same is true for $\mathcal T$ of $TDS\in \mathcal O$. We can therefore conclude that $TDS'$ and $TDS\in \mathcal O$ have the same orbit equivalence classes. Consequently, $TDS'$ is structurally stable.    
    
    
  \end{proof}

\begin{thm}\thmlab{thm2}
There is an open dense subset $\mathcal D$ of $$\TDS,$$ with the associated subset $D\subset \rspp{\mathbf{A}}(2M)$ (\rspp{recall the notation in \remref{mathcalOO}}) consisting of a finite union of convex polytopes, upon which we have the following: 
\begin{enumerate}
 \item \label{item1thm2} Any $TDS\in \mathcal D$ is structurally stable.
\item \label{item2thm2} There is a finite number of equivalence classes of structurally stable systems on $\mathcal D$.
\end{enumerate}
  \end{thm}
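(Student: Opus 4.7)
The plan is to construct $\mathcal D$ by starting from the open dense set furnished by \thmref{thm0}, on which the combinatorial data is piecewise constant, and then successively removing a finite number of affine hyperplanes corresponding to the degeneracies of separatrix connections and non-hyperbolic crossing cycles, so that the hypotheses of \thmref{thm1} hold throughout. By \thmref{thm0} together with items \ref{gp1}--\ref{gp3} of \lemmaref{triangulation}, the set $\mathcal D_0 \subset \TDSN$ on which $\rspp{(\mathcal T,\mathcal T^{\mathcal U},\mathcal T^{\mathcal V})}$ is in general position corresponds to an open dense subset $D_0 \subset \rspp{\mathbf A}(2M)$ that is a finite union of open convex polytopes; each added condition of \lemmaref{triangulation} is cut out by finitely many strict linear inequalities. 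On a fixed component $D_0^{(j)}$, the triangulations $\mathcal S, \mathcal S^{\mathcal U}, \mathcal S^{\mathcal V}$ and the associated crossing graph $\mathcal G$ are constant, and by \lemmaref{troppoint1} and \lemmaref{tropeq} every tropical vertex and every tropical singularity has coordinates that are affine in $\alpha \in D_0^{(j)}$; moreover, the type (sink, source, saddle, hybrid point) of each tropical singularity and the local picture at each tropical vertex are constant throughout $D_0^{(j)}$.

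Next I would enumerate the potential separatrix connections on $D_0^{(j)}$. By \lemmaref{connectiongraph}, each such connection corresponds to a directed path in the finite graph $\mathcal G$ from a vertex associated with one relevant point (tropical vertex in general position or sink/source/saddle) to a vertex associated with another; on a fixed polytope the combinatorics is locked, so the set of combinatorial types that can realize a separatrix connection for some $\alpha \in D_0^{(j)}$ is finite. For each candidate, \lemmaref{Deltasep} yields an affine splitting function $\Delta(\alpha) = b\cdot(\alpha - \alpha')$ with $b \in \mathbb Q^{2M}$. Either $b=0$ identically on $D_0^{(j)}$, in which case the connection (if it is realized) is in general position per \defnref{sepgen}, or $\{\Delta=0\}$ is an affine hyperplane that we delete. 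Applying the analogous step to the finitely many candidate crossing cycles via \lemmaref{returnmap} and \lemmaref{crossinggen} (the multiplier $c \in \mathbb Q_+$ is combinatorially determined and hence constant on $D_0^{(j)}$; degeneracy can occur only when $c=1$ and the rational splitting vector $b$ is nonzero, which again defines an affine hyperplane) produces the desired set $\mathcal D$. The associated $D \subset \rspp{\mathbf A}(2M)$ is a finite union of open convex polytopes and is open and dense in $\rspp{\mathbf A}(2M)$; on $\mathcal D$ the hypotheses of \thmref{thm1} are satisfied, which proves item \ref{item1thm2}.

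For item \ref{item2thm2}, I would fix a connected component $\mathcal D^{(k)}$ of $\mathcal D$ and show all systems in $\mathcal D^{(k)}$ are equivalent. On $\mathcal D^{(k)}$ the combinatorial data (triangulations, crossing graph, types of tropical vertices and singularities, incidences of separatrix connections, multipliers of crossing cycles) is constant, and all relevant positions move affinely in $\alpha$. Given $\alpha, \alpha' \in \mathcal D^{(k)}$, the affine translations of \lemmaref{troppoint2}, \lemmaref{tropeq} and \lemmaref{this}, applied consistently near each tropical vertex, singularity, and separatrix, can be linearly interpolated into a one-parameter family of piecewise affine deformations that preserves the flow orientation and line orientation of every orbit segment while moving the vertices continuously from those of $TDS(\alpha')$ to those of $TDS(\alpha)$. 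By \defnref{orbitshomotopy}, this exhibits each polygonal orbit of $TDS(\alpha')$ as homotopic to one of $TDS(\alpha)$, and conversely, so by \defnref{equivalence} the two systems are equivalent. Since $\mathcal D$ has only finitely many components, the total number of equivalence classes on $\mathcal D$ is finite.

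The hardest point I expect is the finiteness step in the second paragraph: a departure separatrix of the crossing flow may, a priori, traverse arbitrarily many tropical regions before possibly terminating at a tropical vertex or tropical singularity, so one must verify that on a fixed polytope only finitely many combinatorial paths in $\mathcal G$ can realize an actual separatrix connection. The argument should proceed by noting that the crossing flow on the fixed polytope is governed by finitely many affine maps of the form \eqref{mapPij}, so the image of a given departure point is affine in $\alpha$ and hits a specified target point along only finitely many paths (e.g. by bounding admissible paths that do not revisit regions in a way forced by the flow orientations encoded in $\mathcal G$); once this is secured, the rest is bookkeeping with finitely many rational-coefficient affine hyperplanes inside convex polytopes.
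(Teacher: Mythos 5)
Your proposal is correct and follows essentially the same route as the paper: stratify $\rspp{\mathbf{A}}(2M)$ into finitely many open convex polytopes via the triangulations of $\mathcal S^{\mathcal U},\mathcal S^{\mathcal V},\mathcal S^{\mathcal I}$ and the general-position conditions, then cut further along the finitely many rational affine hyperplanes coming from nonzero splitting constants of separatrix connections and nonhyperbolic crossing cycles, and finally invoke \thmref{thm1} on each resulting polytope. Your explicit interpolation argument for equivalence on each connected component, and your flagging of the finiteness of candidate separatrix connections (which the paper asserts via \lemmaref{connectiongraph} without elaboration), are refinements of, not departures from, the paper's proof.
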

  \begin{proof}
To prove the theorem, we focus on item \ref{item2thm2} and show that there is a finite number of equivalence classes of structurally stable systems. This will in turn show the existence of $\mathcal D$ in item \ref{item1thm2}.

  By  \lemmaref{triangulation}, for a structurally stable system we may assume that (or more accurately, consider a representative of the structural stable system for which) $\rspp{(\mathcal T,\mathcal T^{\mathcal U},\mathcal T^{\mathcal V})}$ is in general position. There are finitely many (regular) triangulations of $\mathcal S^{\mathcal U}$ and $\mathcal S^{\mathcal V}$. For each fixed pair of triangulation of $\mathcal S^{\mathcal U}$ and $\mathcal S^{\mathcal V}$, there are finitely many  triangulations of $\mathcal S^{\mathcal I}$. We then fix $\mathcal S^{\mathcal U}$, $\mathcal S^{\mathcal V}$ and $\mathcal S^{\mathcal I}$. This corresponds to fixing a convex polytope $\rspp{O}\subset \rspp{\mathbf{A}}(2M)$, see \thmref{thm0}. Within this set, the corresponding tropical curves  $\mathcal T^{\mathcal U},\mathcal T^{\mathcal U}$ and $\mathcal T^{\mathcal I}$ are fixed up to homotopy, but their relative positions are not, \cite{maclagan2015introduction}. However, there are finitely many ways that $\mathcal T^{\mathcal U}$ and $\mathcal T^{\mathcal V}$ can intersect transversally (up to homotopy) along their \rsp{tropical edges} -- these are described by the Minkowski sum, see \cite{maclagan2015introduction} -- and each of these cases correspond to a convex polytope $\rspp{O}'\subset Y$. Within $\rspp{O}'$, all singularities and \rsp{tropical vertices} are in general position, see \lemmaref{troppoint2} and \lemmaref{tropeq}. Finally, the crossing graph $\mathcal G$, recall \defnref{graph}, is fixed for $\alpha\in \rspp{O}'$.
  
  We now subdivide $\rspp{O}'$ further through separatrix connections (crossing cycles are handled in a similar way using the (finitely many) cycles of $\mathcal G$). Consider a departure separatrix from a point $\mathcal P$ with $\alpha\in \rspp{O}'$. (The case of an arrival separatrix is handled in the same way.) Then in a neighborhood of $\mathcal P$ this set varies in an affine way with respect to $\alpha$. Now, following \lemmaref{connectiongraph}, we use the crossing graph $\mathcal G$ to follow the departure separatrix. There are finitely many possible separatrix connections to consider. If a separatrix connection exists for some $\alpha\in \rspp{O}'$ with a splitting constant $b\ne 0$, then we divide $\rspp{O}'$ into two convex polytopes (corresponding to $\Delta(\alpha)\gtrless 0$). Proceeding in this way, we obtain finitely many convex polytopes -- the  union of which is open and dense -- and where \thmref{thm1} applies. In turn, we have finitely many equivalence classes of structurally stable systems. This completes the proof.

  
  \end{proof}

%

\section{\rsp{Example}: A generalized autocatalator}\seclab{genauto}
In this section, we consider a generalized tropical autocatalator model defined by the following tropical pairs:
\begin{equation}\eqlab{tropgenauto}
\begin{alignedat}{3}
 F_1(u,v)&:=\alpha_1-u,\quad &\md_1&:=(1,0),\\
 F_2(u,v)&:=\alpha_2,\quad  &\md_2&:=(-1,0),\\
 F_3(u,v)&:=\alpha_3+2v, \quad &\md_3&:=(-1,0),\\
 F_4(u,v)&:=\alpha_4, \quad &\md_4&:=(0,-1),\\
 F_5(u,v)&:=\alpha_5+u-v,\quad &\md_5&:=(0,1),\\
 F_6(u,v)&:=\alpha_6+u+v,\quad &\md_6&:=(0,1),
\end{alignedat}
\end{equation}
\rspp{with $\mathcal U=\{1,2,3\}$, $\mathcal V=\{4,5,6\}$}. 
In comparison with \eqref{tropauto}, all tropical coefficients $\alpha_1,\ldots,\alpha_6$ are ``free'', in the sense that $\alpha_1=\alpha-1$, $\alpha_2=-1$, $\alpha_3=-1$, $\alpha_4=0$, $\alpha_5=0$, $\alpha_6=0$ in \eqref{tropgenauto} gives \eqref{tropauto}. In particular, the tropical flow vectors in \eqref{tropgenauto} are the same as in \eqref{tropauto}. \textit{We believe that this mimics the general situation in chemical reaction networks, where the signs of the terms are known, but the (magnitude of the) parameters are uncertain/unknown.} 

\begin{proposition}\proplab{genauto}
 There are $15$ different structurally stable phase portraits of the tropical dynamical system defined by \eqref{tropgenauto}.
\end{proposition}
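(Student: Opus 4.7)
The plan is to carry out the enumeration strategy described in the proof of \thmref{thm2} for the specific six-monomial system \eqref{eq:tropgenauto}, leveraging the fact that the list of flow vectors $\{\md_k\}_{k=1}^6$ is fixed and the point configurations have very small size.

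First, I would classify the polyhedral subdivisions. The point configurations of $F_{\max}^{\mathcal U}$ and $F_{\max}^{\mathcal V}$ each consist of three degree-tuples in convex position ($\{(-1,0),(0,0),(0,2)\}$ and $\{(0,-1),(1,-1),(1,1)\}$ respectively), hence each admits a unique regular triangulation for every $\alpha\in\rspp{\mathbf{A}}(12)$ where $\rspp{(\mathcal T,\mathcal T^{\mathcal U},\mathcal T^{\mathcal V})}$ is in general position. The combined configuration $F_{\max}^{\mathcal I}$ is a pentagon with the interior point $(0,0)$, so the list of regular triangulations $\mathcal S^{\mathcal I}$ is finite and short: I would enumerate them explicitly by lifting $(\degree F_k,\alpha_k)$ and walking the secondary polytope, and label each resulting subdivision by the crossing graph $\mathcal G$ of \defnref{graph}.

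Second, for each triangulation of $\mathcal S^{\mathcal I}$ I would compute the possible transversal intersections of $\mathcal T^{\mathcal U}$ and $\mathcal T^{\mathcal V}$ via the Minkowski sum $\mathcal S^{\mathcal U}\oplus\mathcal S^{\mathcal V}$; this fixes the tropical singularities and their types (sink, source, saddle, or hybrid) according to \defnref{sinks}. Because the flow vectors agree with the autocatalator \eqref{eq:tropauto}, the sliding directions along $\mathcal T^{\mathcal U}$ and $\mathcal T^{\mathcal V}$ are determined once the dominant region is known, so the type of each tropical singularity can be read off mechanically. At this stage I expect only a moderate number of combinatorial possibilities to survive, since most configurations with multiple singularities in the first quadrant are forbidden by the fixed sign pattern of the flow vectors.

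Third, for each surviving configuration I would use the crossing graph $\mathcal G$ and \lemmaref{connectiongraph} to list the candidate separatrix connections and crossing cycles. Following the strategy in the proof of \thmref{thm2}, I would compute the splitting constant $b$ of each candidate connection from the explicit maps \eqref{eq:mapPij}, and the multiplier $c$ of each crossing cycle from the cycles in $\mathcal G$. The crux of the argument — and the step where I expect most work — is to verify that every potential bifurcation either cannot occur (e.g., because the relevant separatrices land on a sliding segment before reaching another vertex/singularity), or is persistent (splitting constant vanishes identically on the polytope, as in the purple orbit of \figref{fig:tropauto2}(d) and in the example of \secref{sec:crossing2}). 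Whenever a nonzero splitting constant is found, the polytope must be subdivided further by the sign of $\Delta(\alpha)=b\cdot(\alpha-\alpha')$; each resulting sub-polytope yields one structurally stable equivalence class by \thmref{thm1}.

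Finally, having obtained the full list of polytopes, I would identify equivalence classes (collapsing redundant representatives produced by different triangulations of $\mathcal S^{\mathcal I}$ that give identical polygonal-orbit structure) and verify the tally of $15$. The phase portraits illustrated in \figref{fig:tropauto} and \figref{fig:tropauto2} already account for four of these (the two regimes $\alpha<0$ and $\alpha\in(0,1/2)$ for \eqref{eq:tropauto}, together with two further regimes in the extended parameter space of \eqref{eq:tropgenauto}); the remaining eleven would be confirmed using the \texttt{Tropical Phase Plane} toolbox of \secref{sec:TPP} to produce representative portraits on each polytope of $\rspp{\mathbf{A}}(12)$. The main obstacle is the bookkeeping: ensuring that every possible separatrix routing through the crossing graph has been considered and that no two polytopes with topologically indistinguishable phase portraits have been double-counted.
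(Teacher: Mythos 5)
Your overall strategy coincides with the paper's: specialize the proof of \thmref{thm2} to \eqref{tropgenauto}, enumerate the regular triangulations of $\mathcal S^{\mathcal I}$, then the transversal intersections of $\mathcal T^{\mathcal U}$ and $\mathcal T^{\mathcal V}$ (via the Minkowski sum), and finally check that separatrix connections and crossing cycles do not subdivide the coefficient space further. The paper executes this with a refinement poset, finds five triangulations $1,\dots,5$ of which $1$, $4$ and $5$ each split into an $H$- and a $V$-labelled version, and counts $2+1+1+1+2+2+3+3=15$ general-position subcases after identifying $1^H_b\sim 1^H_c$.

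There are, however, two concrete gaps. First, your point configuration for $F_{\max}^{\mathcal V}$ is wrong: by \eqref{Fk}, $F_4=\alpha_4$ has $\degree F_4=(0,0)$, not $(0,-1)$. Hence $\degree F_2=\degree F_4=(0,0)$, and the combined configuration has only \emph{five} distinct points --- a quadrilateral with $(0,0)$ in its interior --- where $(0,0)$ carries two competing labels $\md_2=(-1,0)$ and $\md_4=(0,-1)$. This coincidence is not a bookkeeping nuisance: it is exactly what produces the $H$/$V$ doubling of cases $1$, $4$ and $5$ in the paper's count, and without it one does not arrive at $15$. Your ``pentagon with interior point $(0,0)$'' has six distinct points and a different secondary polytope, so a literal execution of your plan would enumerate the wrong set of triangulations. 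Second, the proposal never performs the enumeration. The case analysis --- which triangulations of $\mathcal S^{\mathcal I}$ occur, which intersections of $\mathcal T^{\mathcal U}$ with $\mathcal T^{\mathcal V}$ are realizable without changing $\mathcal S^{\mathcal I}$ (e.g.\ the paper repeatedly uses that $\mathcal T^{\mathcal V}$ cannot cross $\mathcal E_{35}$), which subcases give homotopic phase portraits, and the verification that no separatrix connection or crossing cycle forces a further subdivision --- is the entire content of the proof; deferring it to ``bookkeeping'' and a numerical toolbox leaves the statement unproved. The plan is the right one, but as written it neither reproduces the correct combinatorial input nor carries out the computation that yields the number $15$.
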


To prove this claim, we follow the approach of the proof of \thmref{thm2} and first determine the different $\rspp{(\mathcal T,\mathcal T^{\mathcal U},\mathcal T^{\mathcal V})}$ in general position, see \defnref{generalposition}.  For this purpose, we notice that $\mathcal S^{\mathcal U}$ and $\mathcal S^{\mathcal V}$ for $\alpha_i\in \mathbb R$, $i\in \{1,\ldots,6\}$, each consist of a single triangle, see \figref{SUV}, since there are three monomials (with $\degree F_k$ not co-linear) in each direction. Next, for $\mathcal S^{\mathcal I}$, we use a refinement poset \cite{de2010a} of the point configuration associated with \eqref{tropgenauto}, see \figref{subgenauto}. The notion of a refinement gives an ordering of the subdivision into layers according to how far a subdivision is from  being a triangulation, see \cite{de2010a}.  The upper-most subdivision is the trivial subdivision (where all points lie on the same plane in the $(\degree F,\alpha)$-space). The second layer (row) consists of the almost-triangulations which connect exactly two triangulations in the third layer (row) by a \textit{flip} (indicated by the lines). A flip \cite[Section 2.4]{de2010a} is a local change that transforms one triangulation into another. ( In general, the numbers of rows and columns of a refinement poset depend upon the point configuration.) We enumerate the triangulations in the final layer by $1$, $2$, $\ldots$, $5$ as indicated in the figure. Notice that the cases $1$, $4$ and $5$ come in different pairs according to whether $(0,0)$ is labelled as $\md_2=(-1,0)$ ($H=$horizontal) or as $\md_4=(0,-1)$ ($V=$vertical), cf. item \ref{gp2} of \lemmaref{triangulation}, see \defnref{generalposition}. We write $C^H$ and $C^V$, $C=1,4,5,$ to separate these cases further. In the following, we consider the cases separately.

\begin{figure}
    \centering
        \includegraphics[width=0.45\linewidth]{./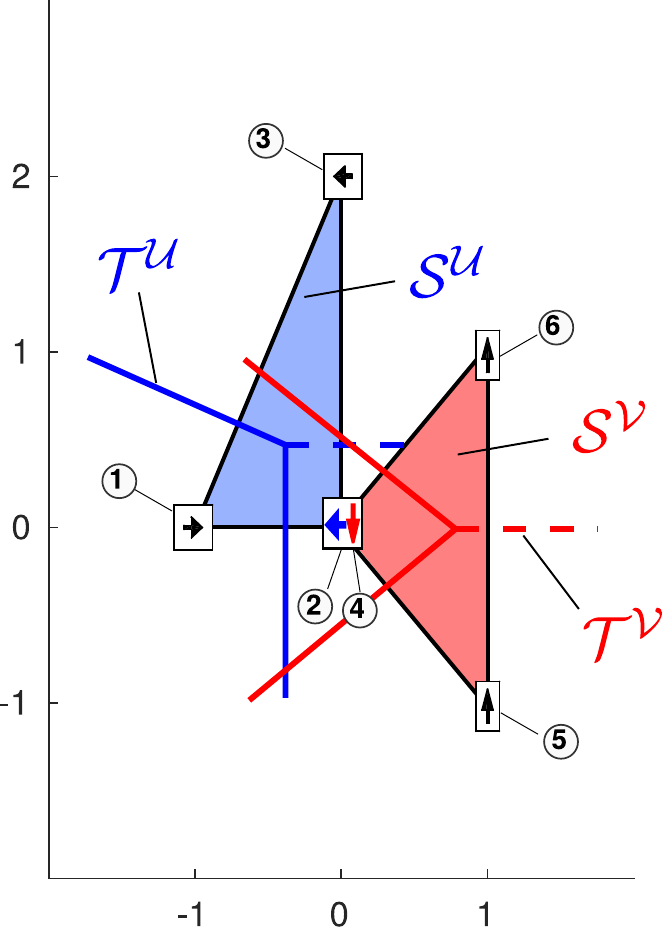}
        \caption{The subdivisions $\mathcal S^{\mathcal U}$ and $\mathcal S^{\mathcal V}$ and their dual tropical curves $\mathcal T^{\mathcal U}$ and $\mathcal T^{\mathcal V}$, respectively. Only the full lines correspond to nullcline sliding. }
\figlab{SUV}
\end{figure}

\begin{figure}
    \centering
        \includegraphics[width=1.0\linewidth]{./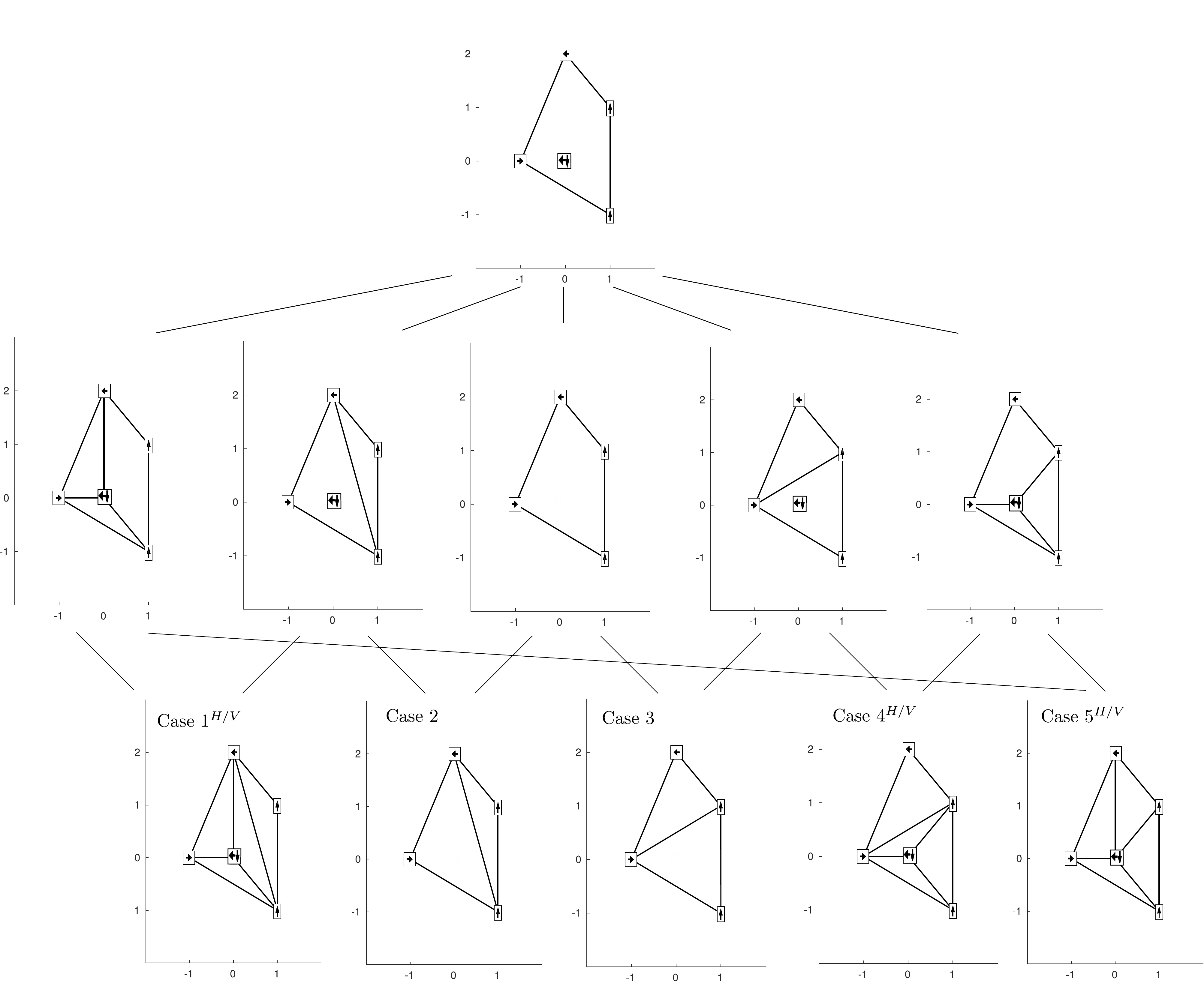}
        \caption{Refinement poset of the point configuration associated with the generalized tropical autocatalator model, see \eqref{tropgenauto}.}
\figlab{subgenauto}
\end{figure}
\subsection{Case $1^H$}
We illustrate the labelled subdivision $\mathcal S^{\mathcal I}$ and the associate tropical curves $\mathcal T^{\mathcal U}$ (blue), $\mathcal T^{\mathcal V}$ (red), and $\mathcal T^{\mathcal I}$ (black) in \figref{tropauto1H}. We also illustrate $\mathcal T^{\mathcal U}$ and $\mathcal T^{\mathcal V}$ and indicate that subsets of each of the \rsp{tropical edges} of $\mathcal T^{\mathcal U}$ belong to $\mathcal T^{\mathcal I}$. On the other hand, only one \rsp{tropical edge} of $\mathcal T^{\mathcal V}$ ($\mathcal E_{56}$) is a subset of $\mathcal T^{\mathcal I}$. This leads to three distinct subcases of case $1^H$ where $\rspp{(\mathcal T,\mathcal T^{\mathcal U},\mathcal T^{\mathcal V})}$ in general position, cf. item \ref{gp3} of \lemmaref{triangulation}, see \defnref{generalposition}. These are indicated in the figure. Here we use that $\mathcal T^{\mathcal V}$ cannot intersect $\mathcal E_{35}$ without changing the subdivision of $\mathcal S^{\mathcal I}$. This should be clear enough, see also \figref{SUV}.  However, $1^H_c\sim 1^H_b$, since the separatrix connection in between does not generate new equivalence classes, and there are therefore only two (up to equivalence) distinct phase portraits. We provide examples of these  in \figref{genauto11H} (parameters are indicated in the caption).  
\begin{figure}
    \centering
        \includegraphics[width=0.75\linewidth]{./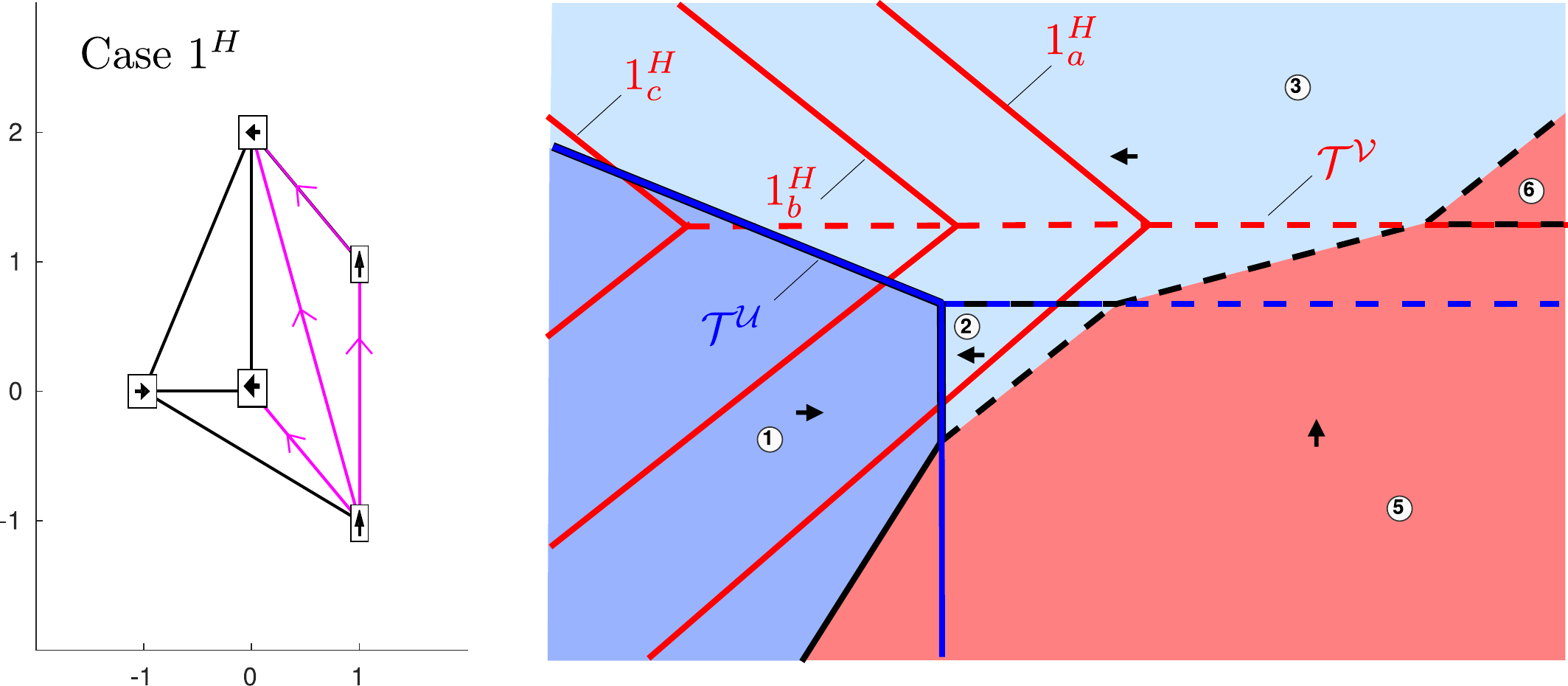}
        \caption{The subdivision associated with case $1^H$ and the associated tropical curves. There are three different cases of $\rspp{(\mathcal T,\mathcal T^{\mathcal U},\mathcal T^{\mathcal V})}$ in general position. Subsets of each of the \rsp{tropical edges} of $\mathcal T^{\mathcal U}$ (blue) belong to $\mathcal T^{\mathcal I}$ (in black) as indicated, whereas only one \rsp{tropical edge} of $\mathcal T^{\mathcal V}$ ($\mathcal E_{56}$ in red) is a subset of $\mathcal T^{\mathcal I}$. The separatrix connection, that occurs between $1^H_b$ and $1^H_c$, does not generate new orbit equivalence classes.}
\figlab{tropauto1H}
\end{figure}

\begin{figure}
\centering
   \begin{subfigure}{0.495\textwidth}
    \centering
        \includegraphics[width=0.96\linewidth]{./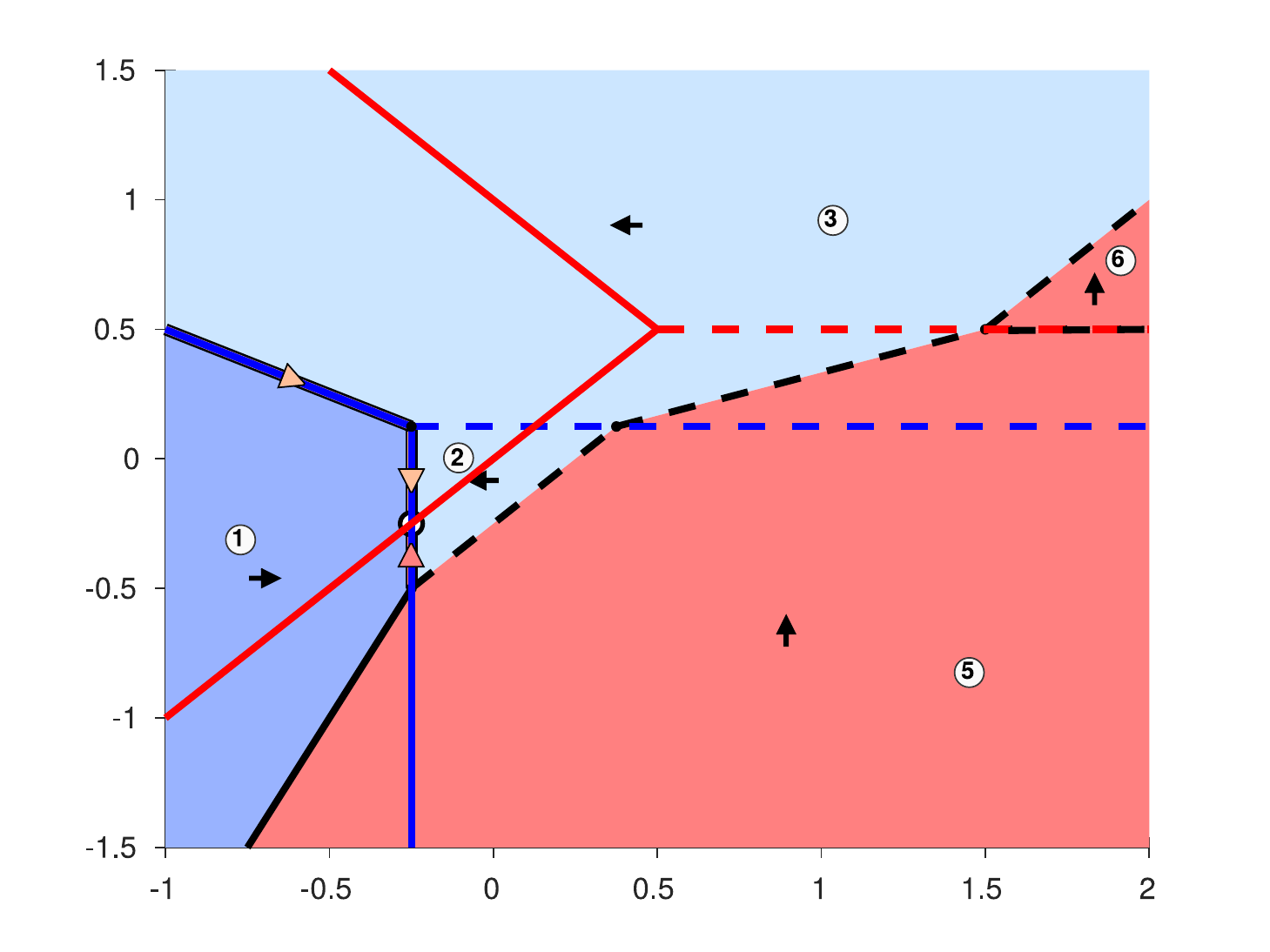}
        \caption{$1^H_a$}
    \end{subfigure}%
    \begin{subfigure}{0.495\textwidth}
    \centering
        \includegraphics[width=0.96\linewidth]{./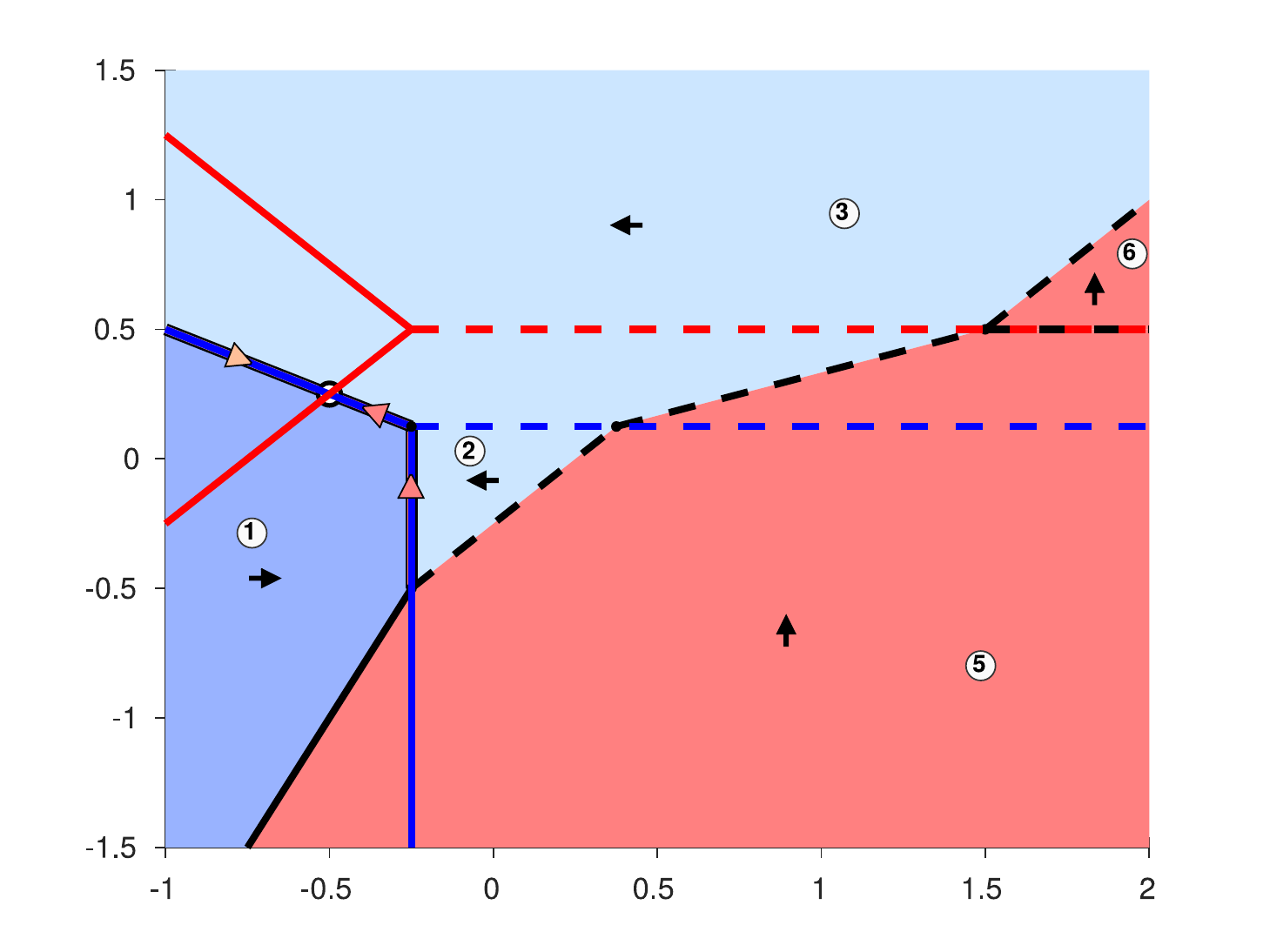}
        \caption{$1^H_b$}
    \end{subfigure}
        \caption{Two nonequivalent phase portraits $1^H_a$ (a) and $1^H_b$ (b) for the case $1^H$, see \figref{tropauto1H}. $\mathcal T^{\mathcal U}$ and $\mathcal T^{\mathcal V}$ are blue and red, whereas $\mathcal T^{\mathcal I}$ is in black. Full lines indicate sliding. The values of the tropical coefficients in the examples are: $\alpha_1 = 0$, $\alpha_2=0.25$, $\alpha_3=0$, $\alpha_5=0$, $\alpha_6=-1$ and $\alpha_4 = 0$ in (a) and $\alpha_4= -0.75$ in (b). }
\figlab{genauto11H}
\end{figure}

\subsection{Case $1^V$}
We illustrate the labelled subdivision $\mathcal S^{\mathcal I}$ and the associate tropical curves $\mathcal T^{\mathcal U}$ (blue), $\mathcal T^{\mathcal V}$ (red), and $\mathcal T^{\mathcal I}$ (black) in \figref{tropauto1V} (similar to \figref{tropauto1H}). There are two different subcases of case $1^V$ with $\rspp{(\mathcal T,\mathcal T^{\mathcal U},\mathcal T^{\mathcal V})}$ in general position. These corresponds to two different intersections of $\mathcal T^{\mathcal U}$ (blue) and $\mathcal T^{\mathcal V}$ (red) along $\mathcal E_{45}$, cf. item \ref{gp3} of \lemmaref{triangulation}, see \defnref{generalposition}. However, the phase portraits are clearly equivalent. We illustrate an example of the single phase portrait (up to equivalence) associated to case $1^V$ in \figref{genauto11V} (see parameters in the figure caption). 

\begin{figure}
    \centering
        \includegraphics[width=0.75\linewidth]{./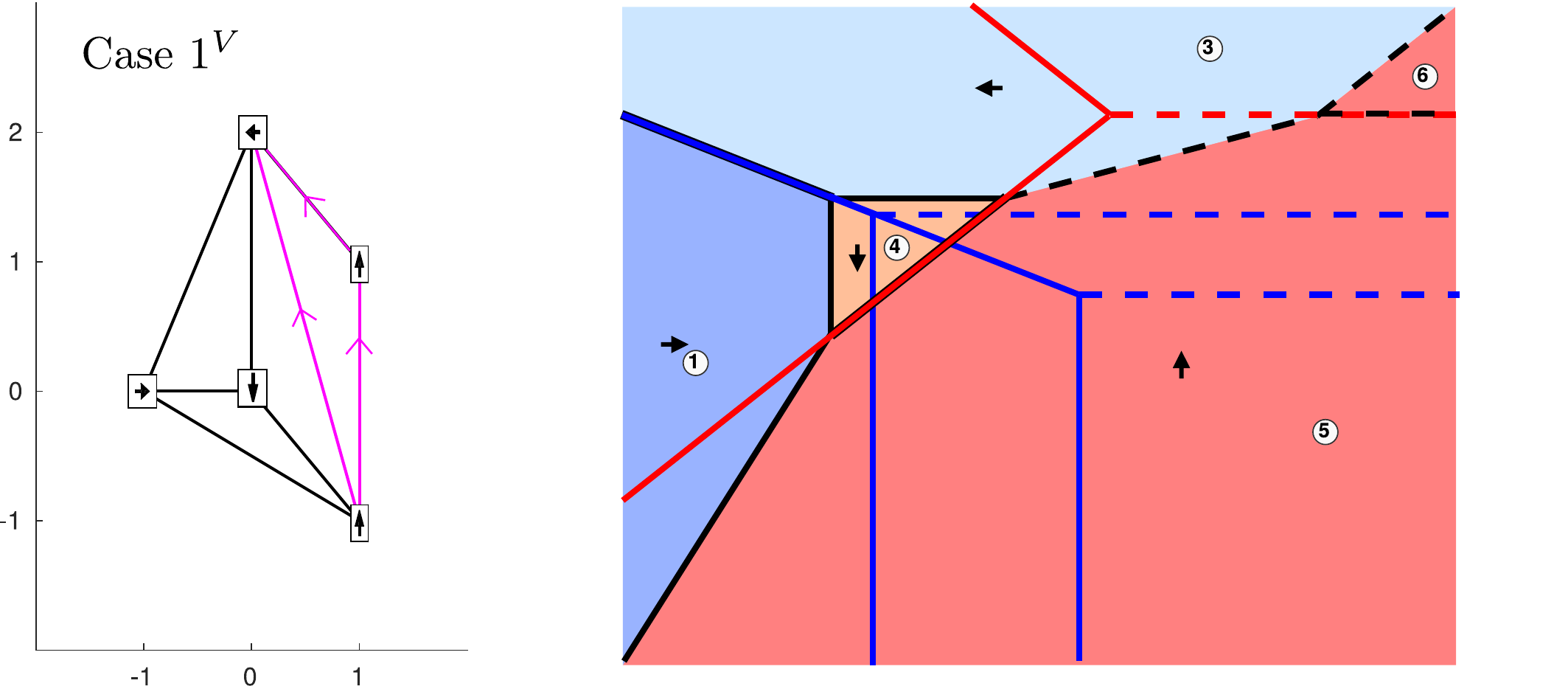}
        \caption{The subdivision associated with case $1^V$ and the associated tropical curves. There are two different cases of $\rspp{(\mathcal T,\mathcal T^{\mathcal U},\mathcal T^{\mathcal V})}$ in general position.  }
\figlab{tropauto1V}
\end{figure}

\begin{figure}
    \centering
        \includegraphics[width=0.5\linewidth]{./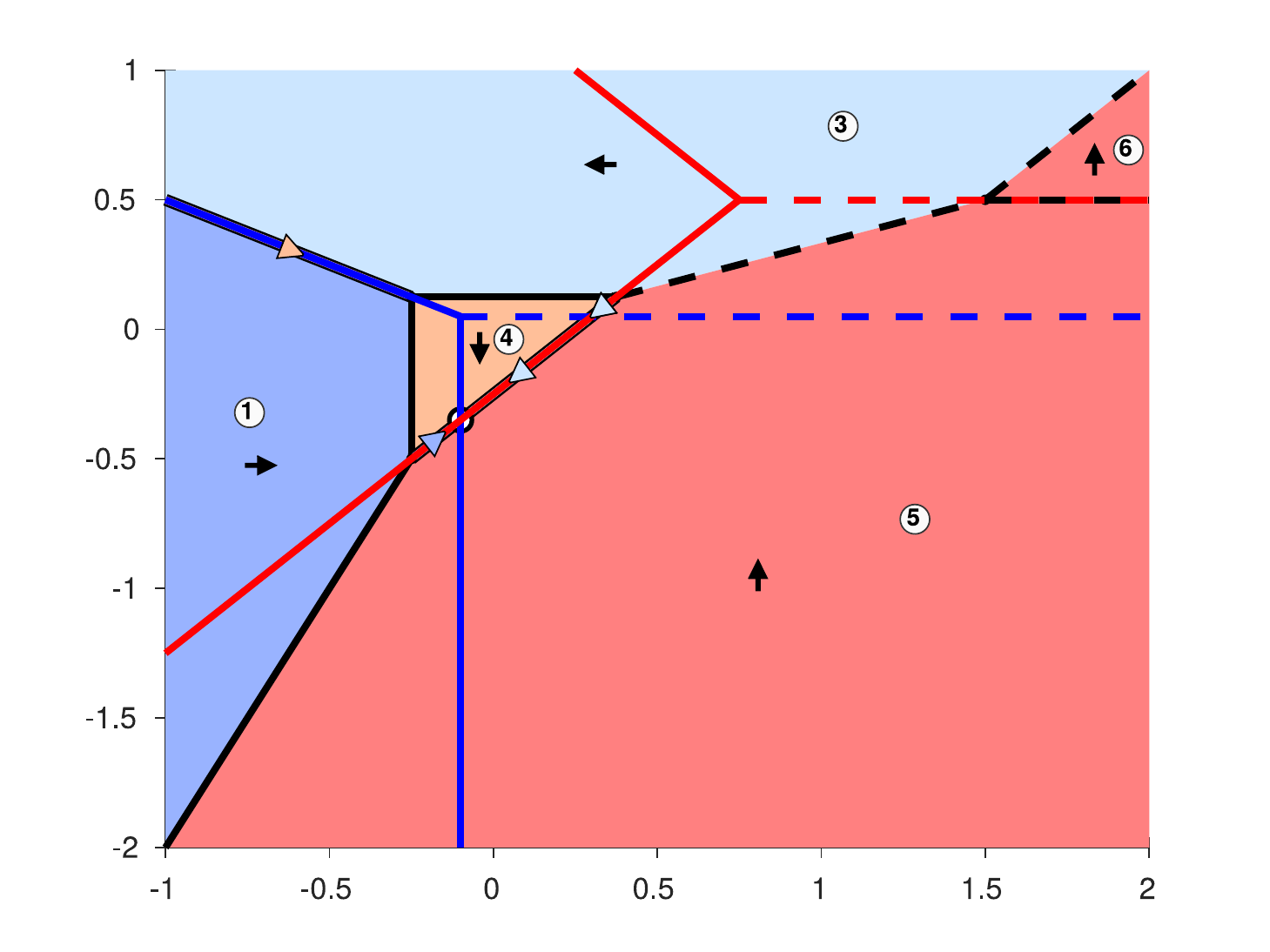}
        \caption{Phase portrait in the case $1^V$, see \figref{tropauto1V}. The values of the tropical coefficients are: $\alpha_1 = 0$, $\alpha_2=0.1$, $\alpha_3=0$, $\alpha_4=0.25$, $\alpha_5=0$ and $\alpha_6 = -1$. }
\figlab{genauto11V}
\end{figure}
\subsection{Case $2$}
We illustrate the labelled subdivision $\mathcal S^{\mathcal I}$ and the associate tropical curves $\mathcal T^{\mathcal U}$ (blue), $\mathcal T^{\mathcal V}$ (red), and $\mathcal T^{\mathcal I}$ (black) in \figref{tropauto22}. There is only one case of $\rspp{(\mathcal T,\mathcal T^{\mathcal U},\mathcal T^{\mathcal V})}$ being in general position. Here we again use that $\mathcal T^{\mathcal V}$ cannot intersect $\mathcal E_{35}$ without changing the subdivision $\mathcal S^{\mathcal I}$. We illustrate an example of the single phase portrait (up to equivalence) associated to case $2$ in \figref{genauto12} (see parameters in the figure caption). 
\begin{figure}
    \centering
        \includegraphics[width=0.75\linewidth]{./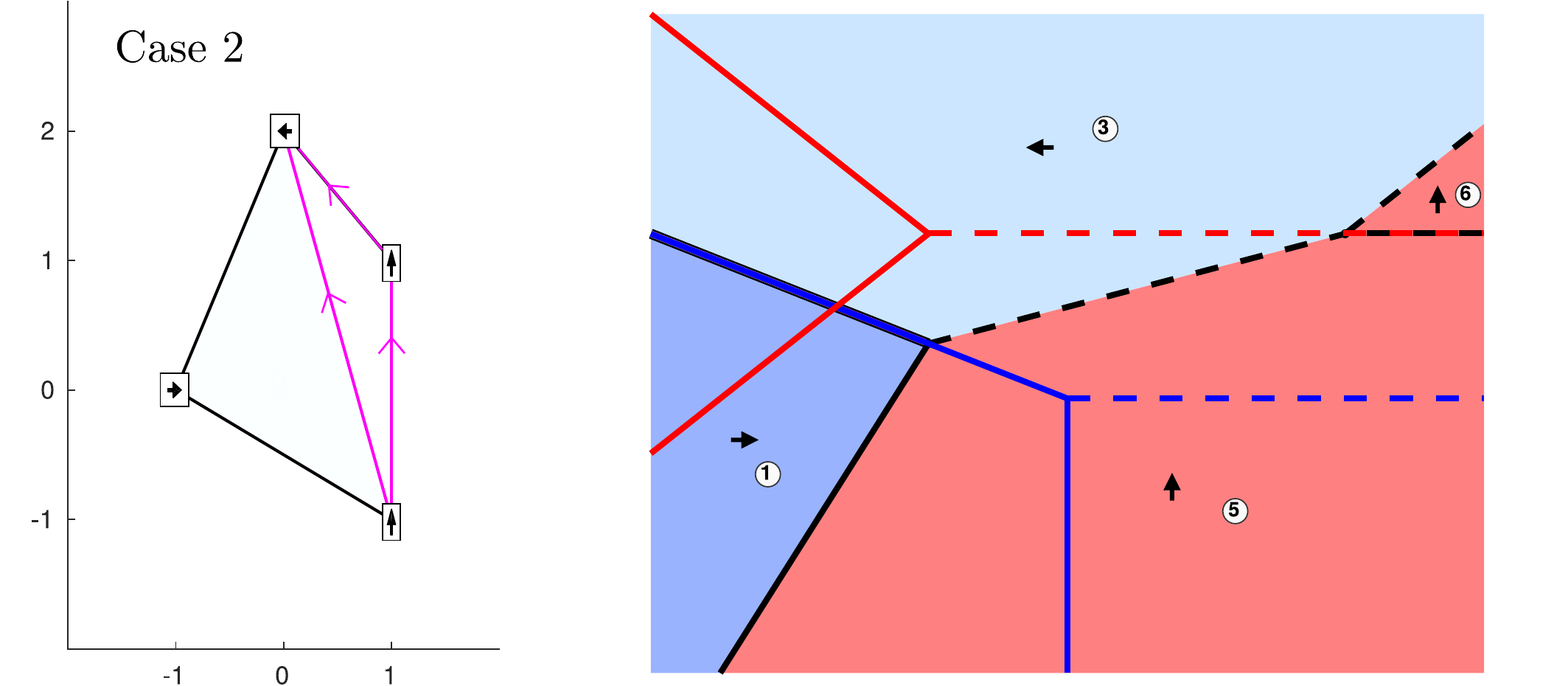}
        \caption{The subdivision associated with case $2$ and the associated tropical curves.  }
\figlab{tropauto22}
\end{figure}

\begin{figure}
    \centering
        \includegraphics[width=0.5\linewidth]{./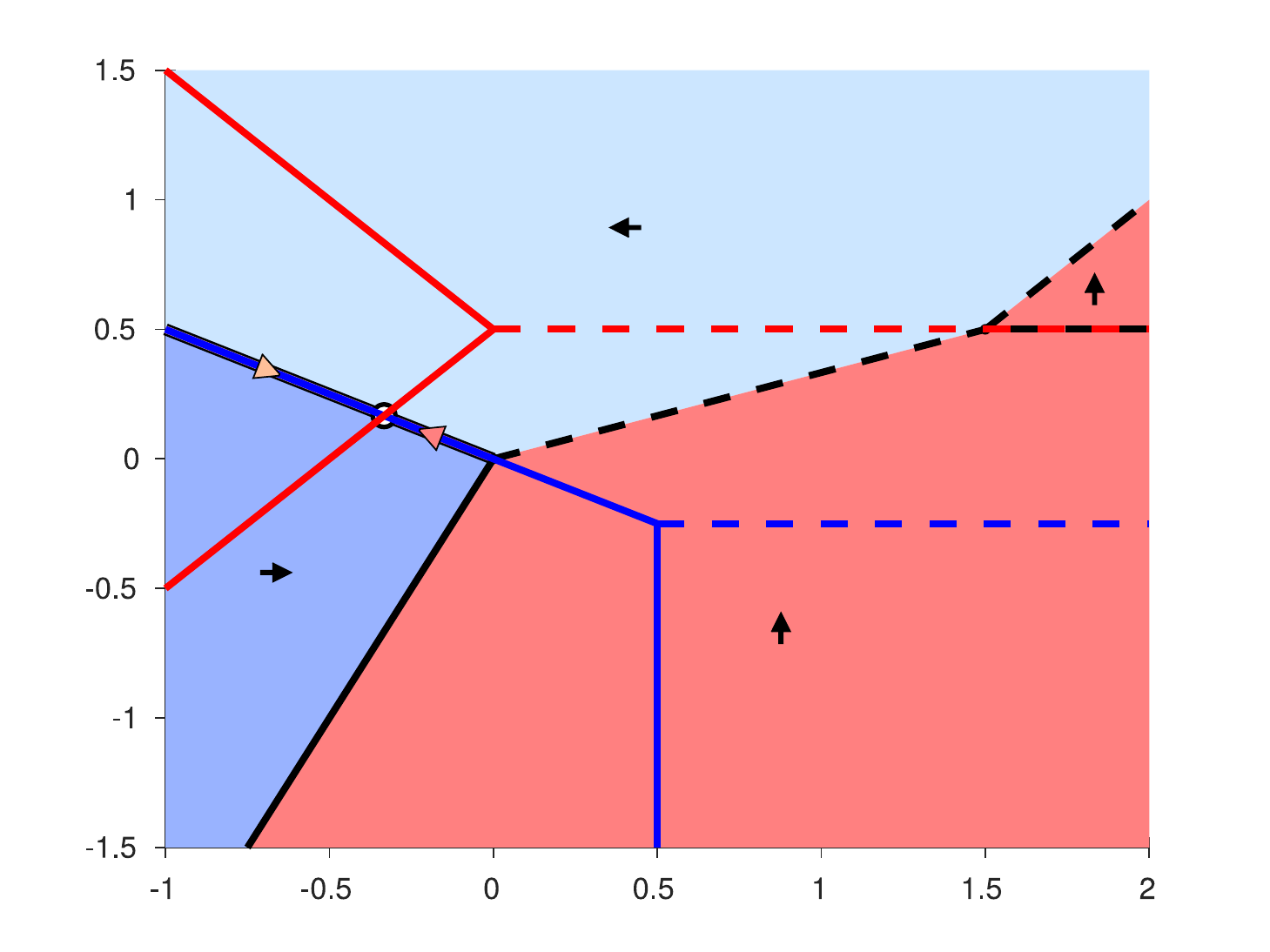}
        \caption{Phase portrait in the case $2$, see \figref{tropauto22}. The values of the tropical coefficients are: $\alpha_1 = 0$, $\alpha_2=-0.5$, $\alpha_3=0$, $\alpha_4=-0.5$, $\alpha_5=0$ and $\alpha_6 = -1$. }
\figlab{genauto12}
\end{figure}

\subsection{Case $3$}
We illustrate the labelled subdivision $\mathcal S^{\mathcal I}$ and the associate tropical curves $\mathcal T^{\mathcal U}$ (blue), $\mathcal T^{\mathcal V}$ (red), and $\mathcal T^{\mathcal I}$ (black) in \figref{tropauto33}. There is only one case of $\rspp{(\mathcal T,\mathcal T^{\mathcal U},\mathcal T^{\mathcal V})}$ being in general position. Here we again use that $\mathcal T^{\mathcal V}$ cannot intersect $\mathcal E_{35}$ and $\mathcal E_{36}$ without changing the subdivision $\mathcal S^{\mathcal I}$. We illustrate an example of the single phase portrait (up to equivalence) associated to case $3$ in \figref{genauto13} (see parameters in the figure caption). 
\begin{figure}
    \centering
        \includegraphics[width=0.75\linewidth]{./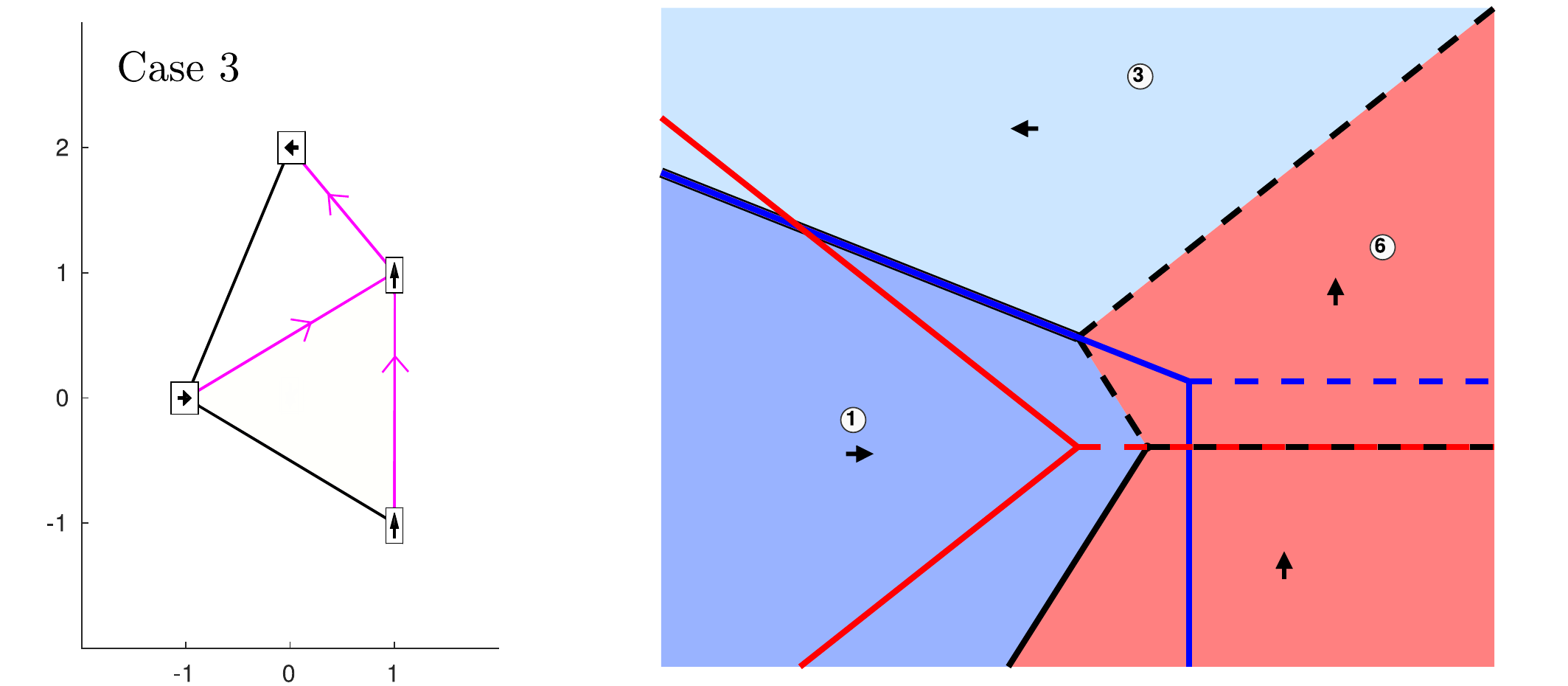}
        \caption{The subdivision associated with case $3$ and the associated tropical curves.  }
\figlab{tropauto33}
\end{figure}

\begin{figure}
    \centering
        \includegraphics[width=0.5\linewidth]{./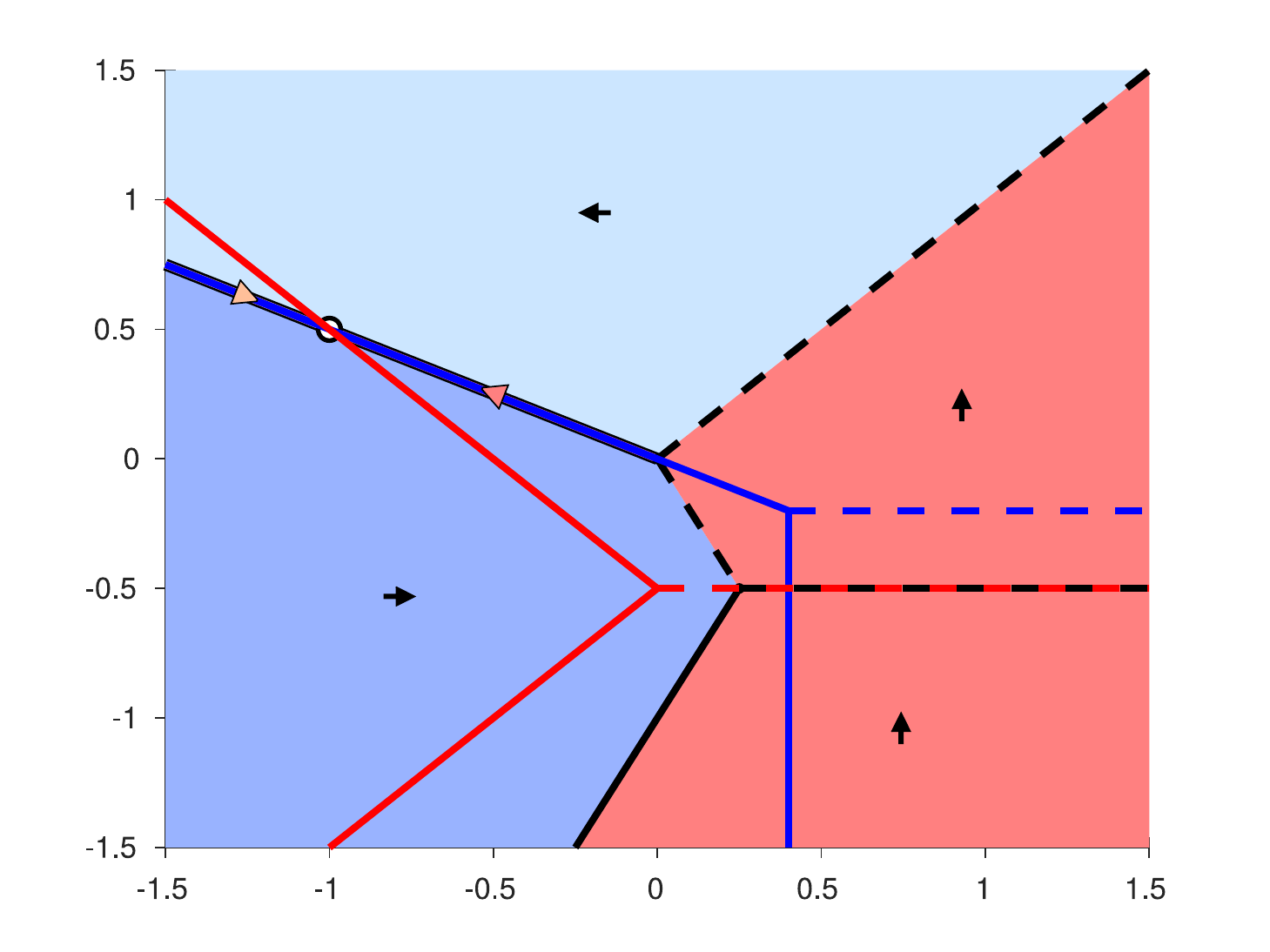}
        \caption{Phase portrait in the case $3$, see \figref{tropauto33}. The values of the tropical coefficients are: $\alpha_1 = 0$, $\alpha_2=-0.4$, $\alpha_3=0$, $\alpha_4=-0.5$, $\alpha_5=-1$ and $\alpha_6 = 0$. }
\figlab{genauto13}
\end{figure}

\subsection{Case $4^H$}
We illustrate the labelled subdivision $\mathcal S^{\mathcal I}$ and the associate tropical curves $\mathcal T^{\mathcal U}$ (blue), $\mathcal T^{\mathcal V}$ (red), and $\mathcal T^{\mathcal I}$ (black) in \figref{tropauto4H}. There are two cases of $\rspp{(\mathcal T,\mathcal T^{\mathcal U},\mathcal T^{\mathcal V})}$ being in general position. We denote these by $4^H_a$ and $4^H_b$. There is a bifurcation in between, where two singularities along $\mathcal E_{12}$ coalesce. Interestingly, this bifurcation is also related to another global bifurcation of a separatrix connection (indicated in black). We illustrate  examples of the two phase portraits (up to equivalence) associated to case $4^H$ in \figref{genauto14H} (see parameters in the figure caption). 
\begin{figure}
    \centering
        \includegraphics[width=0.75\linewidth]{./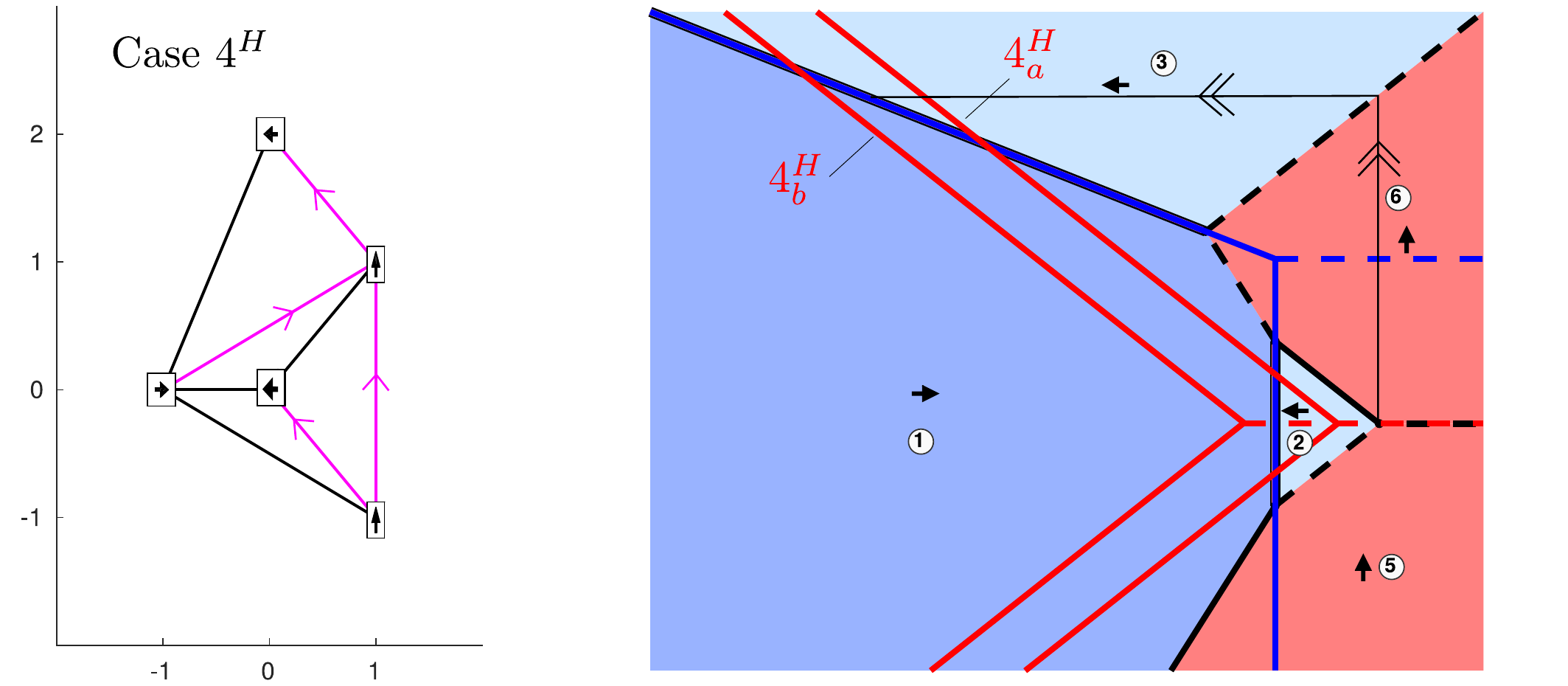}
        \caption{The subdivision associated with case $4^H$ and the associated tropical curves.  }
\figlab{tropauto4H}
\end{figure}

\begin{figure}
\centering
   \begin{subfigure}{0.5\textwidth}
    \centering
        \includegraphics[width=0.96\linewidth]{./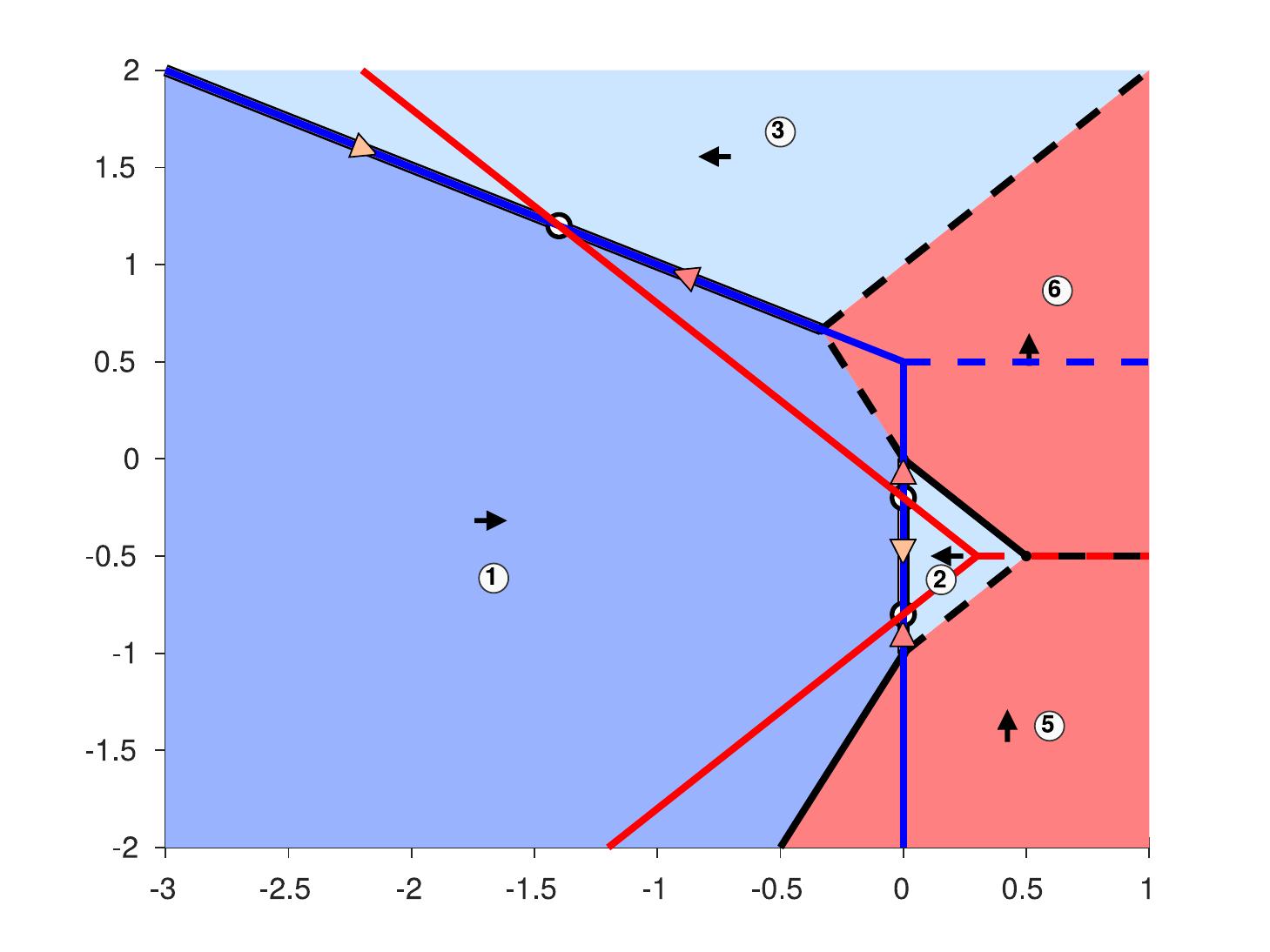}
        \caption{$4^H_a$}
    \end{subfigure}%
    \begin{subfigure}{0.5\textwidth}
    \centering
        \includegraphics[width=0.96\linewidth]{./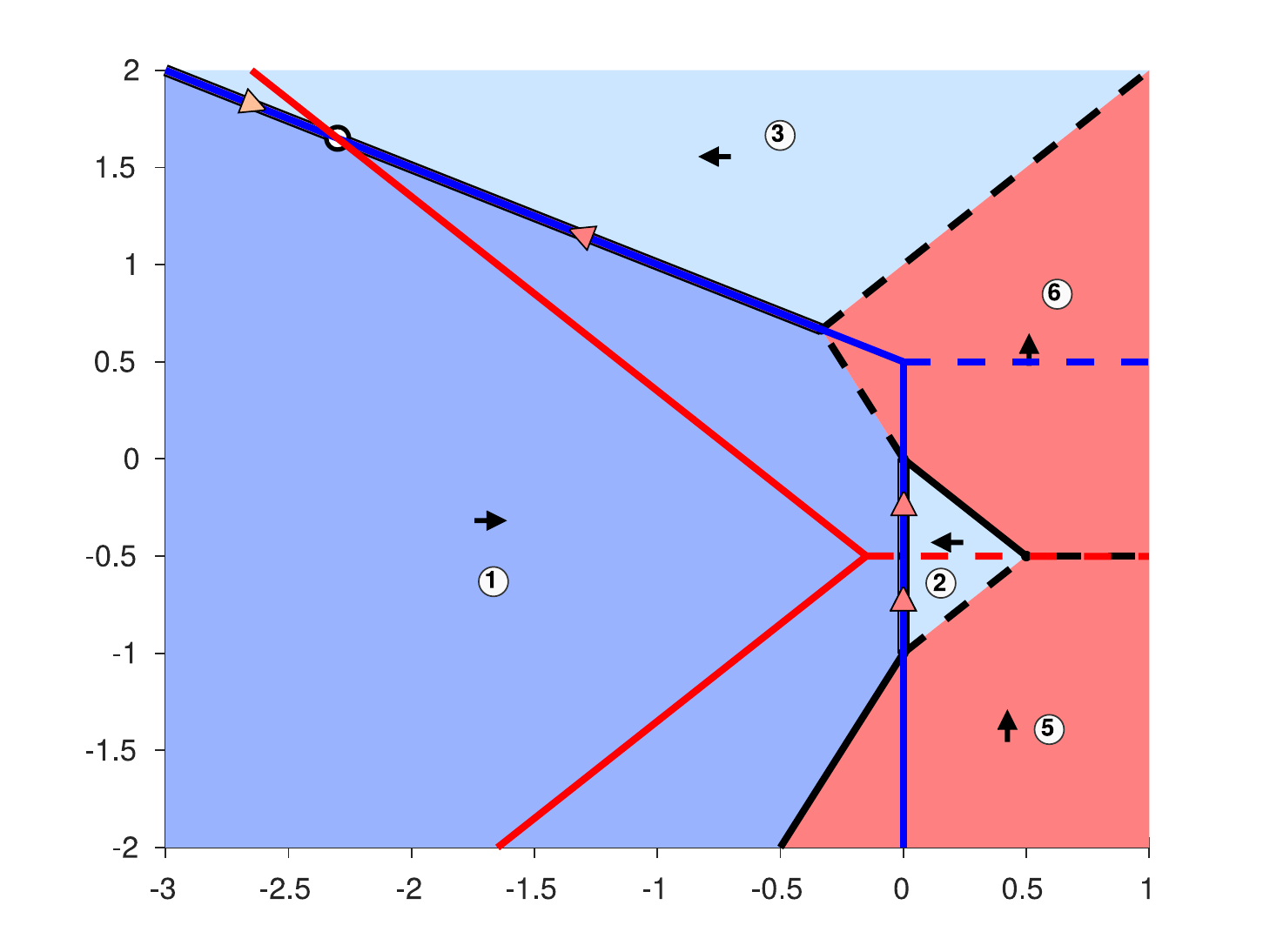}
        \caption{$4^H_b$}
    \end{subfigure}
        \caption{Phase portraits in the case $4^H$, see \figref{tropauto4H}. The values of the tropical coefficients are: $\alpha_1 = 0$, $\alpha_2=0$, $\alpha_3=-1$, $\alpha_5=-1$ and $\alpha_6 = 0$ and $\alpha_4=-0.2$ in (a) and $\alpha_4=-0.65$ in (b). }
\figlab{genauto14H}
\end{figure}

\subsection{Case $4^V$}
We illustrate the labelled subdivision $\mathcal S^{\mathcal I}$ and the associate tropical curves $\mathcal T^{\mathcal U}$ (blue), $\mathcal T^{\mathcal V}$ (red), and $\mathcal T^{\mathcal I}$ (black) in \figref{tropauto4V}. There are two cases of $\rspp{(\mathcal T,\mathcal T^{\mathcal U},\mathcal T^{\mathcal V})}$ being in general position. We denote these by $4^V_a$ and $4^V_b$. There is a bifurcation in between, where two singularities along $\mathcal E_{12}^{\mathcal U}$ coalesce. We illustrate  examples of the two  structurally stable phase portraits (up to equivalence) associated to case $4^V$ in \figref{genauto14V} (see parameters in the figure caption). 
\begin{figure}
    \centering
        \includegraphics[width=0.75\linewidth]{./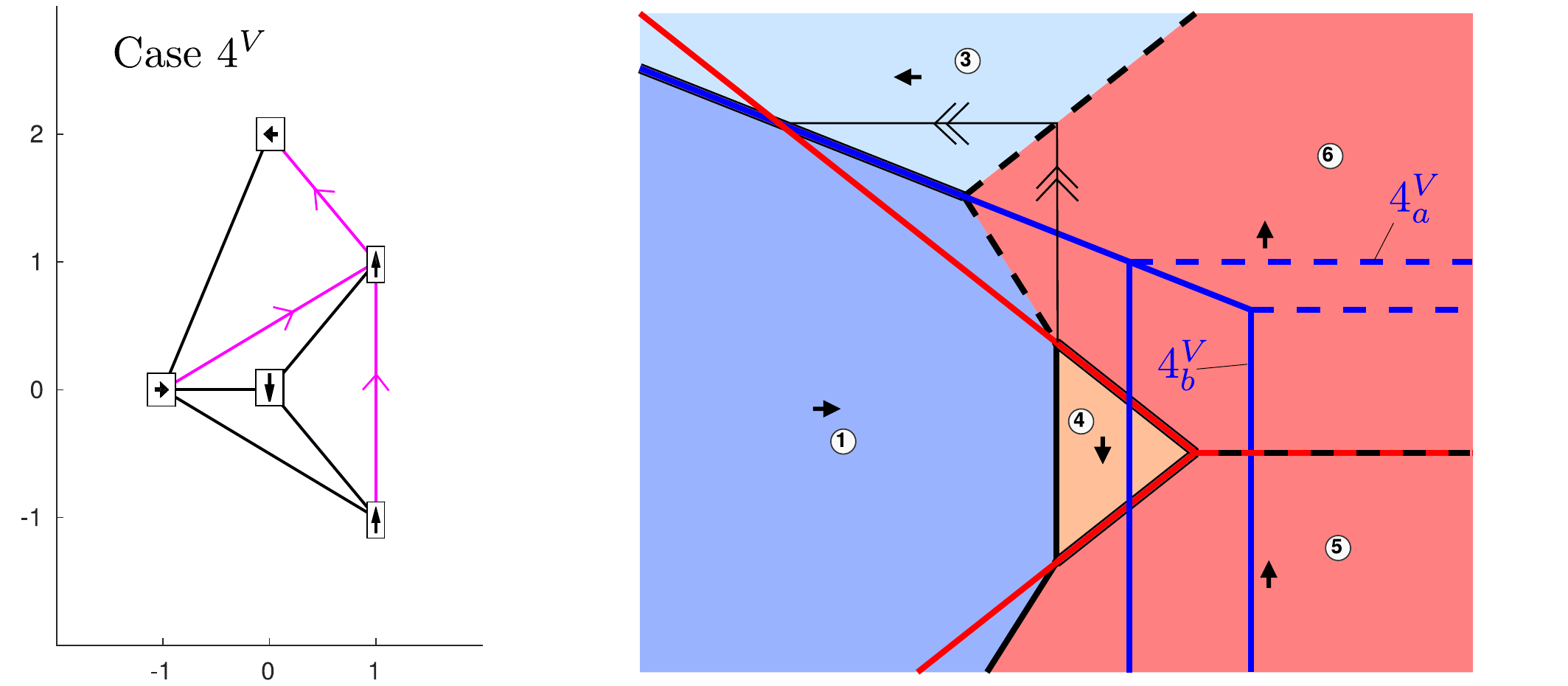}
        \caption{The subdivision associated with case $4^V$ and the associated tropical curves.  }
\figlab{tropauto4V}
\end{figure}

\begin{figure}
\centering
   \begin{subfigure}{0.5\textwidth}
    \centering
        \includegraphics[width=0.96\linewidth]{./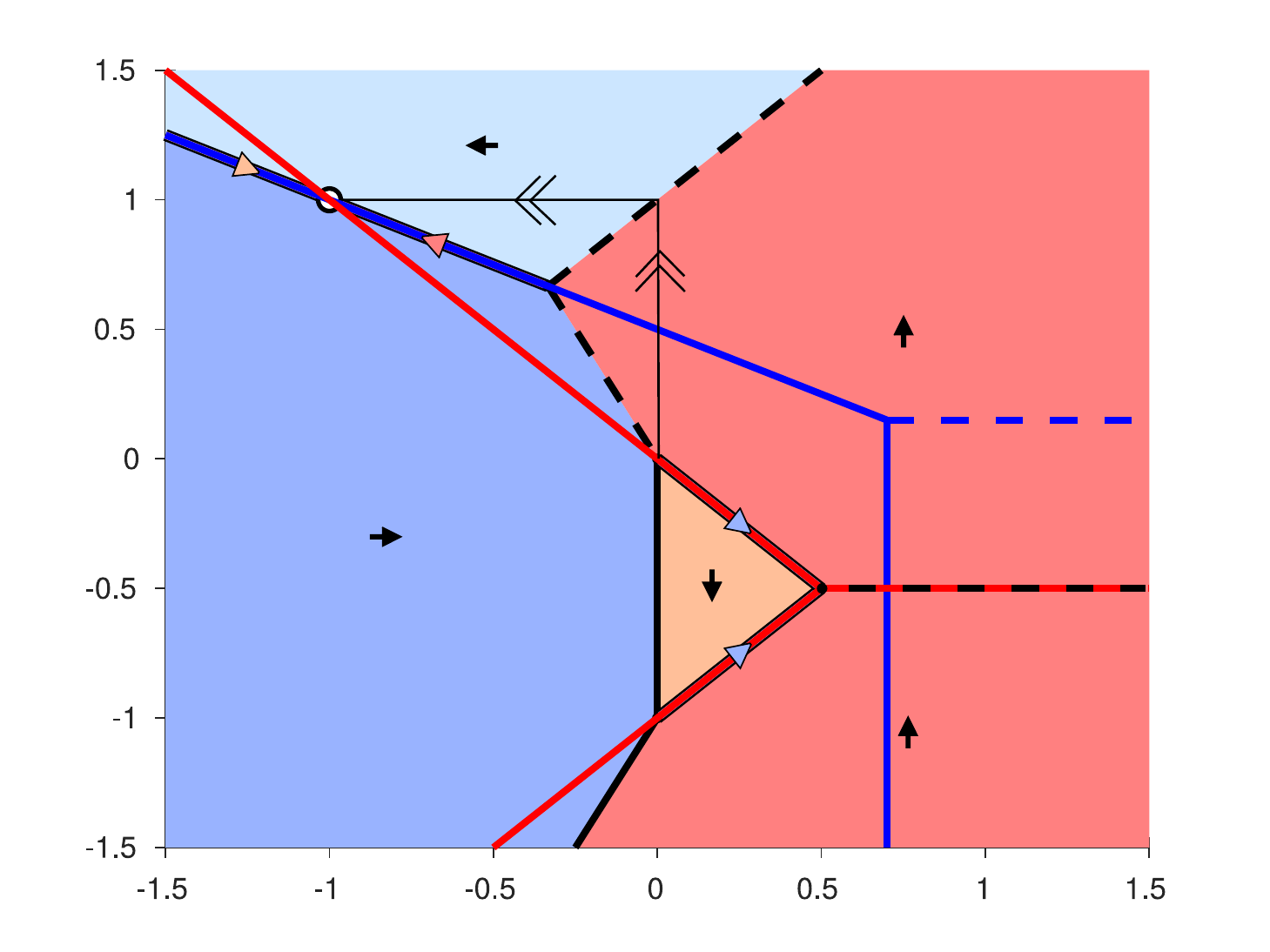}
        \caption{$4^V_a$}
    \end{subfigure}%
    \begin{subfigure}{0.5\textwidth}
    \centering
        \includegraphics[width=0.96\linewidth]{./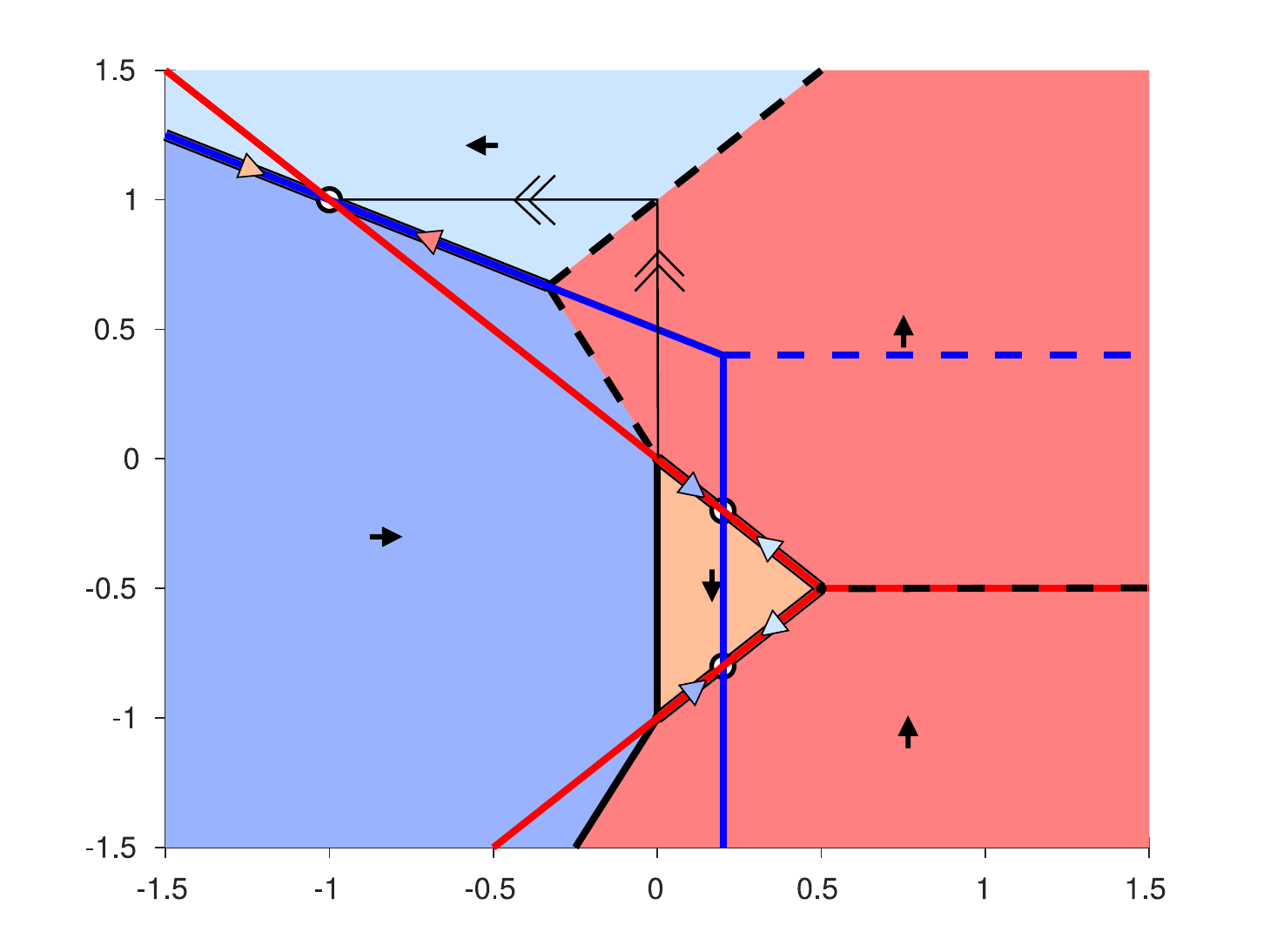}
        \caption{$4^V_b$}
    \end{subfigure}
        \caption{Phase portraits in the case $4^V$, see \figref{tropauto4V}. The values of the tropical coefficients are: $\alpha_1 = 0$, $\alpha_3=-1$, $\alpha_4=0$, $\alpha_5=-1$ and $\alpha_6 = 0$ and $\alpha_2=-0.7$ in (a) and $\alpha_4=-0.2$ in (b). }
\figlab{genauto14V}
\end{figure}
\subsection{Case $5^H$}

We illustrate the labelled subdivision $\mathcal S^{\mathcal I}$ and the associate tropical curves $\mathcal T^{\mathcal U}$ (blue), $\mathcal T^{\mathcal V}$ (red), and $\mathcal T^{\mathcal I}$ (black) in \figref{tropauto5H}. There are three cases of $\rspp{(\mathcal T,\mathcal T^{\mathcal U},\mathcal T^{\mathcal V})}$ being in general position. We denote these by $5^H_a$, $5^H_b$ and $5^H_c$. There are bifurcations in between, where two singularities along $\mathcal E_{12}^{\mathcal U}$ coalesce. The structurally unstable case between $5^H_b$ and $5^H_c$ is (again) also associated with a global bifurcation due to a separatrix connection (indicated in black). We illustrate examples of the three structurally stable phase portraits (up to equivalence) associated to case $5^H$ in \figref{genauto15H} (see parameters in the figure caption). 
\begin{figure}
    \centering
        \includegraphics[width=0.75\linewidth]{./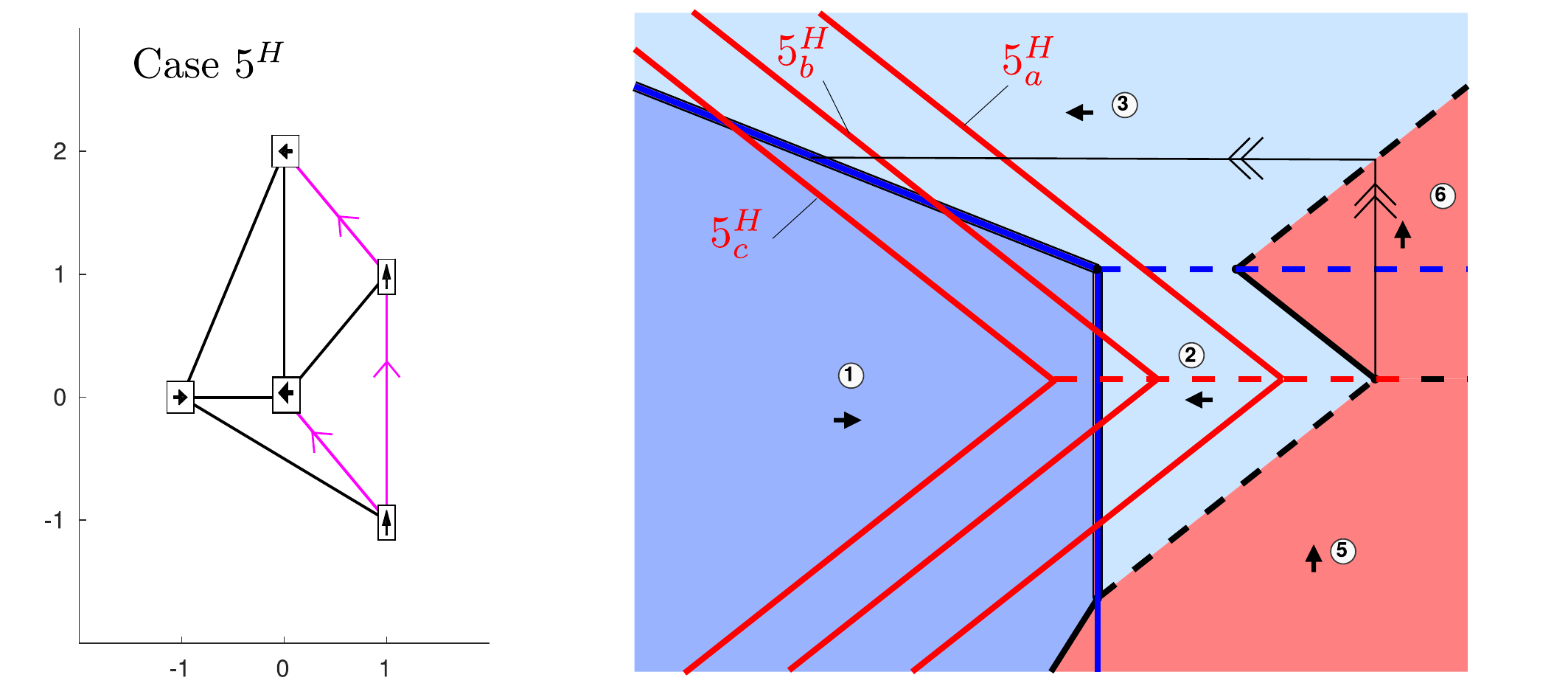}
        \caption{The subdivision associated with case $5^H$ and the associated tropical curves.  }
\figlab{tropauto5H}
\end{figure}

\begin{figure}
\centering
   \begin{subfigure}{0.495\textwidth}
    \centering
        \includegraphics[width=0.96\linewidth]{./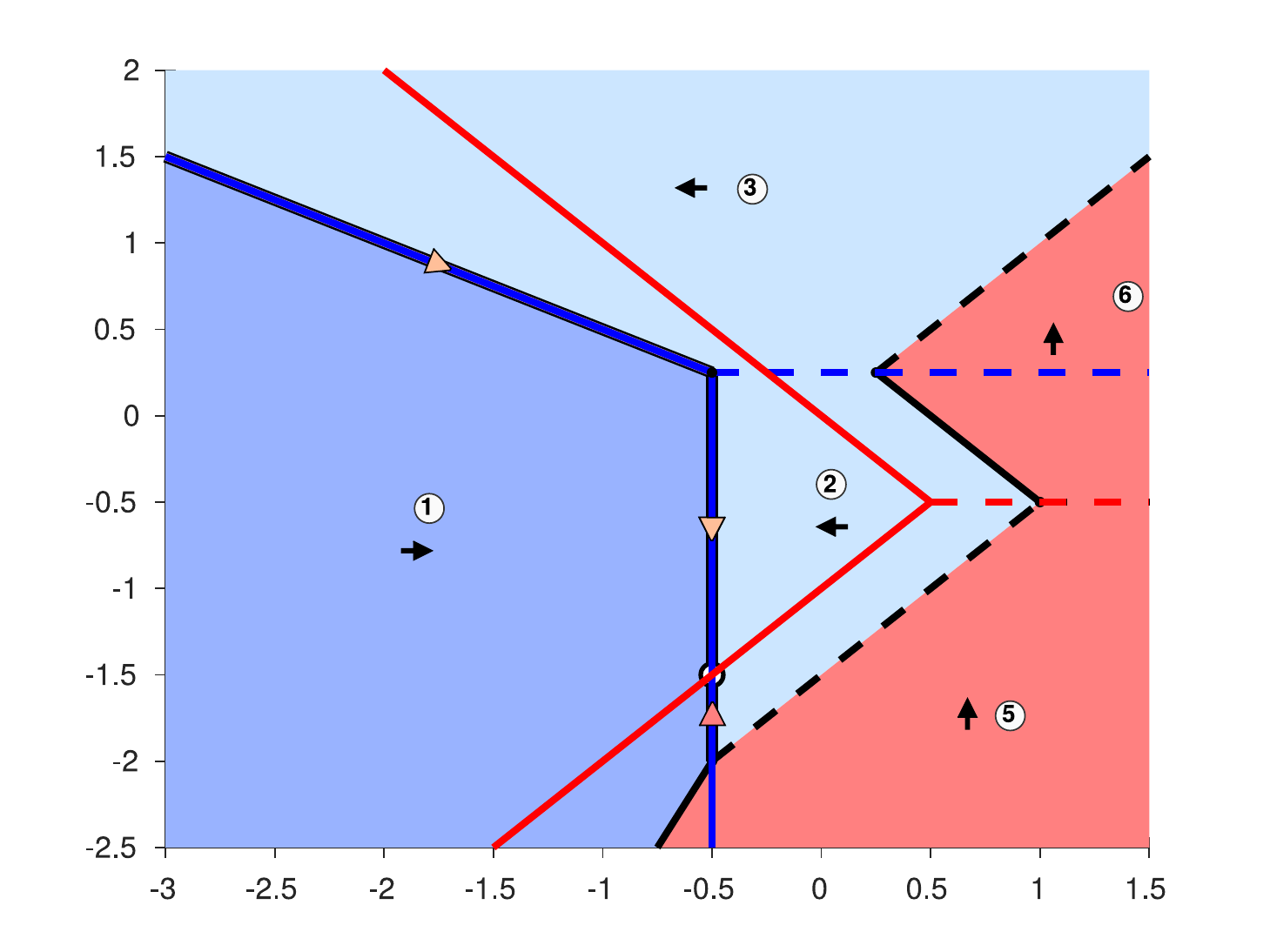}
        \caption{$5^H_a$}
    \end{subfigure}%
    \begin{subfigure}{0.495\textwidth}
    \centering
        \includegraphics[width=0.96\linewidth]{./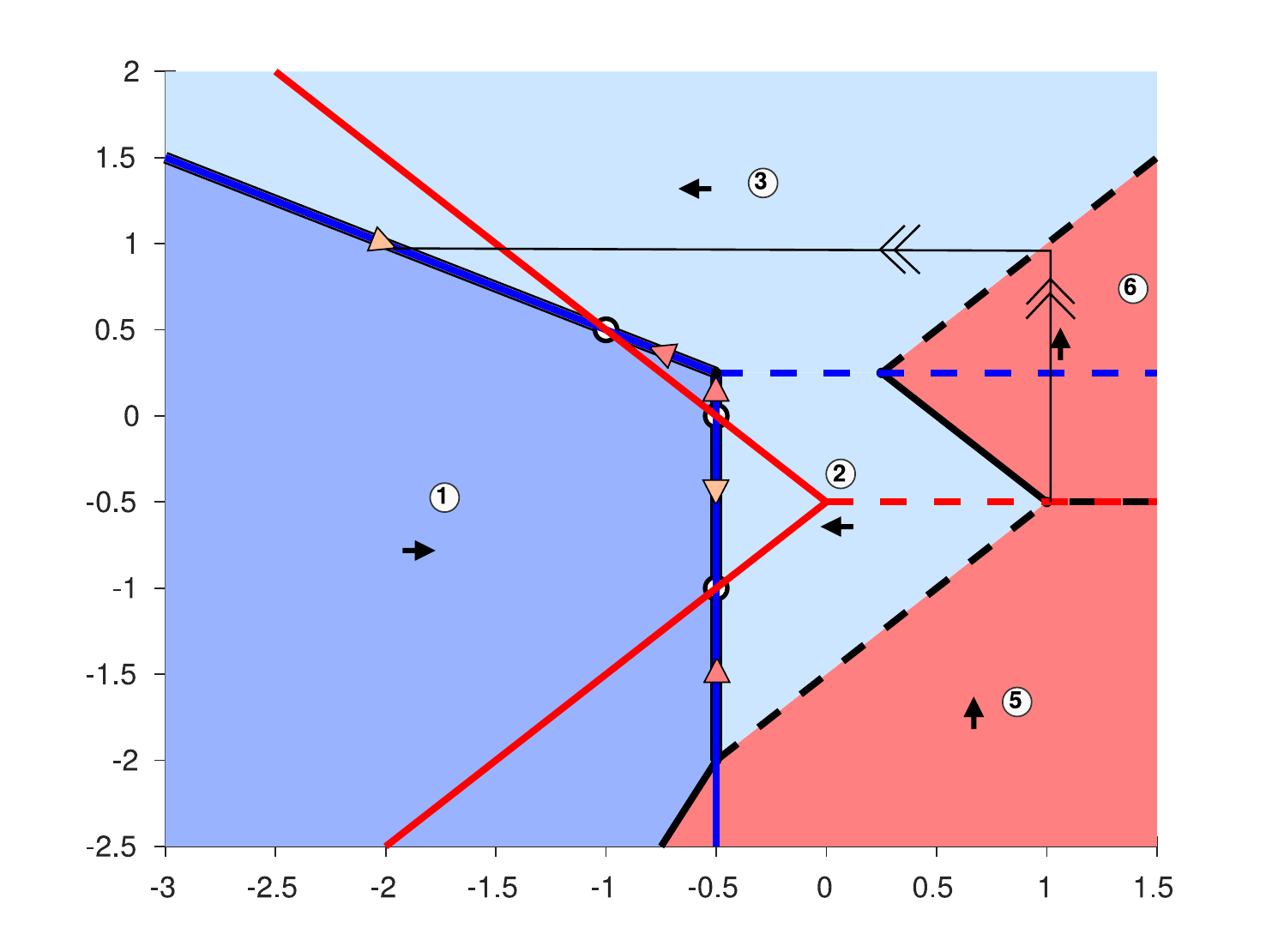}
        \caption{$5^H_b$}
    \end{subfigure}
    \begin{subfigure}{0.495\textwidth}
    \centering
        \includegraphics[width=0.96\linewidth]{./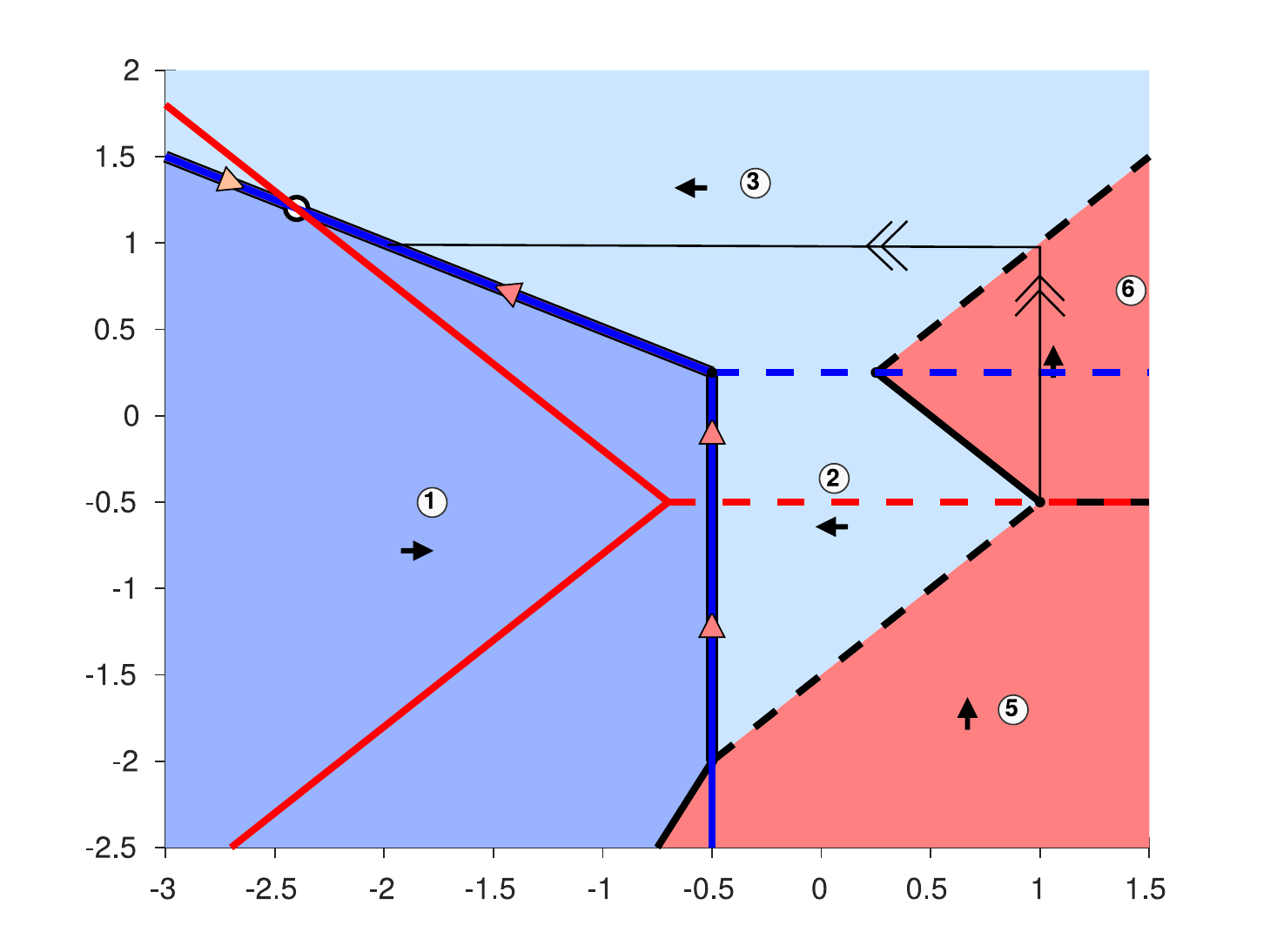}
        \caption{$5^H_c$}
    \end{subfigure}
        \caption{Phase portraits in the case $5^H$, see \figref{tropauto5H}. The values of the tropical coefficients are: $\alpha_1 = 0$, $\alpha_2=0.5$, $\alpha_3=0$, $\alpha_5=-1$ and $\alpha_6 = 0$ and $\alpha_4=0$ in (a), $\alpha_4=-0.5$ in (b)  and $\alpha_4=-1.2$ in (c). }
\figlab{genauto15H}
\end{figure}

\subsection{Case $5^V$}
We illustrate the labelled subdivision $\mathcal S^{\mathcal I}$ and the associate tropical curves $\mathcal T^{\mathcal U}$ (blue), $\mathcal T^{\mathcal V}$ (red), and $\mathcal T^{\mathcal I}$ (black) in \figref{tropauto5V}. There are three cases of $\rspp{(\mathcal T,\mathcal T^{\mathcal U},\mathcal T^{\mathcal V})}$ being in general position. We denote these by $5^V_a$, $5^V_b$ and $5^V_c$. There are bifurcations in between, where two singularities coalesce.  We illustrate examples of the three structurally stable phase portraits (up to equivalence) associated to case $5^V$ in \figref{genauto15V} (see parameters in the figure caption).
There is a limit cycle in case $5^V_c$, see \figref{genauto15V}(c).
\begin{figure}
    \centering
        \includegraphics[width=0.75\linewidth]{./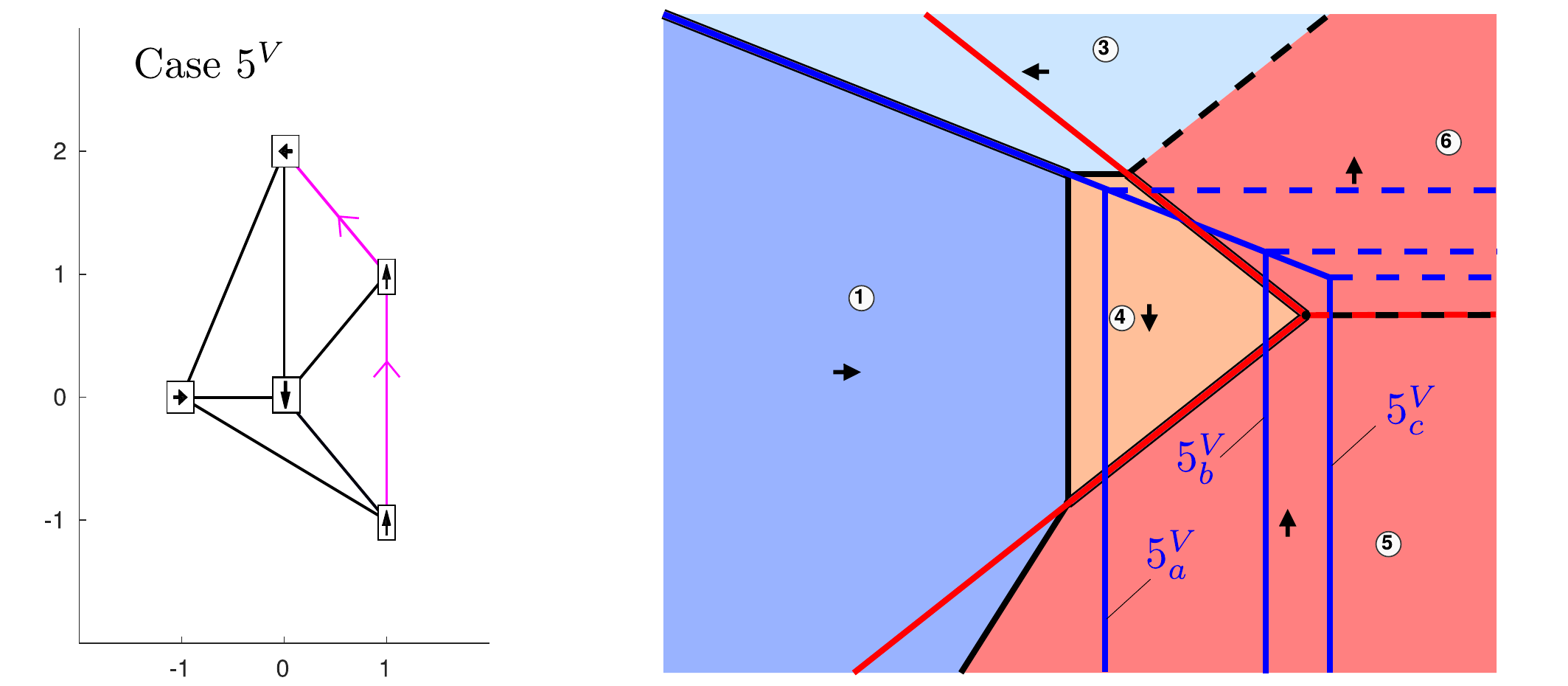}
        \caption{The subdivision associated with case $5^V$ and the associated tropical curves.  }
\figlab{tropauto5V}
\end{figure}

\begin{figure}
\centering
   \begin{subfigure}{0.495\textwidth}
    \centering
        \includegraphics[width=0.96\linewidth]{./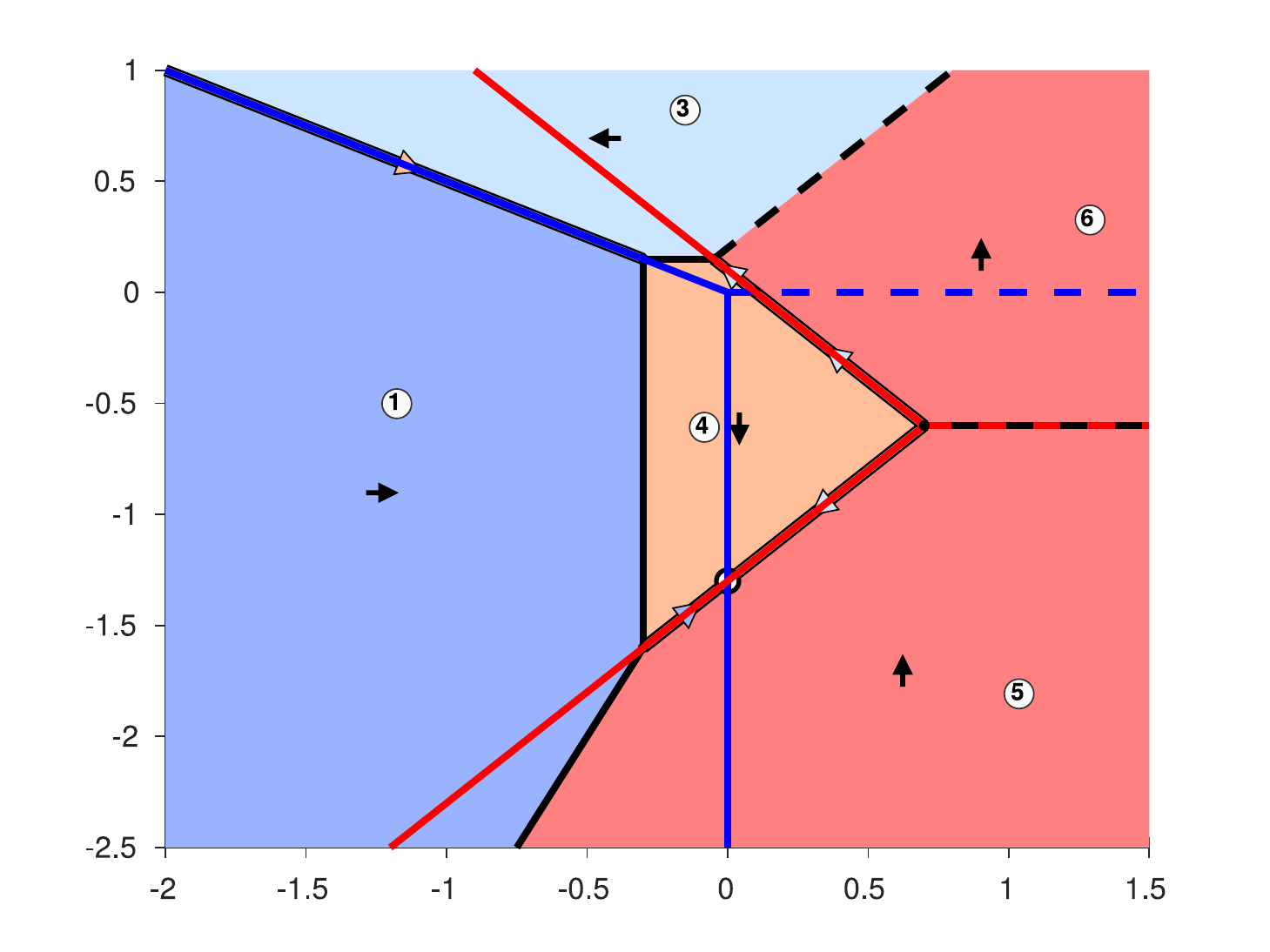}
        \caption{$5^V_a$}
    \end{subfigure}%
    \begin{subfigure}{0.495\textwidth}
    \centering
        \includegraphics[width=0.96\linewidth]{./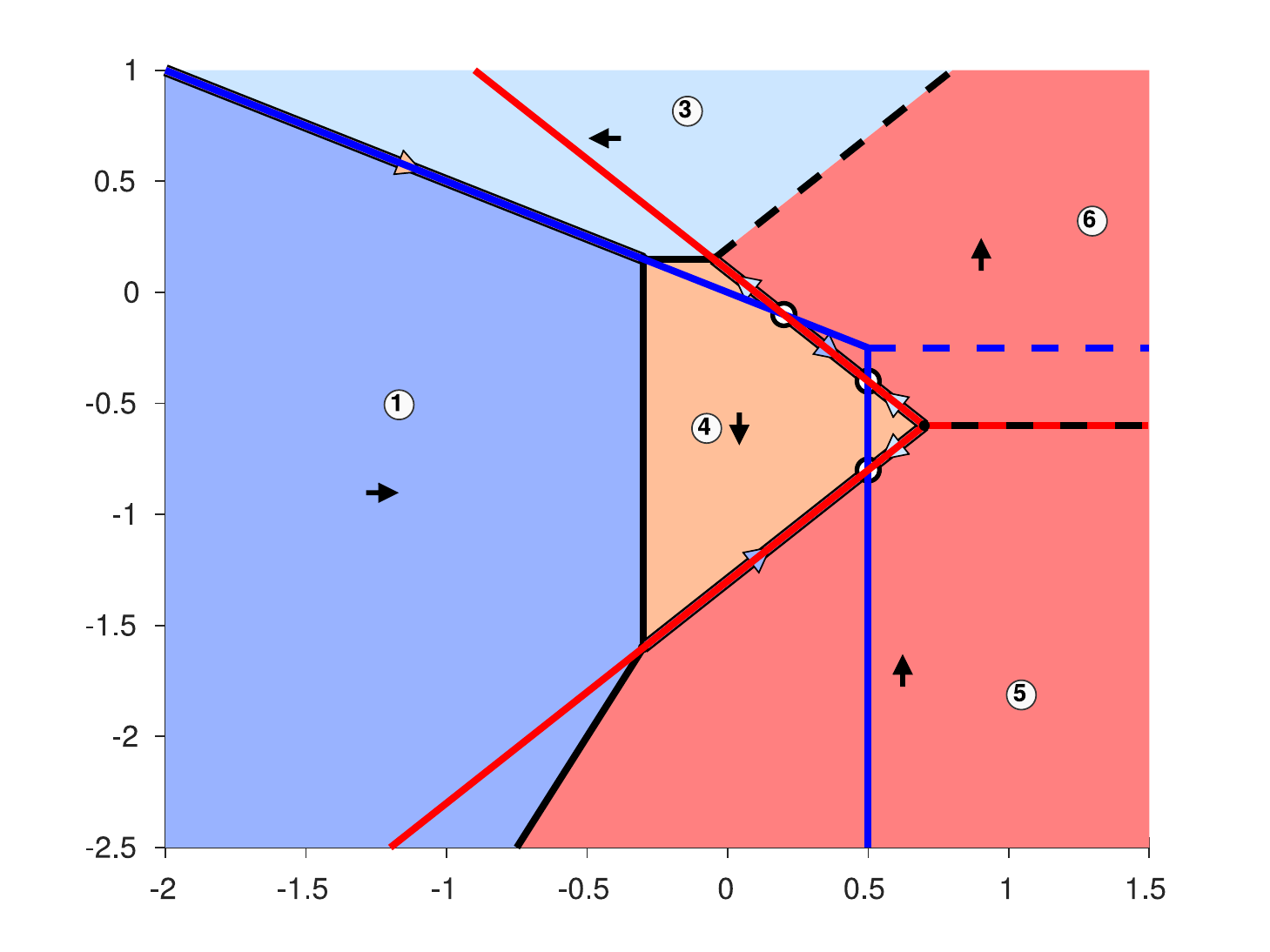}
        \caption{$5^V_b$}
    \end{subfigure}
    \begin{subfigure}{0.495\textwidth}
    \centering
        \includegraphics[width=0.96\linewidth]{./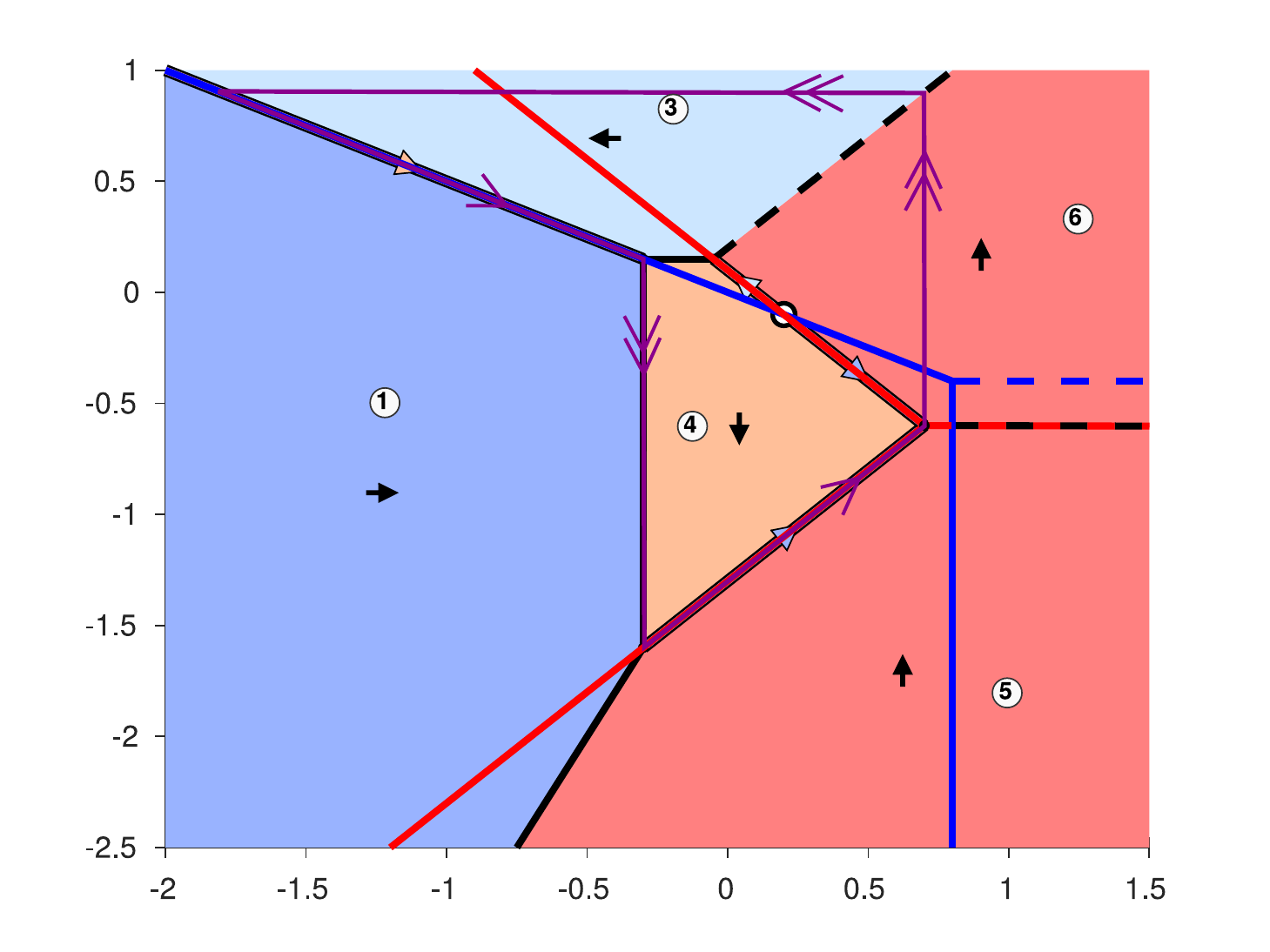}
        \caption{$5^V_c$}
    \end{subfigure}
        \caption{Phase portraits in the case $5^V$, see \figref{tropauto5V}. The values of the tropical coefficients are: $\alpha_1 = 0$, $\alpha_3=0$, $\alpha_4=0.3$, $\alpha_5=-1$ and $\alpha_6 = 0.2$ and $\alpha_2=0$ in (a), $\alpha_2=-0.5$ in (b)  and $\alpha_4=-0.8$ in (c). There is a limit cycle (purple) in (c).}
\figlab{genauto15V}
\end{figure}
\subsection{Completing the proof of \propref{genauto}}
In conclusion, we have found precisely two different phase portraits of $1^H$, one single phase portrait for each of $1^V$, $2$, and $3$, two different phase portraits for each of $4^H$ and $4^V$, and finally, three different phase portraits for each of $5^H$ and $5^V$. This gives $15$ in total, as claimed by \propref{genauto}. Notice that there is only a limit cycle in the case $5^V_c$ and it is unique.
\begin{remark}
 The $15$ different structurally stable phase portraits of the generalized autocatalator model are only determined by the tropical curves $\mathcal T^{\mathcal I}$, $\mathcal T^{\mathcal U}$ and $\mathcal T^{\mathcal V}$ and the different intersections of the two latter ones. In particular, there are no crossing cycles  and no structurally unstable separatrix connections that divide the $\alpha$-space into further equivalence classes. 
 \end{remark}
 \begin{remark}
 For the ``original'' tropical autocatalator, defined by \eqref{tropauto}, having only the single tropical coefficient  $\alpha-1$ of $F_1$ as a free parameter, there are four distinct structurally stable phase portraits (up to equivalence). These correspond to cases $3$ ($\alpha>1$), $4^V_a$ ($\alpha\in (\frac12,1)$), $5^V_a$ ($\alpha<0$) and $5^V_c$ ($\alpha\in (0,\frac12)$), compare with \secref{auto} and \figref{tropauto} and \figref{tropauto2}. (Notice that \eqref{tropauto} is special in the sense that $\mathcal E_{2,3}^{\mathcal U}\cap \mathcal E_{5,6}^\mathcal U\subset \{v=0\}$ is non-empty for all $\alpha$. Although this does not play a role for the classification of structurally stable systems, this property is not the case in \figref{genauto15V}. )
\end{remark}

\section{Discussion}\seclab{discussion}
In this paper, we have introduced the notion of a tropical dynamical system by building upon \rspp{the approach initiated in} \cite{portisch2021novel} and using a differential inclusion framework. This led to a (new) notion of equivalence of tropical dynamical systems and in our main result, \thmref{thm2}, we showed that there are finitely many structurally stable phase portraits. In contrast, it is known for \textit{quadratic} polynomial vector-fields that there are $44$ different phase potraits on the Poincar\'e sphere, but only modulo limit cycles \cite{art1998a}. A similar result for general polynomial systems, would depend upon a solution to Hilbert's 16th problem \cite{art2021a}.


\rspp{In future work, we aim to relate our notion of equivalence to the usual notion of equivalence. For example, we would like to study the other direction in \lemmaref{usual}. We are also interested in enumerating the structurally stable phase portraits for each $N$ (which -- following the proof of \thmref{thm2}  -- is essentially ``computable''). In fact, we believe that there are many directions for future research. We elaborate on a few additional ones below. }
%

\subsection{Limit cycles}
Firstly, since separatrix connections can be structurally stable (for zero splitting constants), we have not bounded the number of limit cycles on the finitely many structurally stable phase portraits,  see \figref{foldfold} \rspp{(and the figure caption for details) as well as} \remref{hybrid}. 
It requires a more thorough description of the case of zero splitting constant for the description of canard-like phenomena as in \figref{foldfold}; the larger cycle in \figref{foldfold} is reminiscent of a canard cycle of jump type, see \cite{de2021a}. (Notice that there are more than just two equivalence classes of limit cycles in \figref{foldfold}. Indeed, as solutions of \eqref{uvtrop}, we can (without further assumptions) have (pathological) periodic solutions of arbitrary period, e.g. going $n$ times around the inner red limit cycles and then subsequently $m$ times around the outer red limit cycles before repeating itself.)

Having said that, the number of \textit{crossing limit cycles} is obviously bounded on $\TDSN$ since the number of cycles in the planar crossing graphs $\mathcal G$ provides an upper bound, see \lemmaref{graph}. It would be interesting to determine the maximal number of crossing cycles in future work. This is essentially a graph theoretical problem.  (The upper bound on general directed planar graphs in \cite{aldred2008a} is not appropriate in the present context; for one thing, since orbits of the crossing flow only consist of horizontal and vertical line segments, the path length of cycles of $\mathcal G$ are greater than or equal to four, see \lemmaref{graph}). In fact, we believe that the graph theoretical viewpoint can be expanded in such a way that $\mathcal G$ also encodes sliding cycles. 

On the other hand, the system \eqref{uvEqn} is a singular perturbation problem for $\epsilon\to 0$. The reference \cite{MR4053594} realizes the asymptotic bound \eqref{lowerboundHN} on the number of limit cycles through canard cycles in slow-fast systems. From this perspective, we do not believe that it is reasonable to expect that a bound on the number of limit cycles on the set of structurally stable tropical dynamical systems will lead to an improvement beyond \eqref{lowerboundHN} (see discussion about the connection to $\epsilon>0$ below). In fact, disregarding the possibility of canards as in \figref{foldfold} on the set of structurally stable systems, so that at most one tropical limit cycle passes through each \rsp{tropical vertex}, then \textit{the number of sliding limit cycles is bounded by the number of triangles  in  $\mathcal S^{\mathcal I}$}, which is given by
\begin{align*}
 2\left[\frac12 (N+1)(N+4)\right]-(3N+2)-2 = N(N+2),
\end{align*}
see \cite[Lemma 3.1.3]{de2010a} and \cite{andreas}. Here the square bracket is the number points in the point configuration, see \eqref{NIpoints}, whereas the other bracket ($3N+2$) is the number of points on the boundary of the Newton polygon. In his thesis \cite{andreas}, the second author constructed a family of tropical dynamical systems with 
\begin{align*}
 \frac12 (N-2)(N-3),
\end{align*}
many sliding limit cycles for each degree $N\ge 4$. The family is constructed by duplicating (using the labelled subdivision) a certain template system for $N=4$. For further details, we refer to the script
\begin{center}\verb#lower_bound_script.m# 
\end{center}
available through the link provided in \secref{TPP},
for running the program \verb#Tropical# \verb#Phase# \verb#Portrait#. This script constructs the tropical phase portrait of the family for each $N\ge 4$. \rsp{We expect that these limit cycles} can be perturbed into limit cycles of \eqref{uvEqn} (and therefore \eqref{xyPQ}) of relaxation type for all $0<\epsilon\ll 1$, see also \conjref{per1} and the discussion below. 

Although these bounds are not competitive with \eqref{lowerboundHN}, we do believe that pursuing such bounds in the context of tropical dynamical systems is meaningful nonetheless. Indeed, ``optimal'' lower bounds for the number of limit cycles in polynomial slow-fast systems occur in a very narrow parameter regime (due to canards). In fact, this is a general observation: When we pick a polynomial system at ``random'', then we ``do not tend'' to see limit cycles at the order of \eqref{lowerboundHN}. (Here ``do not tend'' is very imprecise but refers to something strictly less than probability one, since the limit cycles of \cite{MR4053594} are hyperbolic.) The well-known example by Songling of a quadratic system with four limit cycles, see \cite{songling}, is also very sensitive to variations of the parameters. This is of course just an observation, difficult (if not outright impossible) to make precise as a general mathematical result on \eqref{xyPQ}. However, we speculate that tropical dynamical systems on the set of structurally stable systems offer a route for addressing this observation.

Moreover, as the phase portrait of a tropical dynamical system is determined by linear inequalities on the tropical coefficients, tropical dynamical systems essentially offers a framework where Coppel's problem of characterizing
the phase portraits of quadratic systems by means
of algebraic inequalities on the coefficients, see \cite{coppel1965a}, is solvable on any degree. This is in contrast to the classical case, see \cite[Section 4.14]{perko1991a}, where such a classification \cite{dumortier1991a} (if possible at all) would require nonanalytic inequalities. 


\begin{figure}[H]
    \centering
    \begin{subfigure}{0.5\textwidth}
    \centering
        \includegraphics[width=0.95\linewidth]{./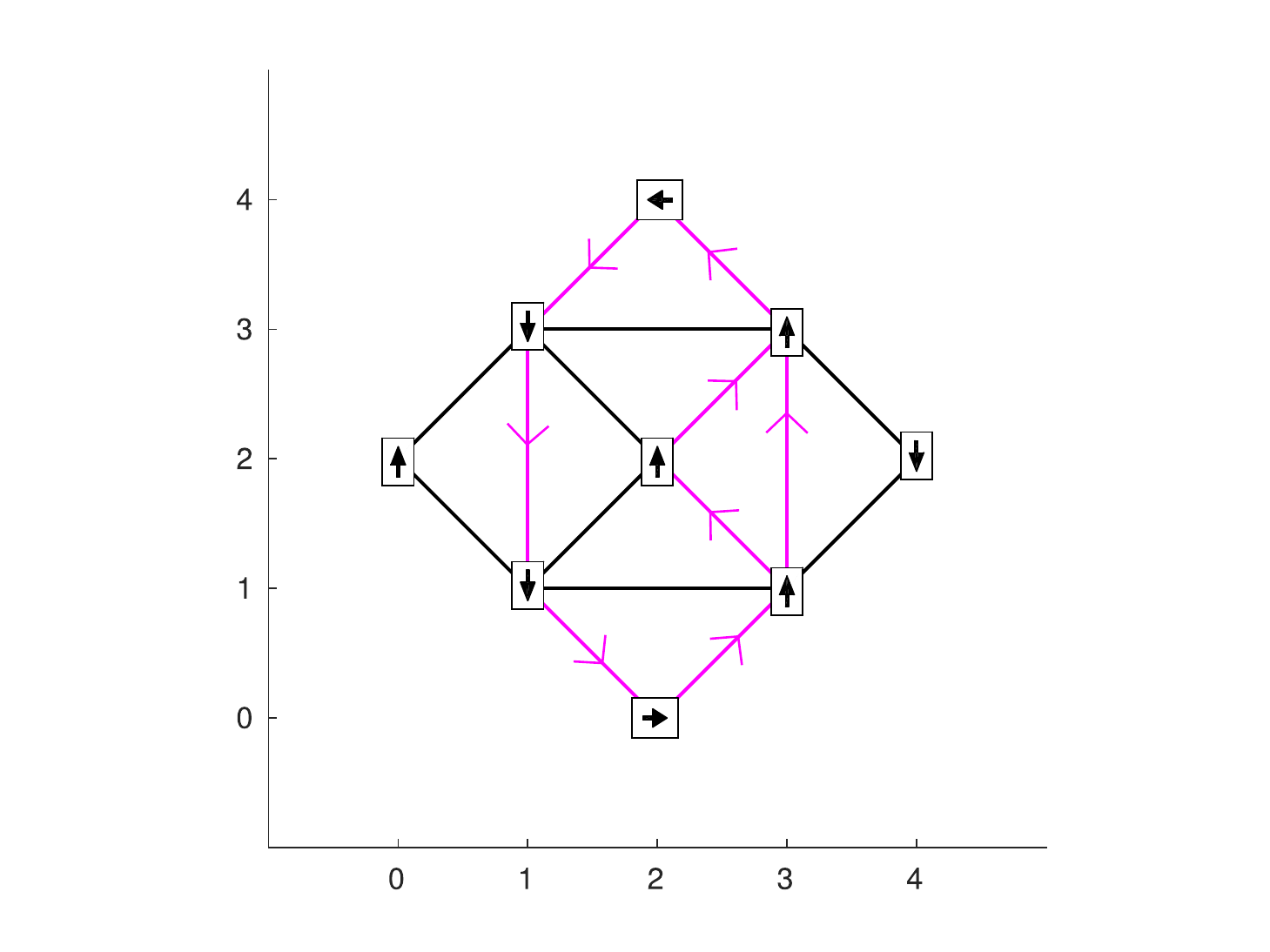}
        \caption{}
    \end{subfigure}%
    \begin{subfigure}{0.5\textwidth}
    \centering
        \includegraphics[width=0.94\linewidth]{./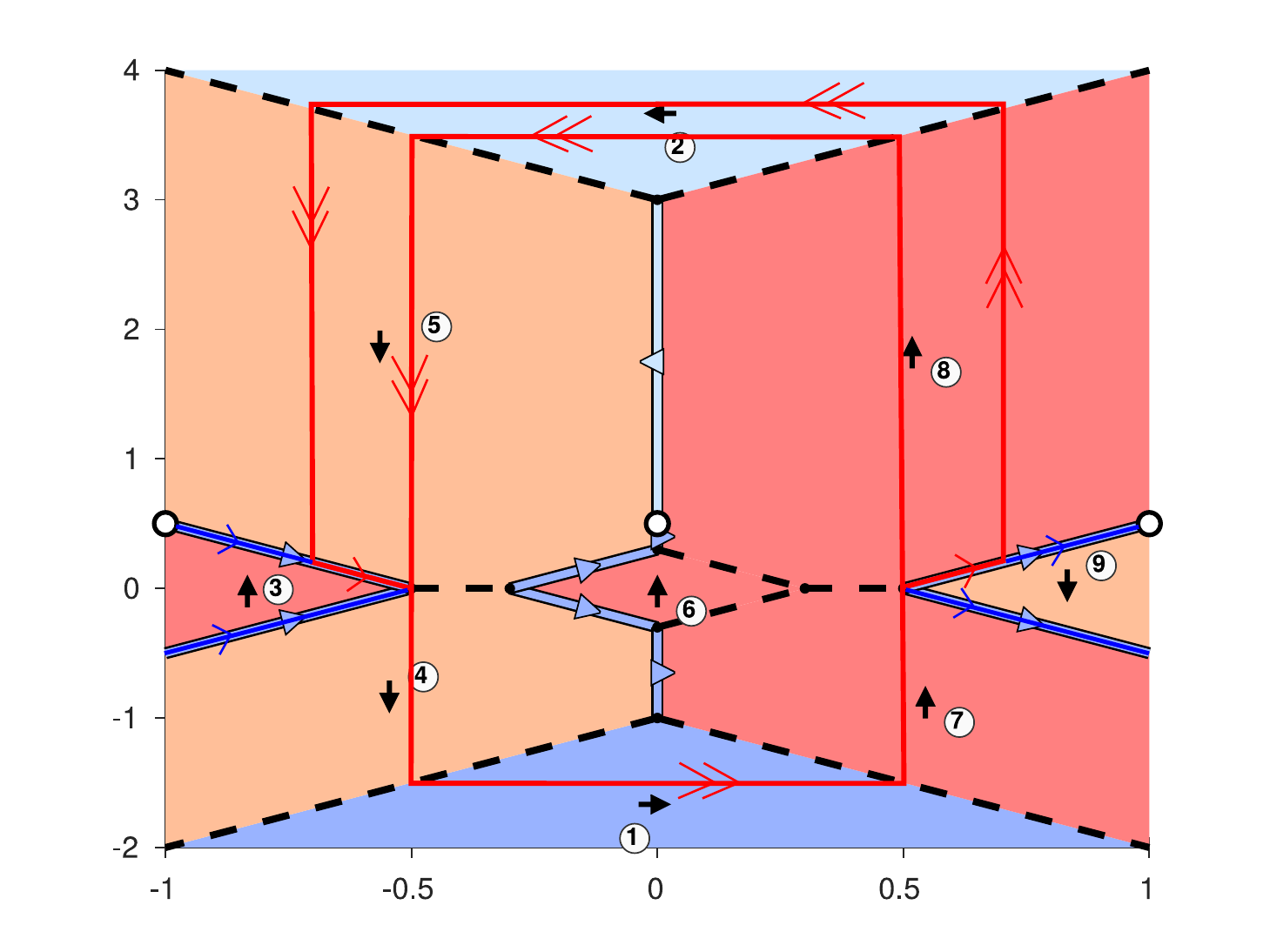}
        \caption{}
    \end{subfigure}
    \caption{An example with infinitely many limit cycles. In (a): the subdivision and the associated crossing graph $\mathcal G$. In (b): The associated phase portrait. The red orbits are examples of limit cycles, whereas the blue orbits are examples of orbits having the limit cycles as their $\omega$- or $\alpha$-limit sets. The situation in (b) is structurally unstable, since the \rsp{tropical vertices} $\mathcal P_{345}$ and $\mathcal P_{789}$ can move independently. Consequently, the heteroclinic connection between $\mathcal P_{345}$ and $\mathcal P_{789}$ can be broken (its splitting constant $b$ is nonzero). To bound the number of limit cycles on the set of structurally stable systems, we need to show that all cases of this type (and others) are structurally unstable.   }
    \figlab{foldfold}
    \end{figure}

%
\subsection{Tropicalization of the full plane}
Secondly, there is the issue of tropicalization of \eqref{xyPQ} in the full plane $\mathbb R^2$. Our methods and results apply to each quadrant. Specifically, we can tropicalize \eqref{xyPQ} in each quadrant by setting
\begin{equation}\eqlab{uvpm}
\begin{aligned}
 u_\text{sign(x)}&=\epsilon \log (\sign(x) x),\\
 v_\text{sign(y)}&=\epsilon \log (\sign(y) y),
\end{aligned}
\end{equation}
for $(x,y)\ne (0,0)$. However, the present paper does not deal with the connection problem that arises when matching across the quadrants. By \eqref{uvpm}, we have coordinates $(u_+,v_+)$ in the first quadrant, $(u_-,v_+)$ in the second quadrant and so on. It is clear that the matching between the first quadrant and the second has to be done along $u_\pm \to -\infty$. This naturally leads to a compactification of each of the four copies of $\mathbb R^2\ni (u_\pm,v_\pm)$ as $(\mathbb R\cup \{-\infty\})^2$. This is also very natural from the perspective that the tropical monomials themselves take values in $\mathbb R\cup \{-\infty\}$.  We will explore this further in future work. 

\subsection{Extensions to $\mathbb R^n$}
Thirdly, there is the problem of extending this theory to higher dimensions. Much of the initial development translates directly, including the definition of crossing and sliding. This is even the basis for the computational methods \cite{kruff2021a} based on tropical geometry. In this regard, it is important to highlight that there is a recursive structure to these systems, in the sense that along a higher dimensional switching manifold of transversal nullcline-type, there is a tropical system of dimension one less. In particular, in three dimensions we can (locally) reduce to a planar tropical dynamical systems, as studied in the present paper, along two-dimensional sliding switching manifolds of transversal nullcline-type. 

\subsection{\rsp{The singular perturbation problem defined by $0<\epsilon\ll 1$}}\seclab{perturb}
Finally, there is the problem of addressing the connection between the tropical dynamical system and the associated smooth system \eqref{uvEqn} for all $0<\epsilon\ll 1$.  We leave this to future work, but do state the following:
\begin{conj}\conjlab{per1}
Suppose that a tropical dynamical system has a \rsp{tropical edge} $\mathcal E$ that is either of transversal Filippov-, or transversal nullcline sliding type and consider a compact line segment $\mathcal S_0\subset \mathcal E$ upon which $\md_{\Fil}$ or $\md_{\trop}$ is constant. Then for all $0<\epsilon\ll 1$, \eqref{uvEqn} has a locally invariant manifold $\mathcal S_\epsilon$, diffeomorphic to $\mathcal S_0$, of the same stability type, and with $\mathcal S_\epsilon\to \mathcal S_0$ for $\epsilon\to 0$.  The one-dimensional flow on $\mathcal S_\epsilon$ is topologically equivalent to the flow on $\mathcal S_0$ given by $\md_{\Fil}$ or $\md_{\trop}$ in the two cases.
\end{conj}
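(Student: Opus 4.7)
The plan is to reduce \conjref{per1} to a standard application of Geometric Singular Perturbation Theory (Fenichel's theorem), combined with the blowup techniques for exponential nonlinearities developed in \cite{jelbart2021c,kristiansen2018a,kristiansen2017a}. Both cases (Filippov and nullcline) follow the same template: localize near $\mathcal S_0$, introduce a transverse coordinate vanishing on $\mathcal E$, rescale by $\epsilon$ to expose a slow-fast structure, and read off an invariant manifold $\mathcal S_\epsilon$ of the correct stability type.

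First I would localize to an open tubular neighborhood $N$ of $\mathcal S_0$. Since $\mathcal S_0$ is compact and $\md_{\Fil}$ (respectively $\md_{\trop}$) is constant along it, the set $\argmax_{k\in \mathcal I} F_k$ is constant on $\mathcal S_0$, and $N$ can be chosen so small that a uniform spectral gap $F_{\max} - \max_{k\notin \argmax} F_k > c > 0$ holds throughout $N$; all non-dominant monomials therefore contribute only $O(e^{-c/\epsilon})$ to \eqref{uvEqn}. Next, I would introduce normal-form coordinates $(s,\zeta)$ with $s$ a $\vert \cdot \vert_1$-arclength along $\mathcal E_{i,j}$ and $\zeta := (F_i(u,v)-F_j(u,v))/\epsilon$, so that $\mathcal E_{i,j} = \{\zeta = 0\}$ and $\zeta = O(1)$ parametrizes the $O(\epsilon)$-thick boundary layer around $\mathcal S_0$.

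In the Filippov case $(i\in \mathcal U,\,j\in \mathcal V)$ the system reads, modulo $O(e^{-c/\epsilon})$,
\begin{align*}
 \dot u = \delta_i\,\sigma(\zeta),\qquad \dot v = \delta_j\,\sigma(-\zeta),\qquad \sigma(\zeta):=\tfrac{1}{1+e^{-\zeta}},
\end{align*}
so that on the fast time scale $\tau = t/\epsilon$ the equation $d\zeta/d\tau = (n_i-n_j)\delta_i\sigma(\zeta)+(m_i-m_j)\delta_j\sigma(-\zeta)$ has a unique hyperbolic zero $\zeta^*$, and a direct computation identifies $\zeta^*$ with the Filippov weight $q$ in \lemmaref{ConvFilippov}. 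Fenichel's theorem then yields a locally invariant $C^\infty$-manifold $\mathcal S_\epsilon$, $O(\epsilon)$-close to $\mathcal S_0$ and of the same hyperbolic stability type, on which the reduced flow converges uniformly to $\md_{\Fil}$. The nullcline case $(i,j\in \mathcal U,\,l\in \mathcal V$ subdominant$)$ is analogous in the fast direction: $\dot u = \delta_i \tanh(\zeta/2)$ gives a unique hyperbolic zero at $\zeta=0$ with stability sign $(n_i-n_j)\delta_i$, and Fenichel again produces $\mathcal S_\epsilon$. On this manifold $\dot u = O(e^{-c/\epsilon})$ and $\dot v = (\delta_l/2)\,e^{(F_l-F_i)/\epsilon} + O(e^{-2c/\epsilon})$, i.e.\ the motion is exponentially slow in $t$; after the rescaling $d\tilde t = \tfrac12 e^{(F_l-F_i(u,v))/\epsilon}\,dt$ (equivalently, reverting to the $\tau$-time of \eqref{uvEqn0}), the reduced flow becomes $O(1)$ with direction matching $\md_{\trop}$ up to $O(e^{-c/\epsilon})$, and the topological equivalence claimed by \conjref{per1} follows from the trivial structural stability of a non-vanishing smooth vector field on a compact interval.

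The hard part will be rigorously justifying this time rescaling in the nullcline case, where the separation between the fast-equilibration rate ($\sim 1/\epsilon$) and the slow-drift rate ($\sim e^{-c/\epsilon}$) is itself exponential in $\epsilon$; standard Fenichel theory delivers $\mathcal S_\epsilon$ but must be supplemented with the exponential-time blowup machinery of \cite{kristiansen2017a} and the PWS-smoothing framework of \cite{jelbart2021c,jelbart2021b} to control the higher-order exponentially small corrections along $\mathcal S_\epsilon$. Handling the endpoints of $\mathcal S_0$ is not a genuine obstacle: by choosing $N$ so that the spectral gap extends slightly past $\mathcal S_0$, Fenichel's invariant manifold covers all of $\mathcal S_\epsilon$ with room to spare, and no further blowup at tropical vertices is needed since $\mathcal S_0$ is assumed interior to the constancy region of $\md_{\Fil}$ or $\md_{\trop}$.
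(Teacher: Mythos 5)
You should begin by noting that the paper does not prove \conjref{per1}: it is stated explicitly as a conjecture in \secref{perturb}, with the proof deferred to future work; the authors only remark that they ``feel certain'' the result can be obtained from Fenichel theory and blowup, and point to \cite{portisch2021novel} for a version proven in the Michaelis--Menten case. There is therefore no proof in the paper to compare against, and your route is precisely the one the authors themselves advertise. Your layer-problem computations check out: writing $\zeta=(F_i-F_j)\epsilon^{-1}$ and discarding the uniformly $O(\e^{-c/\epsilon})$ subdominant monomials (legitimate on a tubular neighborhood of the compact segment $\mathcal S_0$), the fast equation in the Filippov case is $d\zeta/d\tau=-\sigma(\zeta)\,(\mathbf n_{i,j}\cdot\md_i)-\sigma(-\zeta)\,(\mathbf n_{i,j}\cdot\md_j)$, which is monotone in $\zeta$ with a unique hyperbolic zero $\zeta^*$ satisfying $\sigma(\zeta^*)=q$ with $q$ as in \eqref{mdFil} (you should correct ``identifies $\zeta^*$ with $q$'' to $\sigma(\zeta^*)=q$); in the nullcline case $d\zeta/d\tau=-(\mathbf n_{i,j}\cdot\md_i)\tanh(\zeta/2)+\cdots$ has a hyperbolic zero near $\zeta=0$ exactly when $\mathbf n_{i,j}\cdot\md_i\ne 0$, i.e.\ exactly under the transversality hypothesis of the conjecture, and the stability sign agrees with the PWS notion of stable/unstable sliding in both cases.

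Two points still need care before the sketch becomes a proof. First, Fenichel's theorem is invoked for a perturbation that is exponentially small rather than $O(\epsilon)$ and in a chart $(s,\zeta)$ that only covers an $O(\epsilon)$-neighborhood of $\mathcal E_{i,j}$; this is exactly the situation handled by the smoothing/blowup references you cite, but it must be invoked as such rather than as ``standard Fenichel'', since the normally hyperbolic critical manifold lives at finite $\zeta$ inside a boundary layer that collapses onto $\mathcal E_{i,j}$ as $\epsilon\to 0$. Second, in the nullcline case your estimate $\dot v=(\delta_l/2)\e^{(F_l-F_i)\epsilon^{-1}}+O(\e^{-2c/\epsilon})$ is not uniform if $\mathcal S_0$ meets the boundary of $\mathcal R_l^{\mathcal V}$, which the hypotheses do not exclude: $\md_{\trop}$ can remain constant across a tie between two subdominant $\mathcal V$-monomials $l,l'$ provided $\delta_l=\delta_{l'}$. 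What saves the argument is precisely that constancy of $\md_{\trop}$ on $\mathcal S_0$ forces all competing subdominant $\mathcal V$-monomials to carry the same sign, so that $\dot v$ restricted to $\mathcal S_\epsilon$ is nonvanishing of fixed sign, and a nonvanishing $C^1$ flow on a compact interval is topologically equivalent to the constant flow $\md_{\trop}$; this step should be made explicit. With these two repairs your outline is a credible proof of the conjecture, consistent with the strategy the authors intend.
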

\begin{conj}
 Suppose that a tropical dynamical system has a hyperbolic crossing cycle $\gamma_0$. Then for all $0<\epsilon\ll 1$, \eqref{uvEqn} has a limit cycle $\gamma_\epsilon$ of the same stability type with $\gamma_\epsilon\to \gamma_0$ for $\epsilon\to 0$.
\end{conj}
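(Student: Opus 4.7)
The strategy is a standard Poincaré return map argument, combined with a careful boundary-layer analysis at each crossing switching manifold traversed by $\gamma_0$. Fix the section $\Sigma$ transverse to $\gamma_0$ from \lemmaref{returnmap} and, for $\epsilon>0$ small, construct a return map $P_\epsilon:\Sigma\to\Sigma$ for \eqref{uvEqn}. The aim is to prove that $P_\epsilon\to P_0$ in $C^1$ as $\epsilon\to 0$, where $P_0$ is the affine return map \eqref{Pmap} with multiplier $c\neq 1$. Since $|P_0'(u')|=c\neq 1$, the implicit function theorem then yields a unique nearby fixed point $u^*(\epsilon)\to u'$ of $P_\epsilon$, corresponding to a hyperbolic limit cycle $\gamma_\epsilon$ with multiplier close to $c$; this automatically gives the claimed stability and the convergence $\gamma_\epsilon\to \gamma_0$.

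The decomposition of $P_\epsilon$ proceeds segment by segment along $\gamma_0$. In the interior of each tropical region $\mathcal R_{k_i}$ visited by $\gamma_0$, \lemmaref{limitTrop} shows that the right-hand side of \eqref{uvEqn} is $\mathcal O(\e^{-c/\epsilon})$-close to the constant vector $\md_{k_i}$, so a Gronwall estimate gives uniform $C^1$-closeness of the associated flow maps between two compact transversals lying strictly inside consecutive regions. Near a crossing \rsp{tropical edge} $\mathcal E_{i,j}$ traversed by $\gamma_0$, the transversality $(\md_i\cdot \mathbf n_{i,j})(\md_j\cdot\mathbf n_{i,j})>0$ and the fact that $\gamma_0$ stays bounded away from all other monomials becoming dominant (guaranteed by $\gamma_0$ being a crossing cycle in general position, \defnref{cyclegen}, and in particular not passing through \rsp{tropical vertices}) imply that only $F_i$ and $F_j$ enter the asymptotics on a tubular neighborhood of the intersection point. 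Introducing the fast coordinate $s=(F_i-F_j)/\epsilon$, one rewrites \eqref{uvEqn} locally as a smooth regularization of the PWS crossing problem, integrable in closed form, whose transition map between two transversals on either side of $\mathcal E_{i,j}$ converges in $C^1$ as $\epsilon\to 0$ to the affine map \eqref{mapPij}.

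Composing finitely many interior flow maps and boundary-layer transition maps yields $P_\epsilon$, and the $C^1$-closeness $P_\epsilon\to P_0$ follows from the chain rule and the closeness of each factor. One final minor point is uniformity: since $\gamma_0$ is compact and each transition is localized, one can choose a finite atlas of neighborhoods covering $\gamma_0$ in which the above two types of estimates apply uniformly. The implicit function theorem then produces $\gamma_\epsilon$ as a smooth curve in $\epsilon\in (0,\epsilon_0]$.

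The main obstacle is the rigorous boundary-layer analysis at the crossing manifolds, i.e. justifying the $C^1$ (not merely $C^0$) convergence of the transition maps across the smoothing layer. A direct computation in the reduced two-monomial model should suffice because the problem is essentially one-dimensional after reduction to the slow variable along $\mathcal E_{i,j}$, but making this uniform over neighboring initial data (to control derivatives) requires care. A cleaner alternative would be to apply the blowup and geometric singular perturbation machinery developed in \cite{jelbart2021c,kristiansen2015a,kristiansen2018a}, which is explicitly designed to resolve such regularizations of PWS systems near crossing manifolds and yields the $C^k$-closeness of transition maps as a standard output. A secondary technical issue is that the time reparametrization \eqref{time} is $\epsilon$-dependent, but since it is strictly positive and smooth, it does not affect the existence or stability of $\gamma_\epsilon$ in the original time.
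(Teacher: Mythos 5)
The paper does not prove this statement: it appears as \conjref{per1}'s companion conjecture in the discussion section (\secref{perturb}), is explicitly deferred to future work, and the authors only remark that they ``feel certain'' the result follows from blowup and Fenichel theory. So there is no proof in the paper to compare against; what you have written is a proof \emph{strategy} for an open item, and it should be judged on its own merits.

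On those merits, your outline is sound and is essentially the route the authors gesture at (regularized PWS crossing plus a return-map/IFT argument), supplemented by a more elementary direct computation. Two comments. First, the obstacle you flag as the ``main'' one --- $C^1$ convergence of the layer transition maps --- is actually milder than you suggest, because of the special structure of \eqref{uvEqn}: on a tubular neighborhood of a crossing point of $\mathcal E_{i,j}$ away from tropical vertices, discarding the exponentially subdominant monomials reduces \eqref{uvEqn} to $\dot u=\delta_i\sigma(w/\epsilon)$, $\dot v=\delta_j(1-\sigma(w/\epsilon))$ with $w:=F_i-F_j$ affine and $\sigma$ the logistic function. The right-hand side depends on $(u,v)$ only through $w$, and the crossing condition \eqref{crossingdef} makes $\dot w$ bounded away from zero; hence in coordinates $(w,z)$ with $z$ a complementary affine coordinate the transition map between $\{w=-c\}$ and $\{w=c\}$ is an exact translation in $z$, and the full transition map between your sections is affine up to $\mathcal O(\e^{-c'/\epsilon})$ corrections in $C^1$. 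So the $C^1$ convergence to \eqref{mapPij} follows by direct computation without blowup. Second, what does need to be said carefully (and you only assert) is the uniformity of the two-scale decomposition along the whole cycle: you need that $\gamma_0$, being compact and crossing-only (\defnref{crossing}), has positive distance from all tropical vertices and from every tropical edge it does not cross, so that the constant $c>0$ in the subdominance estimates can be chosen uniformly on a tubular neighborhood of $\gamma_0$. With that in place, the composition of finitely many $C^1$-convergent factors, the hyperbolicity $c\ne 1$, and the implicit function theorem give the claimed $\gamma_\epsilon$ with the same stability; your observation that the time reparametrization \eqref{time} is positive and hence irrelevant to existence and stability is correct. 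As written the proposal is an outline rather than a proof, but I see no step that would fail.
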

\begin{conj}\conjlab{tropeq}
 Suppose that a tropical dynamical system has a \rsp{tropical singularity}  at $(u_0,v_0)$ of either sink-, source- or saddle-type, recall \defnref{sinks}. Then there is a neighborhood $X$ of $(u_0,v_0)$ in which the associated smooth system \eqref{uvEqn} for all $0<\epsilon\ll 1$  has a unique hyperbolic \rspp{singularity} at $(u_\epsilon,v_\epsilon)$, with $\lim_{\epsilon\to 0}(u_\epsilon,v_\epsilon)=(u_0,v_0)$, of the same type (stable node, unstable node or saddle).
\end{conj}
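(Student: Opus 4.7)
The plan is to locate the singularity $(u_\epsilon,v_\epsilon)$ of \eqref{uvEqn} by a dominant-balance argument combined with the implicit function theorem, and then to classify it by a direct computation of the Jacobian in coordinates transverse to the two nullcline switching manifolds through $(u_0,v_0)$. By \defnref{sinks}, the tropical singularity is the transverse intersection of $\mathcal E_{i,j}^{\mathcal U} \subset \mathcal T^{\mathcal U}$ and $\mathcal E_{l,p}^{\mathcal V} \subset \mathcal T^{\mathcal V}$, with $\delta_j = -\delta_i$ and $\delta_p = -\delta_l$. Under general position and the assumption $\textnormal{\textbf{0}} \in \Conv(u_0,v_0)$, precisely one of the two edges lies in $\mathcal T$; assume it is $\mathcal E_{i,j}^{\mathcal U}$, so that $\mu := F_i(u_0,v_0) = F_j(u_0,v_0) > \nu := F_l(u_0,v_0) = F_p(u_0,v_0)$ and all remaining monomials are strictly dominated at $(u_0,v_0)$. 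I would then introduce the affine coordinates $a := F_i - F_j$, $b := F_l - F_p$; by item \ref{gp3} of \lemmaref{triangulation}, the linear map $(u,v) \mapsto (a,b)$ is a diffeomorphism with $(u_0,v_0) \leftrightarrow (0,0)$.

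For the existence of $(u_\epsilon,v_\epsilon)$, factor out the dominant exponentials:
\begin{align*}
 \sum_{k\in\mathcal U} \delta_k \e^{F_k(u,v)/\epsilon} &= \e^{F_j(u,v)/\epsilon}\bigl[\delta_j(1 - \e^{a/\epsilon}) + R_{\mathcal U}(u,v,\epsilon)\bigr], \\
 \sum_{k\in\mathcal V} \delta_k \e^{F_k(u,v)/\epsilon} &= \e^{F_p(u,v)/\epsilon}\bigl[\delta_p(1 - \e^{b/\epsilon}) + R_{\mathcal V}(u,v,\epsilon)\bigr],
\end{align*}
where $R_{\mathcal U}, R_{\mathcal V} = O(\e^{-c/\epsilon})$ for some $c>0$, uniformly on a fixed neighborhood $X$ of $(u_0,v_0)$. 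After rescaling $a = \epsilon \tilde a$, $b = \epsilon \tilde b$, the equilibrium equations for \eqref{uvEqn} reduce to $\delta_j(1 - \e^{\tilde a}) + O(\e^{-c/\epsilon}) = 0$ and the analogous equation for $\tilde b$; the $\epsilon = 0$ limit has the unique nondegenerate root $(\tilde a, \tilde b) = (0,0)$, so the implicit function theorem produces a unique $(a_\epsilon, b_\epsilon) = O(\epsilon \e^{-c/\epsilon})$, and hence a unique $(u_\epsilon,v_\epsilon) \to (u_0,v_0)$ in $X$.

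For the classification, I compute the Jacobian of the right-hand side of \eqref{uvEqn} at $(u_\epsilon, v_\epsilon)$ in the $(a,b)$ coordinates. Since $\sum_k \e^{F_k/\epsilon} = 2\, \e^{\mu/\epsilon}(1 + O(\e^{-c/\epsilon}))$, the Jacobian is to leading order a diagonal matrix with entries
\begin{align*}
 -\tfrac{\delta_j}{2\epsilon}(1 + o(1)) \quad \text{and} \quad -\tfrac{\delta_p}{2\epsilon}\, \e^{-(\mu-\nu)/\epsilon}(1 + o(1)),
\end{align*}
the first being ``fast'' (transverse to $\mathcal E_{i,j}^{\mathcal U}$) and the second being ``slow'' (tangential to $\mathcal E_{i,j}^{\mathcal U}$). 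Both are real and nonzero, so $(u_\epsilon,v_\epsilon)$ is a hyperbolic equilibrium of node- or saddle-type. The sign of $-\delta_j$ reflects whether $\mathcal E_{i,j}^{\mathcal U}$ is stable or unstable as a transversal nullcline sliding manifold, while the sign of $-\delta_p$ reflects whether $\md_{\trop}$ on $\mathcal E_{i,j}^{\mathcal U}$ points toward or away from $(u_0,v_0)$. A short case check against \defnref{sinks} then matches sink with stable node, source with unstable node, and the two strong-stable/unstable saddles with hyperbolic saddles; pulling back through the linear diffeomorphism $(a,b) \leftrightarrow (u,v)$ preserves the equilibrium type.

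The hardest part will be the uniform control of the remainders $R_{\mathcal U}, R_{\mathcal V}$ and of the off-diagonal Jacobian entries as $\epsilon \to 0$: the two diagonal eigenvalues live on widely separated scales $\epsilon^{-1}$ and $\epsilon^{-1} \e^{-(\mu-\nu)/\epsilon}$, and one must verify that all cross derivatives coming from the subdominant monomials are truly subleading relative to the \emph{smaller} eigenvalue, not merely relative to the larger. This is an exponential-asymptotics bookkeeping task, and I expect that the blow-up and GSPT techniques from \cite{jelbart2021b,jelbart2021c,kristiansen2017a} will be needed for a fully rigorous version; the direct IFT sketch above already yields the existence, uniqueness, and type claims asserted in the conjecture.
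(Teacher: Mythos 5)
First, a point of order: the paper does not prove this statement. It is \conjref{tropeq}, stated in \secref{perturb} and explicitly deferred to future work (the authors only remark that they ``feel certain'' the result follows from blowup and Fenichel theory, and the closest the paper comes to a computation of this kind is the explicit example in \remref{hybridpoint}). So there is no proof in the paper to compare yours against; I can only assess your argument on its own terms. Your overall strategy is sound and, for this particular conjecture, probably the right one: the existence/uniqueness part via dominant balance in the coordinates $a=F_i-F_j$, $b=F_l-F_p$ and the implicit function theorem is essentially complete as sketched (the equilibrium equations for \eqref{uvEqn} decouple into a $\mathcal U$-sum and a $\mathcal V$-sum, each of which you correctly reduce to $1-\e^{a/\epsilon}=O(\e^{-c/\epsilon})$, resp.\ $1-\e^{b/\epsilon}=O(\e^{-c/\epsilon})$, on a sufficiently small fixed neighborhood).

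The gap is in the classification step, and it is more than bookkeeping. In the $(a,b)$ coordinates the Jacobian at the equilibrium is \emph{not} diagonal to leading order: writing $\mathbf n_{i,j}=(n_j-n_i,m_j-m_i)$ and $\mathbf n_{l,p}$ analogously, the linearization is (up to relatively exponentially small corrections) lower triangular with $(1,1)$-entry $\tfrac{\delta_j(n_j-n_i)}{2\epsilon}$, a $(2,1)$-entry of the \emph{same} order $\epsilon^{-1}$, and $(2,2)$-entry of order $\epsilon^{-1}\e^{-(\mu-\nu)/\epsilon}$. Two consequences. First, the fast eigenvalue is $\tfrac{\delta_j(n_j-n_i)}{2\epsilon}$, not $-\tfrac{\delta_j}{2\epsilon}$; its sign is $\sgn(\delta_j(n_j-n_i))$, which is exactly the stability of the transversal nullcline sliding manifold but is \emph{not} $-\sgn\delta_j$ in general (they disagree whenever $n_j-n_i>0$), so the case check as you set it up would assign the wrong stability in such examples. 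Second, because the large off-diagonal entry couples into the small eigenvalue, the slow eigenvalue must be computed as $\det J/\operatorname{tr}J$; it comes out proportional to $\delta_p\,\e^{-(\mu-\nu)/\epsilon}\det\begin{pmatrix}\mathbf n_{i,j}\\ \mathbf n_{l,p}\end{pmatrix}/(n_j-n_i)$, so hyperbolicity rests on the transversality condition \eqref{detcond} (which you never invoke), and its sign involves that determinant and not merely $-\delta_p$. You do flag the cross-derivative issue in your final paragraph, which is the right instinct, but the fix is not just ``verify subleadingness'': the determinant factor is what has to be matched against the direction of $\md_{\trop}$ from \lemmaref{ConvNullcline} and \defnref{sinks}, and that geometric-to-algebraic translation is the actual content of the classification. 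With those corrections the argument should close, and no blowup machinery seems needed for this conjecture (in contrast to \conjref{per1}).
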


\rsp{We feel certain that} these results can be obtained using blowup and Fenichel's theory, see  \cite{fen1,fen2,fen3,jones_1995} and \cite{jelbart2021a,kristiansen2015a,kristiansen2020a} for applications of these methods in smooth systems approaching nonsmooth ones. We also refer to \cite{portisch2021novel}, where a version of \conjref{per1} is proven in the context of the Michaelis-Menten model. 

It is also possible to perturb certain sliding limit cycles, like those in \figref{tropauto}.
Nonhyperbolic crossing cycles and hybrid points, on the other hand, do not persist (in general) as similar objects for \eqref{uvEqn} for $0<\epsilon\ll 1$. It is straightforward to construct examples, see also \remref{hybridpoint}. A consequence of this last fact is that structural stability of the tropical system does not imply structural stability of  \eqref{uvEqn} (in the usual sense) for all $0<\epsilon\ll 1$, at least not without imposing additional assumptions. 

\textbf{Acknowledgement}. The first author would like to offer genuine thanks to Peter Szmolyan for introducing the subject of tropical geometry and for providing valuable insights into its potential applications in chemical reaction networks. The first author also thanks Sam Jelbart for helpful discussions. Finally, both  authors thank the students Jonas Dammann, Thomas Haastrup, Victor Hansen (all project students at the bachelor level) and Daniel Munch Nielsen (master thesis, see \cite{daniel}) for their contribution, which helped plant the seed for the development of the theory.

\bibliography{refs}
\bibliographystyle{plain}
\newpage
\appendix 
\end{document}